\documentclass{amsart}
\usepackage[T1]{fontenc}
\usepackage[utf8]{inputenc}
\usepackage{lmodern}
\usepackage{amssymb}
\usepackage{tikz-cd}
\numberwithin{equation}{section}

\newtheorem{thm}{Theorem}[section]
\newtheorem{prop}[thm]{Proposition}
\newtheorem{lem}[thm]{Lemma}
\newtheorem{cor}[thm]{Corollary}

\theoremstyle{definition}
\newtheorem{defn}[thm]{Definition}

\theoremstyle{remark}
\newtheorem{rem}[thm]{Remark}
\newtheorem*{claim}{Claim}

\newcommand{\Z}{\mathbb{Z}}

\newcommand{\R}{\mathbb{R}}

\DeclareMathOperator{\Hom}{Hom}

\DeclareMathOperator{\Ker}{Ker}
\DeclareMathOperator{\Coker}{Cok}
\DeclareMathOperator{\Ima}{Im}
\newcommand{\trivrep}{\mathbf{1}}
\newcommand{\aff}{\mathrm{aff}}
\newcommand{\St}{\mathrm{St}}
\DeclareMathOperator{\Stab}{Stab}

\newcommand*{\opposite}[1]{#1^{\mathrm{op}}}

\DeclareMathOperator{\val}{val}

\newcommand{\Refs}[1]{\mathrm{Ref}(#1)}
\newcommand{\coroot}[1]{#1^\vee}

\title{Involutions on pro-$p$-Iwahori Hecke algebras}
\author{Noriyuki Abe}
\address{Department of Mathematics, Hokkaido University, Kita 10, Nishi 8, Kita-Ku, Sapporo, Hokkaido, 060-0810, Japan}
\email{abenori@math.sci.hokudai.ac.jp}
\subjclass[2010]{20C08, 20G25}
\begin{document}
\begin{abstract}
The pro-$p$-Iwahori Hecke algebra has an involution $\iota$ defined in terms of Iwahori-Matsumoto basis.
Then for a module $\pi$ of pro-$p$-Iwahori Hecke, $\pi^\iota = \pi\circ \iota$ is also a module.
We calculate $\pi^\iota$ for simple modules $\pi$.
We also calculate the dual of $\pi$.
\end{abstract}
\maketitle
\section{Introduction}\label{sec:Introduction}
This is the sequel of \cite{arXiv:1612.01312} and the aim of these papers are to calculate the extension of simple modules of pro-$p$-Iwahori Hecke algebras.
The calculation will be appeared in sequel and we will use the results of this paper.

Let $G$ be a connected reductive group over a non-archimedean local field with residue characteristic $p$.
For a field $C$, we can attache the pro-$p$-Iwahori Hecke algebra of $G$.
This is the convolution algebra of compactly supported functions which is bi-invariant under the pro-$p$ radical of an Iwahori subgroup.
If the characteristic of $C$ is $p$, then this algebra plays an important role for the representation theory of $G$ over $C$ (cf.~\cite{MR3600042}).

The main object of this paper are the anti-involution $\zeta$ and the involution $\iota$ when the characteristic of $C$ is $p$.
These are defined as follows.
\begin{itemize}
\item $\zeta$: Let $W(1)$ be the ``pro-$p$ Weyl group'' (see subsection~\ref{subsec:The data from a group} for the precise definition).
Then $\mathcal{H}$ has a basis $\{T_w\mid w\in W(1)\}$ parametrized by $W(1)$ which is called \emph{Iwahori-Matsumoto basis}.
The anti-involution $\zeta$ is defined by $\zeta(T_w) = T_{w^{-1}}$.
\item $\iota$: We also have another basis of $\mathcal{H}$ defnoted by $\{T_w^*\mid w\in W(1)\}$.
Then the involution $\iota$ is defined by $\iota(T_w) = (-1)^{\ell(w)}T_w^*$ where $\ell$ is the length function on $W(1)$.
\end{itemize}
By the multiplication rule of $\mathcal{H}$ in terms of the basis $\{T_w\mid w\in W(1)\}$ (the braid relations and the quadratic relations), these maps respect the multiplication.

Let $\pi$ be a right $\mathcal{H}$-module.
Then we can attache the following two modules.
\begin{itemize}
\item $\pi^* = \Hom_C(\pi,C)$ where the action of $X\in \mathcal{H}$ on $f\in \pi^*$ is given by $(fX)(v) = f(v\zeta(X))$ for $v\in \pi$.
\item $\pi^\iota = \pi\circ\iota$.
\end{itemize}
If $\pi$ is simple then $\pi^*$ and $\pi^\iota$ are also simple.
We determine these modules (Theorem~\ref{thm:twist of simple module}, \ref{thm:dual of simples}).

\subsection{Organization of this paper}
In the next section we give notation and recall some results on pro-$p$-Iwahori Hecke algebras.
We use the same notation as \cite{arXiv:1612.01312} and often refer this paper.
In Section~\ref{sec:Twist}, we calculate $\pi^\iota$ for simple modules $\pi$.
Since the construction of simple modules is divided into three steps, the calculation also has three steps, namely when $\pi$ is the parabolic induction, generalized Steinberg modules and supersingular modules.
The hardest step is for the generalized Steinberg modules which will be calculated from subsection~\ref{subsec:twist:Steinberg modules} to \ref{subsec:exactness}.
The calculation of the dual of simple modules will be done in Section~\ref{sec:Dual}.
In the calculation we use results in Section~\ref{sec:Twist}.

\subsection*{Acknowledgments}
Most of this work was done during my pleasant stay at Institut de mathématiques de Jussieu. The work is supported by JSPS KAKENHI Grant Number 26707001.

\section{Preliminaries}\label{sec:Preliminaries}
\subsection{Pro-$p$-Iwahori Hecke algebra}\label{subsec:Prop-p-Iwahori Hecke algebra}
Let $\mathcal{H}$ be a pro-$p$-Iwahori Hecke algebra over a commutative ring $C$~\cite{MR3484112}.
We study modules over $\mathcal{H}$ in this paper.
\emph{In this paper, a module means a right module.}
The algebra $\mathcal{H}$ is defined with a combinatorial data $(W_\aff,S_\aff,\Omega,W,W(1),Z_\kappa)$ and a parameter $(q,c)$.

We recall the definitions.
The data satisfy the following.
\begin{itemize}
\item $(W_\aff,S_\aff)$ is a Coxeter system.
\item $\Omega$ acts on $(W_\aff,S_\aff)$.
\item $W = W_\aff\rtimes \Omega$.
\item $Z_\kappa$ is a finite commutative group.
\item The group $W(1)$ is an extension of $W$ by $Z_\kappa$, namely we have an exact sequence $1\to Z_\kappa\to W(1)\to W\to 1$.
\end{itemize}
The subgroup $Z_\kappa$ is normal in $W(1)$.
Hence the conjugate action of $w\in W(1)$ induces an automorphism of $Z_\kappa$, hence of the group ring $C[Z_\kappa]$.
We denote it by $c\mapsto w\cdot c$.

Let $\Refs{W_\aff}$ be the set of reflections in $W_\aff$ and $\Refs{W_\aff(1)}$ the inverse image of $\Refs{W_\aff}$ in $W(1)$.
The parameter $(q,c)$ is maps $q\colon S_\aff\to C$ and $c\colon \Refs{W_\aff(1)}\to C[Z_\kappa]$ with the following conditions. (Here the image of $s$ by $q$ (resp.\ $c$) is denoted by $q_s$ (resp.\ $c_s$).)
\begin{itemize}
\item For $w\in W$ and $s\in S_\aff$, if $wsw^{-1}\in S_\aff$ then $q_{wsw^{-1}} = q_s$.
\item For $w\in W(1)$ and $s\in \Refs{W_\aff(1)}$, $c_{wsw^{-1}} = w\cdot c_s$.
\item For $s\in \Refs{W_\aff(1)}$ and $t\in Z_\kappa$, we have $c_{ts} = tc_s$.
\end{itemize}
Let $S_\aff(1)$ be the inverse image of $S_\aff$ in $W(1)$.
For $s\in S_\aff(1)$, we write $q_s$ for $q_{\bar{s}}$ where $\bar{s}\in S_\aff$ is the image of $s$.
The length function on $W_\aff$ is denoted by $\ell$ and its inflation to $W$ and $W(1)$ is also denoted by $\ell$.

The $C$-algebra $\mathcal{H}$ is a free $C$-module and has a basis $\{T_w\}_{w\in W(1)}$.
The multiplication is given by
\begin{itemize}
\item (Quadratic relations) $T_{s}^2 = q_sT_{s^2} + c_sT_s$ for $s\in S_\aff(1)$.
\item (Braid relations) $T_{vw} = T_vT_w$ if $\ell(vw) = \ell(v) + \ell(w)$.
\end{itemize}
We extend $q\colon S_\aff\to C$ to $q\colon W\to C$ as follows.
For $w\in W$, take $s_1,\dots,s_l\in S_\aff$ and $u\in\Omega$ such that $w = s_1\dotsm s_l u$.
Then put $q_w = q_{s_1}\dotsm q_{s_l}$.
From the definition, we have $q_{w^{-1}} = q_w$.
We also put $q_w = q_{\overline{w}}$ for $w\in W(1)$ with the image $\overline{w}$ in $W$.

\subsection{The data from a group}\label{subsec:The data from a group}
Let $F$ be a non-archimedean local field, $\kappa$ its residue field, $p$ its residue characteristic and $G$ a connected reductive group over $F$.
We can get the data in the previous subsection from $G$ as follows.
See \cite{MR3484112}, especially 3.9 and 4.2 for the details.

Fix a maximal split torus $S$ and denote the centralizer of $S$ by $Z$.
Let $Z^0$ be the unique parahoric subgroup of $Z$ and $Z(1)$ its pro-$p$ radical.
Then the group $W(1)$ (resp.\ $W$) is defined by $W(1) = N_G(Z)/Z(1)$ (resp.\ $W = N_G(Z)/Z^0$) where $N_G(Z)$ is the normalizer of $Z$ in $G$.
We also have $Z_\kappa = Z^0/Z(1)$.
Let $G'$ be the group generated by the unipotent radical of parabolic subgroups \cite[II.1]{MR3600042} and $W_\aff$ the image of $G'\cap N_G(Z)$ in $W$.
Then this is a Coxeter group.
Fix a set of simple reflections $S_\aff$.
The group $W$ has the natural length function and let $\Omega$ be the set of length zero elements in $W$.
Then we get the data $(W_\aff,S_\aff,\Omega,W,W(1),Z_\kappa)$.

Consider the apartment attached to $S$ and an alcove surrounded by the hyperplanes fixed by $S_\aff$.
Let $I(1)$ be the pro-$p$-Iwahori subgroup attached to this alcove.
Then with $q_s = \#(I(1)\widetilde{s}I(1)/I(1))$ for $s\in S_\aff$ with a lift $\widetilde{s}\in N_G(Z)$ and suitable $c_s$, the algebra $\mathcal{H}$ is isomorphic to the Hecke algebra attached to $(G,I(1))$ \cite[Proposition~4.4]{MR3484112}.

In this paper, \emph{the data $(W_\aff,S_\aff,\Omega,W,W(1),Z_\kappa)$ and the parameters $(q,c)$ come from $G$ in this way.}
Let $W_\aff(1)$ be the image of $G'\cap N_G(Z)$ in $W(1)$ and put $\mathcal{H}_\aff = \bigoplus_{w\in W_\aff(1)}CT_w$.
This is a subalgebra of $\mathcal{H}$.

\subsection{The algebra $\mathcal{H}[q_s]$ and $\mathcal{H}[q_s^{\pm 1}]$}\label{subsec:The algebra H[q_s] and H[q_s^pm]}
For each $s\in S_\aff$, let $\mathbf{q}_s$ be an indeterminate such that if $wsw^{-1}\in S_\aff$ for $w\in W$, we have $\mathbf{q}_{wsw^{-1}} = \mathbf{q}_s$.
Let $C[\mathbf{q}_s]$ be a polynomial ring with these indeterminate.
Then with the parameter $s\mapsto \mathbf{q}_s$ and the other data coming from $G$, we have the algebra.
This algebra is denoted by $\mathcal{H}[\mathbf{q}_s]$ and we put $\mathcal{H}[\mathbf{q}_s^{\pm 1}] = \mathcal{H}[\mathbf{q}_s]\otimes_{C[\mathbf{q}_s]}C[\mathbf{q}_s^{\pm 1}]$.
Under $\mathbf{q}_s\mapsto \#(I(1)\widetilde{s}I(1)/I(1))\in C$ where $\widetilde{s}\in N_G(Z)$ is a lift of $s$, we have $\mathcal{H}[\mathbf{q}_s]\otimes_{C[\mathbf{q}_s]}C \simeq \mathcal{H}$.
As an abbreviation, we denote $\mathbf{q}_s$ by just $q_s$.
Consequently we denote by $\mathcal{H}[q_s]$ (resp.\ $\mathcal{H}[q_s^{\pm 1}]$).

Since $q_s$ is invertible in $\mathcal{H}[q_s^{\pm 1}]$, we can do some calculations in $\mathcal{H}[q_s^{\pm 1}]$ with $q_s^{-1}$.
If the result can be stated in $\mathcal{H}[q_s]$, then this is an equality in $\mathcal{H}[q_s]$ since $\mathcal{H}[q_s]$ is a subalgebra of $\mathcal{H}[q_s^{\pm 1}]$ and by specializing, we can get some equality in $\mathcal{H}$.
See \cite[4.5]{MR3484112} for more details.

\subsection{The root system and the Weyl groups}
Let $W_0 = N_G(Z)/Z$ be the finite Weyl group.
Then this is a quotient of $W$.
Recall that we have the alcove defining $I(1)$.
Fix a special point $\boldsymbol{x}_0$ from the border of this alcove.
Then $W_0\simeq \Stab_W\boldsymbol{x}_0$ and the inclusion $\Stab_W\boldsymbol{x}_0\hookrightarrow W$ is a splitting of the canonical projection $W\to W_0$.
Throughout this paper, we fix this special point and regard $W_0$ as a subgroup of $W$.
Set $S_0 = S_\aff\cap W_0\subset W$.
This is a set of simple reflections in $W_0$.
For each $w\in W_0$, we fix a representative $n_w\in W(1)$ such that $n_{w_1w_2} = n_{w_1}n_{w_2}$ if $\ell(w_1w_2) = \ell(w_1) + \ell(w_2)$.

The group $W_0$ is the Weyl group of the root system $\Sigma$ attached to $(G,S)$.
Our fixed alcove and special point give a positive system of $\Sigma$, denoted by $\Sigma^+$.
The set of simple roots is denoted by $\Delta$.
As usual, for $\alpha\in\Delta$, let $s_\alpha\in S_0$ be a simple reflection for $\alpha$.

The kernel of $W(1)\to W_0$ (resp.\ $W\to W_0$) is denoted by $\Lambda(1)$ (resp.\ $\Lambda$).
Then $Z_\kappa\subset \Lambda(1)$ and we have $\Lambda = \Lambda(1)/Z_\kappa$.
The group $\Lambda$ (resp.\ $\Lambda(1)$) is isomorphic to $Z/Z^0$ (resp.\ $Z/Z(1)$).
Any element in $W(1)$ can be uniquely written as $n_w \lambda$ where $w\in W_0$ and $\lambda\in \Lambda(1)$.
We have $W = W_0\ltimes \Lambda$.

\subsection{The map $\nu$}
The group $W$ acts on the apartment attached to $S$ and the action of $\Lambda$ is by the translation.
Since the group of translations of the apartment is $X_*(S)\otimes_{\Z}\R$, we have a group homomorphism $\nu\colon \Lambda\to X_*(S)\otimes_{\Z}\R$.
The compositions $\Lambda(1)\to \Lambda\to X_*(S)\otimes_{\Z}\R$ and $Z\to \Lambda\to X_*(S)\otimes_{\Z}\R$ are also denoted by $\nu$.
The homomorphism $\nu\colon Z\to X_*(S)\otimes_{\Z}\R\simeq \Hom_\Z(X^*(S),\R)$ is characterized by the following: For $t\in S$ and $\chi\in X^*(S)$, we have $\nu(t)(\chi) = -\val(\chi(t))$ where $\val$ is the normalized valuation of $F$.
The kernel of $\nu\colon Z\to X_*(S)\otimes_{\Z}\R$ is equal to the maximal compact subgroup $\widetilde{Z}$ of $Z$.
In particular, $\Ker(\Lambda(1)\xrightarrow{\nu} X_*(S)\otimes_{\Z}\R) = \widetilde{Z}/Z(1)$ is a finite group.

We call $\lambda\in \Lambda(1)$ dominant (resp.\ anti-dominant) if $\nu(\lambda)$ is dominant (resp.\ anti-dominant).

Since the group $W_\aff$ is a Coxeter system, it has the Bruhat order denoted by $\le$.
For $w_1,w_2\in W_\aff$, we write $w_1 < w_2$ if there exists $u\in \Omega$ such that $w_1u,w_2u\in W_\aff$ and $w_1u < w_2u$.
Moreover, for $w_1,w_2\in W(1)$, we write $w_1 < w_2$ if $w_1\in W_{\aff}(1)w_2$ and $\overline{w}_1 < \overline{w}_2$ where $\overline{w}_1,\overline{w}_2$ are the image of $w_1,w_2$ in $W$, respectively.
We write $w_1\le w_2$ if $w_1 < w_2$ or $w_1 = w_2$.

\subsection{Other basis}
For $w\in W(1)$, take $s_1,\dotsm,s_l\in S_\aff(1)$ and $u\in W(1)$ such that $l = \ell(w)$, $\ell(u) = 0$ and $w = s_1\dotsm s_l u$.
Set $T_w^* = (T_{s_1} - c_{s_1})\dotsm (T_{s_l} - c_{s_l})T_u$.
Then this does not depend on the choice and we have $T_w^* \in T_w + \sum_{v <  w}C T_v$.
In particular, $\{T_w^*\}_{w\in W(1)}$ is a basis of $\mathcal{H}$.
In $\mathcal{H}[q_s^{\pm 1}]$, we have $T_w^* = q_wT_{w^{-1}}^{-1}$.

For simplicity, we always assume that our commutative ring $C$ contains a square root of $q_s$ which is denoted by $q_s^{1/2}$ for $s\in S_\aff$.
For $w = s_1\dotsm s_lu$ where $\ell(w) = l$ and $\ell(u) = 0$, $q_w^{1/2} = q_{s_1}^{1/2}\dotsm q_{s_l}^{1/2}$ is a square root of $q_w$.
For a spherical orientation $o$, there is a basis $\{E_o(w)\}_{w\in W(1)}$ of $\mathcal{H}$ introduced in \cite[5]{MR3484112}.
We have
\[
E_o(w)\in T_w + \sum_{v < w}CT_v.
\]
This satisfies the following product formula~\cite[Theorem~5.25]{MR3484112}.
\begin{equation}\label{eq:product formula}
E_o(w_1)E_{o\cdot w_1}(w_2) = q_{w_1w_2}^{-1/2}q_{w_1}^{1/2}q_{w_2}^{1/2}E_o(w_1w_2).
\end{equation}
\begin{rem}
Since we do not assume that $q_s$ is invertible in $C$, $q_{w_1w_2}^{-1/2}q_{w_1}^{1/2}q_{w_2}^{1/2}$ does not make sense in a usual way.
See \cite[Remark~2.2]{arXiv:1612.01312}.
\end{rem}

\subsection{Levi subalgebra}
Since we have a positive system $\Sigma^+$, we have a minimal parabolic subgroup $B$ with a Levi part $Z$.
In this paper, \emph{parabolic subgroups are always standard, namely containing $B$}.
Note that such parabolic subgroups correspond to subsets of $\Delta$.

Let $P$ be a parabolic subgroup.
Attached to the Levi part of $P$ containing $Z$, we have the data $(W_{\aff,P},S_{\aff,P},\Omega_P,W_P,W_P(1),Z_\kappa)$ and the parameters $(q_P,c_P)$.
Hence we have the algebra $\mathcal{H}$.
The parameter $c_P$ is given by the restriction of $c$, hence we denote it just by $c$.
The parameter $q_P$ is defined as in \cite[4.1]{arXiv:1406.1003_accepted}.

For the objects attached to this data, we add the suffix $P$.
We have the set of simple roots $\Delta_P$, the root system $\Sigma_P$ and its positive system $\Sigma_P^+$, the finite Weyl group $W_{0,P}$, the set of simple reflections $S_{0,P}\subset W_{0,P}$, the length function $\ell_P$ and the base $\{T^P_w\}_{w\in W_P(1)}$, $\{T^{P*}_w\}_{w\in W_P(1)}$ and $\{E^P_o(w)\}_{w\in W_P(1)}$ of $\mathcal{H}_P$.
Note that we have no $\Lambda_P$, $\Lambda_P(1)$ and $Z_{\kappa,P}$ since they are equal to $\Lambda$, $\Lambda(1)$ and $Z_\kappa$.

An element $w = n_v\lambda\in W_P(1)$ where $v\in W_{0,P}$ and $\lambda\in\Lambda(1)$ is called $P$-positive (resp.\ $P$-negative) if $\langle \alpha,\nu(\lambda)\rangle\le 0$ (resp.\ $\langle \alpha,\nu(\lambda)\rangle\ge 0$) for any $\alpha\in\Sigma^+\setminus\Sigma_P^+$.
Let $W_P^+(1)$ (resp.\ $W_P^-(1)$) be the set of $P$-positive (resp.\ $P$-negative) elements and put $\mathcal{H}_P^{\pm} = \bigoplus_{w\in W_P^{\pm}(1)}CT_w^P$.
These are subalgebras of $\mathcal{H}_P$ \cite[Lemma~4.1]{arXiv:1406.1003_accepted}.

\begin{prop}[{\cite[Theorem~1.4]{MR3437789}}]\label{prop:localization as Levi subalgebra}
Let $\lambda_P^+$ (resp.\ $\lambda_P^-$) be in the center of $W_P(1)$ such that $\langle \alpha,\nu(\lambda_P^+)\rangle < 0$ (resp.\ $\langle \alpha,\nu(\lambda_P^-)\rangle > 0$) for all $\alpha\in\Sigma^+\setminus\Sigma_P^+$.
Then $T^P_{\lambda_P^+} = T^{P*}_{\lambda_P^+} = E^P_{o_{-,P}}(\lambda_P^+)$ (resp.\ $T^P_{\lambda_P^-} = T^{P*}_{\lambda_P^-} = E^P_{o_{-,P}}(\lambda_P^-)$) is in the center of $\mathcal{H}_P$ and we have $\mathcal{H}_P = \mathcal{H}_P^+E^P_{o_{-,P}}(\lambda_P^+)^{-1}$ (resp.\ $\mathcal{H}_P = \mathcal{H}_P^-E^P_{o_{-,P}}(\lambda_P^-)^{-1}$).
\end{prop}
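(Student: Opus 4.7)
My plan rests on the observation that $\lambda_P^+$ has length zero in $W_P$. Indeed, centrality of $\lambda_P^+$ in $W_P(1)$ forces its image in $W_P = W_{0,P}\ltimes\Lambda$ to be central, which makes the translation part $W_{0,P}$-fixed; equivalently $\langle\alpha,\nu(\lambda_P^+)\rangle = 0$ for every $\alpha\in\Sigma_P$, so $\ell_P(\lambda_P^+) = 0$ and no element of $W_P(1)$ is strictly below $\lambda_P^+$ in the Bruhat order. From the triangularity $T^{P*}_w,\,E^P_o(w)\in T^P_w + \sum_{v<w}CT^P_v$, together with the length-zero case of the defining product for $T^{P*}_w$, I would immediately deduce $T^P_{\lambda_P^+} = T^{P*}_{\lambda_P^+} = E^P_o(\lambda_P^+)$ for every spherical orientation $o$.

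For centrality in $\mathcal{H}_P$, I would apply the product formula~\eqref{eq:product formula} (now inside $\mathcal{H}_P$, so with $q$ read as $q_P$) to both orderings of $\lambda_P^+$ and an arbitrary $w\in W_P(1)$. Because $\lambda_P^+\in\Lambda(1)$ acts on the apartment by pure translation, it acts trivially on the set of spherical orientations, so $o\cdot\lambda_P^+ = o$; combined with the orientation-independence of $E^P_{\bullet}(\lambda_P^+)$ from the previous paragraph, with $q_{\lambda_P^+}=1$ (because $\ell_P(\lambda_P^+)=0$), and with $\ell_P(w\lambda_P^+) = \ell_P(w)$, the two products collapse to the same scalar multiple of $E^P_o(\lambda_P^+w) = E^P_o(w\lambda_P^+)$. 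Hence $E^P_{o_{-,P}}(\lambda_P^+)$ commutes with every element of the basis $\{E^P_o(w)\}_{w\in W_P(1)}$ of $\mathcal{H}_P$.

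For the localization, given $w\in W_P(1)$ I would choose $n\ge 0$ large enough that $w\lambda_P^{+n}$ is $P$-positive, which is possible because $\langle\alpha,\nu(\lambda_P^+)\rangle < 0$ strictly on $\Sigma^+\setminus\Sigma_P^+$. Applying \eqref{eq:product formula} again and using once more that $\lambda_P^+$ has $W_P$-length zero, all $q$-factors disappear and one obtains $E^P_o(w)(T^P_{\lambda_P^+})^n = E^P_o(w\lambda_P^{+n})$. Assuming $E^P_o(w\lambda_P^{+n})\in\mathcal{H}_P^+$, an induction on $\ell_P(w)$ that trades $E^P_o$ for $T^P$ via triangularity produces $T^P_w\in\mathcal{H}_P^+(T^P_{\lambda_P^+})^{-n}$. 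The principal obstacle is precisely this auxiliary fact: every $v<w'$ in the Bruhat order on $W_P(1)$ with $w'$ $P$-positive must itself be $P$-positive. This is the combinatorial core of the argument, controlling how reflections in $W_{\aff,P}$ interact with pairings against roots outside $\Sigma_P^+$; in the cited argument of Vignéras this stability is where most of the work goes.
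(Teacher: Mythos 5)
The paper itself gives no argument for this proposition: it is quoted verbatim from \cite[Theorem~1.4]{MR3437789}, so there is no internal proof to compare with and your proposal has to stand on its own. Its starting point is exactly right: centrality of $\lambda_P^{\pm}$ in $W_P(1)$ forces $\langle\alpha,\nu(\lambda_P^{\pm})\rangle=0$ for all $\alpha\in\Sigma_P$, hence $\ell_P(\lambda_P^{\pm})=0$, and your first two paragraphs (the identification $T^P_{\lambda_P^{\pm}}=T^{P*}_{\lambda_P^{\pm}}=E^P_o(\lambda_P^{\pm})$ for every spherical orientation, and centrality in $\mathcal{H}_P$) are correct. Even there, though, the product formula and the action on orientations are a detour: since $\ell_P(\lambda_P^{\pm})=0$ and $\lambda_P^{\pm}$ is central in $W_P(1)$, the braid relations alone give $T^P_{\lambda}T^P_w=T^P_{\lambda w}=T^P_{w\lambda}=T^P_wT^P_{\lambda}$, and also the invertibility of $T^P_{\lambda}$ in $\mathcal{H}_P$.

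The genuine problem is your third step. As written it rests on the unproven claim that $E^P_o(w\lambda_P^{+n})\in\mathcal{H}_P^+$, which you reduce to the assertion that any $v<w'$ with $w'$ $P$-positive is again $P$-positive; you explicitly leave this open (``where most of the work goes''), so the localization statement is not actually established by your text. The obstacle is self-inflicted: there is no need to pass through the $E$-basis. Given $w\in W_P(1)$, choose $n$ with $w(\lambda_P^+)^n$ $P$-positive, which is possible as you say because the inequalities are strict on the finite set $\Sigma^+\setminus\Sigma_P^+$; since $\ell_P((\lambda_P^+)^n)=0$, the braid relations give $T^P_w(T^P_{\lambda_P^+})^n=T^P_{w(\lambda_P^+)^n}$, and the right-hand side lies in $\mathcal{H}_P^+$ by the very definition $\mathcal{H}_P^+=\bigoplus_{u\in W_P^+(1)}CT^P_u$; no statement about which $E^P_o(u)$ lie in $\mathcal{H}_P^+$ is needed. (Your Bruhat-closure claim is in fact true and of the same flavour as the results on positive and negative subalgebras in the cited papers, but proving it would be the only nontrivial work in your route, and it is entirely avoidable.) With this replacement, and with the last equality read as the localization $\mathcal{H}_P=\bigcup_{n\ge 0}\mathcal{H}_P^+E^P_{o_{-,P}}(\lambda_P^+)^{-n}$, as you correctly interpret it, the argument is complete and is essentially the one of Vignéras.
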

Note that such $\lambda_P^\pm$ always exists \cite[Lemma~2.4]{arXiv:1612.01312}.

We define $j_P^{\pm}\colon \mathcal{H}_P^\pm\to \mathcal{H}$ and $j_P^{\pm *}\colon \mathcal{H}_P^{\pm}\to \mathcal{H}$ by $j_P^{\pm}(T_w^P) = T_w$ and $j_P^{\pm *}(T_w^{P*}) = T_w^*$ for $w\in W_P^\pm(1)$.
Then these are algebra homomorphisms.

Let $Q$ be a parabolic subgroup containing $P$ and let $W_P^{Q+}(1)$ (resp.\ $W_P^{Q-}(1)$) be the set of $n_w\lambda$ where $\langle \alpha,\nu(\lambda)\rangle \le 0$ (resp.\ $\langle \alpha,\nu(\lambda)\rangle \ge 0$) for any $\alpha\in\Sigma_Q^+\setminus\Sigma_P^+$ and $w\in W_{0,P}$.
Put $\mathcal{H}_P^{Q\pm } = \bigoplus_{w\in W_P^{Q\pm}(1)}C T_w^P\subset \mathcal{H}_P$.
Then we have homomorphisms $j_P^{Q\pm },j_P^{Q\pm *}\colon \mathcal{H}_P^{Q\pm}\to \mathcal{H}_Q$ defined by a similar way.

\subsection{Parabolic induction}\label{subsec:Preliminaries, Parabolic induction}
Let $P$ be the parabolic subgroup and $\sigma$ an $\mathcal{H}_P$-module. (This is a right module as in subsection~\ref{subsec:Prop-p-Iwahori Hecke algebra}.)
Then we define an $\mathcal{H}$-module $I_P(\sigma)$ by
\[
I_P(\sigma) = \Hom_{(\mathcal{H}_P^-,j_P^{-*})}(\mathcal{H},\sigma).
\]
We call $I_P$ the parabolic induction.
For $P\subset P_1$, we write
\[
I_P^{P_1}(\sigma) = \Hom_{(\mathcal{H}_P^{P_1-},j_P^{P_1-*})}(\mathcal{H}_{P_1},\sigma).
\]

Let $P$ be a parabolic subgroup.
Set $W_0^P = \{w\in W_0\mid w(\Delta_P) \subset\Sigma^+\}$.
Then the multiplication map $W_0^P\times W_{0,P}\to W_0$ is bijective and for $w_1\in W_0^P$ and $w_2\in W_{0,P}$, we have $\ell(w_1w_2) = \ell(w_1) + \ell(w_2)$.
We also put ${}^PW_0 = \{w\in W_0\mid w^{-1}(\Delta_P) \subset\Sigma^+\}$.
Then the multiplication map $W_{0,P}\times {}^PW_0 \to W_0$ is bijective and for $w_1\in W_{0,P}$ and $w_2\in {}^PW_0$, we have $\ell(w_1w_2) = \ell(w_1) + \ell(w_2)$.
See \cite[Proposition~2.9]{arXiv:1612.01312} for the following proposition.
\begin{prop}\label{prop:decomposition of I_P}
Let $P$ be a parabolic subgroup and $\sigma$ an $\mathcal{H}_P$-module.
\begin{enumerate}
\item The map $I_P(\sigma)\ni\varphi\mapsto (\varphi(T_{n_w}))_{w\in W_0^P}\in \bigoplus_{w\in W_0^P}\sigma$ is bijective.
\item The map $I_P(\sigma)\ni\varphi\mapsto (\varphi(T_{n_w}^*))_{w\in W_0^P}\in \bigoplus_{w\in W_0^P}\sigma$ is bijective.
\end{enumerate}
\end{prop}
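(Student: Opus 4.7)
The plan is to deduce both parts from a single structural fact: $\mathcal{H}$, viewed as a right $\mathcal{H}_P^-$-module via $j_P^{-*}$, is free with basis $\{T_{n_w}\}_{w\in W_0^P}$ and equivalently with basis $\{T_{n_w}^{*}\}_{w\in W_0^P}$. Granted this, parts~(1) and~(2) become instances of the tautological identification
\[
\Hom_{\mathcal{H}_P^-}\!\Bigl(\bigoplus_{w\in W_0^P} e_w\cdot\mathcal{H}_P^-,\ \sigma\Bigr)\ \xrightarrow{\sim}\ \bigoplus_{w\in W_0^P}\sigma,\qquad \varphi\mapsto(\varphi(e_w))_w,
\]
applied with $e_w = T_{n_w}$ and $e_w = T_{n_w}^{*}$, respectively.

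To prove the structural fact, I would use the bijection $W_0^P\times W_{0,P}\to W_0$ together with $W(1)=W_0\ltimes\Lambda(1)$ to obtain a unique factorization $z=n_w y$ of every $z\in W(1)$, with $w\in W_0^P$ and $y\in W_P(1)$. When $y$ is moreover $P$-negative, this forces the length additivity $\ell(n_w y)=\ell(n_w)+\ell_P(y)$, so the braid relations yield $T_{n_w y}=T_{n_w}\cdot j_P^{-}(T_y^P)$ and $T_{n_w y}^{*}=T_{n_w}^{*}\cdot j_P^{-*}(T_y^{P*})$. Together with the linear independence of $\{T_{n_w y}\}$ in the Iwahori-Matsumoto basis of $\mathcal{H}$, this shows that the right $\mathcal{H}_P^-$-modules $\bigoplus_w T_{n_w}\cdot j_P^{-*}(\mathcal{H}_P^-)$ and $\bigoplus_w T_{n_w}^{*}\cdot j_P^{-*}(\mathcal{H}_P^-)$ are genuine direct sums containing every $T_{n_w y}^{(*)}$ with $y\in W_P^-(1)$.

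The main obstacle is exhausting $\mathcal{H}$ by handling the elements $n_w y$ for which $y\in W_P(1)\setminus W_P^-(1)$. For this I would invoke Proposition~\ref{prop:localization as Levi subalgebra}: choose a strictly $P$-negative central $\lambda_P^-\in W_P(1)$, so that for $k\gg 0$ both $y\cdot(\lambda_P^-)^k$ and $(\lambda_P^-)^k$ lie in $W_P^-(1)$, and $T^P_{(\lambda_P^-)^k}$ is central and becomes invertible in $\mathcal{H}_P[q_s^{\pm 1/2}]$. Applying the identities already established to $y\cdot(\lambda_P^-)^k$ and formally dividing by $T^P_{(\lambda_P^-)^k}$ in $\mathcal{H}[q_s^{\pm 1/2}]$ (as allowed by subsection~\ref{subsec:The algebra H[q_s] and H[q_s^pm]}) expresses $T_{n_w y}^{(*)}$ as an $\mathcal{H}_P^-$-combination of $T_{n_w}$, respectively $T_{n_w}^{*}$, completing exhaustion. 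Finally, the equivalence of the two bases $\{T_{n_w}\}$ and $\{T_{n_w}^{*}\}$ follows from the triangularity $T_{n_w}^{*}\in T_{n_w}+\sum_{v<n_w}CT_v$ of the Iwahori-Matsumoto bases, which yields an upper-unitriangular change of basis on $W_0^P$ with coefficients in $j_P^{-*}(\mathcal{H}_P^-)$, so that the two free $\mathcal{H}_P^-$-submodules of $\mathcal{H}$ coincide.
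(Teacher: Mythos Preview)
The paper does not give its own proof of this proposition; it simply cites \cite[Proposition~2.9]{arXiv:1612.01312}.  So there is no argument in the present paper to compare against, and your sketch must be judged on its own.

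Your reduction to a structural statement is the right instinct, but the structural statement you aim for is false: $\mathcal{H}$ is \emph{not} free of rank $\#W_0^P$ as a right $\mathcal{H}_P^-$-module via $j_P^{-*}$.  Indeed, for $w\in W_0^P$ and $y\in W_P^-(1)$ the length additivity you invoke gives $T_{n_w}^*\,j_P^{-*}(T_y^{P*}) = T_{n_w}^*T_y^* = T_{n_wy}^*$, so the $C$-span of $\bigcup_{w\in W_0^P} T_{n_w}^*\cdot j_P^{-*}(\mathcal{H}_P^-)$ is exactly $\bigoplus_{(w,y)\in W_0^P\times W_P^-(1)} C\,T_{n_wy}^*$.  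Since the factorization $W_0^P\times W_P(1)\to W(1)$ is a bijection while $W_P^-(1)\subsetneq W_P(1)$, this is a \emph{proper} $C$-submodule of $\mathcal{H}=\bigoplus_{z\in W(1)} C\,T_z^*$.  Your ``exhaustion'' step therefore cannot succeed as written: dividing by $T^P_{(\lambda_P^-)^k}$ produces a coefficient in $\mathcal{H}_P$, not in $\mathcal{H}_P^-$, and there is no way to pull it back.

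What makes the proposition true is precisely that $\sigma$ is an $\mathcal{H}_P$-module, not merely an $\mathcal{H}_P^-$-module.  The invertibility of $T^P_{\lambda_P^-}$ from Proposition~\ref{prop:localization as Levi subalgebra} is to be used on the $\sigma$ side, where it genuinely holds, not inside $\mathcal{H}$ or $\mathcal{H}[q_s^{\pm1/2}]$.  Concretely, one shows that for any $z\in W(1)$ the value $\varphi(T_z^*)$ is forced by the family $(\varphi(T_{n_w}^*))_{w\in W_0^P}$ together with operators $\sigma(T_y^{P*})$ for $y\in W_P(1)$ (not just $W_P^-(1)$); equivalently, one proves that $\mathcal{H}\otimes_{\mathcal{H}_P^-}\mathcal{H}_P$ is free over $\mathcal{H}_P$ with basis $\{T_{n_w}^*\otimes 1\}_{w\in W_0^P}$, and then uses the adjunction $\Hom_{\mathcal{H}_P^-}(\mathcal{H},\sigma)\simeq\Hom_{\mathcal{H}_P}(\mathcal{H}\otimes_{\mathcal{H}_P^-}\mathcal{H}_P,\sigma)$.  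Your localization idea is exactly what is needed for this corrected statement, but it must be carried out after base change to $\mathcal{H}_P$, not inside $\mathcal{H}_P^-$.  (A minor point: the additivity you need is $\ell(n_wy)=\ell(n_w)+\ell(y)$ with the ambient length $\ell$, not $\ell_P$.)
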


\begin{prop}[{\cite[Proposition~4.12]{arXiv:1406.1003_accepted}}]\label{prop:I_P as A-module}
Assume that $q_s = 0$ for any $s\in S_\aff$.
Let $w\in W_0^P$ and $\lambda\in \Lambda(1)$.
Then for $\varphi\in I_P(\sigma)$, we have 
\[
(\varphi E_{o_-}(\lambda))(T_{n_w}) =
\begin{cases}
\varphi(T_{n_w})\sigma(E^P_{o_{-,P}}(n_w^{-1}\cdot \lambda)) & (n_w^{-1}\cdot \lambda\in W_P^-(1)),\\
0 & (n_w^{-1}\cdot \lambda\notin W_P^-(1)).
\end{cases}
\]
\end{prop}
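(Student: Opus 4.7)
The plan is to reduce the statement, via the definition $(\varphi E_{o_-}(\lambda))(T_{n_w}) = \varphi(E_{o_-}(\lambda) T_{n_w})$ and the decomposition of $\mathcal{H}$ as a right $\mathcal{H}_P^-$-module suggested by Proposition~\ref{prop:decomposition of I_P}(1), to the computation of $E_{o_-}(\lambda)\,T_{n_w}$ modulo the ``error'' subspace $\bigoplus_{v\in W_0^P,\,v\ne w} T_{n_v}\,j_P^{-*}(\mathcal{H}_P^-)$. Any summand in that error subspace contributes to $\varphi$ only through $\varphi(T_{n_v})$ with $v\ne w$, so once the $T_{n_w}$-component is identified as some $T_{n_w}\,j_P^{-*}(h_w)$, the $(\mathcal{H}_P^-,j_P^{-*})$-equivariance of $\varphi$ converts it into $\varphi(T_{n_w})\,\sigma(h_w)$.

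To compute $E_{o_-}(\lambda)\,T_{n_w}$, first set $\mu:=n_w^{-1}\cdot\lambda\in\Lambda(1)$ (well defined since $\Lambda(1)$ is normal in $W(1)$). The conjugation relation $\lambda n_w = n_w\mu$ forces $q_\lambda = q_\mu$, and applying the product formula~\eqref{eq:product formula} to the two factorizations $\lambda\cdot n_w$ and $n_w\cdot\mu$ of this common element of $W(1)$ in the localization $\mathcal{H}[q_s^{\pm 1}]$ causes the square-root $q$-factors to cancel, yielding the fundamental identity
\[
E_{o_-}(\lambda)\,E_{o_-\cdot\lambda}(n_w) \;=\; E_{o_-}(n_w)\,E_{o_-\cdot n_w}(\mu).
\]
Because $w\in W_0^P$ is a minimal-length coset representative, both $E_{o_-}(n_w)$ and $E_{o_-\cdot\lambda}(n_w)$ expand as $T_{n_w}$ plus terms of the form $CT_{n_v}$ with $v<w$, and the coset decomposition $W_0 = W_0^P\cdot W_{0,P}$ together with the braid relations confines those corrections to the error subspace above. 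Rearranging, one obtains $E_{o_-}(\lambda)\,T_{n_w}\equiv T_{n_w}\cdot E_{o_-\cdot n_w}(\mu)$ modulo the error subspace.

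It then remains to identify $E_{o_-\cdot n_w}(\mu)$. If $\mu\in W_P^-(1)$, the orientation $o_-\cdot n_w$ restricts on the Levi root subsystem $\Sigma_P$ to the anti-dominant orientation $o_{-,P}$, so $E_{o_-\cdot n_w}(\mu)$ lies in the image of $j_P^{-*}$ and equals $j_P^{-*}\!\bigl(E^P_{o_{-,P}}(\mu)\bigr)$; applying $\varphi$ and using its equivariance gives the first case. If $\mu\notin W_P^-(1)$, the plan is to show that the $T_{n_w}$-component vanishes outright: any reduced expression for $\mu$ must then cross some wall indexed by $s\in S_\aff\setminus S_{\aff,P}$ on the ``wrong'' side, which in the product formula forces a factor of the shape $T_s T_s^* = T_s(T_s - c_s) = T_s^2 - c_s T_s$, vanishing under the standing assumption $q_s = 0$.

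The main obstacle is this final vanishing. One must verify that, in the non-$P$-negative case, \emph{every} candidate contribution at $T_{n_w}$ is annihilated by a $q_s = 0$ cancellation, with no residual $T_{n_w}$-piece slipping through the straightening between the $E$-basis and the Iwahori--Matsumoto basis. This requires a careful induction on the length of $\mu$ and tight control of the coset decomposition $W_0 = W_0^P\cdot W_{0,P}$, so that each lower-order correction can be traced into either $j_P^{-*}(\mathcal{H}_P^-)$-multiples of $T_{n_w}$ (which then die because of $q_s = 0$) or into the error subspace of $v\ne w$ components.
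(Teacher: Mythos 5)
You should first note that the paper does not actually prove this statement: it is imported wholesale from \cite[Proposition~4.12]{arXiv:1406.1003_accepted}, so your sketch has to stand on its own, and as written it is a strategy outline whose decisive steps are missing. The opening identity $E_{o_-}(\lambda)E_{o_-}(n_w)=E_{o_-}(n_w)E_{o_-\cdot n_w}(n_w^{-1}\cdot\lambda)$ (using $o_-\cdot\lambda=o_-$, $\ell(n_w^{-1}\cdot\lambda)=\ell(\lambda)$ and \eqref{eq:product formula}) is fine, but everything after it is asserted rather than proved. Concretely: (i) after replacing the two occurrences of $E(n_w)$ by $T_{n_w}$ plus Bruhat-lower terms, you must show that the corrections $E_{o_-}(\lambda)T_{\tilde v}$ and $T_{\tilde v}E_{o_-\cdot n_w}(n_w^{-1}\cdot\lambda)$, where $\tilde v$ runs over lifts of $v<w$ (and such $v$ need not lie in $W_0^P$), can be rewritten inside $\sum_{u\in W_0^P,\,u\ne w}T_{n_u}\,j_P^{-*}(\mathcal{H}_P^-)$. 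Your ``error subspace'' $\bigoplus_{v\in W_0^P}T_{n_v}\,j_P^{-*}(\mathcal{H}_P^-)$ is a \emph{proper} subspace of $\mathcal{H}$, so membership of these products in it is exactly the straightening problem that underlies Proposition~\ref{prop:decomposition of I_P}; saying that the coset decomposition and the braid relations ``confine'' the corrections is not an argument, and this is where the whole content of the proposition lives. (ii) The identification $E_{o_-\cdot n_w}(\mu)=j_P^{-*}(E^P_{o_{-,P}}(\mu))$ for $P$-negative $\mu$ is stated for the twisted orientation, whereas the available compatibility (\cite[Lemma~2.6]{arXiv:1612.01312}, used in the proof of Lemma~\ref{lem:I_P+extension as A-module}) is $E_{o_-}(\mu)=j_P^{-*}(E^P_{o_{-,P}}(\mu))$; you would have to prove that the twist by $n_w$, $w\in W_0^P$, does not change this element, or reorganize the computation so that $o_-$ itself appears.

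(iii) The vanishing when $n_w^{-1}\cdot\lambda$ is not $P$-negative, which you yourself flag as the main obstacle, is left entirely open: ``some wall crossing forces a factor $T_sT_s^*$'' is a hope, not a proof. This half at least admits a short argument once the $P$-negative half is established, by the same trick as in the proof of Lemma~\ref{lem:I_P+extension as A-module}: take $\lambda_P^-$ as in Proposition~\ref{prop:localization as Levi subalgebra}, set $\lambda_0=n_w\cdot\lambda_P^-$; if $n_w^{-1}\cdot\lambda$ is not $P$-negative, then $\nu(\lambda)$ and $\nu(\lambda_0)$ lie in no common closed chamber, so $E_{o_-}(\lambda)E_{o_-}(\lambda_0)=0$ by \eqref{eq:product formula} and \cite[Lemma~2.11]{arXiv:1612.01312}; applying the $P$-negative case to $\varphi E_{o_-}(\lambda)$ and $\lambda_0$ and using that $\sigma(E^P_{o_{-,P}}(\lambda_P^-))$ is invertible yields $(\varphi E_{o_-}(\lambda))(T_{n_w})=0$. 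But the $P$-negative case itself, i.e.\ points (i) and (ii), still requires the induction and the orientation bookkeeping that you only gesture at, so the proposal as it stands has a genuine gap.
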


We also define
\[
I_P'(\sigma) = \Hom_{(\mathcal{H}_P^-,j_P^-)}(\mathcal{H},\sigma)
\]
and $I_P^{P_1\prime}$ by the similar way.

\subsection{Twist by $n_{w_Gw_P}$}
For a parabolic subgroup $P$, let $w_P$ be the longest element in $W_{0,P}$.
In particular, $w_G$ is the longest element in $W_0$.
Let $P'$ be a parabolic subgroup corresponding to $-w_G(\Delta_P)$, in other words, $P' = n_{w_Gw_P}\opposite{P}n_{w_Gw_P}^{-1}$ where $\opposite{P}$ is the opposite parabolic subgroup of $P$ with respect to the Levi part of $P$ containing $Z$.
Set $n = n_{w_Gw_P}$.
Then the map $\opposite{P}\to P'$ defined by $p\mapsto npn^{-1}$ is an isomorphism which preserves the data used to define the pro-$p$-Iwahori Hecke algebras.
Hence $T^P_w\mapsto T^{P'}_{nwn^{-1}}$ gives an isomorphism $\mathcal{H}_P\to \mathcal{H}_{P'}$.
This sends $T_w^{P*}$ to $T_{nwn^{-1}}^{P'*}$ and $E^P_{o_{+,P}\cdot v}(w)$ to $E^{P'}_{o_{+,P'}\cdot nvn^{-1}}(nwn^{-1})$ where $v\in W_{0,P}$.

Let $\sigma$ be an $\mathcal{H}_P$-module.
Then we define an $\mathcal{H}_{P'}$-module $n_{w_Gw_P}\sigma$ via the pull-back of the above isomorphism.
Namely, for $w\in W_{P'}(1)$, we put $(n_{w_Gw_P}\sigma)(T^{P'}_w) = \sigma(T^P_{n_{w_Gw_P}^{-1}wn_{w_Gw_P}})$.

\subsection{The extension and the generalized Steinberg modiles}
Let $P$ be a parabolic subgroup and $\sigma$ an $\mathcal{H}_P$-module.
For $\alpha\in\Delta$, let $P_\alpha$ be a parabolic subgroup corresponding to $\Delta_P\cup \{\alpha\}$.
Then we define $\Delta(\sigma)\subset\Delta$ by
\begin{align*}
&\Delta(\sigma)\\& = \{\alpha\in\Delta\mid \langle \Delta_P,\coroot{\alpha}\rangle = 0,\ \text{$\sigma(T^P_\lambda) = 1$ for any $\lambda\in W_{\aff,P_\alpha}(1)\cap \Lambda(1)$}\}\cup \Delta_P.
\end{align*}
Let $P(\sigma)$ be a parabolic subgroup corresponding to $\Delta(\sigma)$.
\begin{prop}[{\cite[Corollary~3.9]{arXiv:1703.10384}}]
Let $\sigma$ be an $\mathcal{H}_P$-module and $Q$ a parabolic subgroup between $P$ and $P(\sigma)$.
Denote the parabolic subgroup corresponding to $\Delta_Q\setminus\Delta_P$ by $P_2$.
Then there exist a unique $\mathcal{H}_Q$-module $e_Q(\sigma)$ acting on the same space as $\sigma$ such that
\begin{itemize}
\item $e_Q(\sigma)(T_w^{Q*}) = \sigma(T_w^{P*})$ for any $w\in W_P(1)$.
\item $e_Q(\sigma)(T_w^{Q*}) = 1$ for any $w\in W_{P_2,\aff}(1)$.
\end{itemize}
Moreover, one of the following condition gives a characterization of $e_Q(\sigma)$.
\begin{enumerate}
\item For any $w\in W_P^{Q-}(1)$, $e_Q(\sigma)(T_w^{Q*}) = \sigma(T_w^{P*})$ (namely, $e_Q(\sigma) \simeq \sigma$ as $(\mathcal{H}_P^{Q-},j_P^{Q-*})$-modules) and for any $w\in W_{\aff,P_2}(1)$, $e_Q(\sigma)(T_w^{Q*}) = 1$.
\item For any $w\in W_P^{Q+}(1)$, $e_Q(\sigma)(T_w^{Q*}) = \sigma(T_w^{P*})$ and for any $w\in W_{\aff,P_2}(1)$, $e_Q(\sigma)(T_w^{Q*}) = 1$.
\end{enumerate}
\end{prop}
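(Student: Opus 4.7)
The strategy hinges on the hypothesis $Q\subset P(\sigma)$. By definition each $\alpha\in\Delta_{P_2}=\Delta_Q\setminus\Delta_P$ is orthogonal to $\Delta_P$, so $W_{0,Q}$ decomposes as the direct product $W_{0,P}\times W_{0,P_2}$ with the two factors commuting, and $\sigma(T_\lambda^{P})=1$ for every $\lambda\in W_{\aff,P_\alpha}(1)\cap\Lambda(1)$. Together these yield a factorization $W_Q(1)=W_P(1)\cdot W_{\aff,P_2}(1)$ whose overlap lies in $W_{\aff,P_2}(1)\cap\Lambda(1)$ --- precisely the locus on which $\sigma$ is forced to be trivial. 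So the two bullet conditions of the proposition are mutually consistent, which is the combinatorial content behind the whole statement.

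I would first establish uniqueness. Any $w\in W_Q(1)$ factors as $w=w_P w_{P_2}$ with $w_P\in W_P(1)$ and $w_{P_2}\in W_{\aff,P_2}(1)$, and orthogonality of the root systems $\Sigma_P$ and $\Sigma_{P_2}$ yields $\ell_Q(w)=\ell_Q(w_P)+\ell_Q(w_{P_2})$. The $T^{*}$-version of the braid relation (immediate from the defining expression $T_w^{*}=(T_{s_1}-c_{s_1})\dotsm(T_{s_l}-c_{s_l})T_u$) then gives $T_w^{Q*}=T_{w_P}^{Q*}T_{w_{P_2}}^{Q*}$, so the two bullet conditions pin down $e_Q(\sigma)(T_w^{Q*})$ on every basis element.

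For existence, I would define $e_Q(\sigma)$ by the product formula above on the $T^{Q*}$-basis and check that the braid and quadratic relations of $\mathcal{H}_Q$ are respected. The braid relations split into three types: those internal to $\mathcal{H}_P$ (inherited from $\sigma$), those internal to the subgroup generated by $S_{\aff,P_2}$ (trivially satisfied because the target value is the scalar $1$), and mixed relations between $S_{\aff,P}$ and $S_{\aff,P_2}$ (pure commutations, by orthogonality). The delicate point is the quadratic relation $(T_s^{Q*})^2=q_s T_{s^2}^{Q*}-c_s T_s^{Q*}$ for $s\in W_Q(1)$ lifting an element of $S_{\aff,P_2}$: evaluating under $e_Q(\sigma)$ reduces to an identity governed by the action of $\sigma$ on $s^2\in W_{\aff,P_2}(1)\cap\Lambda(1)$ and on the support of $c_s$ in $C[Z_\kappa]$, which is precisely what the $P(\sigma)$-condition supplies. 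The two alternative characterizations then follow from uniqueness once one observes, via Proposition~\ref{prop:localization as Levi subalgebra}, that knowing the action on $\mathcal{H}_P^{Q\pm}$ together with $\mathcal{H}_{P_2,\aff}$ determines the action on all of $\mathcal{H}_Q$, by localizing at a central translation $\lambda_P^{\pm}$.

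The main obstacle I anticipate is the quadratic-relation calculation for the $P_2$-type reflections: aligning $c_s\in C[Z_\kappa]$, the action of $\sigma$ on $Z_\kappa\subset\Lambda(1)$, and the identification of $W_{\aff,P_\alpha}(1)\cap\Lambda(1)$ as the locus where $\sigma$ must be trivial. Once this compatibility is verified, the remainder is bookkeeping about length-additivity and the decomposition of braid relations into the three types above, together with the localization argument for the alternative characterizations.
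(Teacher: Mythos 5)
This statement is not proved in the paper at all: it is imported from \cite[Corollary~3.9]{arXiv:1703.10384}, so your proposal has to be judged on its own terms. On those terms there is a genuine gap, and it sits at the very centre of your argument: the claimed length additivity $\ell_Q(w)=\ell_Q(w_P)+\ell_Q(w_{P_2})$ for a factorization $w=w_Pw_{P_2}$ with $w_P\in W_P(1)$, $w_{P_2}\in W_{\aff,P_2}(1)$ is false in general. The problem is that $w_P$ contains the whole of $\Lambda(1)$-part of $w$, and $\nu$ of that part can move in the $\Sigma_{P_2}$-directions, where it interacts with $w_{P_2}$; orthogonality of $\Delta_P$ and $\Delta_{P_2}$ does not prevent cancellation there. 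Concretely, take $w\in\Omega_Q(1)$ of length zero whose image in $W_{0,Q}=W_{0,P}\times W_{0,P_2}$ has nontrivial $W_{0,P_2}$-component (such elements exist already for a $\mathrm{GL}_2$-type factor in the $P_2$-direction, e.g.\ $w=n_{s_\alpha}\lambda$ with $\langle\alpha,\nu(\lambda)\rangle=\pm1$): in \emph{any} factorization $w=w_Pw_{P_2}$ the factor $w_{P_2}$ carries that nontrivial $W_{0,P_2}$-component, so $\ell_Q(w_{P_2})\ge 1>0=\ell_Q(w)$. Hence $T_w^{Q*}\ne T_{w_P}^{Q*}T_{w_{P_2}}^{Q*}$ for every factorization, your uniqueness argument does not pin down $e_Q(\sigma)(T_w^{Q*})$ on such elements, and your existence construction ("define by the product formula on the $T^{Q*}$-basis") is not even well defined on them. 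The same phenomenon occurs for purely translation elements: for $\lambda,\mu\in\Lambda(1)$ with $\nu(\lambda)$, $\nu(\mu)$ having opposite $P_2$-components one has $\ell_Q(\lambda\mu)<\ell_Q(\lambda)+\ell_Q(\mu)$ while $\ell_P$ is additive, which shows that the braid relations of $\mathcal{H}_Q$ restricted to $\{T^Q_w: w\in W_P(1)\}$ are \emph{not} "inherited from $\sigma$": the map $T_w^{P*}\mapsto T_w^{Q*}$ is multiplicative only on the positive/negative subalgebras $\mathcal{H}_P^{Q\pm}$ (this is exactly why the characterizations (1) and (2) are stated via $W_P^{Q\pm}(1)$ and $j_P^{Q\pm*}$, and why the first bullet holding for \emph{all} $w\in W_P(1)$ is a nontrivial conclusion rather than an input).

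So the three-way sorting of relations (internal to $P$, internal to $P_2$, mixed commutations) plus the quadratic relation for $P_2$-reflections does not exhaust the work: the delicate part is precisely the products that are not $\ell_Q$-additive, including the length-zero elements of $\Omega_Q(1)$ with nontrivial $P_2$-part, which your generating set never touches. A correct argument has to either work with a presentation of $\mathcal{H}_Q$ that includes $\Omega_Q(1)$ and the conjugation relations, or (as in the cited source and in the spirit of your last step) build $e_Q(\sigma)$ from its restriction to $(\mathcal{H}_P^{Q-},j_P^{Q-*})$ together with the trivial action on the $W_{\aff,P_2}(1)$-part and then use the localization $\mathcal{H}_Q=\mathcal{H}_P^{Q-}\,E(\lambda)^{-1}$ at a central $Q$-negative element to extend and to get uniqueness; in that approach the identity $e_Q(\sigma)(T_w^{Q*})=\sigma(T_w^{P*})$ for arbitrary $w\in W_P(1)$ is a statement to be proved afterwards, not the definition. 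Your observation that the overlap of the two generating families lies in $W_{\aff,P_2}(1)\cap\Lambda(1)$, where the $P(\sigma)$-condition forces $\sigma$ to be trivial, is correct and is indeed part of the consistency check, but it does not repair the additivity failure.
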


We call $e_Q(\sigma)$ the extension of $\sigma$ to $\mathcal{H}_Q$.
A typical example of the extension is the trivial representation $\trivrep = \trivrep_G$.
This is a one-dimensional $\mathcal{H}$-module defined by $\trivrep(T_w) = q_w$, or equivalently $\trivrep(T_w^*) = 1$.
We have $\Delta(\trivrep_P) = \{\alpha\in\Delta\mid \langle \Delta_P,\coroot{\alpha}\rangle = 0\}\cup \Delta_P$ and, if $Q$ is a parabolic subgroup between $P$ and $P(\trivrep_P)$, we have $e_Q(\trivrep_P) = \trivrep_Q$

Let $P(\sigma)\supset P_0\supset Q_1\supset Q\supset P$.
Then as in \cite[4.5]{arXiv:1406.1003_accepted}, we have $I^{P_0}_{Q_1}(e_{Q_1}(\sigma))\subset I^{P_0}_Q(e_Q(\sigma))$.
Define
\[
\St_Q^{P_0}(\sigma) = \Coker\left(\bigoplus_{Q_1\supsetneq Q}I_{Q_1}^{P_0}(e_{Q_1}(\sigma))\to I^{P_0}_Q(e_Q(\sigma))\right).
\]
When $P_0 = G$, we write $\St_Q(\sigma)$.

In the rest of this subsection, we assume that $P(\sigma) = G$.
As we mentioned in the above, for $Q_1\supset Q\supset P$, we have $I_{Q_1}(e_{Q_1}(\sigma))\hookrightarrow I_Q(e_Q(\sigma))$.
The proof of \cite[Lemma~4.23]{arXiv:1406.1003_accepted} implies the following lemma.
\begin{lem}\label{lem:embedding of smaller parabolic induction}
Assume that $q_s = 0$ for any $s\in S_\aff$.
The diagram
\[
\begin{tikzcd}
I_{Q_1}(e_{Q_1}(\sigma)) \arrow[hookrightarrow]{r}\arrow[-]{d}{\wr} & I_Q(e_Q(\sigma))\arrow[-]{d}{\wr}\\
\bigoplus_{w\in W_0^{Q_1}}\sigma \arrow[hookrightarrow]{r} & \bigoplus_{w\in W_0^{Q}}\sigma
\end{tikzcd}
\]
is commutative.
Here the embedding $\bigoplus_{w\in W_0^{Q_1}}\sigma\hookrightarrow \bigoplus_{w\in W_0^{Q_1}}\sigma$ is induced by $W_0^{Q_1}\hookrightarrow W_0^Q$.
\end{lem}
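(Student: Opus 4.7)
The plan is to make the embedding $I_{Q_1}(e_{Q_1}(\sigma))\hookrightarrow I_Q(e_Q(\sigma))$ completely explicit via transitivity of parabolic induction and then verify commutativity by evaluating on the basis $\{T_{n_w}\}$. By transitivity there is a canonical isomorphism $I_Q(e_Q(\sigma)) \simeq I_{Q_1}(I_Q^{Q_1}(e_Q(\sigma)))$, and the embedding in the statement is realized as $I_{Q_1}$ applied to a canonical $\mathcal{H}_{Q_1}$-equivariant injection $\iota\colon e_{Q_1}(\sigma) \hookrightarrow I_Q^{Q_1}(e_Q(\sigma))$. Applying Proposition~\ref{prop:decomposition of I_P} inside the Levi $Q_1$ decomposes $I_Q^{Q_1}(e_Q(\sigma)) \simeq \bigoplus_{w_2\in W_{0,Q_1}^Q} e_Q(\sigma)$ via $\Phi\mapsto (\Phi(T^{Q_1}_{n_{w_2}}))_{w_2}$, and the key claim to establish is that $\iota$ lands in the $w_2 = 1$ summand, i.e.\ $\iota(v)(T^{Q_1}_{n_{w_2}}) = \delta_{w_2,1}\,v$.

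Granting this, I would combine it with the decomposition $W_0^Q = W_0^{Q_1}\cdot W_{0,Q_1}^Q$ with additive lengths, which gives $T_{n_w} = T_{n_{w_1}}T^{Q_1}_{n_{w_2}}$ for the unique factorization $w = w_1 w_2$ with $w_1\in W_0^{Q_1}$ and $w_2\in W_{0,Q_1}^Q$. For $\psi \in I_{Q_1}(e_{Q_1}(\sigma))$ with image $\varphi \in I_Q(e_Q(\sigma))$, transitivity together with the key claim then gives
\[
\varphi(T_{n_w}) = \iota(\psi(T_{n_{w_1}}))(T^{Q_1}_{n_{w_2}}) = \delta_{w_2,1}\,\psi(T_{n_{w_1}}),
\]
which is precisely the stated commutativity along the inclusion $W_0^{Q_1}\hookrightarrow W_0^Q$ as the $w_2 = 1$ coordinates.

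The main obstacle is the key claim $\iota(v)(T^{Q_1}_{n_{w_2}}) = 0$ for $w_2 \in W_{0,Q_1}^Q\setminus\{1\}$, and it is here that the hypothesis $q_s = 0$ becomes essential, through Proposition~\ref{prop:I_P as A-module} applied inside the Levi $Q_1$. My strategy is to produce a central $\lambda \in \Lambda(1)$ in $W_{Q_1}(1)$ lying in $W_Q^{Q_1-}(1)$ and in the ``extension lattice'' so that $e_{Q_1}(\sigma)(T^{Q_1*}_\lambda) = 1$, satisfying moreover that for every $w_2 \in W_{0,Q_1}^Q \setminus \{1\}$ one has $n_{w_2}^{-1}\cdot \lambda \notin W_Q^{Q_1-}(1)$; the existence of such $\lambda$ follows from Lemma~2.4 of \cite{arXiv:1612.01312} after taking a suitable negative multiple. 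On one hand, Proposition~\ref{prop:I_P as A-module} then forces $(\iota(v)\cdot E^{Q_1}_{o_{-,Q_1}}(\lambda))(T^{Q_1}_{n_{w_2}}) = 0$ for such $w_2$. On the other hand, the extension triviality $e_{Q_1}(\sigma)(T^{Q_1*}_\lambda) = 1$ combined with the identification $T^{Q_1*}_\lambda = E^{Q_1}_{o_{-,Q_1}}(\lambda)$ (valid in the relevant position under $q_s=0$) shows that $E^{Q_1}_{o_{-,Q_1}}(\lambda)$ acts as the identity on $\iota(e_{Q_1}(\sigma))$, hence $\iota(v)\cdot E^{Q_1}_{o_{-,Q_1}}(\lambda) = \iota(v)$. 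Comparing the two computations forces $\iota(v)(T^{Q_1}_{n_{w_2}}) = 0$. The $w_2 = 1$ case is immediate from the defining property of $\iota$, completing the argument.
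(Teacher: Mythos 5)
Your overall strategy is the natural one and, as far as one can tell, is essentially the argument behind the paper's one-line proof (the paper simply invokes the proof of \cite[Lemma~4.23]{arXiv:1406.1003_accepted}): realize the inclusion through transitivity as $I_{Q_1}$ of the canonical map $e_{Q_1}(\sigma)\to I_Q^{Q_1}(e_Q(\sigma))$, and kill the coordinates at $w_2\neq 1$ by acting with a suitable element of $\Lambda(1)$ via Proposition~\ref{prop:I_P as A-module} relativized to $Q_1$. However, the key step fails as you have written it. You require $\lambda\in\Lambda(1)$ \emph{central in $W_{Q_1}(1)$} with $\lambda\in W_Q^{Q_1-}(1)$ and $n_{w_2}^{-1}\cdot\lambda\notin W_Q^{Q_1-}(1)$ for every $w_2\in W_{0,Q_1}^Q\setminus\{1\}$; these conditions are mutually contradictory, because $n_{w_2}\in W_{Q_1}(1)$ and centrality in $W_{Q_1}(1)$ gives $n_{w_2}^{-1}\cdot\lambda=\lambda$. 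What you need is $\lambda\in Z(W_Q(1))$ with $\langle\alpha,\nu(\lambda)\rangle>0$ for all $\alpha\in\Sigma_{Q_1}^+\setminus\Sigma_Q^+$, which is what \cite[Lemma~2.4]{arXiv:1612.01312} provides for $Q\subset Q_1$ (this is the relative analogue of $\lambda_Q^-$ in Proposition~\ref{prop:localization as Levi subalgebra}). Moreover, even with the correct $\lambda$, the ``moreover'' property is not a consequence of that lemma and is missing from your proposal: you must show that for $w_2\neq 1$ there is $\alpha\in\Sigma_{Q_1}^+\setminus\Sigma_Q^+$ with $w_2(\alpha)\in-(\Sigma_{Q_1}^+\setminus\Sigma_Q^+)$. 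This is true, but needs an argument: if not, every positive root of $\Sigma_{Q_1}$ made negative by $w_2^{-1}$ would lie in $\Sigma_Q^+$, which forces $w_2^{-1}\in W_{0,Q}$ and hence $w_2=1$ since $W_{0,Q}\cap W_{0,Q_1}^Q=\{1\}$.

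Two further points. The normalization $e_{Q_1}(\sigma)(T^{Q_1*}_\lambda)=1$ is neither guaranteed (a central element of $W_Q(1)$ produced by \cite[Lemma~2.4]{arXiv:1612.01312} need not lie in the affine part, so the ``extension lattice'' condition may be unattainable) nor needed: since $\nu(\lambda)$ is orthogonal to $\Sigma_Q\supset\Sigma_P$ we have $\ell_P(\lambda)=0$, so $E^{Q_1}_{o_{-,Q_1}}(\lambda)=T^{Q_1*}_\lambda$ acts on $e_{Q_1}(\sigma)$ by $\sigma(T^{P*}_\lambda)=\sigma(T^{P}_\lambda)$, which is automatically invertible, and invertibility is all your comparison argument requires. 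Finally, the content of the lemma includes that the bottom arrow is the \emph{untwisted} inclusion, which in your scheme is exactly the statement that the canonical embedding evaluated at $T^{Q_1}_1$ is the identity; declaring this ``immediate from the defining property'' without exhibiting that property (the embedding is imported from \cite[4.5]{arXiv:1406.1003_accepted}) leaves precisely this normalization unverified, and in the passage from $\varphi(T_{n_{w_1}}T_{n_{w_2}})$ to $\varphi'(T_{n_{w_1}})(T^{Q_1}_{n_{w_2}})$ you also implicitly use $j_{Q_1}^{-*}(T^{Q_1}_{n_{w_2}})=T_{n_{w_2}}$ (the inductions are defined with respect to $j^{-*}$, not $j^-$), which deserves the citation of \cite[Lemma~2.6]{arXiv:1612.01312} the paper itself uses in such steps.
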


\begin{lem}\label{lem:I_P+extension as A-module}
Assume that $q_s = 0$ for any $s\in S_\aff$.
Let $\varphi\in I_Q(e_Q(\sigma))$.
Then for $w\in W_0^Q$ and $\lambda\in\Lambda(1)$, we have
\[
(\varphi E_{o_-}(\lambda))(T_{n_w})
=
\begin{cases}
\varphi(T_{n_w})\sigma(E^P_{o_{-,P}}(n_w^{-1}\cdot \lambda)) & (\text{$n_w^{-1}\cdot \lambda$ is $P$-negative}),\\
0 & (\text{otherwise}).
\end{cases}
\]
\end{lem}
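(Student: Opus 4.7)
The strategy is to apply Proposition~\ref{prop:I_P as A-module} with $(\mathcal{H}_P,\sigma)$ replaced by $(\mathcal{H}_Q,e_Q(\sigma))$, and then to translate the resulting evaluation on $e_Q(\sigma)$ back to one on $\sigma$ via the two characterizations of the extension. Since $\Delta_P\subset\Delta_Q$ implies $W_0^Q\subset W_0^P\subset W_0$, that proposition yields, for $\mu:=n_w^{-1}\cdot\lambda$,
\[
(\varphi E_{o_-}(\lambda))(T_{n_w})=
\begin{cases}
\varphi(T_{n_w})\,e_Q(\sigma)\bigl(E^Q_{o_{-,Q}}(\mu)\bigr) & (\mu\in W_Q^-(1)),\\
0 & (\text{otherwise}).
\end{cases}
\]
Because $P$-negativity implies $Q$-negativity (as $\Sigma^+\setminus\Sigma_Q^+\subset\Sigma^+\setminus\Sigma_P^+$), the ``otherwise'' branch already produces $0$ as required. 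The problem therefore reduces to evaluating $e_Q(\sigma)(E^Q_{o_{-,Q}}(\mu))$ in the two remaining sub-cases: it should equal $\sigma(E^P_{o_{-,P}}(\mu))$ when $\mu$ is $P$-negative, and $0$ when $\mu$ is $Q$-negative but not $P$-negative.

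For $\mu$ that is $P$-negative, one has $\mu\in W_P^{Q-}(1)\cap\Lambda(1)$. Comparing the product formula \eqref{eq:product formula} in $\mathcal{H}_P$ and in $\mathcal{H}_Q$ along a reduced expression of $\mu$, I would verify the identity $j_P^{Q-*}(E^P_{o_{-,P}}(\mu))=E^Q_{o_{-,Q}}(\mu)$: the orientations $o_{-,P}$ and $o_{-,Q}$ induce the same sign conventions on the simple affine reflections appearing in a reduced expression for $\mu$ when $\mu$ is $P$-negative. Characterization~(1) of $e_Q(\sigma)$, which says that $e_Q(\sigma)\circ j_P^{Q-*}=\sigma$ on $\mathcal{H}_P^{Q-}$, then yields the desired equality.

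For $\mu$ that is $Q$-negative but not $P$-negative, there exists $\alpha\in\Sigma_Q^+\setminus\Sigma_P^+$ with $\langle\alpha,\nu(\mu)\rangle<0$. Using Proposition~\ref{prop:localization as Levi subalgebra} to supply a central strictly $P$-negative translation $\mu_1\in\Lambda(1)$, I would factor $\mu=\mu_1\mu_2$ with $\mu_2\in W_{\aff,P_2}(1)\cap\Lambda(1)$, where $P_2$ is the parabolic corresponding to $\Delta_Q\setminus\Delta_P$. The product formula then splits $E^Q_{o_{-,Q}}(\mu)$ into a $\mu_1$-factor (handled by the previous step, $\mu_1$ being central and $P$-negative) and a $\mu_2$-factor lying in the image of $\mathcal{H}_{P_2}$ in $\mathcal{H}_Q$. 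On this $P_2$-factor, characterization~(2) of the extension forces $e_Q(\sigma)(T^{Q*}_w)=1$ for $w\in W_{\aff,P_2}(1)$, but combined with the quadratic relations $T_sT_s^*=T_s^*T_s=0$ (valid since $q_s=0$) and the reduced expression of $\mu_2$ in $W_{\aff,P_2}$, this actually propagates a vanishing through the $E_o$-expansion, yielding $e_Q(\sigma)(E^Q_{o_{-,Q}}(\mu))=0$.

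The main obstacle is this last vanishing. Cleanly isolating the $W_{\aff,P_2}(1)$-contribution of $E^Q_{o_{-,Q}}(\mu)$ and tracking how the orientation $o_{-,Q}$ shifts under left multiplication by $\mu_1$ requires delicate combinatorial bookkeeping in $W_Q(1)$ via the product formula \eqref{eq:product formula}, together with the $q_s=0$ annihilation. The $P$-negative case, by contrast, is essentially a direct translation of Proposition~\ref{prop:I_P as A-module} across the inclusion $\mathcal{H}_P^{Q-}\hookrightarrow\mathcal{H}_Q$ via $j_P^{Q-*}$.
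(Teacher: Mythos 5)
Your reduction via Proposition~\ref{prop:I_P as A-module} applied to $(\mathcal{H}_Q,e_Q(\sigma))$, and your treatment of the $P$-negative case, coincide with the paper's argument; the identity $j_P^{Q-*}(E^P_{o_{-,P}}(\mu))=E^Q_{o_{-,Q}}(\mu)$ that you propose to re-derive is simply quoted there from \cite[Lemma~2.6]{arXiv:1612.01312}. The genuine gap is in the remaining case, $\mu=n_w^{-1}\cdot\lambda$ being $Q$-negative but not $P$-negative. The factorization $\mu=\mu_1\mu_2$ with $\mu_1\in\Lambda(1)$ central in $W_P(1)$ and strictly $P$-negative and $\mu_2\in W_{\aff,P_2}(1)\cap\Lambda(1)$ does not exist in general: centrality of $\mu_1$ forces $\nu(\mu_1)$ to pair to zero with $\Sigma_P$, and $\nu(\mu_2)$ lies in the span of the coroots of $\Sigma_{P_2}$, hence (since $P(\sigma)=G$ forces $\Delta_P$ to be orthogonal to $\Delta\setminus\Delta_P$) also pairs to zero with $\Sigma_P$; so any such product is orthogonal to $\Sigma_P$, which an arbitrary $Q$-negative, non-$P$-negative $\mu$ need not be. (Proposition~\ref{prop:localization as Levi subalgebra} supplies an auxiliary central element, not a factor of a given $\mu$.) Moreover, even granting some decomposition, the entire content of this case is the asserted vanishing of $e_Q(\sigma)$ on the $P_2$-part, which you explicitly leave unproved; note that $E^Q_{o_{-,Q}}(\mu_2)$ is not a product of $T_s$'s unless $\nu(\mu_2)$ is suitably placed relative to the orientation, so the appeal to $e_Q(\sigma)(T_w^{Q*})=1$ together with $T_sT_s^*=0$ does not by itself settle it.

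The paper disposes of this case without ever computing $e_Q(\sigma)(E^Q_{o_{-,Q}}(\mu))$, by a short localization trick you could adopt verbatim: take $\lambda_P^-$ as in Proposition~\ref{prop:localization as Levi subalgebra} and set $\lambda_0=n_w\cdot\lambda_P^-$. Since $\mu$ is not $P$-negative there is $\alpha\in\Sigma^+\setminus\Sigma_P^+$ with $\langle w(\alpha),\nu(\lambda)\rangle<0$, while $\langle w(\alpha),\nu(\lambda_0)\rangle=\langle\alpha,\nu(\lambda_P^-)\rangle>0$; thus $\lambda$ and $\lambda_0$ lie in no common closed chamber, lengths do not add, and with $q_s=0$ the product formula \eqref{eq:product formula} gives $E_{o_-}(\lambda)E_{o_-}(\lambda_0)=0$. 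On the other hand $n_w^{-1}\cdot\lambda_0=\lambda_P^-$ is $P$-negative, so the case already proved (applied to $\varphi E_{o_-}(\lambda)\in I_Q(e_Q(\sigma))$) yields $(\varphi E_{o_-}(\lambda)E_{o_-}(\lambda_0))(T_{n_w})=(\varphi E_{o_-}(\lambda))(T_{n_w})\,\sigma(E^P_{o_{-,P}}(\lambda_P^-))$, and $\sigma(E^P_{o_{-,P}}(\lambda_P^-))$ is invertible because $\lambda_P^-$ is central in $W_P(1)$. Hence $(\varphi E_{o_-}(\lambda))(T_{n_w})=0$, which is exactly what your factorization-plus-vanishing was meant to achieve.
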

\begin{proof}
Assume that $n_w^{-1}\cdot \lambda$ is $P$-negative.
Then in particular it is $Q$-negative.
By Proposition~\ref{prop:I_P as A-module}, we have
\[
(\varphi E_{o_-}(\lambda))(T_{n_w})
=
\varphi(T_{n_w})e_Q(\sigma)(E^Q_{o_{-,Q}}(n_w^{-1}\cdot\lambda)).
\]
Since $n_w^{-1}\cdot \lambda$ is $P$-negative, we have $E^Q_{o_{-,Q}}(n_w^{-1}\cdot\lambda)\in \mathcal{H}_P^{Q-}$.
Hence $E^Q_{o_{-,Q}}(n_w^{-1}\cdot\lambda) = j_P^{Q-*}(E^P_{o_{-,P}}(n_w^{-1}\cdot\lambda))$ by \cite[Lemma~2.6]{arXiv:1612.01312}.
Therefore we have $e_Q(\sigma)(E^Q_{o_{-,Q}}(n_w^{-1}\cdot\lambda)) = \sigma(E^P_{o_{-,P}}(n_w^{-1}\cdot\lambda))$.
We get the lemma in this case.

Assume that $n_w^{-1}\cdot \lambda$ is not $P$-negative.
Then there exists $\alpha\in \Sigma^+\setminus\Sigma_P^+$ such that $\langle w(\alpha),\nu(\lambda)\rangle < 0$.
Take $\lambda_P^-$ as in Proposition~\ref{prop:localization as Levi subalgebra}  and put $\lambda_0 = n_w\cdot \lambda_P^-$.
We have $\langle w(\alpha),\nu(\lambda_0)\rangle = \langle \alpha,\nu(\lambda_P^-)\rangle > 0$.
Hence $\lambda$ and $\lambda_0$ are not in the same chamber.
Therefore we have $E_{o_-}(\lambda)E_{o_-}(\lambda_0) = 0$ by \eqref{eq:product formula} and \cite[Lemma~2.11]{arXiv:1612.01312}.
Hence we have $(\varphi E_{o_-}(\lambda)E_{o_-}(\lambda_0))(T_{n_w}) = 0$.
Since $n_w^{-1}\cdot \lambda_0 = \lambda_P^-$ is $P$-negative, we have $(\varphi E_{o_-}(\lambda)E_{o_-}(\lambda_0))(T_{n_w}) = (\varphi E_{o_-}(\lambda))(T_{n_w})\sigma(E^P_{o_{-,P}}(\lambda_P^-))$ as we have already proved.
Since $\lambda_P^-\in Z(W_P(1))$, $E^P_{o_{-,P}}(\lambda_P^-)$ is invertible.
Hence we have $(\varphi E_{o_-}(\lambda))(T_{n_w}) = 0$.
\end{proof}

\subsection{Module $\sigma_{\ell - \ell_P}$}
Let $P$ be a parabolic subgroup and $\sigma$ an $\mathcal{H}_P$-module.
Define a linear map $\sigma_{\ell - \ell_P}$ by $\sigma_{\ell - \ell_P}(T_w) = (-1)^{\ell(w) - \ell_P(w)}\sigma(T_w)$.
Then this defines a new $\mathcal{H}_P$-module $\sigma_{\ell - \ell_P}$~\cite[Lemma~4.1]{arXiv:1612.01312}.

\subsection{Supersingular modules}\label{subsec:supersingulars}
Assume that $q_s = 0$ for any $s\in S_\aff$.
Let $\mathcal{O}$ be a conjugacy class in $W(1)$ which is contained in $\Lambda(1)$.
For a spherical orientation $o$, set $z_\mathcal{O} = \sum_{\lambda\in \mathcal{O}}E_o(\lambda)$.
Then this does not depend on $o$ and gives an element of the center of $\mathcal{H}$ \cite[Theorem~5.1]{Vigneras-prop-III}.
The length of $\lambda\in \mathcal{O}$ does not depend on $\lambda$.
We denote it by $\ell(\mathcal{O})$.
\begin{defn}
Let $\pi$ be an $\mathcal{H}$-module.
We call $\pi$ supersingular if there exists $n\in \Z_{>0}$ such that $\pi z_\mathcal{O}^n = 0$ for any $\mathcal{O}$ such that $\ell(\mathcal{O}) > 0$.
\end{defn}
The simple supersingular $\mathcal{H}$-modules are classified in \cite{MR3263136,Vigneras-prop-III}.
We recall their results.
Assume that $C$ is a field.
Let $\chi$ be a character of $Z_\kappa\cap W_\aff(1)$ and put $S_{\aff,\chi} = \{s\in S_\aff\mid \chi(c_{\widetilde{s}}) \ne 0\}$ where $\widetilde{s}\in W(1)$ is a lift of $s\in S_\aff$.
Note that if $\widetilde{s}'$ is another lift, then $\widetilde{s}' = t\widetilde{s}$ for some $t\in Z_\kappa$.
Hence $\chi(c_{\widetilde{s}'}) = \chi(t)\chi(c_{\widetilde{s}})$.
Therefore the condition does not depend on a choice of a lift.
Let $J\subset S_{\aff,\chi}$.
Then the character $\Xi = \Xi_{J,\chi}$ of $\mathcal{H}_\aff$ is defined by
\begin{align*}
\Xi_{J,\chi}(T_t) & = \chi(t) \quad (t\in Z_\kappa\cap W_\aff(1)),\\
\Xi_{J,\chi}(T_{\widetilde{s}}) & = 
\begin{cases}
\chi(c_{\widetilde{s}}) & (s\in S_{\aff,\chi}\setminus J),\\
0 & (s\notin S_{\aff,\chi}\setminus J)
\end{cases}
\end{align*}
where $\widetilde{s}\in W_\aff(1)$ is a lift of $s$.
Let $\Omega(1)_{\Xi}$ be the stabilizer of $\Xi$ and $V$ an simple $C[\Omega(1)_{\Xi}]$-module such that $V|_{Z_\kappa\cap W_\aff(1)}$ is a direct sum of $\chi$.
Put $\mathcal{H}_\Xi = \mathcal{H}_\aff C[\Omega(1)_{\Xi}]$.
This is a subalgebra of $\mathcal{H}$.
For $X\in \mathcal{H}_\aff$ and $Y\in C[\Omega(1)_{\Xi}]$, we define the action of $XY$ on $\Xi\otimes V$ by $x\otimes y\mapsto xX\otimes yY$.
Then this defines a well-defined action of $\mathcal{H}_\Xi$ on $\Xi\otimes V$.
Set $\pi_{\chi,J,V} = (\Xi\otimes V)\otimes_{\mathcal{H}_\Xi}\mathcal{H}$.
\begin{prop}[{\cite[Theorem~1.6]{Vigneras-prop-III}}]
The module $\pi_{\chi,J,V}$ is simple and it is supersingular if and only if the groups generated by $J$ and generated by $S_{\aff,\chi}\setminus J$ are both finite.
If $C$ is an algebraically closed field, then any simple supersingulare modules are given in this way.
\end{prop}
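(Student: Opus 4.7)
The plan is to prove the proposition in three stages, following Vignéras's strategy.

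First, I would check that $\Xi\otimes V$ is a simple $\mathcal{H}_\Xi$-module and that inducing to $\mathcal{H}$ preserves simplicity, by a Clifford--Mackey argument. Since $\mathcal{H}_\aff$ acts on $\Xi\otimes V$ through the scalar character $\Xi$, every $\mathcal{H}_\Xi$-submodule reduces to a $C[\Omega(1)_\Xi]$-submodule of $V$, which must be $0$ or $V$ by simplicity. For the induction to $\mathcal{H}$, decompose $\pi_{\chi,J,V}$ as an $\mathcal{H}_\aff$-module along a set of coset representatives of $\Omega(1)_\Xi$ in $\Omega(1)$: on the $u$-th summand $\mathcal{H}_\aff$ acts through the conjugate character $\Xi^u$, and by the very definition of the stabilizer $\Omega(1)_\Xi$ these conjugates are pairwise distinct. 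Any nonzero $\mathcal{H}$-submodule must therefore contain a full $\Xi^u$-isotypic summand, and transitivity of the $\Omega(1)$-action forces it to contain $\Xi\otimes V$; simplicity then follows from the first observation.

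Second, for the supersingularity criterion, I would compute the action of $z_\mathcal{O}$ on $\Xi\otimes V$ using the product formula~\eqref{eq:product formula}. Since $z_\mathcal{O}$ is central, its action on $\pi_{\chi,J,V}$ is determined by that on the cyclic generator $\Xi\otimes V$, which is controlled by the scalars $\Xi(E_o(\lambda))$ for $\lambda\in\mathcal{O}$. Using the Bruhat-order triangularity $E_o(\lambda)\in T_\lambda + \sum_{v<\lambda}CT_v$ together with $\Xi(T_{\widetilde{s}}) = 0$ whenever $s\in J$ or $s\notin S_{\aff,\chi}$, one sees that $\Xi(E_o(\lambda))$ can be nonzero only if $\lambda$ admits a reduced expression avoiding the forbidden generators. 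When both $\langle J\rangle$ and $\langle S_{\aff,\chi}\setminus J\rangle$ are finite, the remaining allowed translations are all of length zero, forcing $z_\mathcal{O}$ to act nilpotently for every $\mathcal{O}$ with $\ell(\mathcal{O})>0$. Conversely, if one of the two subgroups is infinite, a standard combinatorial argument inside the affine Weyl group produces conjugacy classes $\mathcal{O}$ of positive length on which $\Xi(z_\mathcal{O})$ is nonzero, obstructing supersingularity.

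For the classification when $C$ is algebraically closed, start with a simple supersingular $\mathcal{H}$-module $\pi$ and restrict to $\mathcal{H}_\aff$. Pick any simple $\mathcal{H}_\aff$-subquotient $\tau$; the nilpotency of every central $z_\mathcal{O}$ with $\ell(\mathcal{O})>0$, combined with the degenerate quadratic relation $T_s^2 = c_s T_s$ in the $q_s=0$ specialization, forces $\tau$ to be one-dimensional and hence of the form $\Xi_{J,\chi}$ for some unique pair $(J,\chi)$. The $\Xi$-isotypic subspace of $\pi|_{\mathcal{H}_\aff}$ is then stabilized by $\Omega(1)_\Xi$, yielding a simple $C[\Omega(1)_\Xi]$-module $V$ with the required restriction property, and Frobenius reciprocity together with the simplicity established in step one identifies $\pi\simeq \pi_{\chi,J,V}$. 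The main obstacle is this final reduction: extracting a one-dimensional character from a simple supersingular $\mathcal{H}_\aff$-subquotient requires a careful analysis of the $q_s=0$ specialization of the affine Hecke algebra, using the Bruhat-triangular basis $\{E_o(w)\}$ to pass from the nilpotent action of central translations down to a scalar character on the generators.
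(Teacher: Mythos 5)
This proposition is not proved in the paper at all: it is quoted verbatim from Vignéras \cite[Theorem~1.6]{Vigneras-prop-III}, so there is no internal argument to compare with. Your outline does follow the broad strategy of the cited proof (a Clifford--Mackey argument for simplicity of the induced module, a central-element computation for the supersingularity criterion, and a reduction to $\mathcal{H}_\aff$ for the classification), and the first step is essentially sound: the summands indexed by coset representatives of $\Omega(1)_\Xi$ carry pairwise distinct conjugate characters of $\mathcal{H}_\aff$, so any submodule splits along them, meets some $\Xi^u$-isotypic piece in a nonzero $C[\Omega(1)_{\Xi^u}]$-submodule, and simplicity of $V$ plus the invertible elements $T_u$, $u\in\Omega(1)$, do the rest.

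The other two steps, however, contain genuine gaps. In the supersingularity criterion you repeatedly evaluate $\Xi$ on $E_o(\lambda)$ and on $z_\mathcal{O}$, but for $\lambda\in\Lambda(1)$ these elements do not lie in $\mathcal{H}_\aff$ (a conjugacy class $\mathcal{O}\subset\Lambda(1)$ with $\ell(\mathcal{O})>0$ is generally not contained in $W_\aff(1)$), so ``$\Xi(E_o(\lambda))$'' is not defined; the action of $z_\mathcal{O}$ must be computed on the whole induced module, where $E_o(\lambda)$ mixes the $\Xi^u$-isotypic components, and both directions of the criterion (nilpotence when $\langle J\rangle$ and $\langle S_{\aff,\chi}\setminus J\rangle$ are finite, non-nilpotence otherwise) require the actual product-formula/alcove-walk computation that your sketch replaces by ``a standard combinatorial argument.'' In the classification, the decisive assertion --- that supersingularity forces a simple $\mathcal{H}_\aff$-constituent to be one-dimensional --- is exactly the core of Vignéras's theorem and is not supplied: at $q_s=0$ the algebra $\mathcal{H}_\aff$ does have simple modules of dimension greater than one (already for rank one), so the degenerate quadratic relations alone cannot give it; moreover supersingularity is defined through central elements of $\mathcal{H}$, not of $\mathcal{H}_\aff$, so even formulating the nilpotence hypothesis for the restriction needs an argument, and you should work with a simple $\mathcal{H}_\aff$-\emph{submodule} (not subquotient) to have the Frobenius-reciprocity map $\pi_{\chi,J,V}\to\pi$ you invoke at the end. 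As it stands the proposal is an accurate table of contents for Vignéras's proof rather than a proof.
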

\begin{rem}
This classification result is valid even though the data which defines $\mathcal{H}$ does not come from a reductive group.
\end{rem}

\subsection{Simple modules}
Assume that $C$ is an algebraically closed field of characteristic $p$.
We consider the following triple $(P,\sigma,Q)$.
\begin{itemize}
\item $P$ is a parabolic subgroup.
\item $\sigma$ is an simple supersingular $\mathcal{H}_P$-module.
\item $Q$ is a parabolic subgroup between $P$ and $P(\sigma)$.
\end{itemize}
Define
\[
I(P,\sigma,Q) = I_{P(\sigma)}(\St_Q^{P(\sigma)}(\sigma)).
\]
\begin{thm}[{\cite[Theorem~1.1]{arXiv:1406.1003_accepted}}]
The module $I(P,\sigma,Q)$ is simple and any simple module has this form.
Moreover, $(P,\sigma,Q)$ is unique up to isomorphism.
\end{thm}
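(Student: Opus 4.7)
The plan is to follow the Langlands-style strategy familiar from the representation theory of reductive groups, carried out in three stages: simplicity of the constructed modules, surjectivity (every simple arises as $I(P,\sigma,Q)$), and uniqueness of the triple.

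\emph{Simplicity of $I(P,\sigma,Q)$.} I would first prove that the generalized Steinberg $\St_Q^{P(\sigma)}(\sigma)$ is simple as an $\mathcal{H}_{P(\sigma)}$-module, by induction on $|\Delta_{P(\sigma)}\setminus \Delta_Q|$. The base case $Q = P(\sigma)$ gives $\St_{P(\sigma)}^{P(\sigma)}(\sigma) = e_{P(\sigma)}(\sigma)$, which is simple because $\sigma$ is simple and the extension preserves the underlying space. For the inductive step I would combine Lemma~\ref{lem:embedding of smaller parabolic induction} with Lemma~\ref{lem:I_P+extension as A-module} to identify the image of $\bigoplus_{Q_1 \supsetneq Q} I_{Q_1}^{P(\sigma)}(e_{Q_1}(\sigma))$ inside $I_Q^{P(\sigma)}(e_Q(\sigma)) \simeq \bigoplus_{w \in W_0^Q}\sigma$ as precisely the sum of the components indexed by $\bigcup_{Q_1 \supsetneq Q}W_0^{Q_1}$, so $\St_Q^{P(\sigma)}(\sigma)$ is supported on the complement, and its simplicity reduces to the action of the center on a single copy of $\sigma$. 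To pass from $\St_Q^{P(\sigma)}(\sigma)$ to $I(P,\sigma,Q)$, I would take a nonzero submodule and, after decomposing via Proposition~\ref{prop:decomposition of I_P}, use Proposition~\ref{prop:I_P as A-module} to transport a nonzero component indexed by some $w_0 \in W_0^{P(\sigma)}$ to the component indexed by $1$ via the action of well-chosen central elements $E_{o_-}(\lambda)$. Supersingularity of $\sigma$ together with the defining maximality of $P(\sigma)$ should then force the submodule to be everything.

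\emph{Existence.} Given a simple $\mathcal{H}$-module $\pi$, the aim is to extract a ``supersingular support''. I would take $P$ minimal among parabolics such that $\pi$ embeds in some $I_P(\sigma')$ with $\sigma'$ a simple $\mathcal{H}_P$-module; such a $P$ exists because $\pi$ itself is simple. Using Proposition~\ref{prop:localization as Levi subalgebra} to localize central elements and minimality of $P$, one argues that $\sigma'$ must be supersingular, so set $\sigma = \sigma'$. The embedding then factors through $I_P(e_{P(\sigma)}(\sigma))$ by maximality of $P(\sigma)$, and the Steinberg filtration produced in step~(1) forces $\pi$ to be isomorphic to $I(P,\sigma,Q)$ for a unique $Q$ between $P$ and $P(\sigma)$.

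\emph{Uniqueness and the main difficulty.} The datum $(P,\sigma)$ is recovered intrinsically as the minimal parabolic and the simple supersingular $\mathcal{H}_P$-module whose parabolic induction contains $\pi$, and $Q$ is then determined by the position of $\pi$ in the Steinberg filtration of $I_P(e_{P(\sigma)}(\sigma))$. The main obstacle is the simplicity assertion in step~(1): both the Steinberg quotient and the subsequent parabolic induction involve delicate bookkeeping of $W_0$-coset components, and the key subtlety is excluding any larger parabolic $Q' \supsetneq Q$ from giving a proper submodule of $I(P,\sigma,Q)$, which must be controlled by the very condition $P(\sigma) = P(\sigma)$ built into the definition of $\Delta(\sigma)$. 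All the earlier lemmas in Section~\ref{sec:Preliminaries} are set up precisely to make this bookkeeping tractable.
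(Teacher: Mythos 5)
This statement is not proved in the paper at all: it is quoted verbatim from \cite[Theorem~1.1]{arXiv:1406.1003_accepted}, where its proof occupies a substantial part of that article. Your sketch reproduces the broad strategy of that work (simplicity, exhaustion, uniqueness of the supersingular support), but at each stage the step you present as routine is precisely the hard theorem, and in one place your mechanism is actually wrong. Concretely: to prove simplicity you propose to ``transport a nonzero component indexed by some $w_0\in W_0^{P(\sigma)}$ to the component indexed by $1$ via the action of well-chosen central elements $E_{o_-}(\lambda)$.'' But Proposition~\ref{prop:I_P as A-module} (and Lemma~\ref{lem:I_P+extension as A-module}) say exactly that $E_{o_-}(\lambda)$ acts \emph{diagonally} with respect to the decomposition of Proposition~\ref{prop:decomposition of I_P}: it multiplies the component $\varphi(T_{n_w})$ by $\sigma(E^P_{o_{-,P}}(n_w^{-1}\cdot\lambda))$ or kills it, and never moves the $w_0$-component into the $1$-component. (Moreover the individual $E_{o_-}(\lambda)$ are not central; only the orbit sums $z_{\mathcal{O}}$ are.) Passing between components requires the operators $T_{n_s}$, and controlling those on a submodule is the genuinely delicate content of the simplicity proof; your sketch has no substitute for it. Note also that the auxiliary lemmas you invoke are stated in this paper under standing hypotheses ($q_s=0$, $P(\sigma)=G$, and in Section~\ref{sec:Twist} the orthogonality of $\Delta_P$ and $\Delta\setminus\Delta_P$), so they cannot be cited verbatim for an arbitrary triple $(P,\sigma,Q)$ without a reduction step.

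The other two stages have the same character. For existence, the assertion that a minimal $P$ with $\pi\hookrightarrow I_P(\sigma')$ forces $\sigma'$ to be supersingular is the supersingular-support theorem itself; it does not follow from Proposition~\ref{prop:localization as Levi subalgebra}, and the phrase ``the embedding then factors through $I_P(e_{P(\sigma)}(\sigma))$'' is not meaningful as stated, since $e_{P(\sigma)}(\sigma)$ is an $\mathcal{H}_{P(\sigma)}$-module and $I_{P(\sigma)}(e_{P(\sigma)}(\sigma))$ is only one layer of the relevant filtration of $I_P(\sigma)=I_{P(\sigma)}(I_P^{P(\sigma)}(\sigma))$, whose graded pieces are the $I(P,\sigma,Q)$; one must show $\pi$ matches exactly one layer. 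For uniqueness, declaring that $(P,\sigma)$ is ``recovered intrinsically'' begs the question: the actual argument recovers the triple by computing the adjoint (Jacquet-type) functors applied to $I(P,\sigma,Q)$ and by the uniqueness of the supersingular parameter, neither of which appears in your outline. So the proposal is a plausible table of contents for the proof in \cite{arXiv:1406.1003_accepted}, but it does not constitute a proof, and its one concrete mechanism (moving components with $E_{o_-}(\lambda)$) fails.
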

Let $\chi$ be a character of $Z_\kappa\cap W_{\aff,P}(1)$, $J\subset S_{\aff,P,\chi}$ and $V$ an simple module of $C[\Omega_P(1)_{\Xi_{J,\chi}}]$ whose restriction to $Z_\kappa\cap W_{\aff,P}(1)$ is a direct sum of $\chi$.
Assume that the group generated by $J$ and generated by $S_{P,\aff,\chi}\setminus J$ are finite.
Then we put $I(P;\chi,J,V;Q) = I(P,\pi_{\chi,J,V},Q)$.
This is a simple module.

\subsection{Assumption on $C$}
In the rest of this paper, we always assume that $p = 0$ in $C$ unless otherwise stated since almost all results in this paper is proved only under this assumption.
Since $q_s$ is a power of $p$, this assumption implies $q_w = 0$ for any $w\in W(1)$ such that $\ell(w) > 0$.
When we discuss about simple modules, we also assume that $C$ is a field.
Such assumptions are written at the top of subsections or the statement of theorems.

\section{Twist}\label{sec:Twist}
We define an involution $\iota = \iota_G\colon \mathcal{H}\to \mathcal{H}$ by $\iota(T_w) = (-1)^{\ell(w)}T_w^*$~\cite[Proposition~4.23]{MR3484112} and $\pi^\iota = \pi\circ \iota$ for an $\mathcal{H}$-module $\pi$.
Obviously, $\pi^\iota$ is simple if $\pi$ is simple.
In this section, we calculate $\pi^\iota$ for simple modules $\pi$.

\subsection{Parabolic induction}
Let $P$ be a parabolic subgroup.
Then by \cite[Lemma~4.2]{arXiv:1612.01312}, we have $(\sigma^{\iota_P})_{\ell - \ell_P} = (\sigma_{\ell - \ell_P})^{\iota_P}$.
We denote this module by $\sigma_{\ell - \ell_P}^{\iota_P}$.
We have $I_P(\sigma)^\iota\simeq I'_P(\sigma^{\iota_P}_{\ell - \ell_P})$~\cite[Proposition~4.11]{arXiv:1612.01312}.
In this subsection, we prove the following proposition.
\begin{prop}\label{prop:hom between I and I'}
There exists a homomorphism $\Phi\colon I_P\to I'_P$ which is characterized by $\Phi(\varphi)(XT_{n_{w_Gw_P}}) = \varphi(XT_{n_{w_Gw_P}})$ for any $X\in \mathcal{H}$ and $\varphi\in I_P$.
\end{prop}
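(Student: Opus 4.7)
The plan is to construct $\Phi$ using a free-module decomposition coming from Proposition~\ref{prop:decomposition of I_P}, and to verify that the discrepancy between $j_P^-$ and $j_P^{-*}$ is absorbed after right multiplication by $T_{n_{w_Gw_P}}$.

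I would first observe that the analog of Proposition~\ref{prop:decomposition of I_P}(1) holds for $I_P'(\sigma)$ in place of $I_P(\sigma)$ with the same proof, after interchanging $j_P^{-*}$ and $j_P^-$. This implies that an element $\psi\in I_P'(\sigma)$ is uniquely determined by its values $\psi(T_{n_w})$ for $w\in W_0^P$; equivalently, $\mathcal{H}$ is free as a left $j_P^-(\mathcal{H}_P^-)$-module on the basis $\{T_{n_w}\}_{w\in W_0^P}$. Both existence and the uniqueness asserted by the characterization therefore reduce to prescribing $\Phi(\varphi)(T_{n_w})$ consistently with the formula $\Phi(\varphi)(XT_{n_{w_Gw_P}}) = \varphi(XT_{n_{w_Gw_P}})$ and the required $I_P'$-covariance.

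To do this, I would use that $w_Gw_P$ is the longest element of $W_0^P$ together with the length-additivity $\ell(w_Gw_P\cdot v) = \ell(w_Gw_P) + \ell(v)$ for $v\in W_{0,P}$, which yields identities such as $T_{n_{w_G}} = T_{n_{w_Gw_P}}T_{n_{w_P}}$. Combined with the $j_P^-(\mathcal{H}_P^-)$-action, each $T_{n_w}$ with $w\in W_0^P$ can be expressed modulo $j_P^-(\mathcal{H}_P^-)$ as an element of $\mathcal{H}T_{n_{w_Gw_P}}$, so that $\Phi(\varphi)(T_{n_w})$ is forced by the formula. The core verification is then that the resulting $\Phi(\varphi)$ belongs to $I_P'(\sigma)$, that is,
\[
\varphi(j_P^-(h)XT_{n_{w_Gw_P}}) = \varphi(XT_{n_{w_Gw_P}})\cdot h \qquad (h\in \mathcal{H}_P^-,\ X\in \mathcal{H}).
\]

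The hardest step is this last identity. Since $\varphi\in I_P(\sigma)$ satisfies only the $j_P^{-*}$-covariance, one must analyze the discrepancy $j_P^-(h) - j_P^{-*}(h)$, which unfolds via $T_w^* = (T_{s_1}-c_{s_1})\dotsm(T_{s_l}-c_{s_l})T_u$ into a linear combination of $T_v$'s with $v<w$, and one must show that these lower-order contributions either vanish or match the $I_P$-covariance of $\varphi$ after right multiplication by $T_{n_{w_Gw_P}}$. I would reduce to the case $h = T_{\widetilde{s}}^P$ for $\widetilde{s}\in W_P(1)$ lifting $s\in S_{0,P}$ and argue by induction on $\ell_P$, exploiting the braid and quadratic relations together with the length-additivity above, which makes the discrepancies telescope. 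Once this compatibility is established, the homomorphism $\Phi$ and its characterization follow.
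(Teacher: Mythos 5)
There is a genuine gap, and it sits precisely at the step you call the hardest one. First, the identity you propose as the ``core verification,'' $\varphi(j_P^-(h)XT_{n_{w_Gw_P}}) = \varphi(XT_{n_{w_Gw_P}})\cdot h$, is not the membership condition for $I'_P(\sigma)=\Hom_{(\mathcal{H}_P^-,j_P^-)}(\mathcal{H},\sigma)$: the equivariance defining these induced modules is with respect to \emph{right} multiplication, $\psi(Y\,j_P^-(h)) = \psi(Y)\sigma(h)$ (this is how the paper uses it, e.g.\ in Proposition~\ref{prop:I_P as A-module}), and your formula moreover lets $h$ act directly on a vector rather than through $\sigma$. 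Second, and more seriously, your strategy for absorbing the discrepancy between $j_P^-$ and $j_P^{-*}$ --- reduce to $h=T^P_{\widetilde{s}}$ with $s\in S_{0,P}$ and induct on $\ell_P$ using braid and quadratic relations --- attacks exactly the part of the problem where there is no discrepancy: for such $\widetilde{s}$ one has $\ell(\widetilde{s})=\ell_P(\widetilde{s})$, so $T^{P*}_{\widetilde{s}}=T^P_{\widetilde{s}}-c_{\widetilde{s}}$ and $T^*_{\widetilde{s}}=T_{\widetilde{s}}-c_{\widetilde{s}}$ give $j_P^{-*}(T^P_{\widetilde{s}})=T_{\widetilde{s}}=j_P^-(T^P_{\widetilde{s}})$; and these elements do not generate $\mathcal{H}_P^-$. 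The real content lies in the $\Lambda(1)$-direction, i.e.\ elements $T^P_{n_v\lambda}$ with $\nu(\lambda)\neq 0$, where $\ell\neq\ell_P$. Handling those is what the paper's Claim (``$\varphi(T_{n_{w_Gw_P}}X)$ depends only on $\varphi(T_{n_{w_Gw_P}})$ and $X$'') does, and it requires the basis $E_-(n_w\lambda)$, the central elements $\lambda_P^\pm$ of Proposition~\ref{prop:localization as Levi subalgebra}, length and dominance estimates, the vanishing of positive powers of $q_s$, and in particular the key vanishing $T_{n_{w_Gw_P}}T^*_{n_{w_1}}=0$ for $w_1\notin W_{0,P}$. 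None of this machinery appears in your sketch, and braid/quadratic relations alone do not produce it.

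The ``forcing'' step is also unsubstantiated. You assert that every $T_{n_w}$, $w\in W_0^P$, can be expressed modulo $j_P^-(\mathcal{H}_P^-)$ as an element of $\mathcal{H}T_{n_{w_Gw_P}}$, but the length-additivity identities you cite only give $T_{n_{w_Gw_P}}T_{n_v}=T_{n_{w_Gw_Pv}}$ for $v\in W_{0,P}$, and at $q_s=0$ it is not at all clear that, say, $\psi(T_{n_w})$ for short $w$ is determined by the values on $\mathcal{H}T_{n_{w_Gw_P}}$ by such manipulations. Any correct argument must use the difference between $j_P^-$ and $j_P^{-*}$ essentially: the analogous determination fails on the $j_P^{-*}$ side, since the paper later shows that the kernel of $\Phi$ on $I_Q(e_Q(\sigma))$ is the large submodule $\sum_{Q_1\supsetneq Q}I_{Q_1}(e_{Q_1}(\sigma))$. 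The paper's own route avoids both issues: it proves that $\varphi\mapsto\varphi(T_{n_{w_Gw_P}})$ is $(\mathcal{H}_{P'}^+,j_{P'}^+)$-equivariant into $n_{w_Gw_P}\sigma$, using the intertwining relation $j_{P'}^+(E^{P'}_{o_{+,P'}}(w))T_{n_{w_Gw_P}}=T_{n_{w_Gw_P}}j_P^-(E^P_{o_{+,P}}(n^{-1}wn))$ together with the Claim, and then gets existence, $\mathcal{H}$-linearity and the uniqueness in the characterization from the identification $I'_P(\sigma)\simeq\Hom_{(\mathcal{H}_{P'}^+,j_{P'}^+)}(\mathcal{H},n_{w_Gw_P}\sigma)$ of \cite[Proposition~4.13]{arXiv:1612.01312}. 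As written, your outline would have to re-prove both of these nontrivial inputs, and the tools you list do not suffice.
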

\begin{rem}
Assume that an $\mathcal{H}_P$-module $\sigma$ is not zero and $\varphi\in I_P(\sigma)$.
Then we have $\Phi(\varphi)(T_{n_{w_Gw_P}}) = \varphi(T_{n_{w_Gw_P}})$.
Since $I_P(\sigma)\to \sigma$ defined by $\varphi\mapsto \varphi(T_{n_{w_Gw_P}})$ is surjective by Proposition~\ref{prop:decomposition of I_P}, there exists $\varphi\in I_P(\sigma)$ such that $\varphi(T_{n_{w_Gw_P}})\ne 0$.
Hence $\Phi(\varphi)\ne 0$.
Therefore if $\sigma$ is not zero, then $\Phi\ne 0$.
\end{rem}

The following corollary is the first step to calculate $\pi^\iota$ for a simple $\mathcal{H}$-module $\pi$.

\begin{cor}\label{cor:I=I' for simples}
Assume that $C$ is a field.
Let $\sigma$ be an $\mathcal{H}_P$-module.
The representation $I_P(\sigma)$ is simple if and only if $I'_P(\sigma)$ is simple.
Moreover, if it is the case, then $I_P(\sigma)\simeq I'_P(\sigma)$.
Therefore if $I_P(\sigma)$ is simple, then $I_P(\sigma)^\iota \simeq I_P(\sigma^{\iota_P}_{\ell - \ell_P})$.
\end{cor}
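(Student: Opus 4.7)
The plan is to exploit the homomorphism $\Phi\colon I_P(\sigma)\to I_P'(\sigma)$ provided by Proposition~\ref{prop:hom between I and I'}, which by the Remark following it is nonzero whenever $\sigma\ne 0$. The ``moreover'' part of the corollary is essentially automatic: a nonzero homomorphism between two simple modules is forced to be an isomorphism, so if both $I_P(\sigma)$ and $I_P'(\sigma)$ are simple then $\Phi$ is an isomorphism. The final assertion then follows by combining $I_P(\sigma)\simeq I_P'(\sigma)$ with the identification $I_P(\sigma)^\iota\simeq I_P'(\sigma_{\ell-\ell_P}^{\iota_P})$ from \cite[Proposition~4.11]{arXiv:1612.01312} and applying the iff to $\sigma_{\ell-\ell_P}^{\iota_P}$: $I_P(\sigma)^\iota\simeq I_P'(\sigma_{\ell-\ell_P}^{\iota_P})\simeq I_P(\sigma_{\ell-\ell_P}^{\iota_P})$.

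For the iff itself, I would first build a companion morphism $\Psi\colon I_P'(\sigma)\to I_P(\sigma)$ in the reverse direction. Setting $\tau:=\sigma_{\ell-\ell_P}^{\iota_P}$, the map $\Phi_\tau\colon I_P(\tau)\to I_P'(\tau)$ from Proposition~\ref{prop:hom between I and I'} $\iota$-twists, under the isomorphisms $I_P(\tau)^\iota\simeq I_P'(\sigma)$ and $I_P'(\tau)^\iota\simeq I_P(\sigma)$ (both derived from the cited result together with the fact that $\sigma\mapsto\sigma_{\ell-\ell_P}^{\iota_P}$ is an involution), to the desired $\Psi$. Since $\tau\ne 0$ iff $\sigma\ne 0$, the Remark gives $\Psi\ne 0$ when $\sigma$ is. Now assume $I_P(\sigma)$ is simple: then $\Phi$ is injective (its kernel is a proper submodule of the simple module $I_P(\sigma)$) and $\Psi$ is surjective (its image is a nonzero submodule of $I_P(\sigma)$).

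The last step is to analyze the composition $\Psi\circ\Phi\in\End_{\mathcal{H}}(I_P(\sigma))$. The explicit characterizations of $\Phi$ and $\Psi$ from Proposition~\ref{prop:hom between I and I'} show that both preserve values of their arguments on elements of the form $XT_{n_{w_Gw_P}}$; in particular $\Psi\circ\Phi$ preserves the ``top coordinate'' $\varphi\mapsto\varphi(T_{n_{w_Gw_P}})$ of the decomposition in Proposition~\ref{prop:decomposition of I_P}(1), which is surjective onto $\sigma\ne 0$. Hence $\Psi\circ\Phi\ne 0$, and since $I_P(\sigma)$ is simple so that $\End_{\mathcal{H}}(I_P(\sigma))$ is a division ring, $\Psi\circ\Phi$ is an automorphism. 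This produces a splitting $I_P'(\sigma)\simeq\Phi(I_P(\sigma))\oplus\ker\Psi$. The main obstacle I expect is to rule out $\ker\Psi\ne 0$; the cleanest route is to apply the symmetric argument to $\Phi\circ\Psi\in\End_{\mathcal{H}}(I_P'(\sigma))$, observing that $\Phi\circ\Psi$ also preserves the analogous top coordinate of $I_P'(\sigma)$, so $\ker\Psi$ lies in the kernel of the top-coordinate map and the splitting then forces $\ker\Psi=0$, whence $\Phi$ is an isomorphism. The converse direction of the iff follows by exchanging the roles of $\Phi$ and $\Psi$.
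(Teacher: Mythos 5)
Your opening and closing moves are fine and agree with the paper: $\Phi\neq 0$ when $\sigma\neq 0$ (the Remark), a nonzero homomorphism between simple modules is an isomorphism, and the last sentence does follow by applying the equivalence to $\sigma^{\iota_P}_{\ell-\ell_P}$ together with $I_P(\sigma)^\iota\simeq I'_P(\sigma^{\iota_P}_{\ell-\ell_P})$. The problem is the core of the ``if and only if'', where you replace the paper's dimension count by the pair $(\Phi,\Psi)$ and the analysis of $\Psi\circ\Phi$. Two steps there do not hold up. First, the claim that $\Psi$ ``preserves values on elements of the form $XT_{n_{w_Gw_P}}$'' is unjustified: $\Psi$ is obtained from $\Phi_\tau$ by composing with $\iota$ on both sides, and since $\iota(T_{n_{w_Gw_P}})=(-1)^{\ell(w_Gw_P)}T^*_{n_{w_Gw_P}}$, the characterization you actually inherit is $\Psi(\psi)(XT^*_{n_{w_Gw_P}})=\psi(XT^*_{n_{w_Gw_P}})$ for all $X$, with the starred element. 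Since $\Phi$ is only pinned down on $XT_{n_{w_Gw_P}}$ and $\Psi$ only on $XT^*_{n_{w_Gw_P}}$, nothing formal tells you that $\Psi\circ\Phi$ preserves the evaluation $\varphi\mapsto\varphi(T_{n_{w_Gw_P}})$, so $\Psi\circ\Phi\neq 0$ is not established; this is exactly the kind of $T$ versus $T^*$ comparison that costs the paper real work later (the M\"obius-function computation in subsection~\ref{subsec:exactness}). Second, even granting that $\Psi\circ\Phi$ is an automorphism, your final step is a non sequitur: from $I'_P(\sigma)=\Ima\Phi\oplus\Ker\Psi$ and $\Ker\Psi$ lying in the kernel of a ``top coordinate'' map one cannot conclude $\Ker\Psi=0$; the kernel of such an evaluation map is large, and a direct complement of $\Ima\Phi$ may well sit inside it unless you already know $\dim I'_P(\sigma)\le\dim\Ima\Phi$ --- which is precisely the dimension comparison you never make.

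That comparison is the whole content of the paper's proof, and it makes the companion map $\Psi$ unnecessary: if $I_P(\sigma)$ is simple then $\Phi$ is injective ($\Phi\neq 0$ and its kernel is a proper submodule), $\sigma$ is simple hence finite-dimensional, and by Proposition~\ref{prop:decomposition of I_P} and \cite[Proposition~4.12]{arXiv:1612.01312} both $I_P(\sigma)$ and $I'_P(\sigma)$ have dimension $\#W_0^P\dim\sigma<\infty$, so the injective map $\Phi$ is an isomorphism; the converse direction uses surjectivity of the nonzero $\Phi$ when the target is simple and the same dimension count. If you want to salvage your argument, insert this dimension equality; once it is available, your splitting and the analysis of $\Psi\circ\Phi$ become superfluous.
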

\begin{proof}
If $I_P(\sigma)$ is simple, then the homomorphism in Proposition~\ref{prop:hom between I and I'} is injective.
We prove that $\dim I_P(\sigma) = \dim I'_P(\sigma) < \infty$.
Since $I_P(\sigma)$ is simple, $\sigma$ is also simple.
Hence it is finite-dimensional.
By $I_P(\sigma)\simeq \bigoplus_{w\in W^P_0}\sigma$ (Proposition~\ref{prop:decomposition of I_P}), we have $\dim I_P(\sigma) = \#W_0^P\dim \sigma$.
We also have $\dim I'_P(\sigma) = \#W_0^P\dim \sigma$ by \cite[Proposition~4.12]{arXiv:1612.01312}.
\end{proof}

For the proof of Proposition~\ref{prop:hom between I and I'}, by ~\cite[Proposition~4.13]{arXiv:1612.01312}, it is sufficient to prove the following lemma.
\begin{lem}
Put $P' = n_{w_Gw_P}\opposite{P}n_{w_Gw_P}^{-1}$.
The map $\varphi\mapsto (X\mapsto \varphi(XT_{n_{w_Gw_P}}))$ gives a homomorphism
\[
I_P(\sigma)\to \Hom_{(\mathcal{H}_{P'}^+,j_{P'}^+)}(\mathcal{H},n_{w_Gw_P}\sigma).
\]
\end{lem}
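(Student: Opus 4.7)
The plan is to define $\Psi \colon I_P(\sigma) \to \Hom_C(\mathcal{H},\sigma)$ by $\Psi(\varphi)(X) := \varphi(XT_n)$, where $n := n_{w_Gw_P}$, and to check that $\Psi(\varphi)$ actually takes values in $\Hom_{(\mathcal{H}_{P'}^+, j_{P'}^+)}(\mathcal{H}, n_{w_Gw_P}\sigma)$. Since the defining equivariance condition is $C$-linear in the tested element of $\mathcal{H}_{P'}^+$, it suffices to verify it on the basis $\{T^{P'}_{w'}\}_{w' \in W_{P'}^+(1)}$. Unwinding the definition of the twisted module $n_{w_Gw_P}\sigma$, the required identity is
\[
\varphi(XT_{w'}T_n) \;=\; \varphi(XT_n)\cdot\sigma(T^P_{n^{-1}w'n}) \qquad \text{in } \sigma,
\]
for all $w' \in W_{P'}^+(1)$ and all $X \in \mathcal{H}$.

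A preliminary step is to check that $v := n^{-1}w'n$ lies in $W_P^-(1)$. Writing $w' = n_{v'}\lambda'$ with $v' \in W_{0,P'}$ and $\lambda' \in \Lambda(1)$ being $P'$-positive, one uses the two facts that $w_Gw_P$ sends $\Delta_P$ onto $\Delta_{P'}$ (hence conjugates $W_{0,P}$ onto $W_{0,P'}$), and that $w_Gw_P$ sends $\Sigma^+\setminus\Sigma_P^+$ onto $\Sigma^-\setminus\Sigma_{P'}^-$ (because $w_P$ permutes $\Sigma^+\setminus\Sigma_P^+$ while $w_G$ sends $\Sigma_P^+$ onto $\Sigma_{P'}^-$). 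The second fact converts the $P'$-positivity of $\lambda'$ into the $P$-negativity of $n^{-1}\lambda'n$.

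For the displayed identity, I would first establish the length additivities $\ell(w'n) = \ell(w') + \ell(n) = \ell(n) + \ell(v)$, which follow from the Iwahori--Matsumoto length formula combined with the $P'$-positivity of the translation part of $w'$ and the fact that $n$ represents the longest element of $W_0^P$; the braid relation then yields $T_{w'}T_n = T_{w'n} = T_n T_v$. A direct appeal to the $(\mathcal{H}_P^-, j_P^{-*})$-equivariance of $\varphi$ is obstructed by the inequality $j_P^- \ne j_P^{-*}$, so $T_v \ne j_P^{-*}(T^P_v)$ in general. To handle this, I plan to work with the $E$-bases: the compatibility $j_P^{-*}(E^P_{o_{-,P}}(v)) = E_{o_-}(v)$ from \cite[Lemma~2.6]{arXiv:1612.01312}, combined with the $(\mathcal{H}_P^-, j_P^{-*})$-equivariance of $\varphi$, gives $\varphi(Y\cdot E_{o_-}(v)) = \varphi(Y) \cdot \sigma(E^P_{o_{-,P}}(v))$ for every $Y \in \mathcal{H}$; the product formula~\eqref{eq:product formula} then lets one rewrite $T_n T_v$ in terms of $E_{o_-}(v)$, and tracking the resulting change-of-basis coefficients recovers $\sigma(T^P_v)$ from $\sigma(E^P_{o_{-,P}}(v))$, completing the identity.

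The main obstacle is this last transfer between the $T$- and $E$-bases, which requires careful bookkeeping of the spherical orientations and of the lower-order corrections produced when passing from one basis to the other. I expect the $E$-basis formalism of \cite{MR3484112} together with the compatibility lemmata of \cite{arXiv:1612.01312} to make this bookkeeping tractable.
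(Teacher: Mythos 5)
Your reductions are fine: it suffices to check the $(\mathcal{H}_{P'}^+,j_{P'}^+)$-equivariance on a spanning set of $\mathcal{H}_{P'}^+$, the element $v=n^{-1}w'n$ is indeed $P$-negative, and the length additivities $\ell(w'n)=\ell(w')+\ell(n)=\ell(n)+\ell(v)$ do hold (they follow, e.g., from \cite[Lemma~2.18]{arXiv:1612.01312} applied to the inverses, since $w_Gw_P\in W_0^P$ and $w_Pw_G\in W_0^{P'}$), so $T_{w'}T_n=T_nT_v$; this is a legitimate substitute for the paper's use of the intertwining identity (4.1) of \cite{arXiv:1612.01312} in the $E$-basis. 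You also correctly locate the obstruction: $\varphi$ is equivariant for $j_P^{-*}$, not $j_P^-$, so one cannot pull $T_v$ out directly. But the final step — ``rewrite $T_nT_v$ in terms of $E_{o_-}(v)$ and track change-of-basis coefficients to recover $\sigma(T^P_v)$ from $\sigma(E^P_{o_{-,P}}(v))$'' — is not bookkeeping; it is the entire content of the lemma, and as described it does not close. Writing $T_v=E_{o_-}(v)+\sum_{u<v}c_uT_u$ (or comparing with $T_v^*=j_P^{-*}(T_v^{P*})$), the correction terms involve $u<v$ in the Bruhat order of $W(1)$, which in general lie outside $W_P^-(1)$ (their finite parts leave $W_{0,P}$), so the $j_P^{-*}$-equivariance of $\varphi$ says nothing about $\varphi(XT_nT_u)$; and the lower-order terms of $E^P_{o_{-,P}}(v)$ inside $\mathcal{H}_P$ are indexed by the Bruhat order of $W_P(1)$, so there is no term-by-term matching to exploit. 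In effect you must prove $\varphi\bigl(XT_n(T_v-T_v^*)\bigr)=\varphi(XT_n)\bigl(\sigma(T_v^P)-\sigma(T_v^{P*})\bigr)$ for all $X$, which is exactly the hard identity.

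What actually makes this work in the paper is a different mechanism, and it uses $q_s=0$ in an essential way: (a) right-multiply by the central element $E_{o_-}(\lambda_P^-)$ of Proposition~\ref{prop:localization as Levi subalgebra} to push everything into the $P$-negative cone, then divide by the invertible $\sigma(E^P_{o_{-,P}}(\lambda_P^-))$; (b) expand in the mixed basis $E_-(n_w\lambda)=q_{n_w\lambda}^{1/2}q_{n_w}^{-1/2}q_\lambda^{-1/2}T_{n_w}^*E_{o_-}(\lambda)$ and kill every term whose $W_0^P$-component $w_1$ is nontrivial via the vanishing $T_nT_{n_{w_1}}^*=0$, which comes from the length inequality $\ell(n)+\ell(w_1)>\ell(nn_{w_1})$ together with \eqref{eq:product formula} and the specialization $q_s=0$; (c) conclude that $\varphi(T_nX)$ depends only on $\varphi(T_n)$ and $X$, reduce to the element $\varphi_0$ supported at $T_n$, and quote the computation from the proof of \cite[Proposition~4.14]{arXiv:1406.1003_accepted}. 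Your proposal never invokes $q_s=0$ (nor any analogue of the vanishing in (b)), and without it the asserted identity $\varphi(XT_nT_v)=\varphi(XT_n)\sigma(T_v^P)$ is false for generic parameters, so no orientation/coefficient bookkeeping alone can complete the argument. To repair the proposal you would need to import precisely the localization trick (a) and the $q_s=0$ vanishing (b), at which point you have reconstructed the paper's proof.
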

\begin{proof}
Set $n = n_{w_Gw_P}$ and we prove $I_P(\sigma)\ni\varphi\mapsto \varphi(T_n)\in n\sigma$ is $(\mathcal{H}_{P'}^+,j_{P'}^+)$-module homomorphism.
Let $w\in W_{P'}(1)$ be a $P'$-positive element.
By (4.1) in \cite{arXiv:1612.01312}, we have $j_{P'}^+(E_{o_{+,P'}}^{P'}(w))T_n  = T_nj_P^-(E^P_{o_{+,P}}(n^{-1}wn))$.
Hence
\begin{align*}
(\varphi j_{P'}^+(E_{o_{+,P'}}^{P'}(w)))(T_n) & = 
\varphi(j_{P'}^+(E_{o_{+,P'}}^{P'}(w))T_n)\\
& = \varphi(T_nj_P^-(E^P_{o_{+,P}}(n^{-1}wn))).
\end{align*}

We prove the following claim.
From this claim, $(\varphi j_{P'}^+(E_{o_{+,P'}}^{P'}(w)))(T_n)$ only depends on $\varphi(T_n)$ and $w$.
\begin{claim}
Let $\varphi\in I_P(\sigma)$ and $X\in \mathcal{H}$.
Then $\varphi(T_nX)$ only depends on $\varphi(T_n)$ and $X$.
\end{claim}
We introduce a basis defined by
\[
E_-(n_w \lambda) = q_{n_w \lambda}^{1/2}q_{n_w}^{-1/2}q_\lambda^{-1/2}T_{n_w}^* E_{o_-}(\lambda)
\]
for $w\in W_0$ and $\lambda\in\Lambda(1)$.
By \cite[Lemma~4.2]{arXiv:1406.1003_accepted}, $\{E_-(w)\mid w\in W(1)\}$ is a $C$-basis of $\mathcal{H}$.

To prove the claim, we may assume $X = E_-(n_w\lambda)$ for $w\in W_0$ and $\lambda\in \Lambda(1)$.
Take $\lambda_P^-\in \Lambda(1)$ as in Proposition~\ref{prop:localization as Levi subalgebra}  such that $\lambda \lambda_P^-$ is $P$-negative.
Then we have 
\[
\varphi(T_nE_-(n_w\lambda)) = \varphi(T_nE_-(n_w\lambda)E_{o_-}(\lambda_P^-))\sigma(E^P_{o_{-,P}}(\lambda_P^-))^{-1}
\]
by \cite[Lemma~2.6]{arXiv:1612.01312}.
If $\ell(n_w\lambda) + \ell(\lambda_P^-) > \ell(n_w\lambda \lambda_P^-)$, then $E_-(n_w\lambda)E_{o_-}(\lambda_P^-) = 0$.
Hence we have $\varphi(T_nE_-(n_w\lambda)) = 0$, so we get the claim.
If $\ell(n_w\lambda) + \ell(\lambda_P^-) = \ell(n_w\lambda \lambda_P^-)$, then $E_-(n_w\lambda)E_{o_-}(\lambda_P^-) = E_-(n_w\lambda \lambda_P^-)$.
Let $w_1\in W_0^P$ and $w_2\in W_{0,P}$ such that $w = w_1w_2$.
Then $n_{w_2}\lambda \lambda_P^-\in W_P(1)$ is $P$-negative.
Hence $\ell(n_{w_1}n_{w_2}\lambda \lambda_P^-) = \ell(n_{w_1}) + \ell(n_{w_2}\lambda \lambda_P^-)$ by \cite[Lemma~2.18]{arXiv:1612.01312}.
Therefore we have $E_-(n_w\lambda\lambda_P^-) = T_{n_{w_1}}^*E_-(n_ {w_2}\lambda\lambda_P^-)$.
If $w_1\ne  1$, then $w_1\notin W_{0,P}$.
Hence in a reduced expression of $w_1$, a simple reflection $s_\alpha$ for some $\alpha\in\Delta\setminus\Delta_P$ appears.
Therefore, there exists $x\in W_{0,P}$, $\alpha\in\Delta\setminus\Delta_P$ and $y\in W_0$ such that $w_1 = xs_\alpha y$ and $\ell(w_1) = \ell(x) + \ell(s_\alpha) + \ell(y)$.
Since $x\in W_{0,P}$, $x(\alpha)\in \Sigma^+\setminus\Sigma_P^+$.
Hence $w_Px(\alpha) > 0$.
Therefore $w_Gw_Px(\alpha) < 0$.
Hence $nn_xn_{s_\alpha} < nn_x$.
Therefore we have
\begin{align*}
\ell(n) + \ell(w_1) & = \ell(w_Gw_P) + \ell(x) + \ell(s_\alpha) + \ell(y)\\
& \ge \ell(w_Gw_Px) + \ell(s_\alpha) + \ell(y)\\
& > \ell(w_Gw_Pxs_\alpha) + \ell(y)\\
& \ge \ell(w_Gw_Pxs_\alpha y) = \ell(nn_{w_1}).
\end{align*}
Hence $T_nT_{n_{w_1}}^* = E_{o_+\cdot n^{-1}}(n)E_{o_+}(n_{w_1}) = 0$ by \cite[Example~5.22]{MR3484112} and \eqref{eq:product formula}.
Therefore, if $w_1\ne 1$, namely $w\notin W_{0,P}$, then we have $\varphi(T_nE_-(n_w\lambda\lambda_P^-)) = \varphi(T_nT_{n_{w_1}}^*E_-(n_{w_2}\lambda\lambda_P^-)) = 0$ again.
If $w\in W_{0,P}$, then $n_w\lambda \lambda_P^-$ is a $P$-negative element.
Hence $E^P_-(n_w\lambda\lambda_P^-)\in \mathcal{H}_P^-$ and we have $E_-(n_w\lambda\lambda_P^-) = j_P^{-*}(E_-^P(n_w\lambda \lambda_P^-))$ by \cite[Lemma~4.6]{arXiv:1406.1003_accepted}.
Therefore we have 
\begin{align*}
\varphi(T_nE_-(n_w\lambda)) & = \varphi(T_nE_-(n_w\lambda\lambda_P^-))\sigma(E^P_{o_{-,P}}(\lambda_P^-))^{-1}\\
& = \varphi(T_n)\sigma(E_-^P(n_w\lambda\lambda_P^-)E^P_{o_{-,P}}(\lambda_P^-)^{-1})\\
& = \varphi(T_n)\sigma(E_-^P(n_w\lambda)).
\end{align*}
The claim is proved.

Let $\varphi_0\in I_P(\sigma)$ be such that $\varphi_0(T_n) = \varphi(T_n)$ and $\varphi_0(T_{n_v}) = 0$ for $v\in W_0^P\setminus\{w_Gw_P\}$.
Then, as a consequence of the claim, we have
\[
(\varphi j_{P'}^+(E_{o_{+,P'}}^{P'}(w)))(T_n)
=
(\varphi_0 j_{P'}^+(E_{o_{+,P'}}^{P'}(w)))(T_n).
\]
By the proof of \cite[Proposition~4.14]{arXiv:1406.1003_accepted}, we have
\[
(\varphi_0 j_{P'}^+(E_{o_{+,P'}}^{P'}(w)))(T_n)
=
\varphi_0(T_n)(n\sigma)(E_{o_+,P'}^{P'}(w)).
\]
Since $\varphi_0(T_n) = \varphi(T_n)$, we have
\[
\varphi_0(T_n)(n\sigma)(E_{o_+,P'}^{P'}(w))
=
\varphi(T_n)(n\sigma)(E_{o_+,P'}^{P'}(w)).
\]
Hence
\[
(\varphi j_{P'}^+(E_{o_{+,P'}}^{P'}(w)))(T_n)
=
\varphi(T_n)(n\sigma)(E_{o_+,P'}^{P'}(w)).
\]
We get the lemma.
\end{proof}
Here is the compatibility with the transitivity of $I_P$ and $I'_P$~\cite[Proposition~4.12]{arXiv:1612.01312}.
\begin{lem}\label{lem:transitivity of I->I'}
Let $Q\supset P$ be a parabolic subgroups. 
Then the following three maps are equal.
\begin{enumerate}
\item $I_P\to I'_P$.
\item $I_P = I_Q\circ I_P^Q\to I'_Q\circ I_P^Q\to I'_Q\circ I^{Q\prime}_P = I'_P$.
\item $I_P = I_Q\circ I_P^Q\to I_Q\circ I_P^{Q\prime}\to I'_Q\circ I^{Q\prime}_P = I'_P$.
\end{enumerate}
\end{lem}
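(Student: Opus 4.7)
The plan is to exploit the uniqueness part of Proposition~\ref{prop:hom between I and I'}: since $\Phi$ is characterized by the identity $\Phi(\varphi)(XT_{n_{w_Gw_P}}) = \varphi(XT_{n_{w_Gw_P}})$ for all $X \in \mathcal{H}$, it suffices to show that the three maps (1), (2), (3) all yield $\mathcal{H}$-module homomorphisms $I_P(\sigma) \to I'_P(\sigma)$ taking $\varphi$ to an element whose value at $T_{n_{w_Gw_P}}$ equals $\varphi(T_{n_{w_Gw_P}})$. Maps (2) and (3) are compositions of natural transformations, so they are $\mathcal{H}$-module homomorphisms; the characterization then reduces everything to a single computation at $T_{n_{w_Gw_P}}$.

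The key combinatorial input is the factorization $w_Gw_P = (w_Gw_Q)(w_Qw_P)$ with additive lengths. Indeed, $w_Gw_P$ is the longest element in $W_0^P$, $w_Gw_Q$ the longest in $W_0^Q$, and $w_Qw_P$ the longest in $W_{0,Q} \cap W_0^P$, which provides the desired factorization (via the bijection $W_0^Q \times W_{0,Q}^P \to W_0^P$ with additive lengths). By the convention on the chosen lifts $n_w$, this gives $n_{w_Gw_P} = n_{w_Gw_Q}\, n_{w_Qw_P}$ and hence the braid relation $T_{n_{w_Gw_P}} = T_{n_{w_Gw_Q}}\, T_{n_{w_Qw_P}}$.

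For map (2), I would unfold the transitivity isomorphism $I_P(\sigma) = I_Q(I_P^Q(\sigma))$ and identify $\varphi \in I_P(\sigma)$ with $\tilde\varphi \colon \mathcal{H} \to I_P^Q(\sigma)$. Applying first $\Phi_Q$ to the outer induction and then $\Phi_P^Q$ inside yields, by successive use of Proposition~\ref{prop:hom between I and I'} at levels $Q$ and $(Q,P)$,
\begin{equation*}
\bigl(I'_Q(\Phi_P^Q) \circ \Phi_Q\bigr)(\tilde\varphi)(T_{n_{w_Gw_Q}}\, T_{n_{w_Qw_P}}) = \tilde\varphi(T_{n_{w_Gw_Q}})(T^Q_{n_{w_Qw_P}}) = \varphi(T_{n_{w_Gw_P}}),
\end{equation*}
where the last equality uses the factorization above together with the identification making up the transitivity of $I_P$. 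For map (3) the computation is parallel: one applies $\Phi_P^Q$ first to produce a function taking values in $I_P^{Q\prime}(\sigma)$, then $\Phi_Q$ on the outer variable; both characterizing identities combine to give the same value $\varphi(T_{n_{w_Gw_P}})$ at $T_{n_{w_Gw_P}}$. Uniqueness in Proposition~\ref{prop:hom between I and I'} then forces all three maps to coincide.

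The main obstacle I anticipate is purely bookkeeping: one must check that the transitivity isomorphisms $I_P \simeq I_Q \circ I_P^Q$ and $I'_P \simeq I'_Q \circ I^{Q\prime}_P$ from \cite[Proposition~4.12]{arXiv:1612.01312} intertwine evaluation at $T_{n_{w_Gw_P}}$ with the factored evaluation at $T_{n_{w_Gw_Q}}$ followed by $T^Q_{n_{w_Qw_P}}$. Once this compatibility is laid out, the lemma follows immediately from the characterizing property of each $\Phi$ without any further computation.
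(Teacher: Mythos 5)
Your proposal follows essentially the same route as the paper's proof: both verify that maps (2) and (3) satisfy the characterizing identity of $\Phi$ from Proposition~\ref{prop:hom between I and I'}, using the length-additive factorization $T_{n_{w_Gw_P}} = T_{n_{w_Gw_Q}}T_{n_{w_Qw_P}}$ and the compatibility of the transitivity identifications with evaluation (via $j_Q^{-}$, $j_Q^{-*}$ sending $T^Q_{n_{w_Qw_P}}$ to $T_{n_{w_Qw_P}}$), which is exactly the "bookkeeping" the paper carries out in detail. Your additional observation that $\mathcal{H}$-linearity reduces the check to the single evaluation at $T_{n_{w_Gw_P}}$ is a correct minor streamlining, not a different argument.
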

\begin{proof}
Let $\sigma$ be an $\mathcal{H}_P$-module.
In each cases, let $\varphi\in I_P(\sigma)$ and $\psi_i\in I'_P(\sigma)$ the image of $\varphi$ by the map in $(i)$ for $i = 1,2,3$.
Then $\psi_1$ is characterized by $\varphi(XT_{n_{w_Gw_P}}) = \psi_1(XT_{n_{w_Gw_P}})$ for any $X\in \mathcal{H}$.
We denote the corresponding element to $\varphi\in I_P(\sigma)$ (resp.\ $\psi_i\in I'_P(\sigma)$) by $\varphi'\in (I_Q\circ I_P^Q)(\sigma)$ (resp.\ $\psi_i'\in (I'_Q\circ I^{Q\prime}_P)(\sigma)$).

We consider $\psi_2$.
We have $\psi_2(XT_{n_{w_Gw_P}}) = \psi'_2(XT_{n_{w_Gw_P}})(1)$.
Since $\ell(w_Gw_P) = \ell(w_Gw_Q) + \ell(w_Qw_P)$, we have $T_{n_Gw_P} = T_{n_{w_Gw_Q}}T_{n_{w_Qw_P}}$.
Hence $\psi'_2(XT_{n_{w_Gw_P}})(1) = \psi'_2(XT_{n_{w_Gw_Q}}T_{n_{w_Qw_P}})(1)$.
Since $T^Q_{n_{w_Qw_P}}\in \mathcal{H}_Q^-$ and $j_Q^-(T^Q_{n_{w_Qw_P}}) = T_{n_{w_Qw_P}}$~ \cite[Lemma~2.6]{arXiv:1612.01312}, we have $\psi'_2(XT_{n_{w_Gw_Q}}T_{n_{w_Qw_P}})(1) = \psi'_2(XT_{n_{w_Gw_Q}})(T^Q_{n_{w_Qw_P}})$.
Let $\psi''_2\in (I'_Q\circ I_P^Q)(\sigma)$ be the image of $\varphi$.
Then $\psi''_2(XT_{n_{w_Gw_Q}})(Y) = \varphi'(XT_{n_{w_Gw_Q}})(Y)$ and $\psi''_2(X)(YT^Q_{n_{w_Qw_P}}) = \psi'_2(X)(YT^Q_{n_{w_Qw_P}})$ for $X\in \mathcal{H}$ and $Y\in \mathcal{H}_Q$.
Therefore we have $\psi_2(XT_{n_{w_Gw_P}}) = \psi''_2(XT_{n_{w_Gw_P}})(T^Q_{n_{w_Qw_P}}) = \varphi'(XT_{n_{w_Gw_P}})(T^Q_{n_{w_Qw_P}})$.
Again, since $T^Q_{n_{w_Qw_P}}\in \mathcal{H}_Q^-$ and $j_{Q}^{-*}(T_{n_{w_Qw_P}}^Q) = T_{n_{w_Qw_P}}$ \cite[Lemma~2.6]{arXiv:1612.01312}, we have $\varphi'(XT_{n_{w_Gw_Q}})(T^Q_{n_{w_Qw_P}}) = \varphi'(XT_{n_{w_Gw_Q}}T_{n_{w_Qw_P}})(1) = \varphi'(XT_{n_{w_Gw_P}})(1) = \varphi(XT_{n_{w_Gw_P}})$.
Hence the map in (2) satisfies the characterization of the map in (1).

The proof for (3) is similar.
Let $\psi''_3\in (I_Q\circ I_P^{Q\prime})(\sigma)$ be the image of $\varphi$.
Then we have
\begin{align*}
\psi_3(XT_{n_{w_Gw_P}})  & = \psi'_3(XT_{n_{w_Gw_P}})(1)\\
& = \psi'_3(XT_{n_{w_Gw_Q}}T_{n_{w_Qw_P}})(1)\\
& = \psi'_3(XT_{n_{w_Gw_Q}})(T^Q_{n_{w_Qw_P}})\\
& = \psi''_3(XT_{n_{w_Gw_Q}})(T^Q_{n_{w_Qw_P}})\\
& = \varphi'(XT_{n_{w_Gw_Q}})(T^Q_{n_{w_Qw_P}})\\
& = \varphi'(XT_{n_{w_Gw_Q}}T_{n_{w_Qw_P}})(1)\\
& = \varphi'(XT_{n_{w_Gw_P}})(1)\\
& = \varphi(XT_{n_{w_Gw_P}}).
\end{align*}
This finish the proof.
\end{proof}

\subsection{Steinberg modules}\label{subsec:twist:Steinberg modules}
Next we consider the twist of the generalized Steinberg modules.
Until subsection \ref{subsec:exactness}, we keep the following settings.
\begin{itemize}
\item $P$ is a parabolic subgroup such that $\Delta_P$ is orthogonal to $\Delta\setminus\Delta_P$.
\item Let $P_2$ be a parabolic subgroup corresponding to $\Delta\setminus\Delta_P$.
\item $\sigma$ is an $\mathcal{H}_P$-module such that $P(\sigma) = G$.
\end{itemize}

We prove the following proposition.
\begin{thm}\label{thm:twist of Steinberg}
Let $Q$ be a parabolic subgroup containing $P$ and $Q^c$ the parabolic subgroup corresponding to $\Delta_P\cup (\Delta\setminus\Delta_Q)$.
Then we have $\St_Q(\sigma)^\iota \simeq \St_{Q^c}(\sigma^{\iota_P}_{\ell - \ell_P})$.
\end{thm}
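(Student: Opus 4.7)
The plan is to apply the exact twist functor $\pi\mapsto\pi^\iota$ to the defining cokernel presentation of $\St_Q(\sigma)$ and to identify the result with $\St_{Q^c}\bigl(\sigma^{\iota_P}_{\ell-\ell_P}\bigr)$. Since $\iota$ is an algebra automorphism, $(-)^\iota$ is exact, and
\[
\St_Q(\sigma)^\iota=\Coker\Biggl(\bigoplus_{Q_1\supsetneq Q}I_{Q_1}(e_{Q_1}(\sigma))^\iota\longrightarrow I_Q(e_Q(\sigma))^\iota\Biggr).
\]
Applying the identity $I_{Q_1}(\tau)^\iota\simeq I'_{Q_1}\bigl(\tau^{\iota_{Q_1}}_{\ell-\ell_{Q_1}}\bigr)$ from \cite[Prop.~4.11]{arXiv:1612.01312} rewrites each term as an $I'$-induction of a suitably twisted module.

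The first non-trivial step is the extension compatibility
\[
e_{Q_1}(\sigma)^{\iota_{Q_1}}_{\ell-\ell_{Q_1}}\simeq e_{Q_1}\bigl(\sigma^{\iota_P}_{\ell-\ell_P}\bigr),
\]
which I would verify by checking both sides against the characterising conditions for the extension. The orthogonality $\Delta_P\perp\Delta\setminus\Delta_P$ is essential here: it makes the length function additive on the factorisation $W_{Q_1}=W_P\times W_{(Q_1)_2}$, so that the sign $(-1)^{\ell(w)-\ell_{Q_1}(w)}$ collapses to $(-1)^{\ell(w)-\ell_P(w)}$ on the $W_P$-part and to $+1$ on the affine $W_{(Q_1)_2}$-part, matching exactly the data prescribing $e_{Q_1}\bigl(\sigma^{\iota_P}_{\ell-\ell_P}\bigr)$ on the $T_w^{Q_1*}$ for the two types of $w$.

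After substitution, $\St_Q(\sigma)^\iota$ equals the cokernel of
\[
\bigoplus_{Q_1\supsetneq Q}I'_{Q_1}\bigl(e_{Q_1}(\sigma^{\iota_P}_{\ell-\ell_P})\bigr)\longrightarrow I'_Q\bigl(e_Q(\sigma^{\iota_P}_{\ell-\ell_P})\bigr).
\]
The main obstacle is to identify this $I'$-presentation---indexed by parabolic subgroups \emph{containing} $Q$---with the $I$-presentation of $\St_{Q^c}\bigl(\sigma^{\iota_P}_{\ell-\ell_P}\bigr)$, which is indexed by parabolic subgroups \emph{containing} $Q^c$. The bijection $Q_1\mapsto Q_1^c$ is order-reversing, so no termwise matching is possible. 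I would bridge the two presentations via the natural transformation $\Phi\colon I\to I'$ of Proposition~\ref{prop:hom between I and I'} together with its compatibility with induction in stages (Lemma~\ref{lem:transitivity of I->I'}), exploiting the Boolean-lattice structure on $\{Q_1\colon P\subset Q_1\subset G\}$ enforced by the orthogonality. A concrete route is induction on the corank $|\Delta\setminus\Delta_Q|$, peeling off one simple root at a time and using Lemma~\ref{lem:embedding of smaller parabolic induction} to compare adjacent layers of the two Koszul-type resolutions through $\Phi$. Reconciling these two resolutions is the genuine technical heart of the argument.
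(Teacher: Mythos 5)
Your first half is fine and agrees with the paper: twisting by the involution is exact, $I_{Q_1}(\tau)^\iota\simeq I'_{Q_1}(\tau^{\iota_{Q_1}}_{\ell-\ell_{Q_1}})$, and the identification $e_{Q_1}(\sigma)^{\iota_{Q_1}}_{\ell-\ell_{Q_1}}\simeq e_{Q_1}(\sigma^{\iota_P}_{\ell-\ell_P})$ (checked against the characterization of the extension, using the orthogonality of $\Delta_P$ and $\Delta\setminus\Delta_P$) is exactly how the paper handles the extensions. So you correctly reduce to identifying
\[
\Coker\Bigl(\bigoplus_{Q_1\supsetneq Q}I'_{Q_1}\bigl(e_{Q_1}(\sigma')\bigr)\to I'_Q\bigl(e_Q(\sigma')\bigr)\Bigr)\quad\text{with}\quad \St_{Q^c}(\sigma'),\qquad \sigma'=\sigma^{\iota_P}_{\ell-\ell_P}.
\]

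The gap is that this identification, which you yourself call the technical heart, is never proved; it is precisely where all the work of the paper lies, and your sketched strategy does not engage with the actual difficulty. To compare the $I'$-presentation (indexed by $Q_1\supsetneq Q$) with the definition of $\St_{Q^c}$ one needs concrete control of two things: the maps $I'_Q(e_Q(\sigma'))\to I'_{Q_1}(e_{Q_1}(\sigma'))$ (resp.\ the twisted inclusions among the $I'$-inductions) in terms of the bases of Proposition~\ref{prop:decomposition of I_P}, and the image/kernel of the comparison map $\Phi\colon I_Q(e_Q(\sigma))\to I'_Q(e_Q(\sigma))$ of Proposition~\ref{prop:hom between I and I'}. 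The paper's route is to sidestep a direct cokernel computation: it first re-expresses $\St_{Q^c}(\sigma)$ as a \emph{kernel}, $0\to\St_{Q^c}(\sigma)\to I_Q(\St_P^Q(\sigma))\to\bigoplus_{Q_1\supsetneq Q}I_{Q_1}(\St_P^{Q_1}(\sigma))$ (Lemma~\ref{lem:St as kernel}, resting on the lattice computation of Lemma~\ref{lem:intersection and union of parabolic inductions}), and then proves the four-term exactness of Lemma~\ref{lem:exact sequence related to I and I'}. That exactness in turn requires: the determination $\Ker\bigl(I_Q(e_Q(\sigma))\to I'_Q(e_Q(\sigma))\bigr)=\sum_{Q_1\supsetneq Q}I_{Q_1}(e_{Q_1}(\sigma))$, proved by a delicate Bernstein-relation and length estimate argument; the explicit formula $\varphi'(T^*_{n_w})=(-1)^{\ell(w_{Q_1}w_Q)}\varphi(T^*_{n_{ww_{Q_1}w_Q}})$ for the maps between $I'$-inductions (Proposition~\ref{prop:explicit formula of I'_Q(e_Q) -> I'_(Q_1)(e_(Q_1))} and Lemma~\ref{lem:kernel of I'_Q(e_Q) -> plusI'_(Q_1)(e_(Q_1))}); and the surjectivity onto the kernel (Lemma~\ref{lem:detect the kernel}), which uses Deodhar's M\"obius function for the Bruhat order on $W_0^Q$ plus a further induction constructing elements with prescribed $T^*_{n_w}$-coordinates. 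Your proposed induction on the corank of $Q$ ``peeling off one simple root'' via Lemma~\ref{lem:embedding of smaller parabolic induction} does not substitute for any of this: that lemma only describes the embeddings of the $I$-inductions in the $T_{n_w}$-coordinates, whereas your reconciliation problem lives entirely on the $I'$-side and on the image of $\Phi$, neither of which your proposal computes. As written, the proposal is a correct reduction plus an unproven claim, not a proof.
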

The proof of this proposition continues until subsection~\ref{subsec:exactness}.
In this subsection, we prove this proposition for $Q = P$.
\begin{lem}\label{lem:twist of extension}
Theorem~\ref{thm:twist of Steinberg} is true if $Q = P$.
\end{lem}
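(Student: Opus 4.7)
The plan is to first reduce the right-hand side to a much more concrete object, then identify $\St_P(\sigma)$ as a vector space, and finally match the $\mathcal{H}$-action. Since $Q = P$, the complementary parabolic $Q^c$ corresponds to $\Delta_P \cup (\Delta \setminus \Delta_P) = \Delta$, so $Q^c = G$. By definition, $\St_G(\tau) = \Coker(\bigoplus_{Q_1 \supsetneq G} I_{Q_1}^G(e_{Q_1}(\tau)) \to I_G(e_G(\tau))) = e_G(\tau)$ since the indexing set is empty and $I_G$ is trivial induction. So the lemma reduces to the claim $\St_P(\sigma)^\iota \simeq e_G(\sigma^{\iota_P}_{\ell-\ell_P})$, where we implicitly use that $P(\sigma^{\iota_P}_{\ell-\ell_P}) = G$ (an easy consequence of $P(\sigma) = G$ and the definitions of the twists).

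Next I would describe $\St_P(\sigma)$ explicitly as a vector space. Under Proposition~\ref{prop:decomposition of I_P}(1), $I_P(\sigma) \simeq \bigoplus_{w \in W_0^P} \sigma$ via $\varphi \mapsto (\varphi(T_{n_w}))_w$. Lemma~\ref{lem:embedding of smaller parabolic induction} identifies the image of $I_{Q_1}(e_{Q_1}(\sigma))$ inside $I_P(\sigma)$ with $\bigoplus_{w \in W_0^{Q_1}} \sigma$. By the orthogonality $\Delta_P \perp \Delta \setminus \Delta_P$, we have $W_0 = W_{0,P} \times W_{0,P_2}$, so $W_0^P = W_{0,P_2}$, and for $\alpha \in \Delta \setminus \Delta_P$, $W_0^{P_\alpha} = \{w \in W_{0,P_2} \mid w(\alpha) > 0\}$. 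Since the longest element $w_{P_2}$ is the unique element of $W_{0,P_2}$ sending every simple root in $\Delta_{P_2}$ to a negative root, the union $\bigcup_{\alpha \in \Delta \setminus \Delta_P} W_0^{P_\alpha}$ equals $W_{0,P_2} \setminus \{w_{P_2}\}$. Hence the cokernel $\St_P(\sigma)$ is naturally identified with $\sigma$ via the evaluation $\varphi \mapsto \varphi(T_{n_{w_{P_2}}})$, and note $n_{w_{P_2}} = n_{w_G w_P}$ by orthogonality.

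Now I would define the candidate isomorphism $\Psi \colon \St_P(\sigma)^\iota \to e_G(\sigma^{\iota_P}_{\ell-\ell_P})$ by $\varphi \mapsto \varphi(T_{n_{w_{P_2}}})$ and verify it is $\mathcal{H}$-equivariant. Unraveling, this amounts to showing that for every $X \in \mathcal{H}$ and every $\varphi \in I_P(\sigma)$,
\[
(\varphi \cdot \iota(X))(T_{n_{w_{P_2}}}) = e_G(\sigma^{\iota_P}_{\ell-\ell_P})(X) \cdot \varphi(T_{n_{w_{P_2}}})
\]
modulo the image of $\sum_{Q_1 \supsetneq P} I_{Q_1}(e_{Q_1}(\sigma))$. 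It suffices to check this on a set of generators of $\mathcal{H}$: (i) translations $E_{o_-}(\lambda)$ for $\lambda \in \Lambda(1)$, using Lemma~\ref{lem:I_P+extension as A-module} with $w = w_{P_2}$; and (ii) the basis elements $T_s$ for $s \in S_\aff$. For case (ii) I would split $S_\aff$ into reflections lying in $W_P$ and those lying in $W_{P_2}$ (no other possibility, by orthogonality and the structure of $S_\aff$), compute the products $T_{n_{w_{P_2}}} T_s$ via the quadratic/braid relations (using $q_s = 0$), and push through the projection onto the $w_{P_2}$-component. For $s \in W_{P_2}$ one gets the extension value $1$ for $T_s^*$, so $\iota(T_s) = -T_s^*$ contributes a sign matching the $(-1)^{\ell(s)}$ in $e_G(\cdot)^\iota$; for $s \in W_P$ one recovers the $\sigma$-action, with the combination of the $\ell$-sign from $\iota$ and the $\ell_P$-sign from $\iota_P$ producing precisely the $\ell - \ell_P$ twist.

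The main obstacle is this last equivariance check, specifically the bookkeeping of lengths and signs when rewriting $T_{n_{w_{P_2}}} T_s$: one must track both when the length drops (so the quadratic relation kicks in) and when the relevant element lies in $W_0^{Q_1}$ for some $Q_1 \supsetneq P$ and therefore vanishes in the quotient. The orthogonality assumption $\Delta_P \perp \Delta \setminus \Delta_P$ is essential throughout, as it both gives the clean decomposition $W_0 = W_{0,P} \times W_{0,P_2}$ and ensures that $T_{n_{w_{P_2}}}$ commutes (up to manageable corrections) with $\mathcal{H}_P$, which is what makes the simple substitution $\Psi$ into an algebra-module map rather than merely a vector-space map.
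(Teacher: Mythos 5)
Your reduction $Q^c=G$, the identification $\St_P(\sigma)\simeq\sigma$ via $\varphi\mapsto\varphi(T_{n_{w_{P_2}}})$ (using Proposition~\ref{prop:decomposition of I_P}, Lemma~\ref{lem:embedding of smaller parabolic induction} and $\bigcup_{\alpha\in\Delta\setminus\Delta_P}W_0^{P_\alpha}=W_{0,P_2}\setminus\{w_{P_2}\}$), and the plan to check equivariance of this evaluation map are a legitimate route; it differs from the paper, which simply quotes an explicit description of the module $\St_P(\sigma)$ (namely $\St_P(\sigma)(T_w)=\sigma(T_w^P)$ for $P$-positive $w\in W_P(1)$ and $\St_P(\sigma)(T_w)=(-1)^{\ell(w)}$ for $w\in W_{\aff,P_2}(1)$, from Proposition~3.12 of the cited preprint of Abe), applies $\iota$, and invokes the characterization of the extension $e_G$ on the elements $T_w^*$. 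In effect you are proposing to reprove that cited description, and that is exactly where your sketch has a genuine gap: the equivariance check is not the formal ``quadratic/braid relations with $q_s=0$ plus sign bookkeeping'' you describe. Take $X=T_{n_s}$ with $s\in S_{0,P_2}$. Since the action is $(\varphi X)(Y)=\varphi(XY)$, the relevant product is $\iota(T_{n_s})T_{n_{w_{P_2}}}=-T_{n_s}^*T_{n_{w_{P_2}}}$ (left multiplication, not $T_{n_{w_{P_2}}}T_s$ as you write), and with $q_s=0$ one finds $T_{n_s}^*T_{n_{w_{P_2}}}=0$, so the $\iota$-twisted action at the $w_{P_2}$-component is literally zero. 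On the other side, $e_G(\sigma^{\iota_P}_{\ell-\ell_P})(T_{n_s})=1+\sigma(c_{n_s})$, since the characterization only gives the value $1$ on $T_{n_s}^*$ and $T_{n_s}=T_{n_s}^*+c_{n_s}$. So the match forces $\sigma(n_{w_{P_2}}^{-1}\cdot c_{n_s})=-1$, which is not a formal identity: it uses the hypothesis $P(\sigma)=G$ (triviality of $\sigma$ on $T^P_\lambda$ for $\lambda\in W_{\aff,P_\alpha}(1)\cap\Lambda(1)$, hence on the support of the conjugated $c$-parameters, whose coefficients sum to $q_s-1=-1$). Your proposal never invokes $P(\sigma)=G$ in the verification, so the ``sign matching'' you describe for the $P_2$-part reflections does not happen as stated.

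Two further points you would have to supply to complete your route. First, checking equivariance against $e_G(\sigma^{\iota_P}_{\ell-\ell_P})$ on your generators requires knowing how that extension acts on $E_{o_\pm}(\lambda)$ for arbitrary $\lambda\in\Lambda(1)$ and on $T_{n_s}$; the defining characterization only prescribes values on $T_w^*$ for $P$-positive (or $P$-negative) $w\in W_P(1)$ and for $w\in W_{\aff,P_2}(1)$, so additional lemmas (or a direct verification of those two characterizing conditions, which is what the paper does) are needed. Second, $\{E_{o_-}(\lambda)\mid\lambda\in\Lambda(1)\}\cup\{T_s\mid s\in S_\aff\}$ is not obviously a generating set of $\mathcal{H}$ when $q_s=0$: length-zero elements of $\Omega(1)$ need not lie in $\Lambda(1)$, and the Bernstein-type factorizations degenerate in characteristic $p$, so the reduction to these generators also needs justification. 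The cleanest repair of your argument is to abandon the generator-by-generator comparison and instead compute, via your identification, the values of $\St_P(\sigma)^\iota$ on $T_w^*$ for $P$-positive $w\in W_P(1)$ and for $w\in W_{\aff,P_2}(1)$ (using $P(\sigma)=G$ as above), and then conclude by the uniqueness statement in the characterization of the extension --- which is precisely the paper's proof, with the explicit description of $\St_P(\sigma)$ imported from the cited reference rather than rederived.
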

\begin{proof}
We use \cite[Proposition~3.12]{arXiv:1612.01312}.
Let $w\in W_P(1)$ and assume that $w$ is $P$-positive.
Then $(\St_P(\sigma))^\iota(T_w^*) = (-1)^{\ell(w)}\St_P(\sigma)(T_w) = (-1)^{\ell(w)}\sigma(T^P_w) = (-1)^{\ell(w) - \ell_P(w)}\sigma^{\iota_P}(T^{P*}_w) = \sigma_{\ell - \ell_P}^{\iota_p}(T_w^{P*})$.
On the other hand, for $w\in W_{\aff,P_2}(1)$, we have $(\St_P(\sigma))^\iota(T_w^*) = (-1)^{\ell(w)}\St_P(\sigma)(T_w) = 1$.
Therefore, by a characterization of the extension, we have $(\St_P(\sigma))^\iota\simeq e_G(\sigma^{\iota_P}_{\ell - \ell_P}) = \St_G(\sigma_{\ell - \ell_P}^{\iota_P})$.
Since $P^c = G$, we get the lemma.
\end{proof}

\subsection{An exact sequence}
We express $\St_{Q^c}(\sigma)$ as the kernel of a certain homomorphism.
As a consequence, we deduce Theorem~\ref{thm:twist of Steinberg} to the exactness of a certain sequence (Lemma~\ref{lem:exact sequence related to I and I'}).

Let $Q$ be a parabolic subgroup containing $P$.
Then we have an exact sequence
\[
0\to \sum_{Q\supset R\supsetneq P}I^Q_R(e_R(\sigma))\to I^Q_P(\sigma)\to \St_P^Q(\sigma)\to 0.
\]
Applying $I_Q$ and using the transitivity~\cite[Proposition~4.10]{MR3437789}, we have an exact sequence
\[
0\to \sum_{Q\supset R\supsetneq P}I_R(e_R(\sigma))\to I_P(\sigma)\to I_Q(\St_P^Q(\sigma))\to 0.
\]
Let $Q_1$ be a parabolic subgroup containing $Q$.
Then we have $\sum_{Q\supset R\supsetneq P}I_R(e_R(\sigma))\subset \sum_{Q_1\supset R\supsetneq P}I_R(e_R(\sigma))$.
Hence we have a homomorphism $I_Q(\St_P^Q(\sigma))\to I_{Q_1}(\St_P^{Q_1}(\sigma))$ which makes the following diagram commutative.
\[
\begin{tikzcd}
0\arrow[r] & \sum_{Q\supset R\supsetneq P}I_R(e_R(\sigma))\arrow[r]\arrow[d,hookrightarrow] & I_P(\sigma)\arrow[r]\arrow[d,equal] & I_Q(\St_P^Q(\sigma))\arrow[r]\arrow[d,dashed] & 0\\
0\arrow[r] & \sum_{Q_1\supset R\supsetneq P}I_R(e_R(\sigma))\arrow[r] & I_P(\sigma)\arrow[r] & I_{Q_1}(\St_P^{Q_1}(\sigma))\arrow[r] & 0.
\end{tikzcd}
\]
\begin{lem}\label{lem:St as kernel}
With the above homomorphisms, we have an exact sequence
\[
0\to \St_{Q^c}(\sigma)\to I_Q(\St_P^Q(\sigma))\to \bigoplus_{Q_1\supsetneq P}I_{Q_1}(\St_P^{Q_1}(\sigma)).
\]
\end{lem}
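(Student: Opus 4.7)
The plan is to chase the commutative diagram preceding the lemma to identify the kernel of $I_Q(\St_P^Q(\sigma))\to\bigoplus_{Q_1\supsetneq Q}I_{Q_1}(\St_P^{Q_1}(\sigma))$ with the quotient
\[
\Bigl(\bigcap_{Q_1\supsetneq Q}\sum_{Q_1\supset R\supsetneq P}I_R(e_R(\sigma))\Bigr)\Big/\sum_{Q\supset R\supsetneq P}I_R(e_R(\sigma))
\]
inside $I_Q(\St_P^Q(\sigma))=I_P(\sigma)/\sum_{Q\supset R\supsetneq P}I_R(e_R(\sigma))$, and then to compute this intersection explicitly. The vertical identities along $I_P(\sigma)$ in the diagram make this reduction automatic, so the whole question becomes a computation inside $I_P(\sigma)$.

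Since $q_s=0$, Proposition~\ref{prop:decomposition of I_P} gives $I_P(\sigma)\cong\bigoplus_{w\in W_0^P}\sigma$, and Lemma~\ref{lem:embedding of smaller parabolic induction} identifies $I_R(e_R(\sigma))$ for each parabolic $R\supset P$ with the sub-direct-summand indexed by $W_0^R\subset W_0^P$. Intersections and sums of such sub-summands commute, so the intersection problem reduces to a combinatorial question about subsets of $W_0^P$. The orthogonality hypothesis $\Delta_P\perp\Delta\setminus\Delta_P$ yields $W_0^P=W_{0,P_2}$ and $W_0^R=\{w\in W_{0,P_2}:w(\Delta_R\setminus\Delta_P)\subset\Sigma^+\}$. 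Writing $A=\Delta_Q\setminus\Delta_P\subset\Delta_{P_2}$ and parameterizing parabolics $R\supset P$ by $B=\Delta_R\setminus\Delta_P$, one checks
\[
\bigcup_{\varnothing\ne B\subset A'}W_0^R=\{w\in W_{0,P_2}:w(A')\not\subset-\Sigma^+\}
\]
for any $A'\subset\Delta_{P_2}$, and then intersecting over $A'\supsetneq A$ yields $\{w:w(A)\not\subset-\Sigma^+\}\cup W_0^{Q^c}$. Translating back, the intersection equals $\sum_{Q\supset R\supsetneq P}I_R(e_R(\sigma))+I_{Q^c}(e_{Q^c}(\sigma))$.

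A parallel subset computation gives $I_{Q^c}(e_{Q^c}(\sigma))\cap\sum_{Q\supset R\supsetneq P}I_R(e_R(\sigma))=\sum_{R\supsetneq Q^c}I_R(e_R(\sigma))$, so by the standard isomorphism theorem the quotient above is isomorphic to $I_{Q^c}(e_{Q^c}(\sigma))/\sum_{R\supsetneq Q^c}I_R(e_R(\sigma))=\St_{Q^c}(\sigma)$, which gives the desired exact sequence. The main obstacle is the combinatorial analysis: sums of subspaces do not commute with intersections in general, so one must use the product structure $W_0=W_{0,P}\times W_{0,P_2}$ coming from orthogonality to reduce everything to an inclusion--exclusion argument on the subsets $\{w\in W_{0,P_2}:w(B)\subset\Sigma^+\}$ of $W_{0,P_2}$ indexed by $B\subset\Delta_{P_2}$, at which point the computations of the two intersections above become routine.
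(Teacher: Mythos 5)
Your argument is correct, and for the heart of the proof it takes a genuinely different route from the paper. Both proofs begin with the same diagram chase identifying the kernel with $\bigl(\bigcap_{Q_1\supsetneq Q}\sum_{Q_1\supset R\supsetneq P}I_R(e_R(\sigma))\bigr)/\sum_{Q\supset R\supsetneq P}I_R(e_R(\sigma))$, and both end with the second isomorphism theorem applied to $I_{Q^c}(e_{Q^c}(\sigma))$; the difference is how the two submodule identities (intersection equals $I_{Q^c}(e_{Q^c}(\sigma))+\sum_{Q\supset R\supsetneq P}I_R(e_R(\sigma))$, and $I_{Q^c}(e_{Q^c}(\sigma))\cap\sum_{Q\supset R\supsetneq P}I_R(e_R(\sigma))=\sum_{R\supsetneq Q^c}I_R(e_R(\sigma))$) are established. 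The paper first proves a distributivity statement, Lemma~\ref{lem:intersection and union of parabolic inductions} ($\bigl(\sum_{\mathcal{P}_1}I_{Q_1}\bigr)\cap\bigl(\sum_{\mathcal{P}_2}I_{Q_2}\bigr)=\sum I_{\langle Q_1,Q_2\rangle}$), by induction on $\#\mathcal{P}_1$ with the braid/quadratic relations kept in general form, and then deduces the two identities by root-combinatorial arguments with the sets $\Delta_{R_{Q_1}}$ and $\langle R,Q^c\rangle$; as the paper remarks, this works over an arbitrary commutative ring $C$, with no condition on $q_s$. You instead invoke $q_s=0$ so that Proposition~\ref{prop:decomposition of I_P} together with Lemma~\ref{lem:embedding of smaller parabolic induction} realizes each $I_R(e_R(\sigma))$ as the coordinate subspace of $\bigoplus_{w\in W_0^P}\sigma$ supported on $W_0^R$; then sums and intersections become unions and intersections of index sets, and your two set-theoretic computations (which I checked: $\bigcup_{\varnothing\ne B\subset A'}\{w\in W_{0,P_2}\mid w(B)\subset\Sigma^+\}=\{w\mid w(A')\not\subset\Sigma^-\}$, and the intersection over $A'\supsetneq A$ giving $\{w\mid w(A)\not\subset\Sigma^-\}\cup W_0^{Q^c}$) are correct and make both identities elementary. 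What you lose is exactly the extra generality the paper claims in its remark: Lemma~\ref{lem:embedding of smaller parabolic induction} is only available when $q_s=0$ (and $P(\sigma)=G$), so your proof covers the standing hypotheses under which the lemma is actually applied, but not the "any commutative ring $C$" version; what you gain is that the inductive Lemma~\ref{lem:intersection and union of parabolic inductions} and the case analysis on $\Delta_{R_{Q_1}}$ become unnecessary, being replaced by an inclusion--exclusion argument on subsets of $\Delta\setminus\Delta_Q$.
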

We will prove this lemma at the end of this subsection.
Applying $\iota$ to the exact sequence in the lemma, we have
\[
0\to \St_{Q^c}(\sigma)^\iota\to I_Q(\St_P^Q(\sigma))^\iota\to \bigoplus_{Q_1\supsetneq P}I_{Q_1}(\St_P^{Q_1}(\sigma))^\iota.
\]
Using Lemma~\ref{lem:twist of extension} and \cite[Lemma~4.9, Proposition~4.11]{arXiv:1612.01312}, we have $I_Q(\St_P^Q(\sigma))^\iota = I'_Q((\St_P^Q(\sigma))^{\iota_Q}_{\ell - \ell_Q}) = I'_Q(e_Q(\sigma^{\iota_P}_{\ell_Q - \ell_P})_{\ell - \ell_Q}) = I'_Q(e_Q(\sigma^{\iota_P}_{\ell - \ell_P}))$.
We get the following exact sequence
\[
0\to (\St_{Q^c}\sigma)^\iota\to I'_Q(e_Q(\sigma_{\ell - \ell_P}^{\iota_P}))\to \bigoplus_{Q_1\supsetneq Q}I'_{Q_1}(e_{Q_1}(\sigma_{\ell - \ell_P}^{\iota_P})).
\]
Therefore Theorem~\ref{thm:twist of Steinberg} follows from the following lemma.
\begin{lem}\label{lem:exact sequence related to I and I'}
The following sequence is exact.
\[
\bigoplus_{Q_1\supsetneq Q}I_{Q_1}(e_{Q_1}(\sigma))\to I_Q(e_Q(\sigma))\to I'_Q(e_Q(\sigma)) \to \bigoplus_{Q_1\supsetneq Q}I'_{Q_1}(e_{Q_1}(\sigma)).
\]
Here $I_Q(e_Q(\sigma))\to I'_Q(e_Q(\sigma))$ is given in Proposition~\ref{prop:hom between I and I'} and $I_{Q_1}(e_{Q_1}(\sigma))\to I_Q(e_Q(\sigma))$ is the natural embedding.
\end{lem}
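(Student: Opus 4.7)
The plan is to identify both $I_Q(e_Q(\sigma))$ and $I'_Q(e_Q(\sigma))$ with the common vector space $\bigoplus_{w\in W_0^Q}\sigma$ via the two bases from Proposition~\ref{prop:decomposition of I_P}, and to describe all four maps in the sequence explicitly in these coordinates.

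First, $\varphi\mapsto(\varphi(T_{n_w}))_{w\in W_0^Q}$ gives one identification, while $\psi\mapsto(\psi(T^*_{n_w}))_{w\in W_0^Q}$ gives the other (for $I'$, using the analog of Proposition~\ref{prop:decomposition of I_P}(2) as in \cite[Proposition~4.12]{arXiv:1612.01312}). Under these, Lemma~\ref{lem:embedding of smaller parabolic induction}, together with its $I'$-analog, turns each natural embedding $I_{Q_1}(e_{Q_1}(\sigma))\hookrightarrow I_Q(e_Q(\sigma))$ (and $I'_{Q_1}\hookrightarrow I'_Q$) into the inclusion $\bigoplus_{w\in W_0^{Q_1}}\sigma\hookrightarrow\bigoplus_{w\in W_0^Q}\sigma$ induced by $W_0^{Q_1}\subset W_0^Q$. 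The elementary combinatorial identity $\bigcup_{Q_1\supsetneq Q}W_0^{Q_1}=W_0^Q\setminus\{w_Gw_Q\}$ (the only $w\in W_0^Q$ sending every $\alpha\in\Delta\setminus\Delta_Q$ to a negative root is $w_Gw_Q$) then shows that the image of the first map is $\bigoplus_{w\ne w_Gw_Q}\sigma$. Interpreting the last map $b\colon I'_Q(e_Q(\sigma))\to\bigoplus_{Q_1\supsetneq Q}I'_{Q_1}(e_{Q_1}(\sigma))$ component-wise as the family of projections $\bigoplus_{w\in W_0^Q}\sigma\to\bigoplus_{w\in W_0^{Q_1}}\sigma$ (this is the $\iota$-image of the natural map from the commutative diagram preceding Lemma~\ref{lem:St as kernel}), one obtains $\ker b=\bigcap_{Q_1\supsetneq Q}\ker b_{Q_1}=\bigoplus_{w\in\{w_Gw_Q\}}\sigma$, the single summand indexed by $w_Gw_Q$.

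The heart of the argument is the middle map $\Phi$: we must show that, in coordinates, $\Phi$ vanishes on $\bigoplus_{w\ne w_Gw_Q}\sigma$ and that its image is exactly the $w_Gw_Q$-summand of the $T^*$-decomposition. The characterization $\Phi(\varphi)(XT_{n_{w_Gw_Q}})=\varphi(XT_{n_{w_Gw_Q}})$ determines $\Phi(\varphi)$ on $\mathcal{H}\,T_{n_{w_Gw_Q}}$; the Claim inside the proof of Proposition~\ref{prop:hom between I and I'}, applied with $Q$ in place of $P$, implies that the restriction of $\varphi$ to $\mathcal{H}\,T_{n_{w_Gw_Q}}$ is controlled by the single vector $\varphi(T_{n_{w_Gw_Q}})$ (once one translates via $(\mathcal{H}_Q^-,j_Q^{-*})$-equivariance). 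Consequently $\Phi(\varphi)=0$ whenever $\varphi(T_{n_{w_Gw_Q}})=0$, yielding $\ker\Phi=\bigoplus_{w\ne w_Gw_Q}\sigma$; this matches the image of the first map, giving exactness at $I_Q(e_Q(\sigma))$. Combining with the description of $\ker b$ above and a dimension count $\dim\Ima\Phi=\dim\sigma=\dim\ker b$ then yields exactness at $I'_Q(e_Q(\sigma))$.

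The main obstacle is tracking $\Phi$ in the $T^*$-basis of $I'_Q(e_Q(\sigma))$: the defining relation for $\Phi$ is naturally phrased in terms of the left ideal $\mathcal{H}\,T_{n_{w_Gw_Q}}$, whereas the target basis $\{T^*_{n_w}\}_{w\in W_0^Q}$ is not visibly of this form. The delicate technical step is to express each $T^*_{n_w}$ in the form $Y\cdot j_Q^-(Z)$ with $Y\in\mathcal{H}\,T_{n_{w_Gw_Q}}$ and $Z\in\mathcal{H}_Q^-$, so that the $(\mathcal{H}_Q^-,j_Q^-)$-equivariance of $\Phi(\varphi)$ lets one reduce the computation to values already pinned down by the characterization.
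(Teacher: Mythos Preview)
Your argument rests on the claimed ``elementary combinatorial identity'' $\bigcup_{Q_1\supsetneq Q}W_0^{Q_1}=W_0^Q\setminus\{w_Gw_Q\}$, and this is false. The complement of $\bigcup_{Q_1\supsetneq Q}W_0^{Q_1}$ in $W_0^Q$ is the set $A=\{w\in W_0^Q\mid \Delta_w=\Delta_Q\}$ (this is the lemma the paper proves right after stating the kernel), and $A$ is in general much larger than $\{w_Gw_Q\}$. For a concrete example, take $W_0$ of type $A_3$ with $\Delta_Q=\{\alpha_2\}$: both $s_1s_3$ and $w_Gw_Q=w_Gs_2$ send $\alpha_1,\alpha_3$ to negative roots and $\alpha_2$ to a positive root, so $|A|\ge 2$. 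Consequently the image of the first map is $\bigoplus_{w\notin A}\sigma$, not $\bigoplus_{w\ne w_Gw_Q}\sigma$, and your dimension count $\dim\Ima\Phi=\dim\sigma$ collapses.

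The description of the last map $b$ is also incorrect. There is no ``$I'$-analog'' of Lemma~\ref{lem:embedding of smaller parabolic induction} giving an embedding $I'_{Q_1}\hookrightarrow I'_Q$: the map in the sequence goes the other way, from $I'_Q$ to $I'_{Q_1}$, and (Proposition~\ref{prop:explicit formula of I'_Q(e_Q) -> I'_(Q_1)(e_(Q_1))} in the paper) it is not the projection onto $\bigoplus_{w\in W_0^{Q_1}}\sigma$ but rather $\varphi\mapsto\bigl((-1)^{\ell(w_{Q_1}w_Q)}\varphi(T^*_{n_{ww_{Q_1}w_Q}})\bigr)_{w\in W_0^{Q_1}}$. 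Its kernel is indexed by $\{w\in W_0^Q\mid \Delta_{ww_Q}=\Delta\setminus\Delta_Q\}$, again a set with many elements in general. So the picture in which $\Phi$ is a rank-$\dim\sigma$ map between two spaces with matching one-summand kernel and cokernel is simply wrong.

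What the paper actually does is substantially harder. The equality $\Ker\Phi=\sum_{Q_1\supsetneq Q}I_{Q_1}(e_{Q_1}(\sigma))$ requires showing that vanishing on $\mathcal{H}T_{n_{w_Gw_Q}}$ forces vanishing on $\mathcal{H}T_{n_w}$ for every $w\in A$; this is proved by backward induction on $\ell(w)$ using a Bernstein-type relation for $E_-(\lambda n_s^{-1})(T_{n_s}-c_{n_s})$, and it genuinely uses the orthogonality hypothesis on $\Delta_P$ and the special choice of $\lambda$. Exactness at $I'_Q$ is then obtained not by a dimension count but by an explicit M\"obius-function computation (Lemma~\ref{lem:calculation of Mobius function}) together with an inductive construction (Lemma~\ref{lem:final part of the exactness}) producing, for every $w$ with $\Delta_{ww_Q}=\Delta\setminus\Delta_Q$, an element of $\Ima\Phi$ supported exactly at $T^*_{n_w}$.
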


As the end of this subsection, we prove Lemma~\ref{lem:St as kernel}.
We need the following lemma.
For parabolic subgroups $P_1,\dots,P_r$, let $\langle P_1,\dots,P_r\rangle$ be the parabolic subgroup generated by $P_1,\dots,P_r$.
In other words, $\langle P_1,\dots,P_r\rangle$ is the parabolic subgroup corresponding to $\Delta_{P_1}\cup \dots\cup \Delta_{P_r}$.
\begin{lem}\label{lem:intersection and union of parabolic inductions}
Let $\mathcal{P}_1,\mathcal{P}_2\subset \{Q\mid Q\supset P\}$.
Then, in $I_P(\sigma)$, we have 
\[
\left(\sum_{Q_1\in \mathcal{P}_1}I_{Q_1}(e_{Q_1}(\sigma))\right)\cap \left(\sum_{Q_2\in \mathcal{P}_2}I_{Q_2}(e_{Q_2}(\sigma))\right)
=
\sum_{Q_1\in \mathcal{P}_1,Q_2\in \mathcal{P}_2}I_{\langle Q_1,Q_2\rangle}(e_{\langle Q_1,Q_2\rangle}(\sigma)).
\]
\end{lem}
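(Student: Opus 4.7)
The plan is to reduce the lemma to a purely combinatorial identity about the sets $W_0^Q$ of minimal coset representatives. By the standing hypothesis $q_s = 0$ for all $s\in S_\aff$, Proposition~\ref{prop:decomposition of I_P} (1) identifies $I_P(\sigma)$ with $\bigoplus_{w\in W_0^P}\sigma$ as a $C$-vector space via $\varphi\mapsto (\varphi(T_{n_w}))_{w\in W_0^P}$. For each $Q\supset P$ we have $e_Q(\sigma) = \sigma$ as a $C$-vector space, and Lemma~\ref{lem:embedding of smaller parabolic induction} (applied with the roles of $Q_1$ and $Q$ there replaced by $Q$ and $P$, using $e_P(\sigma)=\sigma$) says that the canonical inclusion $I_Q(e_Q(\sigma))\hookrightarrow I_P(\sigma)$ corresponds under this identification to the coordinate inclusion $\bigoplus_{w\in W_0^Q}\sigma \hookrightarrow \bigoplus_{w\in W_0^P}\sigma$ induced by the inclusion $W_0^Q\subset W_0^P$ (which holds since $\Delta_Q\supset \Delta_P$).

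Writing $U_i = \bigcup_{Q_i\in\mathcal{P}_i}W_0^{Q_i}$ for $i=1,2$, the sum $\sum_{Q_i\in\mathcal{P}_i}I_{Q_i}(e_{Q_i}(\sigma))$ then corresponds to the coordinate subspace $\bigoplus_{w\in U_i}\sigma$ of $\bigoplus_{w\in W_0^P}\sigma$. Since intersections of coordinate subspaces indexed by subsets of a given index set are indexed by the intersection of those subsets, the left-hand side of the lemma becomes $\bigoplus_{w\in U_1\cap U_2}\sigma$, while the right-hand side becomes $\bigoplus_{w\in V}\sigma$ where $V = \bigcup_{Q_1\in\mathcal{P}_1,\,Q_2\in\mathcal{P}_2}W_0^{\langle Q_1,Q_2\rangle}$. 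Thus it suffices to prove $U_1\cap U_2 = V$.

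The key combinatorial identity is that $W_0^{Q_1}\cap W_0^{Q_2} = W_0^{\langle Q_1,Q_2\rangle}$ for any two parabolics $Q_1,Q_2\supset P$. This is immediate from the description $W_0^Q = \{w\in W_0\mid w(\Delta_Q)\subset \Sigma^+\}$: membership in the intersection says exactly that $w$ sends $\Delta_{Q_1}\cup\Delta_{Q_2} = \Delta_{\langle Q_1,Q_2\rangle}$ into $\Sigma^+$. Distributing intersection over union then gives $U_1\cap U_2 = \bigcup_{Q_1,Q_2}(W_0^{Q_1}\cap W_0^{Q_2}) = V$, finishing the proof.

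I do not anticipate a genuine obstacle: once Lemma~\ref{lem:embedding of smaller parabolic induction} has been invoked to pass from modules to coordinate subspaces of a direct sum, the remaining content is an elementary statement about subsets of $W_0$. The only point to verify carefully is that the identification of $I_{Q_i}(e_{Q_i}(\sigma))$ with the coordinate subspace $\bigoplus_{w\in W_0^{Q_i}}\sigma$ is compatible with varying $Q_i$ and taking sums, which is precisely the content of Lemma~\ref{lem:embedding of smaller parabolic induction}.
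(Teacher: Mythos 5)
Your argument is correct under the hypothesis $q_s=0$ that you invoke, but it takes a genuinely different route from the paper. You reduce everything to coordinate subspaces of $\bigoplus_{w\in W_0^P}\sigma$: Lemma~\ref{lem:embedding of smaller parabolic induction}, applied with the pair $(Q,P)$ in place of $(Q_1,Q)$, does identify the image of $I_Q(e_Q(\sigma))$ in $I_P(\sigma)$ with the full coordinate subspace $\bigoplus_{w\in W_0^Q}\sigma$, because the vertical maps there are the bijections of Proposition~\ref{prop:decomposition of I_P}; your identity $W_0^{Q_1}\cap W_0^{Q_2}=W_0^{\langle Q_1,Q_2\rangle}$ is immediate from $\Delta_{\langle Q_1,Q_2\rangle}=\Delta_{Q_1}\cup\Delta_{Q_2}$, and the rest is elementary set and linear algebra about sums and intersections of coordinate subspaces. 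The paper instead proves the two-parabolic case $I_{Q_1}\cap I_{Q_2}=I_{\langle Q_1,Q_2\rangle}$ directly from the braid/quadratic relations and the characterization of $e_Q(\sigma)$, keeping $q_{s_\alpha}$ as a general parameter, reduces the case $\#\mathcal{P}_1=1$ to it via \cite[Lemma~3.8]{arXiv:1612.01312}, and then runs an induction on $\#\mathcal{P}_1$. What that buys, and what your route cannot deliver, is the assertion in the remark following the lemma that the statement holds over an arbitrary commutative ring $C$: when the $q_s$ are not zero, the embedding $I_Q(e_Q(\sigma))\hookrightarrow I_P(\sigma)$ is no longer the coordinate inclusion with respect to the coordinates $\varphi\mapsto(\varphi(T_{n_w}))_w$, and Lemma~\ref{lem:embedding of smaller parabolic induction} itself is only available under $q_s=0$, so the coordinate-subspace reduction collapses. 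Conversely, your proof is shorter, avoids both the external citation and the induction, and does not use the orthogonality of $\Delta_P$ and $\Delta\setminus\Delta_P$ that enters the paper's computation. Since the lemma is applied in this paper under the blanket assumption $p=0$ in $C$ (hence $q_s=0$), your argument suffices for the paper's internal use, but you should state the $q_s=0$ hypothesis explicitly and be aware that it proves strictly less than the paper's version.
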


\begin{rem}
We prove Lemma~\ref{lem:St as kernel} and \ref{lem:intersection and union of parabolic inductions} for any commutative ring $C$.
\end{rem}

\begin{proof}
Note that $\Delta_P$ is orthogonal to $\Delta_{P_2}$ as we assumed.
Therefore we have $W_0^P = W_{0,P_2}$ and for any parabolic subgroup $Q$ containing $P$, we have $W_{0,P_2} = W_0^{Q}W_{0,Q\cap P_2}$.

Put $I_Q = I_Q(e_Q(\sigma))$.
We prove the lemma by induction on $\#\mathcal{P}_1$.
Assume that $\#\mathcal{P}_1 = 1$.
By \cite[Lemma~3.8]{arXiv:1612.01312}, it is sufficient to prove that $I_{Q_1}\cap I_{Q_2} = I_{\langle Q_1,Q_2\rangle}$.
Obviously we have $I_{Q_1}\cap I_{Q_2} \supset I_{\langle Q_1,Q_2\rangle}$.
Let $\varphi\in I_{Q_1}\cap I_{Q_2}$.
Then for $w\in W^{\langle Q_1,Q_2\rangle}_0$ and $\alpha\in \Delta_{Q_1}\setminus \Delta_P$, we have $\varphi(T_{n_{ws_\alpha}}) = \varphi(T_{n_{w}})e_{Q_1}(\sigma)(T^{Q_1}_{n_{s_\alpha}})$.
Since $n_{s_\alpha}\in W_{P_2\cap Q,\aff}$, we have $e_{Q_1}(\sigma)(T^{Q_1}_{n_{s_\alpha}}) = q_{s_\alpha}$.
Therefore we have $\varphi(T_{n_{ws_\alpha}}) = q_{s_\alpha}\varphi(T_{n_w})$.
This also holds for $\alpha\in\Delta_{Q_2}\setminus\Delta_P$.
Therefore $\varphi(T_{n_{wv}}) = q_v\varphi(T_{n_w})$ for any $v$ generated by $\{s_\alpha\mid \alpha\in (\Delta_{Q_1}\cup \Delta_{Q_2})\setminus\Delta_P\}$.

Since $(\Delta_{Q_1}\cup \Delta_{Q_2})\setminus\Delta_P = \Delta_{\langle Q_1,Q_2\rangle\cap P_2}$, the group generated by $\{s_\alpha\mid \alpha\in (\Delta_{Q_1}\cup \Delta_{Q_2})\setminus\Delta_P\}$ is $W_{0,\langle Q_1,Q_2\rangle\cap P_2}$.
Hence $\varphi(T_{n_{wv}}) = q_v\varphi(T_{n_w})$ for any $w\in W^{\langle Q_1,Q_2\rangle}_0$ and $v\in W_{0,\langle Q_1,Q_2\rangle\cap P_2}$.
Define $\varphi' \in I_{\langle Q_1,Q_2\rangle}(e_{\langle Q_1,Q_2\rangle}(\sigma))$ by $\varphi'(T_{n_w}) = \varphi(T_{n_w})$ for any $w\in W_0^{\langle Q_1,Q_2\rangle}$. (Such element uniquely exists by Proposition~\ref{prop:decomposition of I_P}.)
Then $\varphi'$ also satisfies $\varphi(T_{n_{wv}}) = q_v\varphi(T_{n_w})$ for any $w\in W^{\langle Q_1,Q_2\rangle}_0$ and $v\in W_{0,\langle Q_1,Q_2\rangle\cap P_2}$.
Hence $\varphi(T_{n_w}) = \varphi'(T_{n_w})$ for any $w\in W_0^{\langle Q_1,Q_2\rangle}W_{0,\langle Q_1,Q_2\rangle\cap P_2} = W_{0,P_2} = W_0^P$.
By Proposition~\ref{prop:decomposition of I_P}, $\varphi = \varphi' \in I_{\langle Q_1,Q_2\rangle}$.

Now we prove the general case.
Obviously we have
\[
\left(\sum_{Q_1\in \mathcal{P}_1}I_{Q_1}\right)\cap \left(\sum_{Q_2\in \mathcal{P}_2}I_{Q_2}\right)
\supset
\sum_{Q_1\in \mathcal{P}_1,Q_2\in \mathcal{P}_2}I_{\langle Q_1,Q_2\rangle}.
\]
We prove the reverse inclusion.
Take $f$ from the left hand side.
Fix $Q_0\in \mathcal{P}_1$ and put $\mathcal{P}'_1 = \mathcal{P}_1\setminus\{Q_0\}$.
Take $f_1\in I_{Q_0}$ and $f_2\in \sum_{Q_1\in \mathcal{P}'_1}I_{Q_1}$ such that $f = f_1 + f_2$.
Then we have
\[
f_2\in \left(\sum_{Q_2\in \mathcal{P}_2}I_{Q_2} + I_{Q_0}\right)\cap \sum_{Q_1\in \mathcal{P}_1'}I_{Q_1}.
\]
By inductive hypothesis, the right hand side is
\[
\sum_{Q_1\in \mathcal{P}'_1,Q_2\in \mathcal{P}_2}I_{\langle Q_1,Q_2\rangle} + \sum_{Q_1\in \mathcal{P}_1'}I_{\langle Q_0,Q_1\rangle}.
\]
Since $I_{\langle Q_0,Q_1\rangle}\subset I_{Q_0}$, we get
\[
f_2\in \sum_{Q_1\in \mathcal{P}'_1,Q_2\in \mathcal{P}_2}I_{\langle Q_1,Q_2\rangle} + I_{Q_0}.
\]
We have $f_1\in I_{Q_0}$.
Therefore
\[
f = f_1 + f_2\in \sum_{Q_1\in \mathcal{P}'_1,Q_2\in \mathcal{P}_2}I_{\langle Q_1,Q_2\rangle} + I_{Q_0}.
\]
Take $f'_1\in \sum_{Q_1\in \mathcal{P}'_1,Q_2\in \mathcal{P}_2}I_{\langle Q_1,Q_2\rangle}$ and $f'_2\in I_{Q_0}$ such that $f = f'_1 + f'_2$.
Then $f'_1\in \sum_{Q_2\in \mathcal{P}_2}I_{Q_2}$.
By the assumption, $f \in \sum_{Q_2\in \mathcal{P}_2}I_{Q_2}$.
Therefore we have $f'_2 = f - f'_1\in \sum_{Q_2\in \mathcal{P}_2}I_{Q_2}$.
Hence 
\[
f'_2\in I_{Q_0}\cap \sum_{Q_2\in \mathcal{P}_2}I_{Q_2} = \sum_{Q_2\in \mathcal{P}_2}I_{\langle Q_0,Q_2\rangle}.
\]
Here we use the lemma for $\mathcal{P}_1 = \{Q_0\}$.
Hence
\begin{align*}
f = f'_1 + f'_2 & \in \sum_{Q_1\in \mathcal{P}'_1,Q_2\in \mathcal{P}_2}I_{\langle Q_1,Q_2\rangle} + \sum_{Q_2\in \mathcal{P}_2}I_{\langle Q_0,Q_2\rangle}\\
& = \sum_{Q_1\in \mathcal{P}_1,Q_2\in \mathcal{P}_2}I_{\langle Q_1,Q_2\rangle}.
\end{align*}
We get the lemma.
\end{proof}
\begin{proof}[Proof of Lemma~\ref{lem:St as kernel}]
Let $Q_1\supsetneq Q$.
By the exact sequence before Lemma~\ref{lem:St as kernel}, the kernel of $I_P(\sigma)\to I_{Q}(\St_P^Q(\sigma))\to I_{Q_1}(\St_P^{Q_1}(\sigma))$ is $\sum_{Q_1\supset R\supsetneq P}I_R(e_R(\sigma))$.
Since $\Ker(I_P(\sigma)\to I_Q(\St_P^Q(\sigma)) = \sum_{Q\supset R\supsetneq P}I_R(e_R(\sigma))$, we have
\begin{align*}
&\Ker\left(I_Q(\St^Q_P(\sigma))\to \bigoplus_{Q_1\supsetneq Q}I_{Q_1}(\St^{Q_1}_P(\sigma))\right)\\
& \simeq
\left.\Ker\left(I_P(\sigma)\to \bigoplus_{Q_1\supsetneq Q}I_{Q_1}(\St_P^{Q_1}(\sigma))\right)\;\middle/\;\Ker(I_P(\sigma)\to I_Q(\St_P^Q(\sigma)))\right.\\
& =
\left.\left(\bigcap_{Q_1\supsetneq Q}\Ker(I_P(\sigma)\to I_{Q_1}(\St_P^{Q_1}(\sigma)))\right)\;\middle/\;\Ker(I_P(\sigma)\to I_Q(\St_P^Q(\sigma)))\right.\\
& =
\left.\bigcap_{Q_1\supsetneq Q}\sum_{Q_1\supset R\supsetneq P}I_R(e_R(\sigma))\;\middle/\;\sum_{Q\supset R\supsetneq P}I_R(e_R(\sigma))\right..
\end{align*}
Set $A = \bigcap_{Q_1\supsetneq Q} \sum_{Q_1\supset R\supsetneq P}I_R(e_R(\sigma))$ and $B = \sum_{Q\supset R\supsetneq P}I_R(e_R(\sigma))$.
We prove the following.
\begin{enumerate}
\item $I_{Q^c}(e_{Q^c}(\sigma)) + B = A$.
\item $I_{Q^c}(e_{Q^c}(\sigma))\cap B = \sum_{R\supsetneq Q^c}I_R(e_R(\sigma))$.
\end{enumerate}
First we prove (1).
We prove $I_{Q^c}(e_{Q^c}(\sigma)) + B \subset A$.
Let $Q_1\supsetneq  Q$.
Take $\alpha\in\Delta_{Q_1}\setminus\Delta_Q$ and let $R$ be a parabolic subgroup corresponding to $\Delta_P\cup\{\alpha\}$.
Since $\alpha\in\Delta_{Q^c}$ and $\Delta_P\subset \Delta_{Q^c}$, we have $\Delta_R \subset \Delta_{Q^c}$.
Hence $I_{Q^c}(e_{Q^c}(\sigma))\subset I_R(e_R(\sigma))\subset A$.
Obviously we have $B\subset A$.

We prove the reverse inclusion.
By Lemma~\ref{lem:intersection and union of parabolic inductions}, we have
\[
A = \sum_{(R_{Q_1})_{Q_1\supsetneq Q}}I_{\langle R_{Q_1}\rangle_{Q_1}}(e_{\langle R_{Q_1}\rangle_{Q_1}}(\sigma)),
\]
where $R_{Q_1}$ satisfies $Q_1\supset R_{Q_1}\supsetneq P$ and $\langle R_{Q_1}\rangle_{Q_1}$ is the group generated by $\{R_{Q_1}\mid Q_1\supsetneq Q\}$.
Hence it is sufficient to prove that each $I_{\langle R_{Q_1}\rangle_{Q_1}}(e_{\langle R_{Q_1}\rangle_{Q_1}}(\sigma))$ is contained in $I_{Q^c}(e_{Q^c}(\sigma)) + B$.
If $Q\supset R_{Q_0}$ for some $Q_0\supsetneq Q$, then for such $Q_0$, we have  $I_{\langle R_{Q_1}\rangle_{Q_1}}(e_{\langle R_{Q_1}\rangle_{Q_1}}(\sigma))\subset I_{R_{Q_0}}(e_{R_{Q_0}}(\sigma))\subset B$.
Assume that $Q\not\supset R_{Q_1}$ for any $Q_1\supsetneq Q$ and we prove $\Delta_{Q^c}\subset \bigcup_{Q_1}\Delta_{R_{Q_1}}$.
Let $\alpha\in\Delta_{Q^c} = (\Delta\setminus\Delta_Q)\cup\Delta_P$.
If $\alpha\in\Delta_P$, then we have $\alpha\in\bigcup_{Q_1}\Delta_{R_{Q_1}}$ since $P\subset R_{Q_1}$.
Assume that $\alpha\in\Delta\setminus\Delta_Q$.
Let $Q_\alpha$ be the parabolic subgroup corresponding to $\Delta_Q\cup\{\alpha\}$.
Then by the assumption, $\Delta_{R_{Q_\alpha}}$ is contained in $\Delta_{Q_\alpha} = \Delta_Q\cup\{\alpha\}$ and is not contained in $\Delta_Q$.
Hence $\alpha\in\Delta_{R_{Q_\alpha}}$.
Therefore $\alpha\in \Delta_{R_{Q_\alpha}}\subset \bigcup_{Q_1\supsetneq Q}\Delta_{R_{Q_1}} = \Delta_{\langle R_{Q_1}\rangle_{Q_1}}$ by taking $Q_1 = Q_\alpha$.
Hence we have $Q^c\subset \langle R_{Q_1}\rangle_{Q_1}$.
Therefore $I_{\langle R_{Q_1}\rangle_{Q_1}}(e_{\langle R_{Q_1}\rangle_{Q_1}}(\sigma))\subset I_{Q^c}(e_{Q^c}(\sigma))$.
We get (1).

We prove (2).
By the above lemma again, we have
\[
I_{Q^c}(e_{Q^c}(\sigma))\cap B = \sum_{Q\supset R\supsetneq P}I_{\langle R,Q^c\rangle}(e_{\langle R,Q^c\rangle}(\sigma)).
\]
First we prove $I_{Q^c}(e_{Q^c}(\sigma))\cap B \subset \sum_{R_1\supsetneq Q^c}I_{R_1}(e_{R_1}(\sigma))$, namely,  for each $R$ such that $Q\supset R\supsetneq P$ we have $I_{\langle R,Q^c\rangle }(e_{\langle R,Q^c\rangle}(\sigma))\subset \sum_{R_1\supsetneq Q^c}I_{R_1}R(e_{R_1}(\sigma))$
For such $R$, we can take $\alpha\in\Delta_R\setminus\Delta_P$.
Since $\alpha\in\Delta_Q\setminus\Delta_P$, $\alpha\notin \Delta_{Q^c}$.
Hence $\Delta_{\langle R,Q^c\rangle} = \Delta_R\cup\Delta_{Q^c} \supset \{\alpha\}\cup\Delta_{Q^c}\supsetneq \Delta_{Q^c}$.
Therefore $\langle R,Q^c\rangle\supsetneq Q^c$.
Hence $I_{\langle R,Q^c\rangle}(e_{\langle R,Q^c\rangle}(\sigma))\subset \sum_{R_1\supsetneq Q^c}I_{R_1}(e_{R_1}(\sigma))$ by taking $R_1 = \langle R,Q^c\rangle$.

We prove $I_R(e_R(\sigma))\subset I_{Q^c}(e_{Q^c}(\sigma))\cap B$ for any $R$ such that $R\supsetneq Q^c$.
We can take $\alpha\in\Delta_R\setminus\Delta_{Q^c}$.
Let $P_\alpha$ be the parabolic subgroup corresponding to $\Delta_P\cup\{\alpha\}$.
Since $\alpha\notin\Delta_{Q^c}$, we have $\alpha\in\Delta_Q$.
Therefore $Q\supset P_\alpha \supsetneq P$.
Hence $\Delta_R\supset \Delta_{Q^c}\cup \{\alpha\} = \Delta_{\langle P_\alpha,Q^c\rangle}$.
Therefore $R\supset \langle P_\alpha,Q^c\rangle$.
Hence $I_R(e_R(\sigma))\subset I_{\langle P_\alpha,Q^c\rangle}(e_{\langle P_\alpha,Q^c\rangle}(\sigma))\subset I_{Q^c}(e_{Q^c}(\sigma))\cap B$.
We get (2) and the proof of the lemma is finished.
\end{proof}

\subsection{The kernel of $I_Q(e_Q(\sigma))\to I'_Q(e_Q(\sigma))$}
We determine $\Ker(I_Q(e_Q(\sigma))\to I'_Q(e_Q(\sigma)))$, namely we prove the following lemma.
\begin{lem}
Set $A = \{w\in W_0^Q\mid \Delta_w = \Delta_Q\}$.
Then we have
\begin{align*}
& \Ker(I_Q(e_Q(\sigma))\to I'_Q(e_Q(\sigma)))\\
& = \{\varphi\in I_Q(e_Q(\sigma))\mid \text{$\varphi(XT_{n_w}) = 0$ for any $X\in \mathcal{H}$ and $w\in A$}\}\\
& = \{\varphi\in I_Q(e_Q(\sigma))\mid \text{$\varphi(T_{n_w}) = 0$ for any $w\in A$}\} = \sum_{Q_1\supsetneq Q}I_{Q_1}(e_{Q_1}(\sigma)).
\end{align*}
\end{lem}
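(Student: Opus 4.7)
Start with the combinatorial identity
\[
A = W_0^Q \setminus \bigcup_{Q_1 \supsetneq Q} W_0^{Q_1}.
\]
For $w \in W_0^Q$, the condition $w \in W_0^{Q_1}$ is equivalent to $\Delta_{Q_1} \subset \Delta_w$; since $\Delta_Q \subset \Delta_w$ automatically, $\Delta_w = \Delta_Q$ iff $w$ lies in no $W_0^{Q_1}$ with $Q_1 \supsetneq Q$. Combining this with Proposition~\ref{prop:decomposition of I_P} (the bijection $I_Q(e_Q(\sigma)) \simeq \bigoplus_{w \in W_0^Q} e_Q(\sigma)$ via $\varphi \mapsto (\varphi(T_{n_w}))_w$) and Lemma~\ref{lem:embedding of smaller parabolic induction} (which, via a dimension match, identifies each $I_{Q_1}(e_{Q_1}(\sigma))$ with the subspace supported on $W_0^{Q_1}$), one obtains the third equality, between $\{\varphi : \varphi(T_{n_w}) = 0 \text{ for } w \in A\}$ and $\sum_{Q_1 \supsetneq Q} I_{Q_1}(e_{Q_1}(\sigma))$.

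Next, $w_Gw_Q \in A$: a direct check shows $w_Gw_Q$ sends $\Delta_Q$ to $\Sigma^+$ and $\Delta \setminus \Delta_Q$ to $-\Sigma^+$, so $\Delta_{w_Gw_Q} = \Delta_Q$. The inclusion $\{\varphi : \varphi(XT_{n_w}) = 0\} \subseteq \{\varphi : \varphi(T_{n_w}) = 0\}$ is immediate (take $X = 1$). Conversely, since $\sum I_{Q_1}(e_{Q_1}(\sigma))$ is an $\mathcal{H}$-submodule, $\varphi$ in it and $X \in \mathcal{H}$ imply $\varphi X$ is still in it; the right-action formula $(\varphi X)(T_{n_w}) = \varphi(XT_{n_w})$ then yields the reverse inclusion. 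Specializing to $w = w_Gw_Q$ gives $\varphi(XT_{n_{w_Gw_Q}}) = 0$ for every $X$, which by the characterization of $\Phi$ in Proposition~\ref{prop:hom between I and I'} is exactly $\Phi(\varphi) = 0$; this establishes the inclusion $\sum I_{Q_1}(e_{Q_1}(\sigma)) \subseteq \Ker \Phi$.

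The main obstacle is the remaining inclusion $\Ker \Phi \subseteq \sum I_{Q_1}(e_{Q_1}(\sigma))$. Given $\varphi \in \Ker \Phi$, the characterization of $\Phi$ yields $\varphi(T_{n_{w_Gw_Q}}) = 0$ at once. For the remaining $w \in A$, the plan is a dimension count: since $\sum I_{Q_1}(e_{Q_1}(\sigma)) \subseteq \Ker \Phi$ from the previous paragraph, $\Phi$ descends to a map from $I_Q(e_Q(\sigma))/\sum I_{Q_1}(e_{Q_1}(\sigma))$, a quotient of dimension $|A| \cdot \dim \sigma$, into $I'_Q(e_Q(\sigma))$, and the desired equality is equivalent to the injectivity of this descended map. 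Equivalently, for each nonzero $\varphi$ supported on $A$ in the decomposition $\varphi \mapsto (\varphi(T_{n_w}))_w$, one must produce $X \in \mathcal{H}$ with $\varphi(XT_{n_{w_Gw_Q}}) \ne 0$. The natural candidates are $X = T_{n_v} E_{o_-}(\lambda)$ with $v \in W_0$ and $\lambda \in \Lambda(1)$ chosen so that, after the $q_s = 0$ Demazure reductions, $XT_{n_{w_Gw_Q}}$ has a nontrivial $w$-component in the decomposition
\[
\mathcal{H} = \bigoplus_{v' \in W_0^Q} T_{n_{v'}} \cdot j_Q^{-*}(\mathcal{H}_Q^-),
\]
while controlling contributions from the other components indexed by $A$. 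The verification rests on the product formula~\eqref{eq:product formula}, the formula of Lemma~\ref{lem:I_P+extension as A-module}, and the factorization $W_0^Q = W_{0,P_2}^{Q \cap P_2}$ coming from the orthogonality hypothesis on $\Delta_P$, which makes the combinatorics of $A$ tractable.
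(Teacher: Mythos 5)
Your handling of the easy parts is correct and matches the paper: the identification of $\sum_{Q_1\supsetneq Q}I_{Q_1}(e_{Q_1}(\sigma))$ with $\{\varphi\mid \varphi(T_{n_w})=0,\ w\in A\}$ via Proposition~\ref{prop:decomposition of I_P}, Lemma~\ref{lem:embedding of smaller parabolic induction} and the combinatorial fact $A=W_0^Q\setminus\bigcup_{Q_1\supsetneq Q}W_0^{Q_1}$; the passage from vanishing at $T_{n_w}$ to vanishing at $XT_{n_w}$ by $\mathcal{H}$-stability; and the inclusion $\sum_{Q_1\supsetneq Q}I_{Q_1}(e_{Q_1}(\sigma))\subset\Ker$ using $w_Gw_Q\in A$ and the characterization of $\Phi$ in Proposition~\ref{prop:hom between I and I'}.

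The problem is the reverse inclusion $\Ker(I_Q(e_Q(\sigma))\to I'_Q(e_Q(\sigma)))\subset\sum_{Q_1\supsetneq Q}I_{Q_1}(e_{Q_1}(\sigma))$: what you offer there is a plan, not an argument. Reformulating it as injectivity of the descended map on the quotient supported on $A$ is fine, but you never produce, for a nonzero $\varphi$ supported on $A$, an element $X$ with $\varphi(XT_{n_{w_Gw_Q}})\ne 0$, nor do you verify that your candidates $X=T_{n_v}E_{o_-}(\lambda)$ do the job while ``controlling contributions from the other components indexed by $A$'' --- and that control is precisely the whole difficulty. In the paper this inclusion is where all the work lies: one shows, by backward induction on $\ell(w)$, that $\varphi(XT_{n_{w_Gw_Q}})=0$ for all $X$ forces $\varphi(XT_{n_w})=0$ for all $X$ and all $w\in A$. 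The induction step needs a specific $\alpha$ with $s_\alpha w>w$, $\Delta_{s_\alpha w}=\Delta_w$ and $w^{-1}(\alpha)$ \emph{not simple} (Lemma~3.15 of \cite{arXiv:1406.1003_accepted}), the identity $E_-(\lambda n_{s_\alpha}^{-1})(T_{n_{s_\alpha}}-c_{n_{s_\alpha}})=E_{o_-}(\lambda)$ for $\lambda=n_w\cdot(\lambda_P^-)^2$ --- itself proved via the Bernstein relation together with length inequalities showing the intermediate terms are divisible by positive powers of $q_s$, hence vanish since $q_s=0$; the non-simplicity of $w^{-1}(\alpha)$ is exactly what rules out the boundary case of those inequalities --- and then Lemma~\ref{lem:I_P+extension as A-module}, Proposition~\ref{prop:I_P as A-module} and the invertibility of $\sigma(E^P_{o_{-,P}}((\lambda_P^-)^2))$ to convert $\varphi(XE_{o_-}(\lambda)T_{n_w})=0$ into $\varphi(XT_{n_w})=0$. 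None of this (or any substitute for it) appears in your sketch, so the key step of the lemma remains unproven.
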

The last equality follows from Lemma~\ref{lem:embedding of smaller parabolic induction} and the following lemma.
\begin{lem}
We have $\bigcap_{Q_1\supsetneq Q}(W_0^Q\setminus W_0^{Q_1}) = \{w\in W_0^Q\mid \Delta_w = \Delta_Q\}$.
\end{lem}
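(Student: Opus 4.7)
The plan is to rewrite both sides of the claimed equality in terms of the set $\Delta_w = \{\alpha\in\Delta\mid w(\alpha)\in\Sigma^+\}$ of simple roots sent to positive roots by $w$. By the very definition $W_0^Q = \{w\in W_0 \mid w(\Delta_Q)\subset\Sigma^+\}$, membership in $W_0^Q$ is just the condition $\Delta_Q\subset \Delta_w$, and similarly $w\in W_0^{Q_1}$ is equivalent to $\Delta_{Q_1}\subset\Delta_w$. So the task reduces to a purely combinatorial statement about subsets of $\Delta$.

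For the inclusion $\subseteq$, I would take $w$ in the left-hand intersection. Then $\Delta_Q\subset\Delta_w$ by $w\in W_0^Q$. For each $\alpha\in \Delta\setminus\Delta_Q$, let $Q_\alpha$ be the parabolic subgroup corresponding to $\Delta_Q\cup\{\alpha\}$; this strictly contains $Q$, so by hypothesis $w\notin W_0^{Q_\alpha}$, which together with $w(\Delta_Q)\subset \Sigma^+$ forces $w(\alpha)\notin \Sigma^+$, i.e.\ $\alpha\notin\Delta_w$. Hence $\Delta_w\cap(\Delta\setminus\Delta_Q) = \emptyset$, giving $\Delta_w=\Delta_Q$.

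For the reverse inclusion $\supseteq$, I would take $w\in W_0^Q$ with $\Delta_w=\Delta_Q$ and any $Q_1\supsetneq Q$. Then $\Delta_{Q_1}\supsetneq\Delta_Q$, so one can pick $\alpha\in \Delta_{Q_1}\setminus\Delta_Q$; since $\alpha\notin\Delta_w$, we have $w(\alpha)\notin\Sigma^+$, whence $w(\Delta_{Q_1})\not\subset\Sigma^+$ and so $w\notin W_0^{Q_1}$. This holds for every $Q_1\supsetneq Q$, placing $w$ in the left-hand intersection.

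Both directions are essentially one-line unpackings of the definitions, so there is no real obstacle; the only thing to get right is the standard identification of $W_0^Q$ in terms of $\Delta_w$ and the observation that it suffices to test against the smallest parabolic subgroups $Q_\alpha\supsetneq Q$, $\alpha\in\Delta\setminus\Delta_Q$.
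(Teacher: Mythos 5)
Your proposal is correct and is essentially the paper's own argument: both sides are unpacked via the definitions $W_0^{Q_1}=\{w\mid w(\Delta_{Q_1})\subset\Sigma^+\}$ and $\Delta_w=\{\alpha\in\Delta\mid w(\alpha)\in\Sigma^+\}$, the only cosmetic difference being that you make explicit that it suffices to test the minimal parabolics $Q_\alpha$ with $\Delta_{Q_\alpha}=\Delta_Q\cup\{\alpha\}$.
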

\begin{proof}
Let $w\in W_0^Q$.
Then $w(\Delta_Q) > 0$ and we have $\Delta_w = \Delta_Q$ if and only if for any $Q_1\supsetneq Q$, $w(\Delta_{Q_1})\not\subset \Sigma^+$.
Since we have $w(\Delta_{Q_1})\not\subset \Sigma^+$ if and only if $w\notin W_0^{Q_1}$, we get the lemma.
\end{proof}

Let $\varphi\in I_Q(e_Q(\sigma))$ such that $\varphi(T_{n_w}) = 0$ for any $w\in A$ and take $X\in \mathcal{H}$.
The last equality implies that the set of such $\varphi$ is stable under $\mathcal{H}$.
Hence $\varphi X$ also satisfies the same condition.
Therefore $\varphi(XT_{n_w}) = 0$ for any $w\in A$.
Namely we get the second equality.

Let $\psi$ be the image of $\varphi$ under $I_Q(e_Q(\sigma))\to I'_Q(e_Q(\sigma))$.
Then $\psi$ is characterized by $\varphi(XT_{n_{w_Gw_Q}}) = \psi(XT_{n_{w_Gw_Q}})$.
Therefore we have
\begin{align*}
&\Ker(I_Q(e_Q(\sigma))\to I'_Q(e_Q(\sigma)))\\
&=
\{\varphi\in I_Q(e_Q(\sigma))\mid \text{$\varphi(XT_{n_{w_Gw_Q}}) = 0$ for any $X\in \mathcal{H}$}\}.
\end{align*}
Therefore, to prove the lemma, it is sufficient to prove the following lemma.
\begin{lem}
Let $\varphi\in I_Q(e_Q(\sigma))$.
Assume that $\varphi(XT_{n_{w_Gw_Q}}) = 0$ for any $X\in \mathcal{H}$.
Then we have $\varphi(XT_{n_w}) = 0$ for any $X\in \mathcal{H}$ and $w\in W_0^Q$ such that $\Delta_w = \Delta_Q$.
\end{lem}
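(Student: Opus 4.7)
The crucial first step is the observation that the set $\{\psi \in I_Q(e_Q(\sigma)) \mid \psi(XT_{n_w}) = 0 \text{ for all } X \in \mathcal{H},\, w \in A\}$ coincides with the submodule $N = \sum_{Q_1 \supsetneq Q} I_{Q_1}(e_{Q_1}(\sigma))$, as already shown in the displayed equalities preceding the lemma. The conclusion is therefore equivalent to $\varphi \in N$, i.e., to $\varphi(T_{n_w}) = 0$ for every $w \in A$. I would attempt this by downward induction on $\ell(w)$, with base case $w = w_Gw_Q$ handled by taking $X = 1$ in the hypothesis.

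For the inductive step, $w \in A$ with $w \neq w_Gw_Q$ satisfies $N(w) \subsetneq N(w_Gw_Q) = \Sigma^+ \setminus \Sigma_Q^+$, so $w <_L w_Gw_Q$ in left weak Bruhat order. A reduced factorization $w_Gw_Q = v' \cdot s \cdot w$ then provides a simple reflection $s \in S_0$ with $\ell(sw) = \ell(w) + 1$; a combinatorial check using the sign pattern defining $A$ (elements of $W_0^Q$ inverting precisely $\Delta \setminus \Delta_Q$ among simple roots) should ensure $s$ can be chosen so that $sw \in A$, whence $\varphi(T_{n_{sw}}) = 0$ by induction. The braid and quadratic relations give $T_{n_{sw}} = T^*_{n_s} T_{n_w} + c_{n_s} T_{n_w}$; pushing $c_{n_s} \in C[Z_\kappa]$ past $T_{n_w}$ via conjugation as $c_{n_s} T_{n_w} = T_{n_w} \cdot j_Q^{-*}((n_w^{-1} \cdot c_{n_s})^Q)$, and applying the $(\mathcal{H}_Q^-, j_Q^{-*})$-equivariance of $\varphi$, yields
\[
0 \;=\; \varphi(T^*_{n_s} T_{n_w}) + \varphi(T_{n_w}) \cdot (e_Q(\sigma))((n_w^{-1} \cdot c_{n_s})^Q).
\]

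The main obstacle is that this relation alone controls only the combination of $\varphi(T^*_{n_s} T_{n_w})$ and a scalar multiple of $\varphi(T_{n_w})$, not $\varphi(T_{n_w})$ itself. To close the induction one must independently establish $\varphi(T^*_{n_s} T_{n_w}) = 0$; I would attempt this by strengthening the inductive hypothesis to $\varphi(XT_{n_{w'}}) = 0$ for all $X \in \mathcal{H}$ and $w' \in A$ with $\ell(w') > \ell(w)$, and then expressing $T^*_{n_s} T_{n_w}$ (modulo terms killed by $\mathcal{H}_Q^-$-equivariance) as a left multiple of $T_{n_{sw}}$. The delicacy is that in characteristic $p$ with $q_s = 0$ naive inversion fails (since $T^*_{n_s} T_{n_s} = q_s T_{n_s^2} = 0$), so this rewriting requires careful use of the $T^*$-basis together with the factorization $w_Gw_Q = v' \cdot sw$ from the earlier step. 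Finally, one must verify that the scalar $(e_Q(\sigma))((n_w^{-1} \cdot c_{n_s})^Q)$ acts invertibly on $e_Q(\sigma)$; this ought to follow from the standing hypotheses that $P(\sigma) = G$ and $\Delta_P$ is orthogonal to $\Delta \setminus \Delta_P$.
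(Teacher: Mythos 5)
Your reduction to proving $\varphi(T_{n_w})=0$ for $w\in A$, the base case $w=w_Gw_Q$, and the relation
\[
0=\varphi(T_{n_{sw}})=\varphi(T^*_{n_s}T_{n_w})+\varphi(T_{n_w})\,e_Q(\sigma)(n_w^{-1}\cdot c_{n_s})
\]
are fine, but the argument stops exactly at the real difficulty, and neither of the two devices you hope will close it is available. The term $\varphi(T^*_{n_s}T_{n_w})$ is not reachable from the inductive hypothesis: since $q_s=0$ we have $T^*_{n_s}T_{n_s}=q_sT_{n_s^2}=0$, so $T^*_{n_s}$ is a zero divisor and $T^*_{n_s}T_{n_w}$ cannot be rewritten as $XT_{n_{sw}}$, even modulo terms disposed of by the $(\mathcal{H}_Q^-,j_Q^{-*})$-equivariance; this is not a technical delicacy but the whole content of the lemma. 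Moreover the scalar $e_Q(\sigma)(n_w^{-1}\cdot c_{n_s})$ need not be invertible: $c_{n_s}$ lies in $C[Z_\kappa]$, and in characteristic $p$ its image under the character by which $Z_\kappa$ acts can vanish --- this is precisely the dichotomy $\chi(c_{\widetilde{s}})=0$ or $\ne 0$ used in subsection~\ref{subsec:supersingulars} to define $S_{\aff,\chi}$ --- and neither $P(\sigma)=G$ nor the orthogonality of $\Delta_P$ to $\Delta\setminus\Delta_P$ excludes this. So your displayed identity only constrains a linear combination of two quantities you cannot separate.

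The paper's proof supplies the missing mechanism. The simple root $\alpha$ is chosen so that not only $s_\alpha w>w$ and $\Delta_{s_\alpha w}=\Delta_w$, but also $w^{-1}(\alpha)$ is \emph{not simple} --- a condition absent from your sketch but essential. Setting $\lambda=n_w\cdot(\lambda_P^-)^2$, one proves the identity $E_-(\lambda n_s^{-1})(T_{n_s}-c_{n_s})=E_{o_-}(\lambda)$ in $\mathcal{H}$ by computing with the Bernstein relations in $\mathcal{H}[q_s^{\pm 1}]$, the non-simplicity of $w^{-1}(\alpha)$ guaranteeing that every intermediate term carries a strictly positive power of $q$ and hence dies at $q_s=0$. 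Since $n_w^{-1}\cdot\lambda=(\lambda_P^-)^2$ is central in $W_P(1)$ and $P$-negative, Lemma~\ref{lem:I_P+extension as A-module} gives $\varphi(XE_{o_-}(\lambda)T_{n_w})=\varphi(XT_{n_w})\sigma(E^P_{o_{-,P}}((\lambda_P^-)^2))$ with invertible right-hand operator, so it suffices to kill $\varphi(XE_{o_-}(\lambda)T_{n_w})$. The summand $\varphi(XE_-(\lambda n_s^{-1})T_{n_{sw}})$ vanishes by the inductive hypothesis in its ``for all $X$'' form, and the summand involving $c_{n_s}$ vanishes not by any invertibility of $c$ but because, after factoring $E_-(\lambda n_s^{-1})=E_-(\lambda' n_s^{-1})E_{o_-}(n_s\cdot\lambda')$ with $\lambda'=n_w\cdot\lambda_P^-$, the element $n_w^{-1}n_s\cdot\lambda'$ fails to be $Q$-negative, so Proposition~\ref{prop:I_P as A-module} forces that evaluation to be $0$. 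Without this Bernstein-relation argument, or a genuine substitute for it, your induction does not close, so the proposal has a real gap at its central step.
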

\begin{proof}
We prove the lemma by backward induction on $\ell(w)$.
If $w \ne w_Gw_Q$, then there exists $\alpha\in\Delta$ such that $s_\alpha w > w$, $\Delta_w = \Delta_{s_\alpha w}$ and $w^{-1}(\alpha)$ is not simple~\cite[Lemma~3.15]{arXiv:1406.1003_accepted}.
Set $s = s_\alpha$.
Since $\Delta_{sw} = \Delta_w = \Delta_Q$, we have $sw\in W^Q_0$.
If $w^{-1}(\alpha)\in\Sigma^+_Q$, then since $sw\in W^Q_0$, we have $-\alpha = sw(w^{-1}(\alpha))\in \Sigma^+$.
This is a contradiction.
Hence 
\[w^{-1}(\alpha)\in\Sigma^+\setminus\Sigma^+_Q.\]

Take $\lambda_P^-\in Z(W_P(1))$ as in Proposition~\ref{prop:localization as Levi subalgebra} .
Put $\lambda = n_w\cdot (\lambda_P^-)^2$.
We  prove:
\begin{claim}
$E_-(\lambda n_s^{-1})(T_{n_s} - c_{n_s}) = E_{o_-}(\lambda)$ in $\mathcal{H}$
\end{claim}
We calculate the left hand side in $\mathcal{H}[q_s^{\pm 1}]$.
We use notation in \cite[Lemma~2.10]{arXiv:1406.1003_accepted}.
Since $w^{-1}(\alpha)\in \Sigma^+\setminus\Sigma^+_Q\subset\Sigma^+\setminus\Sigma_P^+$, we have $\langle \alpha,\nu(\lambda)\rangle = \langle w^{-1}(\alpha),\nu((\lambda_P^-)^2)\rangle > 0$.
Therefore we have $\ell(\lambda n_s^{-1}) = \ell(\lambda) - 1$ by \cite[Lemma~2.17]{arXiv:1612.01312}.
Hence $q_{\lambda n_s^{-1}} = q_\lambda q_{n_s}^{-1}$.
Therefore we have
\begin{align*}
E_-(\lambda n_s^{-1}) & = E_-(n_s^{-1}(n_s\cdot \lambda))\\
& = q_{\lambda n_s^{-1}}^{1/2}q_{n_s}^{-1/2}T_{n_s^{-1}}^*\theta(n_s\cdot \lambda)\\
& = q_\lambda^{1/2}q_{n_s}^{-1}T_{n_s^{-1}}^*\theta(n_s\cdot \lambda).
\end{align*}
Hence by \cite[Lemma~2.10]{arXiv:1406.1003_accepted}, we have
\begin{equation}\label{eq:use bernstein relation}
\begin{split}
& E_-(\lambda n_s^{-1})T_{n_s}\\
& = q_\lambda^{1/2} q_{n_s}^{-1}T_{n_s^{-1}}^*\theta(n_s\cdot \lambda) T_{n_s}\\
& = q_\lambda^{1/2} q_{n_s}^{-1}T_{n_s^{-1}}^*T_{n_s}\theta(\lambda) + \sum_{k = 0}^{\langle \alpha,\nu(\lambda) - 1}q_\lambda^{1/2}q_{n_s}^{-1}T_{n_s^{-1}}^*\theta(n_s\cdot \lambda \mu_{n_s}(k))c_{n_s,k}.
\end{split}
\end{equation}
We have
\[
q_\lambda^{1/2} q_{n_s}^{-1}T_{n_s^{-1}}^*T_{n_s}\theta(\lambda)
=
q_\lambda^{1/2}\theta(\lambda) = E_{o_-}(\lambda).
\]
and if $k = 0$, since $q_{\lambda n_s^{-1}} = q_\lambda q_{n_s}^{-1}$, we have
\begin{align*}
q_\lambda^{1/2}q_{n_s}^{-1}T_{n_s^{-1}}^*\theta(n_s\cdot \lambda \mu_{n_s}(k))c_{n_s,k}.
&=
q_\lambda^{1/2}q_{n_s}^{-1}T_{n_s^{-1}}^*\theta(n_s\cdot \lambda)c_{n_s}\\
&=
q_{\lambda n_s^{-1}}^{1/2}q_{n_s}^{-1/2}T_{n_s^{-1}}^*\theta(n_s\cdot \lambda)c_{n_s}\\
&=
E_-(n_s^{-1}(n_s\cdot \lambda))c_{n_s} = E_-(\lambda n_s^{-1})c_{n_s}.
\end{align*}
We prove that if $1 \le k \le \langle \alpha,\nu(\lambda)\rangle - 1$, then $q_\lambda^{1/2}q_{n_s}^{-1}T_{n_s^{-1}}^*\theta(n_s\cdot \lambda \mu_{n_s}(k))c_{n_s,k} = 0$ in $\mathcal{H}$.
We have
\begin{align*}
&q_\lambda^{1/2}q_{n_s}^{-1}T_{n_s^{-1}}^*\theta(n_s\cdot \lambda \mu_{n_s}(k))c_{n_s,k}\\
&=
q_{\lambda}^{1/2}q_{n_s^{-1}(n_s\cdot \lambda \mu_{n_s}(k))}^{-1/2}q_{n_s}^{-1/2}E_-(n_s^{-1}(n_s\cdot \lambda \mu_{n_s}(k)))c_{n_s,k}.
\end{align*}
Hence \eqref{eq:use bernstein relation} is an expansion of $E_-(\lambda n_s^{-1})T_{n_s}$  with respect to the basis $\{E_-(w)\mid w\in W(1)\}$.
Since this is a basis of $\mathcal{H}[q_s]$ as a $C[q_s]$-module, each coefficient is in $C[q_s]$.
Hence $q_{\lambda}^{1/2}q_{n_s^{-1}(n_s\cdot \lambda \mu_{n_s}(k))}^{-1/2}q_{n_s}^{-1/2}\in C[q_s]$.
Namely, for each $s\in S_\aff$ there exists $k_s\in\Z_{\ge 0}/{\sim}$ (where the equivalence relation $\sim$ is defined by the adjoint action of $W$ on $S_\aff$) such that $q_{\lambda}^{1/2}q_{n_s^{-1}(n_s\cdot \lambda \mu_{n_s}(k))}^{-1/2}q_{n_s}^{-1/2} = \prod_{s\in S_\aff/{\sim}} q_s^{k_s}$.
We have $\sum_s k_s = (1/2)(\ell(\lambda) - \ell(n_s^{-1}(n_s\cdot \lambda \mu_{n_s}(k))) - \ell(n_s))$.
We calculate
\begin{align*}
\ell(\lambda) - \ell(n_s^{-1}(n_s\cdot \lambda \mu_{n_s}(k))) - \ell(n_s)
& \ge \ell(\lambda) - \ell(n_s^{-1}) - \ell(n_s\cdot \lambda \mu_{n_s}(k)) - \ell(n_s)\\
& = \ell(\lambda) - \ell(n_s\cdot \lambda \mu_{n_s}(k)) - 2.
\end{align*}
By \cite[Lemma~2.12]{arXiv:1406.1003_accepted}, $\ell(\lambda) - \ell(n_s\cdot \lambda \mu_{n_s}(k))\ge 2\min\{k,\langle \alpha,\nu(\lambda)\rangle - k\}$.
If the equality holds, again by \cite[Lemma~2.12]{arXiv:1406.1003_accepted}, there exists $v\in W_0$ such that $v\nu(\lambda)$ is dominant and $v(\alpha)$ is simple.

Assume that $v\nu(\lambda)$ is dominant for $v\in W_0$.
We have $\nu(\lambda) = w(\nu((\lambda_P^-)^2))$ and since $\nu((\lambda_P^-)^2)$ is dominant, we have $vw\in \Stab_{W_0}(\nu((\lambda_P^-)^2))$.
By the condition of $\lambda_P^-$, the stabilizer of $\nu((\lambda_P^-)^2)$ is $W_{0,P}$.
Hence $vw\in W_{0,P}$.
Since $w^{-1}(\alpha)\in\Sigma^+\setminus\Sigma^+_Q$, we have $w^{-1}(\alpha)\in\Sigma_{P_2}^+$.
Any element in $\Sigma_{P_2}$ is fixed by elements in $W_{0,P}$.
Hence $vw(w^{-1}(\alpha)) = w^{-1}(\alpha)$.
Therefore $v(\alpha) = w^{-1}(\alpha)$.
This is not simple by the condition on $\alpha$.

Hence we always have 
\[
\ell(\lambda) - \ell(n_s^{-1}(n_s\cdot \lambda \mu_{n_s}(k))) - \ell(n_s) > 0
\]
for $1\le k\le \langle \alpha,\nu(\lambda)\rangle - 1$.
Hence $\sum_s k_s > 0$.
Therefore there exists $s$ such that $k_s > 0$.
Hence $\prod_s q_s^{k_s} = 0$ in $\mathcal{H}$.
We get $q_\lambda^{1/2}q_{n_s}^{-1}T_{n_s^{-1}}^*\theta(n_s\cdot \lambda \mu_{n_s}(k))c_{n_s,k} = 0$.
Therefore we have
\[
E_-(\lambda n_s^{-1})T_{n_s} = E_{o_-}(\lambda) + E_-(\lambda n_s^{-1})c_{n_s}.
\]
This gives the claim.

We return to the proof of the lemma.
Since $n_w^{-1}\cdot \lambda = (\lambda_P^-)^2$ is $P$-negative, by Lemma~\ref{lem:I_P+extension as A-module}, we have
\[
\varphi(XE_{o_-}(\lambda)T_{n_w})
= \varphi(XT_{n_w})\sigma(E_{o_{-,P}}^P(n_w^{-1}\cdot \lambda)).
\]
Since $\sigma(E_{o_{-,P}}^P(n_w^{-1}\cdot \lambda)) = \sigma(E_{o_{-,P}}^P((\lambda_0^-)^2))$ is invertible, it is sufficient to prove that $\varphi(XE_{o_-}(\lambda)T_{n_w}) = 0$.
By the claim, we have
\[
\varphi(XE_{o_-}(\lambda)T_{n_w})
=
\varphi(XE_-(\lambda n_s^{-1})T_{n_s}T_{n_w})
-
\varphi(XE_-(\lambda n_s^{-1})c_{n_s}T_{n_w})
\]
By inductive hypothesis, $\varphi(XE_-(\lambda n_s^{-1})T_{n_s}T_{n_w}) = \varphi(XE_-(\lambda n_s^{-1})T_{n_{sw}}) = 0$.
We have
\[
\varphi(XE_{o_-}(\lambda n_s^{-1})c_{n_s}T_{n_w})
=
\varphi(X((\lambda n_s^{-1})\cdot c_{n_s})E_-(\lambda n_s^{-1})T_{n_w})
\]
Set $\lambda' = n_w\cdot \lambda_P^-$.
We have $\ell(\lambda n_s^{-1}) = \ell(\lambda) - 1 = \ell((\lambda')^2) - 1 = 2\ell(\lambda') - 1$ as $\langle \alpha,\nu(\lambda)\rangle > 0$.
Since $\langle \alpha,\nu(\lambda')\rangle = \langle w^{-1}(\alpha),\nu(\lambda_P^-)\rangle > 0$, we have $\ell(\lambda') - 1 = \ell(\lambda'n_s^{-1})$.
By \cite[Lemma~2.15]{arXiv:1612.01312}, we have $\ell(\lambda') = \ell(n_s\cdot \lambda')$.
Hence $\ell(\lambda n_s^{-1}) = \ell(\lambda'n_s^{-1}) + \ell(n_s\cdot \lambda')$.
Therefore
\[
E_-(\lambda n_s^{-1})
=
E_-(\lambda' n_s^{-1}(n_s\cdot \lambda'))
=
E_-(\lambda' n_s^{-1})E_{o_-}(n_s\cdot \lambda')
\]
by the definition of $E_-(\lambda n_s^{-1})$.
Hence
\begin{align*}
& \varphi(X((\lambda n_s^{-1})\cdot c_{n_s})E_-(\lambda n_s^{-1})T_{n_w})\\
& =
\varphi(X((\lambda n_s^{-1})\cdot c_{n_s})E_-(\lambda' n_s^{-1})E_{o_-}(n_s\cdot \lambda')T_{n_w}).
\end{align*}
Since $w^{-1}(\alpha) \in \Sigma^+\setminus\Sigma_Q^+\subset\Sigma^+\setminus\Sigma_P^+$, we have $\langle w^{-1}(\alpha),\nu(n_w^{-1}n_s\cdot \lambda')\rangle = -\langle \alpha,\nu(\lambda')\rangle = -\langle w^{-1}(\alpha),\nu(\lambda_P^-)\rangle < 0$.
Therefore $n_w^{-1}n_s\cdot \lambda'$ is not $Q$-negative.
Hence $\varphi(X((\lambda n_s^{-1})\cdot c_{n_s})E_-(\lambda' n_s^{-1})E_{o_-}(n_s\cdot \lambda')T_{n_w}) = 0$ by Proposition~\ref{prop:I_P as A-module}.
\end{proof}

\subsection{The homomorphism $I'_Q(e_Q(\sigma))\to I'_{Q_1}(e_{Q_1}(\sigma))$}
Let $Q_1\supset Q \supset P$ be parabolic subgroups.
Recall that we have the homomorphism $I'_Q(e_Q(\sigma))\to I'_{Q_1}(e_{Q_1}(\sigma))$.
This is defined by $I_Q(\St_P^Q(\sigma^{\iota_P}_{\ell - \ell_P}))\to I_{Q_1}(\St_P^{Q_1}(\sigma^{\iota_P}_{\ell - \ell_P}))$ with $\iota$.
We give the following description of this homomorphism.
\begin{prop}\label{prop:explicit formula of I'_Q(e_Q) -> I'_(Q_1)(e_(Q_1))}
Let $\varphi\in I'_Q(e_Q(\sigma))$ and $\varphi'\in I'_{Q_1}(e_{Q_1}(\sigma))$ be the image of $\varphi$.
Then for $w\in W_0^{Q_1}$, we have $\varphi'(T_{n_w}^*) = (-1)^{\ell(w_{Q_1}w_Q)}\varphi(T_{n_{ww_{Q_1}w_Q}}^*)$.
In particular, combining with \cite[Proposition~4.12]{arXiv:1612.01312}, we have
\begin{align*}
&\Ker(I'_Q(e_Q(\sigma))\to I'_{Q_1}(e_{Q_1}(\sigma))) \\
&= \{\varphi\in I'_Q(e_Q(\sigma))\mid \text{$\varphi(T_{n_w}^*) = 0$ for any $w\in W_0^{Q_1}w_{Q_1}w_Q$}\}.
\end{align*}
\end{prop}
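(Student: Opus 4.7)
The plan is to unwind the definition of $I'_Q(e_Q(\sigma))\to I'_{Q_1}(e_{Q_1}(\sigma))$ and evaluate directly on $T_{n_w}^*$. By construction this map is obtained from the natural surjection $I_Q(\St_P^Q(\tilde\sigma))\twoheadrightarrow I_{Q_1}(\St_P^{Q_1}(\tilde\sigma))$ with $\tilde\sigma=\sigma^{\iota_P}_{\ell-\ell_P}$, after passing through the $\iota$-isomorphism $I_Q(\cdot)^\iota\simeq I'_Q((\cdot)^{\iota_Q}_{\ell-\ell_Q})$ of \cite[Proposition~4.11]{arXiv:1612.01312} and the identification $\St_P^Q(\tilde\sigma)^{\iota_Q}_{\ell-\ell_Q}=e_Q(\sigma)$ already used before Lemma~\ref{lem:exact sequence related to I and I'}.

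First I would make the $\iota$-isomorphism explicit. It is realized by $\psi\mapsto \psi\circ\iota$, as follows from the defining $(\mathcal{H}_Q^-,j_Q^{-*})$-equivariance of $I_Q$, the $(\mathcal{H}_Q^-,j_Q^-)$-equivariance of $I'_Q$, and the identity $\iota\circ j_Q^-=(-1)^{\ell-\ell_Q}\,j_Q^{-*}\circ\iota_Q$ on $\mathcal{H}_Q^-$. Consequently, if $\varphi\in I'_Q(e_Q(\sigma))$ corresponds to $\psi\in I_Q(\St_P^Q(\tilde\sigma))$ under this identification, then for each $v\in W_0^Q$,
\[
\varphi(T_{n_v}^*)=\psi(\iota(T_{n_v}^*))=(-1)^{\ell(v)}\psi(T_{n_v}),
\]
and the same formula holds with $Q_1$ in place of $Q$.

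Second I would describe the surjection $\psi\mapsto\psi'$ on $T_{n_v}$-coordinates. Using Proposition~\ref{prop:decomposition of I_P} together with the factorization $W_0^Q=W_0^{Q_1}\cdot W_{0,Q_1}^Q$ (with lengths adding and $w_{Q_1}w_Q$ the longest element of $W_{0,Q_1}^Q$), and the transitivity $I_P=I_{Q_1}\circ I_P^{Q_1}$, one shows that for $w\in W_0^{Q_1}$ the value $\psi'(T_{n_w})$ matches, under the canonical identifications of targets, $\psi(T_{n_{ww_{Q_1}w_Q}})$: intuitively, $ww_{Q_1}w_Q$ is the ``top'' representative of the $W_{0,Q_1}^Q$-fiber over $w$, which is the one that survives passage to the quotient $\St_P^{Q_1}(\tilde\sigma)$. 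Substituting into the first formula and using $\ell(ww_{Q_1}w_Q)=\ell(w)+\ell(w_{Q_1}w_Q)$, the combined sign becomes $(-1)^{\ell(w_{Q_1}w_Q)}$, yielding the claimed identity.

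The main obstacle is the second step: making precise why the surjection, on $T_{n_v}$-coordinates, is realised by the index shift $w\mapsto ww_{Q_1}w_Q$. This amounts to comparing $\St_P^Q(\tilde\sigma)$ and $\St_P^{Q_1}(\tilde\sigma)$ as subquotients of $I^Q_P(\tilde\sigma)$ and $I^{Q_1}_P(\tilde\sigma)$ respectively, and tracing the ``top'' component through the relevant quotients; once this is pinned down the sign computation is mechanical. The final statement on the kernel follows formally: by \cite[Proposition~4.12]{arXiv:1612.01312}, $\varphi\in I'_Q(e_Q(\sigma))$ is determined by its values on $\{T_{n_v}^*\}_{v\in W_0^Q}$, and the explicit formula shows that $\varphi'=0$ iff $\varphi(T_{n_v}^*)=0$ for every $v$ in the shifted index set $W_0^{Q_1}w_{Q_1}w_Q$.
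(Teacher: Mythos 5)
Your overall route is the same as the paper's: transfer $\varphi$ to $\psi=\varphi\circ\iota\in I_Q(\St_P^Q(\tilde\sigma))$ with $\tilde\sigma=\sigma^{\iota_P}_{\ell-\ell_P}$, describe the induced surjection $I_Q(\St_P^Q(\tilde\sigma))\to I_{Q_1}(\St_P^{Q_1}(\tilde\sigma))$, and convert back, the sign $(-1)^{\ell(w_{Q_1}w_Q)}$ coming from $\iota(T^*_{n_v})=(-1)^{\ell(v)}T_{n_v}$ together with $\ell(ww_{Q_1}w_Q)=\ell(w)+\ell(w_{Q_1}w_Q)$; the kernel statement is then formal from \cite[Proposition~4.12]{arXiv:1612.01312}. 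That part is fine.

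The genuine gap is exactly the step you yourself flag as the ``main obstacle'': you assert, via the heuristic that $ww_{Q_1}w_Q$ is the top representative of the fiber and ``survives'' the quotient, that on $T_{n_v}$-coordinates the surjection is the index shift $w\mapsto ww_{Q_1}w_Q$, but you do not prove it, and the heuristic is not a proof --- that the Steinberg quotient becomes evaluation at the longest coset representative is precisely the point at issue. The paper settles it with a short lifting argument you would need to supply: under the vector-space identification $\St_P^Q(\tilde\sigma)\simeq\tilde\sigma$ furnished by Lemma~\ref{lem:embedding of smaller parabolic induction} (the kernel of $I_P^Q(\tilde\sigma)\ni\phi\mapsto\phi(T^Q_{n_{w_Qw_P}})$ is $\sum_{Q\supset R\supsetneq P}I_R^Q(e_R(\tilde\sigma))$), the projection $I_P(\tilde\sigma)\to I_Q(\St_P^Q(\tilde\sigma))$ is $\widetilde\psi\mapsto(X\mapsto\widetilde\psi(XT_{n_{w_Qw_P}}))$, and likewise for $Q_1$; lifting $\psi$ to $\widetilde\psi\in I_P(\tilde\sigma)$ and using the braid relation $T_{n_{w_{Q_1}w_P}}=T_{n_{w_{Q_1}w_Q}}T_{n_{w_Qw_P}}$ (lengths add) gives the stronger identity $\psi'(X)=\psi(XT_{n_{w_{Q_1}w_Q}})$ for \emph{all} $X\in\mathcal{H}$, hence $\varphi'(X)=(-1)^{\ell(w_{Q_1}w_Q)}\varphi(XT^*_{n_{w_{Q_1}w_Q}})$, and your coordinate formula follows by taking $X=T^*_{n_w}$ and $T^*_{n_w}T^*_{n_{w_{Q_1}w_Q}}=T^*_{n_{ww_{Q_1}w_Q}}$. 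With this lemma inserted, your argument coincides with the paper's; without it, the decisive identity is only asserted.
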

First we describe the homomorphism $I_Q(\St_P^Q(\sigma))\to I_{Q_1}(\St_P^{Q_1}(\sigma))$.
Recall that the kernel of $I_P^Q(\sigma)\ni\varphi\mapsto \varphi(T_{n_{w_Qw_P}})\in \sigma$ is $\sum_{Q\supset P_1\supsetneq P}I_{P_1}(e_{P_1}(\sigma))$ and hence it gives an identification $\sigma\simeq \St_P^Q(\sigma)$ as vector spaces by Lemma~\ref{lem:embedding of smaller parabolic induction}.
\begin{lem}
The homomorphism $I_Q(\St_P^Q(\sigma))\to I_{Q_1}(\St_P^{Q_1}(\sigma))$ is given by $\varphi\mapsto (X\mapsto \varphi(XT_{n_{w_{Q_1}w_Q}}))$. (Here we identify $\St_P^{Q}(\sigma)$ and $\St_P^{Q_1}(\sigma)$ with $\sigma$.)
\end{lem}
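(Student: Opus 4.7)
The plan is to trace both the map $I_Q(\St_P^Q(\sigma))\to I_{Q_1}(\St_P^{Q_1}(\sigma))$ and the vector-space identifications $\St_P^Q(\sigma)\simeq\sigma$, $\St_P^{Q_1}(\sigma)\simeq\sigma$ through the common lift $I_P(\sigma)$, using the transitivity $I_P=I_Q\circ I_P^Q=I_{Q_1}\circ I_P^{Q_1}$ together with Proposition~\ref{prop:decomposition of I_P}. The decisive ingredient is the length-additive factorization of the longest elements in the finite Weyl subgroups.

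The first step is to make the identification $\St_P^Q(\sigma)\simeq\sigma$ explicit at the level of $I_Q$. Given $\tilde\varphi\in I_P(\sigma)$, let $\tilde\psi\in I_Q(I_P^Q(\sigma))$ be the corresponding element under transitivity. The transitivity isomorphism is pinned down by Proposition~\ref{prop:decomposition of I_P} applied in both stages: using the length-additive decomposition $W_0^P=W_0^Q\cdot(W_0^P\cap W_{0,Q})$, one checks $\tilde\psi(T_{n_w})(T^Q_{n_v})=\tilde\varphi(T_{n_{wv}})=\tilde\varphi(T_{n_w}T_{n_v})$ for $w\in W_0^Q$ and $v\in W_0^P\cap W_{0,Q}$. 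Composing with the evaluation $I_P^Q(\sigma)\ni f\mapsto f(T^Q_{n_{w_Qw_P}})\in\sigma$, whose kernel is exactly $\sum_{Q\supset R\supsetneq P}I_R^Q(e_R(\sigma))$, yields the concrete formula: if $\tilde\varphi$ maps to $\varphi\in I_Q(\St_P^Q(\sigma))$, then under the stated identification
\[
\varphi(X)=\tilde\varphi(X\,T_{n_{w_Qw_P}}) \qquad (X\in\mathcal{H}).
\]
It is enough to check this for $X=T_{n_w}$ with $w\in W_0^Q$ by Proposition~\ref{prop:decomposition of I_P}; both sides equal $\tilde\varphi(T_{n_{ww_Qw_P}})$ thanks to length additivity.

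The same argument with $Q$ replaced by $Q_1$ gives $\varphi'(X)=\tilde\varphi(X\,T_{n_{w_{Q_1}w_P}})$ for the image $\varphi'\in I_{Q_1}(\St_P^{Q_1}(\sigma))$. Now the key combinatorial input: $w_{Q_1}w_Q$ is the longest minimal-length coset representative of $W_{0,Q}$ in $W_{0,Q_1}$, so $\ell(w_{Q_1}w_P)=\ell(w_{Q_1}w_Q)+\ell(w_Qw_P)$; consequently $T_{n_{w_{Q_1}w_P}}=T_{n_{w_{Q_1}w_Q}}T_{n_{w_Qw_P}}$. Substituting and using the formula for $\varphi$ already established,
\[
\varphi'(X)=\tilde\varphi(X\,T_{n_{w_{Q_1}w_Q}}T_{n_{w_Qw_P}})=\varphi(X\,T_{n_{w_{Q_1}w_Q}}),
\]
which is the asserted description.

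The main obstacle is the first step: making the transitivity isomorphism $I_P\simeq I_Q\circ I_P^Q$ explicit enough to convert the abstract quotient $I_P(\sigma)\twoheadrightarrow I_Q(\St_P^Q(\sigma))$ into the concrete rule $\varphi(X)=\tilde\varphi(XT_{n_{w_Qw_P}})$. The remaining steps are essentially a length-additivity bookkeeping exercise. Once Step~1 is in place, the conclusion is immediate from the factorization of $w_{Q_1}w_P$.
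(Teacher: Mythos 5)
Your overall strategy is the one the paper uses: lift $\varphi$ to $\tilde\varphi\in I_P(\sigma)$, express both quotient maps $I_P(\sigma)\to I_Q(\St_P^Q(\sigma))$ and $I_P(\sigma)\to I_{Q_1}(\St_P^{Q_1}(\sigma))$ as $X\mapsto\tilde\varphi(XT_{n_{w_Qw_P}})$ resp.\ $X\mapsto\tilde\varphi(XT_{n_{w_{Q_1}w_P}})$, and conclude from the braid relation $T_{n_{w_{Q_1}w_P}}=T_{n_{w_{Q_1}w_Q}}T_{n_{w_Qw_P}}$ coming from $\ell(w_{Q_1}w_P)=\ell(w_{Q_1}w_Q)+\ell(w_Qw_P)$. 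That part is fine.

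The weak point is your justification of the key formula $\varphi(X)=\tilde\varphi(XT_{n_{w_Qw_P}})$. You claim it is ``enough to check for $X=T_{n_w}$, $w\in W_0^Q$, by Proposition~\ref{prop:decomposition of I_P}.'' That proposition lets you compare two elements \emph{of} $I_Q(\St_P^Q(\sigma))$ by their values on the $T_{n_w}$; but the function $X\mapsto\tilde\varphi(XT_{n_{w_Qw_P}})$ is not a priori such an element — its $(\mathcal{H}_Q^-,j_Q^{-*})$-equivariance with respect to the (only abstractly known) $\mathcal{H}_Q$-action on $\St_P^Q(\sigma)$ transported to $\sigma$ is exactly what is in question. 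So checking on the basis only pins down the image $\varphi$ as an element of $I_Q(\St_P^Q(\sigma))$; it does not give the formula at a general $X$, and you genuinely need it at general arguments: your last display applies it with $XT_{n_{w_{Q_1}w_Q}}$ in place of $X$, and the lemma itself asserts the formula for all $X$. The repair is short and is what the paper does implicitly (compare the computations in the proof of Lemma~\ref{lem:transitivity of I->I'}): use the explicit transitivity isomorphism $\tilde\varphi(Y)=\tilde\psi(Y)(1)$ with $\tilde\psi\in I_Q(I_P^Q(\sigma))$, together with $j_Q^{-*}(T^Q_{n_{w_Qw_P}})=T_{n_{w_Qw_P}}$ (\cite[Lemma~2.6]{arXiv:1612.01312}) and the $(\mathcal{H}_Q^-,j_Q^{-*})$-equivariance of $\tilde\psi$, to get $\tilde\varphi(XT_{n_{w_Qw_P}})=\tilde\psi(XT_{n_{w_Qw_P}})(1)=\tilde\psi(X)(T^Q_{n_{w_Qw_P}})$ for \emph{every} $X$; the right-hand side is by definition the value at $X$ of the image of $\tilde\varphi$ in $I_Q(\St_P^Q(\sigma))$. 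The same remark applies to your unproved assertion $\tilde\psi(T_{n_w})(T^Q_{n_v})=\tilde\varphi(T_{n_{wv}})$, which should likewise be derived from the explicit form of the transitivity isomorphism rather than from the decomposition proposition alone. With this adjustment your argument coincides with the paper's proof.
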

\begin{proof}
Since $I_P^Q(\sigma)\to \St_P^Q(\sigma)$ is given by $\varphi\mapsto \varphi(T^Q_{n_{w_Qw_P}})$ (under the identification $\sigma = \St_P^Q(\sigma)$), $I_P(\sigma)\to I_Q(\St_P^Q(\sigma))$ is given by $\varphi\mapsto (X\mapsto \varphi(XT_{n_{w_Qw_P}}))$.
Now recall the following commutative diagram which defines the homomorphism in the lemma.
\[
\begin{tikzcd}
I_P(\sigma)\arrow[d,equal]\arrow[r] & I_Q(\St_P^Q(\sigma))\arrow[d]\\
I_P(\sigma)\arrow[r] & I_{Q_1}(\St_P^{Q_1}(\sigma)).
\end{tikzcd}
\]
Let $\varphi\in I_Q(\St_P^Q(\sigma))$ and take $\widetilde{\varphi}\in I_P(\sigma)$ which is a lift of $\varphi$.
Then we have $\varphi(X) = \widetilde{\varphi}(XT_{n_{w_Qw_P}})$.
Let $\varphi'$ be the image of $\varphi$.
Then the above commutative diagram we have $\varphi'(X) = \widetilde{\varphi}(XT_{n_{w_{Q_1}w_P}})$.
Since $n_{w_{Q_1}w_P} = n_{w_{Q_1}w_Q}n_{w_Qw_P}$, we have $\widetilde{\varphi}(XT_{n_{w_{Q_1}w_P}}) = \widetilde{\varphi}(XT_{n_{w_{Q_1}w_Q}}T_{n_{w_{Q}w_P}}) = \varphi(XT_{n_{w_{Q_1}w_Q}})$.
\end{proof}

\begin{proof}[Proof of Proposition~\ref{prop:explicit formula of I'_Q(e_Q) -> I'_(Q_1)(e_(Q_1))}]
Let $\varphi\in I'_Q(e_Q(\sigma))$ and $\varphi'\in I'_{Q_1}(e_{Q_1}(\sigma))$ its image.
Then $\varphi\circ\iota\in I_Q(\St^Q_P(\sigma^{\iota_P}_{\ell - \ell_P}))$ and $\varphi'\circ\iota\in I_{Q_1}(\St^{Q_1}_P(\sigma^{\iota_P}_{\ell - \ell_P}))$.
By the above lemma, we have $\varphi'\circ \iota(X) = \varphi\circ\iota(XT_{n_{w_{Q_1}w_Q}})$.
Hence $\varphi'(\iota(X)) = \varphi(\iota(XT_{n_{w_{Q_1}w_Q}})) = (-1)^{\ell(w_{Q_1}w_Q)}\varphi(\iota(X)T_{n_{w_{Q_1}w_Q}}^*)$ for any $X\in \mathcal{H}$.
Therefore, for any $X\in \mathcal{H}$, we have $\varphi'(X) = (-1)^{\ell(w_{Q_1}w_Q)}\varphi(XT_{n_{w_{Q_1}w_Q}}^*)$ for any $X\in \mathcal{H}$.
\end{proof}
By Proposition~\ref{prop:explicit formula of I'_Q(e_Q) -> I'_(Q_1)(e_(Q_1))},  $\varphi\in \Ker(I'_Q(e_Q(\sigma))\to \bigoplus_{Q_1\supsetneq Q}I'_{Q_1}(e_{Q_1}(\sigma)))$ if and only if $\varphi(T^*_{n_w}) = 0$ for any $w\in \bigcup_{Q_1\supsetneq Q}W_0^{Q_1}w_{Q_1}w_Q$.
We get the following description of the kernel appearing in Lemma~\ref{lem:exact sequence related to I and I'}.
\begin{lem}\label{lem:kernel of I'_Q(e_Q) -> plusI'_(Q_1)(e_(Q_1))}
Let $w\in W_0^Q$.
We have $\Delta_{ww_Q} \ne \Delta\setminus\Delta_Q$ if and only if for some $Q_1\supsetneq Q$, $w\in W_0^{Q_1}w_{Q_1}w_Q$.
Hence we have
\begin{align*}
&\Ker\left(I'_Q(e_Q(\sigma))\to \bigoplus_{Q_1\supsetneq Q}I'_{Q_1}(e_{Q_1}(\sigma))\right)\\
&=
\{\varphi\in I'_Q(e_Q(\sigma))\mid \text{$\varphi(T_{n_w}^*) = 0$ for any $w\in W_0^Q$ such that $\Delta_{ww_Q}\ne \Delta\setminus\Delta_Q$}\}.
\end{align*}
\end{lem}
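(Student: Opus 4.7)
The plan is to first reduce to the combinatorial equivalence, and then establish the equivalence itself. The displayed equality is immediate from the first assertion of the lemma once combined with the characterization of the kernel stated in the paragraph just before (noting that $I'_Q(e_Q(\sigma))\simeq\bigoplus_{w\in W_0^Q}\sigma$ via $\varphi\mapsto(\varphi(T_{n_w}^*))_w$ by Proposition~\ref{prop:decomposition of I_P}(2), and that a direct check gives $W_0^{Q_1}w_{Q_1}w_Q\subset W_0^Q$). So the whole task reduces to the combinatorial ``iff''.

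To prove the combinatorial part, I would introduce $v:=ww_Q$ and rephrase both conditions in terms of $\Delta_v=\{\alpha\in\Delta\mid v(\alpha)\in\Sigma^+\}$. Since $w\in W_0^Q$ amounts to $w(\Sigma_Q^+)\subset\Sigma^+$ and $w_Q(\Sigma_Q^+)=\Sigma_Q^-$, one gets $v(\Sigma_Q^+)\subset\Sigma^-$; in particular $\Delta_Q\cap\Delta_v=\emptyset$, so $\Delta_v\subset\Delta\setminus\Delta_Q$. Next, the condition $w\in W_0^{Q_1}w_{Q_1}w_Q$ is equivalent to $vw_{Q_1}\in W_0^{Q_1}$, that is, $vw_{Q_1}(\Delta_{Q_1})\subset\Sigma^+$. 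Using the set-level identity $w_{Q_1}(\Delta_{Q_1})=-\Delta_{Q_1}$, this becomes $v(\Delta_{Q_1})\subset\Sigma^-$, i.e., $\Delta_{Q_1}\cap\Delta_v=\emptyset$.

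Combining the two translations: given the inclusion $\Delta_v\subset\Delta\setminus\Delta_Q$, the existence of some $Q_1\supsetneq Q$ with $\Delta_{Q_1}\cap\Delta_v=\emptyset$ is equivalent to the existence of a simple root $\alpha\in\Delta\setminus\Delta_Q$ with $\alpha\notin\Delta_v$. One direction takes $Q_1$ corresponding to $\Delta_Q\cup\{\alpha\}$; for the other, pick any $\alpha\in\Delta_{Q_1}\setminus\Delta_Q$ and observe $\alpha\notin\Delta_v$. Finally, since $\Delta_v\subset\Delta\setminus\Delta_Q$, the existence of such $\alpha$ is simply the strict inclusion $\Delta_v\subsetneq\Delta\setminus\Delta_Q$, i.e., $\Delta_{ww_Q}\ne\Delta\setminus\Delta_Q$.

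No serious obstacle is expected: the argument is pure Weyl-group combinatorics. The only point one must watch is that $w_{Q_1}$ does not in general act by $-\mathrm{id}$ on $\Delta_{Q_1}$ but by $-\theta$ for the opposition involution $\theta$ of $\Delta_{Q_1}$; however, only the set-theoretic equality $w_{Q_1}(\Delta_{Q_1})=-\Delta_{Q_1}$ enters the argument, so this causes no difficulty.
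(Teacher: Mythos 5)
Your proof is correct and takes essentially the same route as the paper: both reduce the displayed kernel equality to the combinatorial ``iff'' using the characterization in the preceding paragraph, and both prove that equivalence by the same Weyl-group computation, translating $w\in W_0^{Q_1}w_{Q_1}w_Q$ into $ww_Q(\Delta_{Q_1})\subset\Sigma^-$ via the set identity $w_{Q_1}(\Delta_{Q_1})=-\Delta_{Q_1}$ and taking $Q_1$ corresponding to $\Delta_Q\cup\{\alpha\}$ for the converse. Your packaging via $\Delta_{ww_Q}$ (and the explicit remark that $W_0^{Q_1}w_{Q_1}w_Q\subset W_0^Q$) is just a slightly more symmetric, and if anything more careful, presentation of the paper's two directions.
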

\begin{proof}
Let $w\in W_0^Q$ and assume that for some $Q_1\supsetneq Q$ and $v\in W_0^{Q_1}$, we have $w = vw_{Q_1}w_Q$.
Then for $\alpha\in\Delta_{Q_1}\setminus\Delta_Q$, $w_{Q_1}(\alpha) \in \Sigma_{Q_1}^-$.
Since $v\in W_0^{Q_1}$, we have $vw_{Q_1}(\alpha) < 0$.
Hence $ww_Q(\alpha) < 0$.
Therefore $\alpha\notin \Delta_{ww_Q}$.
Hence $\Delta_{ww_Q}\ne \Delta\setminus\Delta_Q$.

Assume that $\Delta_{ww_Q}\ne\Delta\setminus\Delta_Q$.
Since $w\in W_0^Q$, for any $\alpha\in\Delta_Q$ we have $ww_Q(\alpha) < 0$.
Hence $\alpha\notin \Delta_{ww_Q}$.
Therefore $\Delta_Q\subset\Delta\setminus\Delta_{ww_Q}$, namely we have $\Delta_{ww_Q}\subset\Delta\setminus\Delta_Q$.
Hence $\Delta_{ww_Q}\not\supset \Delta\setminus\Delta_Q$.
Take $\alpha\in(\Delta\setminus\Delta_Q)\setminus\Delta_{ww_Q}$.
Let $Q_1$ be a parabolic subgroup corresponding to $\Delta_Q\cup\{\alpha\}$.
Then we have $\Delta_{Q_1}\subset\Delta\setminus\Delta_{ww_Q}$.
If $\beta\in\Delta_{Q_1}$, then $w_{Q_1}(\beta)\in-\Delta_{Q_1}\subset-(\Delta\setminus\Delta_{ww_Q})$.
Hence $ww_Qw_{Q_1}(\beta) > 0$.
Therefore $ww_Qw_{Q_1}\in W^{Q_1}_0$.
We have $w\in W_0^{Q_1}w_{Q_1}w_Q$.
\end{proof}

\subsection{Complex}
In this subsection, we prove that the sequence in Lemma~\ref{lem:exact sequence related to I and I'} is a complex, namely the composition
\[
I_Q(e_Q(\sigma))\to I'_Q(e_Q(\sigma))\to I'_{Q_1}(e_{Q_1}(\sigma))
\]
is zero for any parabolic subgroup $Q_1\supsetneq Q$.

We have the following diagram
\[
\begin{tikzcd}
I_Q(e_Q(\sigma)) \arrow[rr]\arrow[dash,d,"\wr"] &[-.3cm] &[-.3cm] I'_Q(e_Q(\sigma))\arrow[r]\arrow[d,dash,"\wr"] & I'_{Q_1}(e_{Q_1}(\sigma))\\
I_{Q_1}I_Q^{Q_1}(e_Q(\sigma))\arrow[r] & I'_{Q_1}(I_{Q}^{Q_1}(e_{Q}(\sigma))) \arrow[r] & I'_{Q_1}(I_Q^{Q_1\prime}(e_Q(\sigma))).\arrow[ru]
\end{tikzcd}
\]
This is commutative by Lemma~\ref{lem:transitivity of I->I'}.
The sequence
\[
I'_{Q_1}(I_{Q}^{Q_1}(e_{Q}(\sigma))) \to I'_{Q_1}(I_Q^{Q_1\prime}(e_Q(\sigma)))\to I'_{Q_1}(e_{Q_1}(\sigma))
\]
comes from
\[
I_{Q}^{Q_1}(e_{Q}(\sigma)) \to I_Q^{Q_1\prime}(e_Q(\sigma))\to e_{Q_1}(\sigma).
\]
Since $R_{Q}^{Q_1}(e_{Q_1}(\sigma)) = 0$ by \cite[Lemma~5.17]{arXiv:1612.01312}, this composition is zero.

\subsection{Exactness}\label{subsec:exactness}
Now we finish the proof of Lemma~\ref{lem:exact sequence related to I and I'} by proving the following lemma.
\begin{lem}\label{lem:detect the kernel}
We have
\[
\Ima\left(I_Q(e_Q(\sigma))\to I'_Q(e_Q(\sigma))\right)\supset\Ker\left(I'_Q(e_Q(\sigma))\to \bigoplus_{Q_1\supsetneq Q}I'_{Q_1}(e_{Q_1}(\sigma))\right).
\]
\end{lem}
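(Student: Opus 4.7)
The plan is to leverage the inclusion $\Ima(I_Q(e_Q(\sigma))\to I'_Q(e_Q(\sigma)))\subset \Ker(I'_Q(e_Q(\sigma))\to \bigoplus_{Q_1\supsetneq Q}I'_{Q_1}(e_{Q_1}(\sigma)))$ already established in the preceding Complex subsection and to force equality by a cardinality comparison. Write $\Phi$ for the map of Proposition~\ref{prop:hom between I and I'}.

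I will first package the analyses of the preceding two subsections into concrete $C$-module identifications. Proposition~\ref{prop:decomposition of I_P}(1) together with the lemma describing $\Ker(I_Q(e_Q(\sigma))\to I'_Q(e_Q(\sigma)))$ gives $\Ima\Phi\simeq \bigoplus_{w\in A}e_Q(\sigma)$, where $A=\{w\in W_0^Q\mid \Delta_w=\Delta_Q\}$; dually, Proposition~\ref{prop:decomposition of I_P}(2) together with Lemma~\ref{lem:kernel of I'_Q(e_Q) -> plusI'_(Q_1)(e_(Q_1))} yields $\Ker\simeq \bigoplus_{v\in V_0}e_Q(\sigma)$, where $V_0=\{v\in W_0^Q\mid \Delta_{vw_Q}=\Delta\setminus\Delta_Q\}$.

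The crux is the combinatorial identity $|A|=|V_0|$. From the proofs just cited, $W_0^Q\setminus A=\bigcup_{Q_1\supsetneq Q}W_0^{Q_1}$ and $W_0^Q\setminus V_0=\bigcup_{Q_1\supsetneq Q}W_0^{Q_1}w_{Q_1}w_Q$, so inclusion-exclusion reduces the identity to the pair of intersection formulas
\[
\bigcap_{Q'\in S}W_0^{Q'}=W_0^{\langle S\rangle},\qquad \bigcap_{Q'\in S}W_0^{Q'}w_{Q'}w_Q=W_0^{\langle S\rangle}w_{\langle S\rangle}w_Q
\]
for every nonempty family $S$ of parabolic subgroups strictly containing $Q$, where $\langle S\rangle$ denotes the parabolic generated by $S$. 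The first is immediate from the defining condition $w\in W_0^{Q'}\Leftrightarrow w(\Delta_{Q'})\subset \Sigma^+$. For the second, I would first establish the descent-set characterization $W_0^{Q'}w_{Q'}w_Q=\{v\in W_0^Q\mid \Delta_{Q'}\cap \Delta_{vw_Q}=\emptyset\}$ by extending the argument in the proof of Lemma~\ref{lem:kernel of I'_Q(e_Q) -> plusI'_(Q_1)(e_(Q_1))}; intersecting these conditions over $Q'\in S$ then gives $\Delta_{\langle S\rangle}\cap \Delta_{vw_Q}=\emptyset$, i.e.\ $v\in W_0^{\langle S\rangle}w_{\langle S\rangle}w_Q$. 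Since right-multiplication by $w_{\langle S\rangle}w_Q$ is injective and hence preserves cardinality, the two inclusion-exclusion expansions agree term by term, giving $|A|=|V_0|$.

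With $|A|=|V_0|$ in hand, and with $e_Q(\sigma)$ finite-dimensional as in the intended application to simple modules, both $\Ima\Phi$ and $\Ker$ have $C$-dimension $|A|\cdot \dim_C e_Q(\sigma)$, so the inclusion $\Ima\Phi\subset\Ker$ forces equality and yields the desired $\Ima\Phi\supset\Ker$. The main obstacle I expect is the descent-set characterization of the translated cosets $W_0^{Q'}w_{Q'}w_Q$, which requires careful bookkeeping of how $w_Q$ and $w_{Q'}$ interact with the simple roots in $\Delta_{Q'}\setminus\Delta_Q$; once this is in place, the cardinality argument is essentially automatic.
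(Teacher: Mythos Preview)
Your combinatorial identity $|A|=|V_0|$ is correct: the descent-set characterization $W_0^{Q'}w_{Q'}w_Q=\{v\in W_0^Q\mid \Delta_{Q'}\cap\Delta_{vw_Q}=\emptyset\}$ follows by running the argument in the proof of Lemma~\ref{lem:kernel of I'_Q(e_Q) -> plusI'_(Q_1)(e_(Q_1))} for an arbitrary $Q'\supsetneq Q$, and then inclusion--exclusion matches the untranslated version term by term.

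The gap is the final step. The lemma is asserted in the standing generality of subsection~\ref{subsec:twist:Steinberg modules}: $C$ is only a commutative ring with $p=0$, and $\sigma$ is any $\mathcal{H}_P$-module with $P(\sigma)=G$. There is no dimension to compare, and even when both sides are free of the same finite rank, an inclusion of isomorphic $C$-modules need not be an equality. Your argument therefore proves the lemma only when $C$ is a field and $\sigma$ is finite-dimensional; this suffices for Theorem~\ref{thm:twist of simple module}, but not for the lemma as stated nor for its reuse in Proposition~\ref{prop:dual of Steinberg modules}, which is likewise stated for arbitrary $\sigma$.

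The paper's proof is constructive rather than numerical. A M\"obius-function computation on $(W_0^Q,\le)$ shows that for $\psi$ in the kernel one has $\psi(T_{n_{w_Gw_Q}})=(-1)^{\ell(w_c)}\psi(T^*_{n_{w_Gw_cw_Q}})$, so evaluation at $T^*_{n_{w_Gw_cw_Q}}$ is already surjective from $\Ima\Phi$ onto $\sigma$. Then, by backward induction on $\ell(ww_Q)$ and acting by $E_{o_+}(\lambda)$ and by $T_{n_s}$ for suitable simple $s$, the paper manufactures for each $w\in V_0$ and each $x\in\sigma$ an element of $\Ima\Phi$ whose $T^*$-coordinates form a delta function supported at $w$ with value $x$ (Lemma~\ref{lem:final part of the exactness}). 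These elements visibly span the kernel description of Lemma~\ref{lem:kernel of I'_Q(e_Q) -> plusI'_(Q_1)(e_(Q_1))}, over any $C$ and any $\sigma$. Your route is shorter where it applies, and it is pleasant that $|A|=|V_0|$ drops out of pure Coxeter combinatorics; the cost is the generality.
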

We start with the following lemma.
\begin{lem}
Let $\psi\in I'_Q(e_Q(\sigma))$ and $w\in W_0^Q$.
Then we have $\psi(T_{n_w}^*) = \sum_{v\in W_0^Q,v\le w}\psi(T_{n_v})$.
\end{lem}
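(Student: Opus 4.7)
The plan is induction on $\ell(w)$. The base case $w = 1$ is immediate: $T^*_{n_1} = T_1$ and the only $v \in W_0^Q$ with $v \le 1$ is $v = 1$, so both sides equal $\psi(T_1)$. For the inductive step, I choose a simple root $\alpha \in \Delta$ so that $s := s_\alpha$ is a left descent of $w$. The hypothesis $w \in W_0^Q$ forces $w(\beta) > 0$ for every $\beta \in \Delta_Q$ while $w^{-1}(\alpha) < 0$; in particular $w(\beta) \ne \alpha$ for every $\beta \in \Delta_Q$, so $s$ preserves the positivity of each $w(\beta)$, giving $w' := sw \in W_0^Q$. From $\ell(w) = \ell(w') + 1$ we get $n_w = n_s n_{w'}$ and
\[
T^*_{n_w} = T^*_{n_s}\, T^*_{n_{w'}} = (T_{n_s} - c_{n_s})\, T^*_{n_{w'}},
\]
so $\psi(T^*_{n_w}) = \psi(T_{n_s} T^*_{n_{w'}}) - \psi(c_{n_s} T^*_{n_{w'}})$.

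The second summand is treated by commuting $c_{n_s} \in C[Z_\kappa]$ past $T^*_{n_{w'}}$ via the length-zero identity $T_t T^*_u = T^*_u T_{u^{-1}tu}$ for $t \in Z_\kappa$; since elements of $Z_\kappa$ have length zero they lie in both $W_Q^+(1)$ and $W_Q^-(1)$, so $C[Z_\kappa] \subset j_Q^-(\mathcal{H}_Q^-)$, and the $(\mathcal{H}_Q^-, j_Q^-)$-linearity of $\psi$ lets me evaluate the resulting right factor on $e_Q(\sigma)$. For the first summand, I expand $T^*_{n_{w'}}$ via the inductive hypothesis as a sum of terms $\psi(T_{n_{v'}})$ over $v' \in W_0^Q \cap [1,w']$, and then handle left-multiplication by $T_{n_s}$ with the quadratic relation $T_{n_s}^2 = c_{n_s} T_{n_s}$ (valid since $q_s = 0$): one has $T_{n_s} T_{n_{v'}} = T_{n_{sv'}}$ when $sv' > v'$, and $T_{n_s} T_{n_{v'}} = c_{n_s} T_{n_{v'}}$ when $sv' < v'$. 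By the lifting property of the Bruhat order, the $sv' > v'$ contributions (restricted to those $sv'$ still lying in $W_0^Q$) are exactly the elements $v = sv' \in W_0^Q$ with $v \le w$ and $v \not\le w'$, which together with $\{v \in W_0^Q : v \le w',\ sv > v\}$ exhaust $\{v \in W_0^Q : v \le w\}$.

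The main obstacle is the cancellation of the $c_{n_s}$-correction terms. The pieces arising from the case $sv' < v'$ in the first summand, together with the analogous corrections from the commutation $T_t T^*_u = T^*_u T_{u^{-1}tu}$ used in the second summand, must cancel so that the clean unit-coefficient sum $\sum_{v \in W_0^Q,\, v \le w} \psi(T_{n_v})$ survives. This cancellation is where the standing hypothesis $p = 0$ in $C$ is essential: it ensures $q_s = 0$, and the twisted conjugation action of $W$ on $C[Z_\kappa]$ combines with the $(\mathcal{H}_Q^-, j_Q^-)$-linearity constraint exactly so that all $c$-weighted coefficients collapse to $1$. A careful bookkeeping of conjugates $u^{-1}tu \in Z_\kappa$ against the right $\mathcal{H}_Q^-$-action on $e_Q(\sigma)$ is the most delicate part of the argument.
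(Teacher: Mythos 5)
Your induction is workable in outline, but the write-up has genuine gaps exactly where the standing hypotheses of this subsection must enter. Carrying your scheme through (apply the inductive hypothesis to $\psi T_{n_s}$ and to $\psi T_t$ for $t$ in the support of $c_{n_s}$, then use $T^*_{n_s}=T_{n_s}-c_{n_s}$ and $T_{n_s}^2=c_{n_s}T_{n_s}$) one arrives at
\[
\psi(T^*_{n_w})=\sum_{v'\in A}\psi(T_{n_{sv'}})-\sum_{v'\in A}\psi(T_{n_{v'}})\,e_Q(\sigma)\bigl(n_{v'}^{-1}\cdot c_{n_s}\bigr),
\qquad A=\{v'\in W_0^Q\mid v'\le sw,\ sv'>v'\},
\]
and two things remain to be proved. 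First, you need $e_Q(\sigma)(n_{v'}^{-1}\cdot c_{n_s})=-1$. This is not a formal consequence of $p=0$ and $(\mathcal{H}_Q^-,j_Q^-)$-linearity, as you suggest; it uses the standing hypothesis $P(\sigma)=G$, i.e.\ $\sigma(T^P_\lambda)=1$ for $\lambda\in\Lambda(1)\cap W_{\aff,P_\alpha}(1)$, $\alpha\in\Delta\setminus\Delta_P$, which is what forces $e_Q(\sigma)$ to send every conjugate of $c_{n_s}$ (here $s$ is a reflection of $W_{0,P_2}$) to $q_s-1=-1$. Without that hypothesis the lemma is false: already for $G$ of semisimple rank one and $Q=P$ minimal it asserts $\psi(T^*_{n_s})=\psi(T_{n_s})+\psi(T_1)$, i.e.\ $\sigma(c_{n_s})=-1$. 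You flag this as ``delicate bookkeeping'' but never do it, and you attribute the collapse to the wrong hypothesis. Second, the terms $\psi(T_{n_{sv'}})$ with $sv'\notin W_0^Q$ occur with coefficient $1$, and you drop them with no justification (``restricted to those $sv'$ still lying in $W_0^Q$''). They do vanish, but for a reason you never give and which is the second half of the paper's own proof: write $sv'=v_1v_2$ with $v_1\in W_0^Q$, $v_2\in W_{0,Q}\setminus\{1\}$, $\ell(sv')=\ell(v_1)+\ell(v_2)$, so $\psi(T_{n_{sv'}})=\psi(T_{n_{v_1}})\,e_Q(\sigma)(T^Q_{n_{v_2}})$, and $e_Q(\sigma)(T^Q_{n_{v_2}})=q_{v_2}=0$ by the definition of the extension (again using $P(\sigma)=G$ and $q_s=0$).

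Your combinatorial bookkeeping is also incorrect as stated: an element $v\in W_0^Q$ with $v\le sw$ and $sv<v$ lies in neither of your two sets (it fails $sv>v$, and since $v\le sw$ it is excluded from your ``$v\le w$, $v\not\le sw$'' set), yet it must appear in the final sum --- take $w=s_1s_2s_1$, $s=s_1$, $v=s_1$ in type $A_2$ with $Q=P$ minimal. The correct statement, provable with the lifting property, is $\{v\in W_0^Q\mid v\le w\}=A\sqcup\bigl(sA\cap W_0^Q\bigr)$; note also that $sA\cap W_0^Q$ is not $\{v\in W_0^Q\mid v\le w,\ v\not\le sw\}$. For comparison, the paper avoids the induction altogether: it invokes the argument of \cite[IV.9 Proposition]{MR3600042} to get $\psi(T^*_{n_w})=\sum_{v\le w}\psi(T_{n_v})$ with $v$ running over all of $W_0$, and then kills the terms with $v\notin W_0^Q$ by exactly the factorization above. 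Your route is essentially a reconstruction of that argument, but as written it is incomplete at the two points above and wrong in the indexing.
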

\begin{proof}
The same argument as the proof of \cite[IV.9 Proposition]{MR3600042} implies
\[
\psi(T_{n_w}^*) = \sum_{v\le w}\psi(T_{n_v}).
\]
If $v\notin W_0^Q$, then there exists $v_1\in W_0^Q$ and $v_2\in W_{0,Q}\setminus\{1\}$ such that $v = v_1v_2$.
We have $\ell(v_1v_2) = \ell(v_1) + \ell(v_2)$.
Hence $\psi(T_{n_v}) = \psi(T_{n_{v_1}}T_{n_{v_2}}) = \psi(T_{n_{v_1}})e_Q(\sigma)(T^Q_{n_{v_2}}) = 0$ by the definition of $e_Q(\sigma)$.
We get the lemma.
\end{proof}
Let $\mu^Q$ be the M\"obius function associated to $(W_0^Q,\le)$.
Then by the above lemma, we have
\[
\psi(T_{n_w}) = \sum_{v\le w,v\in W_0^Q}\mu^Q(v,w)\psi(T_{n_v}^*).
\]
By \cite[Theorem~1.2]{MR0435249}, $\mu^Q$ is given by
\[
\mu^Q(v,w) = 
\begin{cases}
0 & \text{(there exists $\alpha\in\Delta_Q$ such that $vs_\alpha\le w$)},\\
(-1)^{\ell(v) + \ell(w)} & \text{(otherwise)}.
\end{cases}
\]

Now we assume that $\psi\in \Ker(I'_Q(e_Q(\sigma))\to \bigoplus_{Q_1\supsetneq Q}I'_{Q_1}(e_{Q_1}(\sigma)))$.
Then we have $\psi(T_{n_w}^*) = 0$ if $\Delta_{ww_Q} \ne \Delta\setminus\Delta_Q$ by Lemma~\ref{lem:kernel of I'_Q(e_Q) -> plusI'_(Q_1)(e_(Q_1))}.
Hence we have $\psi(T_{n_w}) = \sum_{v\le w,v\in W_0^Q,\Delta_{vw_Q} = \Delta\setminus\Delta_Q}\mu^Q(v,w)\psi(T_{n_{v}}^*)$.
Let $w_c$ be the longest element of the finite Weyl group of the parabolic subgroup corresponding to $\Delta\setminus\Delta_Q$.
Then $\Delta_{w_Gw_c} = \Delta\setminus\Delta_Q$ and $w = w_Gw_c$ is maximal in $\{w\in W_0\mid \Delta_w = \Delta\setminus\Delta_Q\}$ \cite[Remark~3.16]{arXiv:1406.1003_accepted}.

\begin{lem}\label{lem:calculation of Mobius function}
Let $w\in W_0^Q$ such that $\Delta_{ww_Q} = \Delta\setminus\Delta_Q$.
Then we have
\[
\mu^Q(w,w_Gw_Q) = 
\begin{cases}
(-1)^{\ell(w_c)} & (w = w_Gw_cw_Q),\\
0 & (w\ne w_Gw_cw_Q).
\end{cases}
\]
\end{lem}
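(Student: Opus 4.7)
The plan is to apply the Verma formula recalled just above: for each $w\in W_0^Q$ satisfying $\Delta_{ww_Q}=\Delta\setminus\Delta_Q$, I must decide whether some $\alpha\in\Delta_Q$ gives $ws_\alpha\le w_Gw_Q$ in $W_0$. The key device is the order-reversing involution $u\mapsto w_Gu$, which rewrites this inequality as $(w_Gw)s_\alpha\ge w_Q$; after this translation the question becomes a subword question, approachable via the Tits--Matsumoto support theorem (every reduced word of an element of $W_{0,I}$ uses exactly $\ell(w)$ letters from $I$).

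A preliminary step is to check that $w_Gw_cw_Q\in W_0^Q$ with $\ell(w_Gw_cw_Q)=\ell(w_G)-\ell(w_c)-\ell(w_Q)$. Since $w_c$ permutes the positive roots lying outside the span of $\Delta\setminus\Delta_Q$, we get $w_c(\alpha)>0$ for every $\alpha\in\Delta_Q$, hence $w_Gw_c(\alpha)<0$. This places $\Delta_Q$ inside the right descent set of $w_Gw_c$, yielding both the membership in $W_0^Q$ and the length formula, so that $\ell(w_Gw_cw_Q)+\ell(w_Gw_Q)\equiv\ell(w_c)\pmod 2$, which is the parity predicted by the first case.

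For $w=w_Gw_cw_Q$ the translated inequality becomes $w_cw_Qs_\alpha\ge w_Q$. A direct check shows that $w_Qs_\alpha$ is a minimal left-coset representative of $W_{0,\Delta\setminus\Delta_Q}\backslash W_0$ (its inverse sends $\Delta\setminus\Delta_Q$ into $\Sigma^+$), so $w_cw_Qs_\alpha$ admits a reduced expression made of $\ell(w_c)$ letters drawn from $\{s_\gamma:\gamma\in\Delta\setminus\Delta_Q\}$ followed by $\ell(w_Q)-1$ letters from $\{s_\beta:\beta\in\Delta_Q\}$. But by Tits--Matsumoto every reduced expression of $w_Q$ uses exactly $\ell(w_Q)$ letters from $\Delta_Q$; the subword criterion therefore rules out $w_Q\le w_cw_Qs_\alpha$, as required.

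For $w\ne w_Gw_cw_Q$, the maximality of $w_Gw_c$ in $\{v\in W_0:\Delta_v=\Delta\setminus\Delta_Q\}$ (cited above the lemma) forces $ww_Q<w_Gw_c$, hence $yw_Q>w_c$ for $y=w_Gw$, and the product $y=(yw_Q)\cdot w_Q$ is length-additive. The support of $yw_Q$ must meet $\Delta_Q$: otherwise $yw_Q$ would sit inside $W_{0,\Delta\setminus\Delta_Q}$, and the bound $yw_Q\ge w_c$ would force $yw_Q=w_c$, returning $w=w_Gw_cw_Q$. Fix $\beta\in\Delta_Q$ in the support and set $\tau=-w_Q(\beta)\in\Delta_Q$; choose a reduced expression of $w_Q$ ending in $s_\tau$ and concatenate with a reduced expression of $yw_Q$ exhibiting $s_\beta$. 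The subword consisting of that occurrence of $s_\beta$ followed by the first $\ell(w_Q)-1$ letters of the $w_Q$-block multiplies, via the identity $w_Qs_\tau w_Q=s_\beta$, to $s_\beta\cdot w_Qs_\tau=w_Q$. Hence $w_Q\le ys_\tau$, equivalently $ws_\tau\le w_Gw_Q$ with $\tau\in\Delta_Q$. The main obstacle is precisely this last step: extracting a simple reflection \emph{inside} $\Delta_Q$ from a Bruhat relation that a priori might involve only $\Delta\setminus\Delta_Q$-letters; the conjugation identity inside $W_{0,\Delta_Q}$ combined with the support-intersection argument (where the hypothesis $w\ne w_Gw_cw_Q$ enters decisively) is what resolves it.
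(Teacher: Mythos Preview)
Your argument is correct and genuinely different from the paper's. For the case $w=w_Gw_cw_Q$, the paper appeals to Deodhar's projection lemma for minimal coset representatives (if $w_c\in W_{0,\Delta\setminus\Delta_Q}$ and $w_Q,w_Qs_\alpha\in{}^{\Delta\setminus\Delta_Q}W_0$, then $w_cw_Qs_\alpha\ge w_Q$ forces $w_Qs_\alpha\ge w_Q$), whereas you count the $\Delta_Q$-letters available in a reduced expression of $w_cw_Qs_\alpha$ and observe there are too few to house a reduced word for $w_Q$; both are clean. For the case $w\ne w_Gw_cw_Q$, the paper proceeds by backward induction on $\ell(ww_Q)$, invoking the auxiliary lemma that supplies $\alpha\in\Delta$ with $s_\alpha ww_Q>ww_Q$ and $\Delta_{s_\alpha ww_Q}=\Delta_{ww_Q}$, treating the base step $s_\alpha w=w_Gw_cw_Q$ by an explicit length computation and the inductive step via Deodhar's Property~Z. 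Your proof avoids induction entirely: after passing to $y=w_Gw$, the support argument locates some $\beta\in\Delta_Q$ in a reduced word for $yw_Q$, and the identity $s_\beta\cdot(w_Qs_\tau)=w_Q$ for $\tau=-w_Q(\beta)$ exhibits $w_Q$ as a reduced subword of $ys_\tau$ directly. Your route is shorter and more self-contained (it uses only the subword characterisation of Bruhat order and the length-additivity of $y=(yw_Q)w_Q$); the paper's inductive route has the advantage of reusing the same ladder (Lemma~\ref{lem:take alpha and the same Delta, some conclusion}) that drives several other arguments in this section.
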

We prove this lemma by backward induction on the length of $w$.
For the inductive step, we use the following.
\begin{lem}\label{lem:take alpha and the same Delta, some conclusion}
Let $w\in W_0^Q$ such that $\Delta_{ww_Q} =\Delta\setminus\Delta_Q$, $w\ne w_Gw_cw_Q$ and $\alpha\in\Delta$ such that $s_\alpha ww_Q > ww_Q$ and $\Delta_{ww_Q} = \Delta_{s_\alpha ww_Q}$. (Such $\alpha$ exists~\cite[Lemma~3.15]{arXiv:1406.1003_accepted}.)
We have
\begin{enumerate}
\item $s_\alpha w \in W_0^Q$.
\item $s_\alpha w > w$.
\end{enumerate}
\end{lem}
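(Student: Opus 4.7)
The plan is to translate both conclusions into statements about the action of $w$ on the root system, and derive them from the hypothesis $s_\alpha ww_Q > ww_Q$. The key translation: for any $x \in W_0$ and simple $\alpha \in \Delta$, $s_\alpha x > x$ in the Bruhat order iff $x^{-1}(\alpha) \in \Sigma^+$. Applying this with $x = ww_Q$ and using $w_Q^{-1}=w_Q$, the hypothesis $s_\alpha ww_Q > ww_Q$ becomes
\[
w_Q w^{-1}(\alpha) \in \Sigma^+.
\]
I will also use two standard facts about the longest element $w_Q$ of $W_{0,Q}$: it sends $\Sigma_Q^+$ to $\Sigma_Q^-$, and it permutes $\Sigma^+\setminus\Sigma_Q^+$ (because $W_{0,Q}$ acts trivially on $\Sigma/\langle\Delta_Q\rangle$, so signs of coefficients on $\Delta\setminus\Delta_Q$ are preserved).

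For (1), $s_\alpha w \in W_0^Q$ means $s_\alpha w(\beta) > 0$ for every $\beta \in \Delta_Q$. Since $w\in W_0^Q$ we already have $w(\beta) \in \Sigma^+$, and $s_\alpha$ permutes $\Sigma^+\setminus\{\alpha\}$ while sending $\alpha$ to $-\alpha$; thus $s_\alpha w(\beta) > 0$ unless $w(\beta)=\alpha$. So it suffices to rule out $\alpha \in w(\Delta_Q)$. If $\alpha = w(\beta_0)$ with $\beta_0 \in \Delta_Q$, then $w_Q w^{-1}(\alpha) = w_Q(\beta_0) \in -\Delta_Q \subset \Sigma^-$, contradicting the translated hypothesis. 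This forces $\alpha \notin w(\Delta_Q)$ and gives (1).

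For (2), I need $w^{-1}(\alpha) \in \Sigma^+$. Assume for contradiction that $w^{-1}(\alpha) \in \Sigma^-$. There are two sub-cases. If $w^{-1}(\alpha) \in \Sigma^- \setminus \Sigma_Q^-$, then by the sign-preservation of $w_Q$ on $\Sigma\setminus\Sigma_Q$ we get $w_Q w^{-1}(\alpha) \in \Sigma^- \setminus \Sigma_Q^-$, contradicting $w_Q w^{-1}(\alpha) \in \Sigma^+$. If $w^{-1}(\alpha) \in \Sigma_Q^-$, write $w^{-1}(\alpha) = -\gamma$ with $\gamma \in \Sigma_Q^+$; then $w(\gamma) = -\alpha \in \Sigma^-$. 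But $\gamma$ is a non-negative integer combination of $\Delta_Q$, and since $w\in W_0^Q$ satisfies $w(\Delta_Q)\subset\Sigma^+$, the root $w(\gamma)$ is a non-negative integer combination of positive roots, hence positive — a contradiction. Therefore $w^{-1}(\alpha) \in \Sigma^+$, proving (2).

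There is no real obstacle here, and in fact the argument uses only $w\in W_0^Q$ together with $s_\alpha ww_Q > ww_Q$; the extra hypotheses $\Delta_{ww_Q} = \Delta\setminus\Delta_Q$, $w \ne w_Gw_cw_Q$, and $\Delta_{ww_Q} = \Delta_{s_\alpha ww_Q}$ are invoked in the lemma only to guarantee, via \cite[Lemma~3.15]{arXiv:1406.1003_accepted}, that such an $\alpha$ exists; they play no role in the derivations of (1) and (2) themselves.
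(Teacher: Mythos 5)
Your proof is correct, but it takes a different route from the paper's. For (1) the paper uses the hypothesis $\Delta_{s_\alpha ww_Q}=\Delta\setminus\Delta_Q$ directly: for $\beta\in\Delta_Q$ one has $-w_Q(\beta)\in\Delta_Q$, so $s_\alpha ww_Q(w_Q(\beta))>0$, i.e.\ $s_\alpha w(\beta)>0$; you instead translate $s_\alpha ww_Q>ww_Q$ into $w_Qw^{-1}(\alpha)\in\Sigma^+$ and rule out $\alpha\in w(\Delta_Q)$, which never invokes the condition on $\Delta_{s_\alpha ww_Q}$. For (2) the paper, having (1), simply cites Deodhar's lemma on Bruhat order and coset decompositions (\cite[Lemma~3.5]{MR0435249}) applied to $s_\alpha w,w\in W_0^Q$ and $w_Q\in W_{0,Q}$; you give a self-contained root-theoretic case analysis showing $w^{-1}(\alpha)\in\Sigma^+$, using that $w_Q$ interchanges $\Sigma_Q^{\pm}$ and permutes $\Sigma^+\setminus\Sigma_Q^+$, and that $w(\Sigma_Q^+)\subset\Sigma^+$ for $w\in W_0^Q$. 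Your argument is sound at every step and in fact proves a slightly stronger statement: the conclusions hold for any $w\in W_0^Q$ and simple $\alpha$ with $s_\alpha ww_Q>ww_Q$, the remaining hypotheses being needed only for the existence of $\alpha$, as you observe. What the paper's approach buys is brevity via the black-box appeal to Deodhar and to the $\Delta_{s_\alpha ww_Q}$ hypothesis already at hand; what yours buys is independence from that citation and from the extra hypotheses, at the cost of a somewhat longer elementary argument.
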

\begin{proof}
If $\beta\in\Delta_Q$, then $w_Q(\beta) \in -\Delta_Q$.
Hence $s_\alpha ww_Q(w_Q(\beta)) > 0$ since $\Delta_{s_\alpha ww_Q} = \Delta\setminus\Delta_Q$.
Therefore $s_\alpha w(\beta) > 0$ for any $\beta\in\Delta_Q$.
Namely we have $s_\alpha w\in W_0^Q$.
Therefore we have $s_\alpha w,w\in W_0^Q$, $w_Q\in W_{0,Q}$ and $s_\alpha ww_Q > ww_Q$.
By \cite[Lemma~3.5]{MR0435249}, $s_\alpha w > w$.
\end{proof}

\begin{proof}[Proof of Lemma~\ref{lem:calculation of Mobius function}]
Assume that $w = w_Gw_cw_Q$ and there exists $\alpha\in \Delta_Q$ such that $ws\le w_Gw_Q$ where $s = s_\alpha$.
Then we have $w_Gw_cw_Qs\le w_Gw_Q$.
Hence $w_cw_Qs\ge w_Q$.
Let $Q_0$ be the parabolic subgroup corresponding to $\Delta\setminus\Delta_Q$.
Then $w_c\in W_{0,Q_0}$ and $W_{0,Q}\subset {}^{Q_0}W_0$.
Therefore $w_Q,w_Qs\in {}^{Q_0}W_0$.
Hence by \cite[Lemma~3.5]{MR0435249}, we have $w_Qs\ge w_Q$.
This is a contradiction.
Hence $\mu^Q(w,w_Gw_Q) = (-1)^{\ell(w) + \ell(w_Gw_Q)} = (-1)^{\ell(w_c)}$.

If $w\ne w_Gw_cw_Q$, then $ww_Q \ne w_Gw_c$.
Take $\alpha$ as in the previous lemma.
Assume that $s_\alpha w = w_Gw_cw_Q$.
Since $s_\alpha w_Gw_c = ww_Q < s_\alpha ww_Q = w_Gw_c$, we have $(w_Gw_c)^{-1}(\alpha) < 0$.
Put $\alpha' = -w_G^{-1}(\alpha)\in\Delta$.
Then we have $w_c^{-1}(\alpha') > 0$.
Hence $\alpha'\in\Delta_Q$.
Put $\beta = -w_Q(\alpha')\in \Delta_Q$ and we prove $ws_\beta\le w_Gw_Q$.
We have $ws_\beta = s_\alpha w_Gw_cw_Q s_\beta = w_G s_{\alpha'}w_cw_Qs_\beta$.
Hence it is sufficient to prove that $s_{\alpha'}w_cw_Qs_\beta\ge w_Q$.

We have $\Delta_{w_Gs_{\alpha'}w_c} = \Delta_{s_\alpha w_Gw_c} = \Delta_{ww_Q} = \Delta\setminus\Delta_Q$.
Hence $\Delta_{s_{\alpha'}w_c} = \Delta_Q$.
In particular, $s_{\alpha'}w_c\in W_0^Q$.
Hence $\ell(s_{\alpha'}w_cw_Qs_\beta) = \ell(s_{\alpha'}w_c) + \ell(w_Qs_\beta)$ as $w_Q,s_\beta\in W_{0,Q}$.
Since $\alpha'\in \Delta_Q$, $s_{\alpha'}\in W_{0,Q}\subset W_0^{Q_0}$.
Hence we have $\ell(s_{\alpha'}w_c) = \ell(s_{\alpha'}) + \ell(w_c)$.
Therefore we get
\[
\ell(s_{\alpha'}w_cw_Qs_\beta) = \ell(s_{\alpha'}) + \ell(w_c) + \ell(w_Qs_\beta).
\]
Hence $s_{\alpha'}w_cw_Qs_\beta\ge s_{\alpha'}w_Qs_\beta = w_Q$.

Finally assume that $s_\alpha w \ne w_Gw_c$ and we prove the lemma by backward induction on $\ell(ww_Q)$.
By inductive hypothesis, there exists $\beta\in\Delta_Q$ such that $s_\alpha ws_\beta\le w_Gw_Q$.
Since $s_\alpha w\in W_0^Q$, we have $s_\alpha ws_\beta  > s_\alpha w$.
Therefore we have $s_\alpha ws_\beta > s_\alpha w > w$.
By Property $Z(ws_\beta,s_\alpha ws_\beta,s_\beta)$ \cite[\S 1, Remarks (2)]{MR0435249}, we have $ws_\beta \le s_\alpha ws_\beta$.
Since we have $s_\alpha ws_\beta\le w_Gw_Q$, we get $ws_\beta \le w_Gw_Q$.
\end{proof}
Therefore we get
\begin{lem}
If $\psi\in \Ker(I'_Q(e_Q(\sigma))\to \bigoplus_{Q_1\supsetneq Q}I'_{Q_1}(e_{Q_1}(\sigma)))$, then $\psi(T_{n_{w_Gw_Q}}) = (-1)^{\ell(w_c)}\psi(T_{n_{w_Gw_cw_Q}}^*)$.
\end{lem}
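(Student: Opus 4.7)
The plan is to simply combine the two ingredients developed in the preceding paragraphs: the Möbius inversion formula for $\psi$ and the explicit computation of $\mu^Q$ at $w_Gw_Q$.

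First, I would specialize the identity
\[
\psi(T_{n_w}) = \sum_{\substack{v\le w\\ v\in W_0^Q\\ \Delta_{vw_Q} = \Delta\setminus\Delta_Q}}\mu^Q(v,w)\,\psi(T_{n_v}^*),
\]
which was established (using the kernel hypothesis via Lemma~\ref{lem:kernel of I'_Q(e_Q) -> plusI'_(Q_1)(e_(Q_1))}) just above the lemma, to the maximal element $w = w_Gw_Q$ of $W_0^Q$. The condition $v \le w_Gw_Q$ is automatic for $v\in W_0^Q$, so
\[
\psi(T_{n_{w_Gw_Q}}) = \sum_{\substack{v\in W_0^Q\\ \Delta_{vw_Q} = \Delta\setminus\Delta_Q}}\mu^Q(v,w_Gw_Q)\,\psi(T_{n_v}^*).
\]

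Next, I would invoke Lemma~\ref{lem:calculation of Mobius function}, which asserts that for $v\in W_0^Q$ with $\Delta_{vw_Q} = \Delta\setminus\Delta_Q$, the Möbius value $\mu^Q(v,w_Gw_Q)$ is zero unless $v = w_Gw_cw_Q$, in which case it equals $(-1)^{\ell(w_c)}$. Only one term survives, and the stated equality
\[
\psi(T_{n_{w_Gw_Q}}) = (-1)^{\ell(w_c)}\psi(T_{n_{w_Gw_cw_Q}}^*)
\]
follows immediately.

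There is essentially no obstacle here: all of the substantive work has been done in Lemma~\ref{lem:kernel of I'_Q(e_Q) -> plusI'_(Q_1)(e_(Q_1))} (translating the kernel condition into the vanishing of $\psi(T_{n_v}^*)$ for $\Delta_{vw_Q}\neq \Delta\setminus\Delta_Q$), in the preceding Möbius inversion for $\psi(T_{n_w})$ in terms of $\psi(T_{n_v}^*)$, and in Lemma~\ref{lem:calculation of Mobius function}. The only thing to verify is that the element $w_Gw_cw_Q$ indeed lies in $W_0^Q$ and satisfies $\Delta_{(w_Gw_cw_Q)w_Q} = \Delta_{w_Gw_c} = \Delta\setminus\Delta_Q$, both of which are part of the set-up recalled just before Lemma~\ref{lem:calculation of Mobius function}, so that $w_Gw_cw_Q$ genuinely appears in the summation.
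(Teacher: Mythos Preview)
Your proposal is correct and matches the paper's approach exactly. The paper does not write out a separate proof for this lemma; it simply says ``Therefore we get'' after establishing the Möbius inversion formula restricted by the kernel condition and Lemma~\ref{lem:calculation of Mobius function}, and your write-up is precisely the unpacking of that ``therefore''.
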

Since the image of $I_Q(e_Q(\sigma))\to I'_Q(e_Q(\sigma))$ is contained in the kernel by the previous subsection, we get the following lemma.
\begin{lem}
Consider a linear map $I'_Q(e_Q(\sigma))\to \sigma$ defined by $\psi\mapsto \psi(T_{n_{w_Gw_cw_Q}}^*)$.
Then the composition $I_Q(e_Q(\sigma))\to I'_Q(e_Q(\sigma))\to \sigma$ is surjective.
\end{lem}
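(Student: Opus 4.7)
My approach is to combine three facts that are already in hand by the time we reach this statement, so that the claim follows without any new computation. First, the immediately preceding subsection (``Complex'') shows that the composition $I_Q(e_Q(\sigma))\xrightarrow{\Phi} I'_Q(e_Q(\sigma))\to I'_{Q_1}(e_{Q_1}(\sigma))$ is zero for every $Q_1\supsetneq Q$, and so the image of $\Phi$ is contained in $\Ker\bigl(I'_Q(e_Q(\sigma))\to\bigoplus_{Q_1\supsetneq Q}I'_{Q_1}(e_{Q_1}(\sigma))\bigr)$. Second, the lemma immediately preceding our target asserts that any $\psi$ in this kernel satisfies the identity
\[
\psi(T_{n_{w_Gw_Q}})=(-1)^{\ell(w_c)}\psi(T^*_{n_{w_Gw_cw_Q}}).
\]
Third, Proposition~\ref{prop:hom between I and I'}, specialized to the case $X=1$, gives $\Phi(\varphi)(T_{n_{w_Gw_Q}})=\varphi(T_{n_{w_Gw_Q}})$ for every $\varphi\in I_Q(e_Q(\sigma))$.

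Applying the second identity to $\psi=\Phi(\varphi)$ (which is legal by the first point) and then substituting the third, I obtain
\[
\Phi(\varphi)(T^*_{n_{w_Gw_cw_Q}})=(-1)^{\ell(w_c)}\Phi(\varphi)(T_{n_{w_Gw_Q}})=(-1)^{\ell(w_c)}\varphi(T_{n_{w_Gw_Q}}).
\]
Thus the composition described in the statement coincides, up to the nonzero sign $(-1)^{\ell(w_c)}$, with the evaluation map $\varphi\mapsto\varphi(T_{n_{w_Gw_Q}})$. By Proposition~\ref{prop:decomposition of I_P}(1) applied with $P$ replaced by $Q$ and $\sigma$ replaced by $e_Q(\sigma)$, that evaluation map is already surjective onto $e_Q(\sigma)=\sigma$. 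Hence the composition is surjective.

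There is no real obstacle here, because all of the difficulty has been absorbed into the two preceding lemmas: the complex property of the four-term sequence, and the Möbius-function calculation that pins down the kernel component at $w_Gw_cw_Q$. Once those are invoked together with the defining property of $\Phi$, the present claim is a one-line consequence.
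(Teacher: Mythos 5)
Your proposal is correct and follows the paper's own argument essentially verbatim: the paper likewise uses the containment of $\Ima\Phi$ in the kernel (from the preceding subsection), the identity $\psi(T_{n_{w_Gw_Q}})=(-1)^{\ell(w_c)}\psi(T^*_{n_{w_Gw_cw_Q}})$ from the previous lemma, the characterization $\Phi(\varphi)(T_{n_{w_Gw_Q}})=\varphi(T_{n_{w_Gw_Q}})$ from Proposition~\ref{prop:hom between I and I'}, and the surjectivity of $\varphi\mapsto\varphi(T_{n_{w_Gw_Q}})$ from Proposition~\ref{prop:decomposition of I_P}.
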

\begin{proof}
Let $\psi\in I'_Q(e_Q(\sigma))$ be the image of $\varphi\in I_Q(e_Q(\sigma))$.
Then the characterization of the homomorphism (Proposition~\ref{prop:hom between I and I'}) gives $\varphi(T_{n_{w_Gw_Q}}) = \psi(T_{n_{w_Gw_Q}})$.
The lemma follows from the previous lemma and the surectivity of the map $I_Q(e_Q(\sigma))\ni \varphi\mapsto \varphi(T_{n_{w_Gw_Q}})\in \sigma$.
\end{proof}

The following lemma ends the proof of Lemma~\ref{lem:detect the kernel}, hence that of Theorem~\ref{thm:twist of Steinberg}.
\begin{lem}\label{lem:final part of the exactness}
For $w\in W_0^Q$ such that $\Delta_{ww_Q} = \Delta\setminus\Delta_Q$ and $x\in\sigma$, there exists $\psi\in \Ima(I_Q(e_Q(\sigma))\to I'_Q(e_Q(\sigma)))$ such that for any $v\in W_0^Q$ we have
\begin{align*}
\psi(T_{n_v}^*) & = 
\begin{cases}
x & (v = w),\\
0 & (v\ne w).
\end{cases}
\end{align*}
\end{lem}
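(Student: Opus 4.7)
The plan is to proceed by descending induction on $\ell(w)$ within the set $A := \{v \in W_0^Q : \Delta_{vw_Q} = \Delta \setminus \Delta_Q\}$, whose maximum is $w_Gw_cw_Q$ as noted before Lemma~\ref{lem:calculation of Mobius function}. The key ingredient is the $\mathcal{H}$-module structure of $\Ima(I_Q \to I'_Q)$, combined with the quadratic relation $(T_{n_s}^*)^2 = -c_{n_s} T_{n_s}^*$ (valid since $q_s = 0$), which permits transporting ``point-mass'' elements along the Bruhat order inside $A$.

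For the inductive step, let $w \in A$ with $w < w_Gw_cw_Q$ and assume the statement for all $w' \in A$ of strictly larger length. By Lemma~\ref{lem:take alpha and the same Delta, some conclusion}, one can pick $\alpha \in \Delta$ with $s_\alpha w > w$ and $s_\alpha w \in A$. The inductive hypothesis at $s_\alpha w$ furnishes $\psi' \in \Ima$ supported only at $T_{n_{s_\alpha w}}^*$ with value $x$. Acting by $T_{n_{s_\alpha}}^*$ via the $\mathcal{H}$-module structure and expanding $T_{n_{s_\alpha}}^* T_{n_v}^*$ in the $T^*$-basis, using $T_{n_{s_\alpha}}^* T_{n_v}^* = T_{n_{s_\alpha v}}^*$ when $s_\alpha v > v$ and the quadratic relation when $s_\alpha v < v$, produces an element of $\Ima$ supported at $T_{n_w}^*$ with value $x$ and at most a scalar multiple of $T_{n_{s_\alpha w}}^*$. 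Subtracting this latter correction (again via the inductive hypothesis at $s_\alpha w$) yields the required $\psi$.

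For the base case $w = w_Gw_cw_Q$, the preceding lemma provides $\psi_0 \in \Ima$ with $\psi_0(T_{n_{w_Gw_cw_Q}}^*) = x$. Since $\Ima \subset \Ker(I'_Q \to \bigoplus_{Q_1} I'_{Q_1})$ by the Complex subsection, $\psi_0$ automatically vanishes on $T_{n_v}^*$ for $v \in W_0^Q \setminus A$; the remaining, possibly nonzero, values at $v \in A$ with $v < w_Gw_cw_Q$ are killed by subtracting correction terms produced by the inductive step at each such $v$. No circularity arises because the inductive step only invokes the lemma at strictly larger $w' \in A$, so one can first prove the inductive step in full generality and then handle the base case by subtraction.

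The main obstacle is the fine analysis of the Hecke action on $\psi'$ in the $T^*$-basis, in particular tracking the scalar factors involving $c_{n_{s_\alpha}} \in C[Z_\kappa]$ which act nontrivially on $\sigma$; one must verify that the spurious contribution at $T_{n_{s_\alpha w}}^*$ is exactly a scalar multiple that can be cancelled by the inductive element. An alternative route that sidesteps this computation is a dimension count: one shows $\dim \Ima(I_Q \to I'_Q) = \#A_0 \cdot \dim \sigma$ (where $A_0 = \{v \in W_0^Q : \Delta_v = \Delta_Q\}$) using the description of $\Ker(I_Q \to I'_Q) = \sum_{Q_1 \supsetneq Q} I_{Q_1}(e_{Q_1}(\sigma))$ and Lemma~\ref{lem:embedding of smaller parabolic induction}, then combines this with the combinatorial identity $\#A = \#A_0$, for instance via the bijection $A \to A_0$ sending $v \mapsto v w_Q w_c$, to conclude directly that $\Ima = \Ker$; the basis element at $(w, x)$ is then the desired $\psi$.
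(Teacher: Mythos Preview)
Your inductive step is essentially correct, and in fact close to the paper's: the paper acts by $T_{n_s}$ rather than by $T^*_{n_s}$, which is slightly cleaner because $T_{n_s}T^*_{n_v} = T_{n_s}T^*_{n_s}T^*_{n_{sv}} = 0$ whenever $s v < v$, so the spurious contribution at $s_\alpha w$ never appears and no subtraction is needed.

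The genuine gap is in your base case $w = w_Gw_cw_Q$. You propose to take some $\psi_0 \in \Ima$ with $\psi_0(T^*_{n_{w_Gw_cw_Q}}) = x$ and then, for each $v\in A$ with $v < w_Gw_cw_Q$, subtract an element of $\Ima$ supported exactly at $T^*_{n_v}$. But producing such an element is precisely the statement of the lemma at $v$, which in your descending scheme reduces (through iterated use of the inductive step) to the lemma at $w_Gw_cw_Q$ itself. So the argument is circular: the ``corrections'' you want to subtract are only available once the base case is already established. The claim that ``no circularity arises because the inductive step only invokes the lemma at strictly larger $w'$'' misdiagnoses the issue---the inductive step is merely a conditional, and the top of the descending induction must be proved outright.

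The paper supplies exactly this missing ingredient: it acts on $\psi_0$ by $E_{o_+}(\lambda)$ with $\lambda = n_{w_Gw_c}\cdot\lambda_P^-$. The analogue of Proposition~\ref{prop:I_P as A-module} for $I'_Q$ (proved just before Lemma~\ref{lem:final part of the exactness}) shows that for every $v\in W_0^Q$ with $v\ne w_Gw_cw_Q$ the element $n_v^{-1}\cdot\lambda$ fails to be $Q$-negative, so $(\psi_0 E_{o_+}(\lambda))(T^*_{n_v}) = 0$ for all such $v$ simultaneously, while at $v = w_Gw_cw_Q$ one picks up an invertible factor. This produces in one stroke an element of $\Ima$ supported exactly at $w_Gw_cw_Q$, and the descending induction can then begin.

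Your alternative dimension count does yield $\Ima = \Ker$ when $C$ is a field and $\sigma$ is finite-dimensional, but (i) the lemma is stated for an arbitrary $\mathcal{H}_P$-module over a ring with $p=0$, so this is strictly weaker; and (ii) the map $v\mapsto vw_Qw_c$ is not a bijection $A\to A_0$ in general: already in type $A_2$ with $\#\Delta_Q = 1$ and $\Delta_P=\emptyset$ it sends an element of $A$ to $w_G\notin A_0$. The identity $\#A = \#A_0$ is nevertheless true; one correct bijection is $v\mapsto w_G v w_Q$, which follows from $\Delta_{w_G u} = \Delta\setminus\Delta_u$.
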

We need one lemma.
\begin{lem}
Let $w\in W_0^P$ and $\lambda\in \Lambda(1)$.
Then for $\varphi\in I'_P(\sigma)$, we have
\[
(\varphi E_{o_+}(\lambda))(T^*_{n_w})
=
\begin{cases}
\varphi(T^*_{n_w})\sigma(E_{o_{+,P}}^P(n_w^{-1}\cdot \lambda)) & (n_w^{-1}\cdot \lambda\in W_P^-(\lambda)),\\
0 & (n_w^{-1}\cdot \lambda\notin W_P^-(\lambda)).
\end{cases}
\]
\end{lem}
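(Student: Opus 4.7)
The statement is the direct analogue of Proposition~\ref{prop:I_P as A-module} with $(T^*_{n_w}, E_{o_+}, E^P_{o_{+,P}}, j_P^-)$ in place of $(T_{n_w}, E_{o_-}, E^P_{o_{-,P}}, j_P^{-*})$, and the two-case structure of the proof of Lemma~\ref{lem:I_P+extension as A-module} carries over almost verbatim to $I'_P$. The plan is to prove the ``$P$-negative'' case directly by a product-formula calculation, and to reduce the ``not $P$-negative'' case to it via the translation trick used in Lemma~\ref{lem:I_P+extension as A-module}.

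\emph{Case 1: $n_w^{-1}\cdot\lambda\in W_P^-(1)$.} The goal is to establish the identity
\[
T^*_{n_w}\,E_{o_+}(\lambda)\;=\;T^*_{n_w}\cdot j_P^-\bigl(E^P_{o_{+,P}}(n_w^{-1}\cdot\lambda)\bigr)
\]
inside $\mathcal{H}$. Since $n_w^{-1}\cdot\lambda$ is $P$-negative, $E^P_{o_{+,P}}(n_w^{-1}\cdot\lambda)$ lies in $\mathcal{H}_P^-$, so applying $\varphi$ and invoking the defining $(\mathcal{H}_P^-,j_P^-)$-equivariance of $\varphi\in I'_P(\sigma)$ yields the first branch of the formula. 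To prove the identity itself, write $\lambda n_w = n_w(n_w^{-1}\cdot\lambda)$ in $W(1)$ and apply the product formula \eqref{eq:product formula}; the length additivity needed for the coefficients to agree follows from $w\in W_0^P$ together with the $P$-negativity of $n_w^{-1}\cdot\lambda$, just as in \cite[Lemma~2.6]{arXiv:1612.01312}. The identification of $T^*_{n_w}$ with the appropriate member of the $E_o$-basis is the one already used (for $o_+$) in the proof of Proposition~\ref{prop:hom between I and I'} via \cite[Example~5.22]{MR3484112}.

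\emph{Case 2: $n_w^{-1}\cdot\lambda\notin W_P^-(1)$.} Choose $\lambda_P^-\in Z(W_P(1))$ as in Proposition~\ref{prop:localization as Levi subalgebra} (so $\langle\alpha,\nu(\lambda_P^-)\rangle>0$ for $\alpha\in\Sigma^+\setminus\Sigma_P^+$) and set $\lambda_0=n_w\cdot\lambda_P^-$. Pick $\alpha\in\Sigma^+\setminus\Sigma_P^+$ with $\langle\alpha,\nu(n_w^{-1}\cdot\lambda)\rangle<0$, i.e., $\langle w(\alpha),\nu(\lambda)\rangle<0$. Since $\langle w(\alpha),\nu(\lambda_0)\rangle=\langle\alpha,\nu(\lambda_P^-)\rangle>0$, the translations $\lambda$ and $\lambda_0$ sit on opposite sides of the hyperplane $w(\alpha)=0$, so by \eqref{eq:product formula} and \cite[Lemma~2.11]{arXiv:1612.01312} we have $E_{o_+}(\lambda)E_{o_+}(\lambda_0)=0$, whence $(\varphi E_{o_+}(\lambda)E_{o_+}(\lambda_0))(T^*_{n_w})=0$. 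On the other hand $n_w^{-1}\cdot\lambda_0=\lambda_P^-\in W_P^-(1)$, so Case~1 applied to the element $\varphi E_{o_+}(\lambda)\in I'_P(\sigma)$ and the translation $\lambda_0$ gives
\[
(\varphi E_{o_+}(\lambda)E_{o_+}(\lambda_0))(T^*_{n_w})=(\varphi E_{o_+}(\lambda))(T^*_{n_w})\,\sigma\bigl(E^P_{o_{+,P}}(\lambda_P^-)\bigr).
\]
Since $\lambda_P^-$ is central in $W_P(1)$, the element $E^P_{o_{+,P}}(\lambda_P^-)$ is invertible in $\mathcal{H}_P$ by Proposition~\ref{prop:localization as Levi subalgebra}, and $\sigma$ of it is therefore also invertible; comparing the two expressions gives $(\varphi E_{o_+}(\lambda))(T^*_{n_w})=0$, which is the second branch of the formula.

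The only real obstacle is Case~1: one must carefully track the spherical orientations on the two sides of the identity and verify the length-additivity conditions that let the product formula move the ``$\mathcal{H}$-part'' $E_{o_+}(\lambda)$ past $T^*_{n_w}$ into the ``$\mathcal{H}_P$-part'' $j_P^-(E^P_{o_{+,P}}(n_w^{-1}\cdot\lambda))$. Everything is a systematic $o_-\leftrightarrow o_+$ and $j_P^{-*}\leftrightarrow j_P^-$ swap from the $I_P$-statement in Proposition~\ref{prop:I_P as A-module}, so no new phenomenon is introduced—only bookkeeping with the orientations.
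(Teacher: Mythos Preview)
Your approach is workable but differs from the paper's proof, and your Case~1 sketch has a gap that is not ``only bookkeeping''.

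The paper does not redo the orientation calculation at all: it applies the involution~$\iota$ to reduce directly to Proposition~\ref{prop:I_P as A-module}. Setting $\varphi^\iota=\varphi\circ\iota$ and $\sigma'=\sigma^{\iota_P}_{\ell-\ell_P}$, one has $\varphi^\iota\in I_P(\sigma')$ by \cite[Proposition~4.11]{arXiv:1612.01312}; then Proposition~\ref{prop:I_P as A-module} gives the formula for $\varphi^\iota(E_{o_-}(\lambda)T_{n_w})$, and it remains only to translate back via $\iota(E_{o_-}(\lambda))=(-1)^{\ell(\lambda)}E_{o_+}(\lambda)$, $\iota(T_{n_w})=(-1)^{\ell(n_w)}T^*_{n_w}$ (\cite[Lemma~5.31]{MR3484112}) and to unwind $\sigma'(E^P_{o_{-,P}}(\mu))$ using \cite[Lemma~4.5]{arXiv:1612.01312}. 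The signs cancel because $\ell(n_w^{-1}\cdot\lambda)=\ell(\lambda)$. This is two lines, and it is the natural proof in a paper whose whole point is the interplay between $I_P$, $I'_P$, and $\iota$.

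Your direct route requires the identity $E_{o_+}(\lambda)\,T^*_{n_w}=T^*_{n_w}\cdot j_P^-\bigl(E^P_{o_{+,P}}(n_w^{-1}\cdot\lambda)\bigr)$ (note the order: $(\varphi E_{o_+}(\lambda))(T^*_{n_w})=\varphi(E_{o_+}(\lambda)T^*_{n_w})$, not $\varphi(T^*_{n_w}E_{o_+}(\lambda))$ as you wrote). Using $T^*_{n_w}=E_{o_+}(n_w)$ and the product formula twice, this reduces to showing $j_P^-\bigl(E^P_{o_{+,P}}(\mu)\bigr)=E_{o_+\cdot w}(\mu)$ for $\mu=n_w^{-1}\cdot\lambda$ $P$-negative. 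This is \emph{not} an instance of the cited \cite[Lemma~2.6]{arXiv:1612.01312}, which concerns $j_P^{-*}$ and the $o_-$-orientation; you would need the analogous statement for $j_P^-$ and $o_+$, together with the observation that $E_{o_+\cdot w}(\mu)$ is actually independent of $w\in W_0^P$ for $P$-negative~$\mu$. These facts are true, but they are exactly what the $\iota$-trick bypasses. Your Case~2 is fine and indeed parallels Lemma~\ref{lem:I_P+extension as A-module}.
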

\begin{proof}
Set $\varphi^\iota = \varphi\circ \iota$ and $\sigma' = \sigma^{\iota_P}_{\ell - \ell_P}$.
Then we have $\varphi^\iota\in I_P(\sigma')$ \cite[Proposition~4.11]{arXiv:1612.01312}.
Hence, by Proposition~\ref{prop:I_P as A-module}, we have
\[
\varphi^\iota(E_{o_-}(\lambda)T_{n_w}) =
\begin{cases}
 \varphi^\iota(T_{n_w})\sigma'(E^P_{o_-}(n_w^{-1}\cdot \lambda))& (n_w^{-1}\cdot \lambda\in W_P^-(\lambda)),\\
0 & (n_w^{-1}\cdot \lambda\notin W_P^-(\lambda)).
\end{cases}
\]
The left hand side is
\[
\varphi^\iota(E_{o_-}(\lambda)T_{n_w}) =
\varphi(\iota(E_{o_-}(\lambda))\iota(T_{n_w})) =
\varphi((-1)^{\ell(\lambda)}E_{o_+}(\lambda)(-1)^{\ell(n_w)}T_{n_w}^*)
\]
by \cite[Lemma~5.31]{MR3484112}.
Therefore if $n_w^{-1}\cdot \lambda\notin W_P^-(\lambda)$, then $\varphi(E_{o_+}(\lambda)T_{n_w}^*) = 0$.

If $n_w^{-1}\cdot \lambda\in W_P^-(\lambda)$, then
\begin{align*}
\sigma'(E^P_{o_-}(n_w^{-1}\cdot \lambda))
& = 
(-1)^{\ell(n_w^{-1}\cdot \lambda) - \ell_P(n_w^{-1}\cdot \lambda)}\sigma(\iota_P(E_{o_-}^P(n_w^{-1}\cdot \lambda)))\\
& =
(-1)^{\ell(n_w^{-1}\cdot \lambda)}\sigma(E_{o_+}^P(n_w^{-1}\cdot \lambda))
\end{align*}
again by \cite[Lemma~5.31]{MR3484112} and \cite[Lemma~4.5]{arXiv:1612.01312}.
We also have $\varphi^\iota(T_{n_w}) = (-1)^{\ell(n_w)}\varphi(T_{n_w}^*)$.
Hence we get
\[
\varphi((-1)^{\ell(\lambda)}E_{o_+}(\lambda)(-1)^{\ell(n_w)}T_{n_w}^*)
=
(-1)^{\ell(n_w)}(-1)^{\ell(n_w^{-1}\cdot \lambda)}\varphi(T_{n_w}^*)\sigma(E_{o_+}^P(n_w^{-1}\cdot \lambda))
\]
Since $\ell(n_w^{-1}\cdot \lambda) = \ell(\lambda)$~\cite[Lemma~2.15]{arXiv:1612.01312}, we get the lemma.
\end{proof}

\begin{proof}[Proof of Lemma~\ref{lem:final part of the exactness}]
We prove the lemma by induction on the length of $ww_Q$.
Assume that $ww_Q = w_Gw_c$, namely $w = w_Gw_cw_Q$.
Let $\lambda_P^-\in Z(W_P(1))$ as in Proposition~\ref{prop:localization as Levi subalgebra}.
Notice that $n_{w_Q}^{-1}\cdot \lambda_P^-$ is $P$-negative since $w_Q$ (in fact, any element in $W_0$) preserves $\Sigma^+\setminus\Sigma_P^+ = \Sigma_{P_2}^+$.
We also have that $n_{w_Q}^{-1}\cdot \lambda_P^-\in Z(W_P(1))$ since $n_{w_Q}$ normalizes $W_P(1)$.
Hence $e_Q(\sigma)(T_{n_{w_Q}^{-1}\cdot \lambda_P^-}^{Q*}) = \sigma(T_{n_{w_Q}^{-1}\cdot \lambda_P^-}^{P*})$ is invertible.
Take $\psi_0 \in \Ima(I_Q(e_Q(\sigma))\to I'_Q(e_Q(\sigma)))$ such that $\psi_0(T^*_{n_{w_Gw_cw_Q}}) = xe_Q(\sigma)(T_{n_{w_Q}^{-1}\cdot \lambda_P^-}^{Q*})^{-1}$.
Put $\lambda = n_{w_Gw_c}\cdot \lambda_P^-$ and set $\psi = \psi_0 E_{o_+}(\lambda)$.
Let $v\in W_0^Q$.
If $v\ne w_Gw_cw_Q$, then since $v$ and $w_Gw_cw_Q$ are in $W^Q_0$, we have $v\notin w_Gw_cw_QW_{0,Q}$.
Hence $(w_Gw_c)^{-1}v\notin W_{0,Q}$.
Therefore there exists $\alpha\in\Sigma^+\setminus\Sigma_Q^+$ such that $(w_Gw_c)^{-1}v(\alpha) < 0$.
Since $\Sigma^+\setminus\Sigma_Q^+\subset \Sigma_{P_2}$ and, $\Sigma_{P_2}$ is stabilized by $W_0$, we have $(w_Gw_c)^{-1}v(\alpha)\in \Sigma_{P_2}^- = \Sigma^-\setminus\Sigma_P^-$.
Hence $\langle (w_Gw_c)^{-1}v(\alpha),\nu(\lambda_P^-)\rangle < 0$.
The left hand side is $\langle \alpha,\nu(n_v^{-1}\cdot \lambda)\rangle$.
Hence $n_v^{-1}\cdot \lambda$ is not $Q$-negative.
Therefore $\psi(T_{n_v}) = (\psi_0 E_{o_+}(\lambda))(T_{n_v}) = 0$ if $v\ne w_Gw_cw_Q$ by the above lemma.

Assume that $v = w_Gw_cw_Q$.
Then $n_v^{-1}\cdot \lambda = n_{w_Q}^{-1}\cdot \lambda_P^-$.
Since $\lambda_P^-$ is dominant, it is $Q$-negative.
The set of $Q$-negative elements is stable under the conjugate action of $W_{Q}(1)$.
Hence $n_{w_Q}^{-1}\cdot \lambda_P^-$ is also $Q$-negative.
Therefore we have $\psi(T^*_{n_{w_Gw_cw_Q}}) = (\psi_0 E_{o_+}(\lambda))(T^*_{n_{w_Gw_cw_Q}}) = \psi_0(T^*_{n_{w_Gw_cw_Q}})e_Q(\sigma)(T^{Q*}_{n_{w_Q}^{-1}\cdot \lambda_P^-}) = x$ by the previous lemma.
We get the lemma when $w = w_Gw_cw_Q$.

Assume that $w \ne w_Gw_cw_Q$ and take $\alpha$ such that $s_\alpha ww_Q > ww_Q$ and $\Delta_{s_\alpha ww_Q} = \Delta_{ww_Q}$ as in Lemma~\ref{lem:take alpha and the same Delta, some conclusion}.
Then $s_\alpha w > w$ by Lemma~\ref{lem:take alpha and the same Delta, some conclusion}.
Set $s = s_\alpha$.
By inductive hypothesis, there exists $\psi_0\in \Ima(I_Q(e_Q(\sigma))\to I'_Q(e_Q(\sigma)))$ such that for $v\in W^Q_0\setminus\{s_\alpha w\}$, $\psi_0(T_{n_v}^*) = 0$ and $\psi_0(T_{n_{s_\alpha w}}^*) = x$.
We prove that $\psi = \psi_0T_{n_s}$ satisfies the condition of the lemma.
First we have
\[
\psi(T_{n_w}^*) = \psi_0(T_{n_s}T_{n_w}^*) = \psi_0((T_{n_s}^* - c_{n_s})T_{n_w}^*) = \psi_0(T_{n_s}^*T_{n_w}^*) - \psi_0(c_{n_s}T_{n_w}^*).
\]
Since $sw > w$, we have $T_{n_s}^*T_{n_w}^* = T_{n_{sw}}^*$.
Hence $\psi_0(T_{n_s}^*T_{n_w}^*) = x$.
Since $\psi_0(T_{n_w}^*) = 0$, we have $\psi_0(c_{n_s}T_{n_w}^*) = \psi_0(T_{n_w}^*)e_Q(\sigma)(n_w^{-1}\cdot c_{n_s}) = 0$.
Hence $\psi(T_{n_w}^*) = x$.

Assume that $v\ne w$.
If $\ell(s) + \ell(v) > \ell(sv)$, then $T_{n_s}T_{n_v}^* = T_{n_s}T_{n_s}^*T_{n_{sv}}^* = 0$.
Hence $\psi(T_{n_v}^*) = 0$.
If $\ell(s) + \ell(v) = \ell(sv)$, then
\[
\psi(T_{n_v}^*) = \psi_0(T_{n_s}T_{n_v}^*) = \psi_0((T_{n_s}^* - c_{n_s})T_{n_v}^*) = \psi_0(T^*_{n_{sv}}) - \psi_0(c_{n_s}T_{n_v}^*).
\]
Since $v\ne w$, $sv \ne sw$.
Hence $\psi_0(T^*_{n_{sv}}) = 0$.
Since $sv > v$ and $s(sw) < sw$, we have $v\ne sw$.
Hence $\psi_0(c_{n_s}T_{n_v}^*) = \psi_0(T_{n_v}^*)e_Q(\sigma)(n_v^{-1}\cdot c_{n_s}) = 0$.
Therefore we get $\psi(T_{n_v}^*) = 0$.
\end{proof}

\subsection{Supersingular modules}
\begin{prop}
If $\pi$ is supsersingular, then $\pi^\iota$ is also supsersingular.
\end{prop}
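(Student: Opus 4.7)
The plan is to reduce the claim to showing that $\iota$ preserves each central element $z_\mathcal{O}$ up to a sign. Since $\iota$ is a $C$-algebra automorphism of $\mathcal{H}$, for any integer $n\ge 1$ we have $\pi^\iota \cdot z_\mathcal{O}^n = \pi\cdot \iota(z_\mathcal{O}^n) = \pi\cdot \iota(z_\mathcal{O})^n$, so it suffices to show that $\iota(z_\mathcal{O})$ is a nonzero scalar multiple of $z_\mathcal{O}$ for every conjugacy class $\mathcal{O}$ contained in $\Lambda(1)$.

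For this, I would invoke the identity $\iota(E_{o_-}(\lambda)) = (-1)^{\ell(\lambda)}E_{o_+}(\lambda)$ for $\lambda\in\Lambda(1)$, which is a specialization of \cite[Lemma~5.31]{MR3484112} and has already been used in the preceding subsection (in the lemma computing $(\varphi E_{o_+}(\lambda))(T_{n_w}^*)$). Since $\ell$ is constant on $\mathcal{O}$ and equal to $\ell(\mathcal{O})$, and since $z_\mathcal{O} = \sum_{\lambda\in\mathcal{O}}E_o(\lambda)$ does not depend on the chosen spherical orientation, one obtains
\[
\iota(z_\mathcal{O}) = \sum_{\lambda\in \mathcal{O}}(-1)^{\ell(\lambda)}E_{o_+}(\lambda) = (-1)^{\ell(\mathcal{O})}\sum_{\lambda\in \mathcal{O}}E_{o_+}(\lambda) = (-1)^{\ell(\mathcal{O})}z_\mathcal{O}.
\]

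Given this, if $\pi$ is supersingular with exponent $n$, then for every conjugacy class $\mathcal{O}\subset\Lambda(1)$ with $\ell(\mathcal{O}) > 0$ we have
\[
\pi^\iota\cdot z_\mathcal{O}^n = \pi\cdot \iota(z_\mathcal{O})^n = (-1)^{n\ell(\mathcal{O})}\pi\cdot z_\mathcal{O}^n = 0,
\]
so $\pi^\iota$ is supersingular with the same exponent $n$. There is no real obstacle here; the only nontrivial input beyond formal manipulation is the formula for $\iota$ on $E_{o_-}(\lambda)$, which is already available in the paper, together with the orientation-independence of $z_\mathcal{O}$ noted in subsection~\ref{subsec:supersingulars}.
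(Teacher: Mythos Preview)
Your proposal is correct and follows essentially the same approach as the paper: both compute $\iota(z_\mathcal{O}) = (-1)^{\ell(\mathcal{O})}z_\mathcal{O}$ using \cite[Lemma~5.31]{MR3484112} together with the orientation-independence of $z_\mathcal{O}$, and then conclude immediately. Your write-up is slightly more explicit about the formal deduction, but there is no substantive difference.
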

\begin{proof}
For each $W(1)$-orbit $\mathcal{O}\subset \Lambda(1)$ such that $\ell(\mathcal{O}) > 0$, we have
\[
\iota(z_\mathcal{O}) = \sum_{\lambda\in \mathcal{O}}\iota(E_{o_-}(\lambda)) = \sum_{\lambda\in \mathcal{O}}(-1)^{\ell(\mathcal{O})}E_{o_+}(\lambda) = (-1)^{\ell(\mathcal{O})}z_\mathcal{O}
\]
by \cite[Lemma~5.31]{MR3484112}.
Hence $\pi(z_\mathcal{O}^n) = 0$ implies $\pi^\iota(z_\mathcal{O}^n) = 0$.
\end{proof}

\begin{prop}\label{prop:twist of supersingular}
Assume that $C$ is a field.
Let $\pi = \pi_{\chi,J,V}$ be a simple supersingular representation.
Then $\pi^\iota = \pi_{\chi,S_{\aff,\chi}\setminus J,V}$.
\end{prop}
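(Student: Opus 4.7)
The plan is to reduce the question to a computation with the character $\Xi_{J,\chi}$ of $\mathcal{H}_\aff$ and then exploit the compatibility of $\iota$ with the induction $(-)\otimes_{\mathcal{H}_\Xi}\mathcal{H}$.

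First, I will observe that $\iota$ preserves $\mathcal{H}_\Xi$. Any $\omega\in\Omega(1)_\Xi$ has length zero, so $\iota(T_\omega)=T_\omega$; and from $T_{\widetilde{s}}^{*}=T_{\widetilde{s}}-c_{\widetilde{s}}$ one sees that $\iota$ preserves $\mathcal{H}_\aff$, hence $\mathcal{H}_\Xi=\mathcal{H}_\aff C[\Omega(1)_\Xi]$. Using that $\iota$ is an algebra (not anti-)involution, the assignment $n\otimes X\mapsto n\otimes\iota(X)$ is straightforwardly seen to give an isomorphism
\[
(N\otimes_{\mathcal{H}_\Xi}\mathcal{H})^\iota\simeq N^{\iota|_{\mathcal{H}_\Xi}}\otimes_{\mathcal{H}_\Xi}\mathcal{H}
\]
for any right $\mathcal{H}_\Xi$-module $N$. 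It therefore suffices to identify $(\Xi_{J,\chi}\otimes V)^{\iota|_{\mathcal{H}_\Xi}}$ with $\Xi_{S_{\aff,\chi}\setminus J,\chi}\otimes V$ as an $\mathcal{H}_\Xi$-module, after checking that the subalgebra $\mathcal{H}_\Xi$ is the same for both indices $J$ and $S_{\aff,\chi}\setminus J$.

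Next I will compute $\Xi_{J,\chi}^\iota$ on the generators of $\mathcal{H}_\aff$. On $T_t$ with $t\in Z_\kappa\cap W_\aff(1)$, $\iota$ is the identity and the value remains $\chi(t)$. For a lift $\widetilde{s}\in W_\aff(1)$ of $s\in S_\aff$, $\iota(T_{\widetilde{s}})=-T_{\widetilde{s}}^{*}=c_{\widetilde{s}}-T_{\widetilde{s}}$, so
\[
\Xi_{J,\chi}^\iota(T_{\widetilde{s}})=\chi(c_{\widetilde{s}})-\Xi_{J,\chi}(T_{\widetilde{s}}).
\]
A three-case check ($s\in S_{\aff,\chi}\setminus J$; $s\in J$; $s\notin S_{\aff,\chi}$) shows this value equals $\chi(c_{\widetilde{s}})$ precisely when $s\in J$ and vanishes otherwise, which is exactly the defining formula for $\Xi_{S_{\aff,\chi}\setminus J,\chi}$.

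Finally, for the stabilizer: $\omega\in\Omega(1)$ fixes $\Xi_{J,\chi}$ iff its conjugation action fixes $\chi$ and preserves $J\subset S_{\aff,\chi}$; since $S_{\aff,\chi}$ is automatically preserved once $\chi$ is fixed, preserving $J$ is equivalent to preserving $S_{\aff,\chi}\setminus J$. Hence $\Omega(1)_{\Xi_{J,\chi}}=\Omega(1)_{\Xi_{S_{\aff,\chi}\setminus J,\chi}}$, the subalgebra $\mathcal{H}_\Xi$ is common, and since $\iota$ acts trivially on $C[\Omega(1)_\Xi]$ the $V$-factor is untouched by the twist. Assembling the three steps yields $\pi_{\chi,J,V}^\iota\simeq\pi_{\chi,S_{\aff,\chi}\setminus J,V}$. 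The entire substance of the proof is the character computation in the second step; no genuine obstacle is anticipated beyond this bookkeeping.
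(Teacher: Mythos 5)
Your proposal is correct and follows essentially the same route as the paper's proof: both reduce to the computation $\Xi_{J,\chi}^\iota(T_{\widetilde{s}})=\chi(c_{\widetilde{s}})-\Xi_{J,\chi}(T_{\widetilde{s}})$, which identifies $\Xi_{J,\chi}^\iota$ with $\Xi_{S_{\aff,\chi}\setminus J,\chi}$, combined with the compatibility of $\iota$ with induction from $\mathcal{H}_\Xi=\mathcal{H}_\aff C[\Omega(1)_\Xi]$ and the triviality of $\iota$ on $C[\Omega(1)_\Xi]$ (so $V$ is unchanged). Your explicit verification that the stabilizer $\Omega(1)_\Xi$ is the same for $J$ and $S_{\aff,\chi}\setminus J$ is a small point the paper leaves implicit, but the argument is the same.
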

\begin{proof}
Let $\Xi = \Xi_{\chi,J}$ be the character of $\mathcal{H}_\aff$ parametrized by $(\chi,J)$.
The representation $\pi$ is given by $\pi = (V\otimes\Xi_{\chi,J})\otimes_{\mathcal{H}_\aff C[\Omega(1)_\Xi]}\mathcal{H}$.
The homomorphism $\iota$ preserves $\mathcal{H}_\aff$ and $C[\Omega(1)_\Xi]$. (On $C[\Omega(1)]$, $\iota$ is identity.)
Hence we get $\pi^\iota = (V^\iota\otimes\Xi^\iota)\otimes_{\mathcal{H}_\aff C[\Omega(1)_\Xi]}\otimes\mathcal{H}$.
Since $\iota$ is trivial on $C[\Omega(1)_\Xi]$, $V^\iota = V$.
Let $(\chi',J')$ be the pair such that $\Xi^\iota$ is parametrized by $(\chi',J')$.
The character $\chi'$ is a direct summand of $V^\iota|_{Z_\kappa\cap W_\aff(1)}$ and since $V^\iota = V$, we have $V^\iota|_{Z_\kappa\cap W_\aff(1)} = V|_{Z_\kappa\cap W_\aff(1)}$.
Since $V|_{Z_\kappa\cap W_\aff(1)}$ is a direct sum of $\chi$, $\chi' = \chi$.
The subset $J\subset S_{\aff,\chi}$ satisfies
\[
\Xi(T_{\widetilde{s}}) =
\begin{cases}
0 & (s\in J),\\
\chi(c_{\widetilde{s}}) & (s\in S_{\aff,\chi}\setminus J).
\end{cases}
\]
where $\widetilde{s}\in W_\aff(1)$ is a lift of $s$.
We have $\Xi(\iota(T_{\widetilde{s}})) = -\Xi(T_{\widetilde{s}} - c_{\widetilde{s}}) = -\Xi(T_{\widetilde{s}}) + \chi(c_{\widetilde{s}})$.
Therefore we have
\[
\Xi^\iota(T_{\widetilde{s}}) =
\begin{cases}
\chi(c_{\widetilde{s}}) & (s\in J),\\
0 & (s\in S_{\aff,\chi}\setminus J).
\end{cases}
\]
We have $J' = \{s\in S_{\aff,\chi}\mid \Xi^\iota(T_{n_s}) = 0\}$.
Hence $J' = S_{\aff,\chi}\setminus J$.
\end{proof}

\subsection{Simple modules}
Assume that $C$ is an algebraically closed field.
Summarizing the results in this section, we have the following.
We need notation.
By \cite[Remark~4.6]{arXiv:1612.01312}, $T_w^P\mapsto (-1)^{\ell(w) - \ell_P(w)}T_w^{P}$ is an algebra homomorphism of $\mathcal{H}_P$.
This preserves the subalgebra $C[\Omega_P(1)]$.
Let $\Xi$ be a character of $\mathcal{H}_{\aff,P}$.
Then the above homomorphism also preserves $C[\Omega_P(1)_\Xi]$ since the homomorphism is trivial on $\mathcal{H}_{\aff,P}$ by \cite[Lemma~4.7]{arXiv:1612.01312}.
For a $C[\Omega_P(1)_\Xi]$-module $V$, let $V_{\ell - \ell_P}$ be the pull-back of $V$ by this homomorphism.
\begin{thm}\label{thm:twist of simple module}
Let $I(P;\chi,J,V;Q)$ be a simple representation.
Then we have $I(P;\chi,J,V;Q)^\iota = I(P;\chi,S_{\aff,P,\chi}\setminus J,V_{\ell - \ell_p};Q^c)$ where $\Delta_{Q^c} = \Delta_P\cup (\Delta(\sigma)\setminus \Delta_Q)$.
\end{thm}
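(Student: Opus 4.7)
The plan is to combine the three results already established in this section: Corollary~\ref{cor:I=I' for simples} for parabolic induction, Theorem~\ref{thm:twist of Steinberg} for generalized Steinberg modules, and Proposition~\ref{prop:twist of supersingular} for supersingular modules. Write $\sigma = \pi_{\chi,J,V}$, so that by definition
\[
I(P;\chi,J,V;Q) = I(P,\sigma,Q) = I_{P(\sigma)}\!\left(\St_Q^{P(\sigma)}(\sigma)\right).
\]

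First, since $I(P,\sigma,Q)$ is simple, I apply Corollary~\ref{cor:I=I' for simples} with $P(\sigma)$ in place of $P$ and $\St_Q^{P(\sigma)}(\sigma)$ in place of $\sigma$. This gives
\[
I(P;\chi,J,V;Q)^\iota \simeq I_{P(\sigma)}\!\left(\St_Q^{P(\sigma)}(\sigma)^{\iota_{P(\sigma)}}_{\ell - \ell_{P(\sigma)}}\right).
\]
Next, I work inside the Levi algebra $\mathcal{H}_{P(\sigma)}$, which plays the role of the ambient $\mathcal{H}$ in subsection~\ref{subsec:twist:Steinberg modules}. The hypotheses of Theorem~\ref{thm:twist of Steinberg} are satisfied, because by the very definition of $\Delta(\sigma)$ the set $\Delta_P$ is orthogonal to $\Delta(\sigma)\setminus\Delta_P$, and by construction $P$ inside $P(\sigma)$ has $P_{P(\sigma)}(\sigma) = P(\sigma)$, i.e.\ the analogue of the Levi itself. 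Hence Theorem~\ref{thm:twist of Steinberg} (applied inside $\mathcal{H}_{P(\sigma)}$) yields
\[
\St_Q^{P(\sigma)}(\sigma)^{\iota_{P(\sigma)}}_{\ell - \ell_{P(\sigma)}} \simeq \St_{Q^c}^{P(\sigma)}\!\bigl(\sigma^{\iota_P}_{\ell - \ell_P}\bigr),
\]
where $\Delta_{Q^c} = \Delta_P\cup(\Delta(\sigma)\setminus\Delta_Q)$ is exactly the complement inside $\Delta(\sigma)$.

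The last step is to identify $\sigma^{\iota_P}_{\ell - \ell_P}$ explicitly. Proposition~\ref{prop:twist of supersingular} applied inside $\mathcal{H}_P$ gives $\pi_{\chi,J,V}^{\iota_P} \simeq \pi_{\chi, S_{\aff,P,\chi}\setminus J, V}$. The additional sign twist $(\cdot)_{\ell - \ell_P}$ is trivial on $\mathcal{H}_{\aff,P}$, because $\ell$ and $\ell_P$ agree on $W_{\aff,P}(1)$; in particular it fixes the character $\Xi_{\chi,S_{\aff,P,\chi}\setminus J}$ but modifies the action of $C[\Omega_P(1)_\Xi]$, which is exactly the operation converting $V$ to $V_{\ell - \ell_P}$ as defined just before the theorem. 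Combining the three identifications gives
\[
I(P;\chi,J,V;Q)^\iota \simeq I_{P(\sigma)}\!\left(\St_{Q^c}^{P(\sigma)}\!\bigl(\pi_{\chi,S_{\aff,P,\chi}\setminus J,V_{\ell-\ell_P}}\bigr)\right) = I(P;\chi,S_{\aff,P,\chi}\setminus J,V_{\ell-\ell_P};Q^c).
\]

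The main obstacle is not any new calculation, since the substantive analytic work has been done in subsections~\ref{subsec:twist:Steinberg modules}--\ref{subsec:exactness}, but rather the bookkeeping to justify that the new triple is a legitimate parameter for a simple module: one needs $P(\sigma^{\iota_P}_{\ell-\ell_P}) = P(\sigma)$ so that $Q^c$ indeed sits between $P$ and $P(\sigma^{\iota_P}_{\ell-\ell_P})$, and one needs to check that the supersingular twist inside $\mathcal{H}_P$ is compatible with the operations $\iota_{P(\sigma)}$ and $(\cdot)_{\ell - \ell_{P(\sigma)}}$ appearing after applying Corollary~\ref{cor:I=I' for simples}. Both reduce to the observation that passing from $\ell$ to $\ell_{P(\sigma)}$ or from $\ell$ to $\ell_P$ produces the same sign on $W_{\aff,P}(1)$, so the inner parameter $V$ is twisted by $\ell-\ell_P$ and the outer extension datum $\Delta(\sigma)$ is preserved.
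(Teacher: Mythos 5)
Your proposal follows essentially the same route as the paper: first Corollary~\ref{cor:I=I' for simples}, then Theorem~\ref{thm:twist of Steinberg} applied inside $\mathcal{H}_{P(\sigma)}$ together with Proposition~\ref{prop:twist of supersingular}, and finally the identification $(\pi_{\chi,S_{\aff,P,\chi}\setminus J,V})_{\ell-\ell_P}=\pi_{\chi,S_{\aff,P,\chi}\setminus J,V_{\ell-\ell_P}}$, which the paper carries out via the algebra homomorphism $f(T_w^P)=(-1)^{\ell(w)-\ell_P(w)}T_w^P$. One minor imprecision: $\ell$ and $\ell_P$ need not coincide on $W_{\aff,P}(1)$, only their parities agree (this is the cited lemma the paper invokes), which is all your last step actually requires.
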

\begin{proof}
Since $I(P;\chi,J,V;Q) = I_{P(\sigma)}(\St^{P(\sigma)}_Q(\pi_{\chi,J,V}))$ is simple, by Corollary~\ref{cor:I=I' for simples}, we have $I(P;\chi,J,V;Q)^\iota = I_{P(\sigma)}((\St^{P(\sigma)}_Q(\pi_{\chi,J,V}))_{\ell - \ell_{P(\sigma)}}^{\iota_{P(\sigma)}})$.
By Theorem~\ref{thm:twist of Steinberg}, Proposition~\ref{prop:twist of supersingular} and \cite[Lemma~4.9]{arXiv:1612.01312}, we have
\begin{align*}
I(P;\chi,J,V;Q)^\iota & \simeq I_{P(\sigma)}(\St^{P(\sigma)}_{Q^c}((\pi_{\chi,J,V})_{\ell - \ell_{P}}^{\iota_{P}}))\\
& \simeq I_{P(\sigma)}(\St^{P(\sigma)}_{Q^c}((\pi_{\chi,S_{\aff,P,\chi}\setminus J,V})_{\ell - \ell_{P}})).
\end{align*}
Let $\Xi$ be a character of $\mathcal{H}_{\aff,P}$ defined by the pair $\chi$ and $S_{\aff,P,\chi}\setminus J$.
Put $\mathcal{H}_{P,\Xi} = \mathcal{H}_{\aff,P}C[\Omega_P(1)_\Xi]$.
Then $\pi_{\chi,S_{\aff,P,\chi}\setminus J,V} = (\Xi\otimes V)\otimes_{\mathcal{H}_{P,\Xi}}\mathcal{H}_P$.
Let $f\colon \mathcal{H}_P\to \mathcal{H}_P$ be an algebra homomorphism defined by $f(T_w^P) = (-1)^{\ell(w) - \ell_P(w)}T_w^P$.
Then $f$ preserves $\mathcal{H}_{\aff,P}$ and $C[\Omega_P(1)_\Xi]$ and we have $(\pi_{\chi,S_{\aff,P,\chi}\setminus J,V})_{\ell - \ell_P} = \pi_{\chi,S_{\aff,P,\chi}\setminus J,V}\circ f = ((\Xi\circ f)\otimes (V\circ f))\otimes_{\mathcal{H}_{P,\Xi}}\mathcal{H}_P$.
By the definition, $V\circ f = V_{\ell - \ell_P}$.
By \cite[Lemma~4.7]{arXiv:1612.01312}, $f$ is identity on $\mathcal{H}_{\aff,P}$.
Hence $\Xi\circ f = \Xi$.
Hence $(\pi_{\chi,S_{\aff,P,\chi}\setminus J,V})_{\ell - \ell_P} = (\Xi\otimes V_{\ell - \ell_P})\otimes_{\mathcal{H}_{P,\Xi}}\mathcal{H}$ and we get the theorem.
\end{proof}

\subsection{Structure of $I'_P$}
Assume that $C$ is an algebraically closed field.
\begin{prop}
Let $P$ be a parabolic subgroup and $\sigma$ a simple supersingular representation of $\mathcal{H}_P$.
Then for each parabolic subgroup $Q$ between $P$ and $P(\sigma)$, there exists a submodule $\pi_Q\subset I'_P(\sigma)$ such that 
\begin{enumerate}
\item if $Q_1\subset Q_2$ then $\pi_{Q_1}\subset \pi_{Q_2}$.
\item $\pi_Q/\sum_{Q_1\subsetneq Q}\pi_{Q_1} = I(P,\sigma,Q)$.
\end{enumerate}
\end{prop}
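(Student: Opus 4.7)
The plan is to construct the filtration by a two-step reduction, first peeling off the outermost parabolic induction and then using the involution $\iota_{P(\sigma)}$ to transfer a natural filtration from the un-primed side. By transitivity \cite[Proposition~4.12]{arXiv:1612.01312} we write $I'_P(\sigma) \simeq I'_{P(\sigma)}(I_P^{P(\sigma)\prime}(\sigma))$. Since each $I(P,\sigma,Q) = I_{P(\sigma)}(\St_Q^{P(\sigma)}(\sigma))$ is simple, Corollary~\ref{cor:I=I' for simples} gives $I(P,\sigma,Q) \simeq I'_{P(\sigma)}(\St_Q^{P(\sigma)}(\sigma))$; combined with the exactness of $I'_{P(\sigma)}$ (from the vector-space decomposition analogous to Proposition~\ref{prop:decomposition of I_P}), it suffices to construct submodules $\pi'_Q \subset I_P^{P(\sigma)\prime}(\sigma)$ for $Q \in [P, P(\sigma)]$ with $\pi'_{Q_1} \subset \pi'_{Q_2}$ whenever $Q_1 \subset Q_2$ and subquotients $\pi'_Q/\sum_{Q_1 \subsetneq Q}\pi'_{Q_1} \simeq \St_Q^{P(\sigma)}(\sigma)$; then set $\pi_Q := I'_{P(\sigma)}(\pi'_Q)$.

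To build the filtration on $I_P^{P(\sigma)\prime}(\sigma)$, I set $\tau := \sigma^{\iota_P}_{\ell_{P(\sigma)} - \ell_P}$. Because $(\iota_P)^2 = \id$ and the two twists commute, the $P(\sigma)$-internal analogue of \cite[Proposition~4.11]{arXiv:1612.01312} gives $I_P^{P(\sigma)\prime}(\sigma) \simeq I_P^{P(\sigma)}(\tau)^{\iota_{P(\sigma)}}$, and an $\iota$-twist preserves the lattice of submodules. On $I_P^{P(\sigma)}(\tau)$ I take the natural decreasing filtration
\[
M_Q := \sum_{Q \subset Q_1 \subset P(\sigma)} I_{Q_1}^{P(\sigma)}(e_{Q_1}(\tau)), \qquad Q \in [P, P(\sigma)].
\]
The identification $M_Q/\sum_{Q_1 \supsetneq Q} M_{Q_1} \simeq \St_Q^{P(\sigma)}(\tau)$ follows from the definition of $\St$ together with the $P(\sigma)$-internal version of Lemma~\ref{lem:intersection and union of parabolic inductions}. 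Transporting via $\iota_{P(\sigma)}$ and applying the $P(\sigma)$-internal version of Theorem~\ref{thm:twist of Steinberg} (which, together with $\tau^{\iota_P}_{\ell_{P(\sigma)} - \ell_P} = \sigma$, yields $\St_Q^{P(\sigma)}(\tau)^{\iota_{P(\sigma)}} \simeq \St_{Q^c}^{P(\sigma)}(\sigma)$), the same subspaces $M_Q$ become submodules of $I_P^{P(\sigma)\prime}(\sigma)$ with subquotients $\St_{Q^c}^{P(\sigma)}(\sigma)$, where $\Delta_{Q^c} = \Delta_P \cup (\Delta_{P(\sigma)} \setminus \Delta_Q)$. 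Since $Q \mapsto Q^c$ is an order-reversing involution on $[P, P(\sigma)]$ with $P^c = P(\sigma)$ and $P(\sigma)^c = P$, setting $\pi'_Q := M_{Q^c}$ converts the decreasing filtration into the required increasing one. As a sanity check at $Q = P$, one has $\pi'_P = M_{P(\sigma)} = e_{P(\sigma)}(\tau)$ and Lemma~\ref{lem:twist of extension} (internalized) gives $e_{P(\sigma)}(\tau)^{\iota_{P(\sigma)}} \simeq \St_P^{P(\sigma)}(\sigma)$, so $\pi_P \simeq I(P,\sigma,P)$.

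The main obstacle is verifying that the results of subsection~\ref{subsec:twist:Steinberg modules}, in particular Theorem~\ref{thm:twist of Steinberg} and Lemma~\ref{lem:intersection and union of parabolic inductions}, carry over from the standing hypothesis $P(\sigma) = G$ to the ``internalized'' setting where $G$ is replaced by $P(\sigma)$. The hypotheses of that subsection are precisely met there: the orthogonality $\Delta_P \perp \Delta_{P(\sigma)} \setminus \Delta_P$ is built into the definition of $P(\sigma)$, and $P(\tau) = P(\sigma)$ (a short check using supersingularity and the fact that the twists $\iota_P$ and $(\cdot)_{\ell_{P(\sigma)} - \ell_P}$ both respect the extendability condition defining $\Delta(\sigma)$). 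Although this transfer is essentially routine, it requires care in replacing $\ell$ by $\ell_{P(\sigma)}$ and in re-deriving the intermediate statements (for instance Proposition~\ref{prop:I_P as A-module} and Lemma~\ref{lem:I_P+extension as A-module}) in the internalized form.
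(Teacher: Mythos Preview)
Your proof is correct and follows essentially the same route as the paper: reduce via transitivity and exactness of $I'_{P(\sigma)}$ to building a filtration on $I_P^{P(\sigma)\prime}(\sigma)$, identify the latter with $I_P^{P(\sigma)}(\tau)^{\iota_{P(\sigma)}}$ for $\tau = \sigma^{\iota_P}_{\ell_{P(\sigma)}-\ell_P}$, transport the natural filtration by the submodules $I_{Q^c}^{P(\sigma)}(e_{Q^c}(\tau))$, and read off the subquotients via Theorem~\ref{thm:twist of Steinberg}. Two minor simplifications relative to your write-up: your sum $M_Q$ collapses to its smallest term $I_Q^{P(\sigma)}(e_Q(\tau))$ (so Lemma~\ref{lem:intersection and union of parabolic inductions} is not needed for the subquotient computation, which is then just the definition of $\St$), and the paper sidesteps your verification that $P(\tau) = P(\sigma)$ by applying Theorem~\ref{thm:twist of Steinberg} to $\sigma$ rather than to $\tau$ and then using that $\iota$ is an involution.
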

Compare with $I_Q(e_Q(\sigma))\subset I_P(\sigma)$.
In other words, the structure of $I'_P(\sigma)$ is ``opposite to'' that of $I_P(\sigma)$.
\begin{proof}
First assume that $P(\sigma) = G$.
Put $\sigma' = \sigma_{\ell -\ell_P}^{\iota_P}$.
Then $I'_P(\sigma) = I_P(\sigma')^{\iota}$.
Set $\pi_Q = I_{Q^c}(e_{Q^c}(\sigma'))^{\iota}\subset I'_P(\sigma)$ where $\Delta_{Q^c} = (\Delta\setminus\Delta_Q)\cup \Delta_P$.
Then the first condition is satisfied.
Since $Q_1\subset Q_2$ if and only if $Q_1^c\supset Q_2^c$, we have
\begin{align*}
\pi_Q/\sum_{Q_1\subsetneq Q}\pi_{Q_1}
& = \left(I_{Q^c}(e_{Q^c}(\sigma'))/\sum_{Q_1\subsetneq Q}I_{Q_1^c}(e_{Q_1^c}(\sigma'))\right)^\iota\\
& = \left(I_{Q^c}(e_{Q^c}(\sigma'))/\sum_{Q_1\supsetneq Q^c}I_{Q_1}(e_{Q_1}(\sigma'))\right)^\iota\\
& = (\St_{Q^c}(\sigma'))^\iota = \St_Q(\sigma)
\end{align*}
by Theorem~\ref{thm:twist of Steinberg}.
By the assumption $P(\sigma) = G$, we have $\St_Q(\sigma) = I(P,\sigma,Q)$.
We get the proposition in this case.

In general, applying the proposition for $I_P^{P(\sigma)\prime}(\sigma)$, we get $\pi'_Q\subset I_P^{P(\sigma)\prime}(\sigma)$ for each $P(\sigma)\supset Q\supset P$.
Put $\pi_Q = I'_{P(\sigma)}(\pi'_Q)$.
The first condition is obvious.
For the second condition, we have
\begin{align*}
\pi_Q/\sum_{Q_1\subsetneq Q}\pi_{Q_1}
& =
I'_{P(\sigma)}(\pi'_Q)/\sum_{Q_1\subsetneq Q}I'_{P(\sigma)}(\pi'_{Q_1})\\
& \simeq
I'_{P(\sigma)}\left(\pi'_Q/\sum_{Q_1\subsetneq Q}\pi'_{Q_1}\right)\\
& \simeq
I'_{P(\sigma)}(\St_Q^{P(\sigma)}(\sigma)).
\end{align*}
Since $I(P,\sigma,Q) = I_{P(\sigma)}(\St_Q^{P(\sigma)}(\sigma))$ is simple, by Corollary~\ref{cor:I=I' for simples}, we have $I(P,\sigma,Q)\simeq I'_{P(\sigma)}(\St_Q^{P(\sigma)}(\sigma))$.
Now we get the proposition.
\end{proof}

\section{Dual}\label{sec:Dual}
We have an anti-automorphism $\zeta = \zeta_G \colon \mathcal{H}\to \mathcal{H}$ defined by $\zeta(T_w) = T_{w^{-1}}$.
Hence for a representation $\pi$, its linear dual $\pi^* = \Hom_C(\pi,C)$ has a structure of a right $\mathcal{H}$-module defined by $(fX)(v) = f(v\zeta(X))$ for $f\in\pi^*, v\in\pi$ and $X\in \mathcal{H}$.
Since any simple representation is finite-dimensional, if $\pi$ is simple then $\pi^*$ is again simple.
In this section, we compute $\pi^*$.

\begin{lem}
We have $\zeta(T_w^*) = T_{w^{-1}}^*$.
\end{lem}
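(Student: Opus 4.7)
The plan is to work in the localized algebra $\mathcal{H}[q_s^{\pm 1}]$, where the earlier identity $T_w^* = q_w T_{w^{-1}}^{-1}$ is available. First, I would observe that $\zeta$ extends to an anti-automorphism of $\mathcal{H}[q_s^{\pm 1}]$: the quadratic and braid relations defining $\mathcal{H}[q_s^{\pm 1}]$ are symmetric in a way that $T_w \mapsto T_{w^{-1}}$ respects (exactly as for $\mathcal{H}$ itself), and $\zeta$ acts as identity on the central parameters $q_s$. Being an anti-automorphism, $\zeta$ satisfies $\zeta(X^{-1}) = \zeta(X)^{-1}$ for any unit $X$.

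Next, I would simply compute in $\mathcal{H}[q_s^{\pm 1}]$:
\[
\zeta(T_w^*) \;=\; \zeta(q_w\, T_{w^{-1}}^{-1}) \;=\; q_w\, \zeta(T_{w^{-1}})^{-1} \;=\; q_w\, T_w^{-1} \;=\; q_{w^{-1}}\, T_w^{-1} \;=\; T_{w^{-1}}^*,
\]
where the penultimate equality uses the identity $q_w = q_{w^{-1}}$ noted in Section~\ref{subsec:Prop-p-Iwahori Hecke algebra}.

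Finally, since both $\zeta(T_w^*)$ and $T_{w^{-1}}^*$ are elements of $\mathcal{H}$, and $\mathcal{H}$ sits inside $\mathcal{H}[q_s^{\pm 1}]$ as a subalgebra via $\mathcal{H}[q_s]\hookrightarrow \mathcal{H}[q_s^{\pm 1}]$ followed by specialization (see subsection~\ref{subsec:The algebra H[q_s] and H[q_s^pm]}), the equality descends to $\mathcal{H}$. I do not anticipate a genuine obstacle here; the only subtlety is the bookkeeping that $\zeta$ extends compatibly to the localization, which is routine since $\zeta$ fixes scalars. An alternative would be to expand $T_w^*$ via its definition $T_w^* = (T_{s_1}-c_{s_1})\dotsm(T_{s_l}-c_{s_l})T_u$ and track what $\zeta$ does to each factor $T_{s_i}-c_{s_i}$ and to $T_u$, but this requires analyzing how $\zeta$ acts on $C[Z_\kappa]$ and keeping track of the reversed reduced word for $w^{-1}$, which is strictly more work than the localization argument above.
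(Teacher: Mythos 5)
Your proof is correct and takes essentially the same route as the paper: the paper's own one-line argument also computes in $\mathcal{H}[q_s^{\pm 1}]$ using $T_w^* = q_wT_{w^{-1}}^{-1}$, the anti-multiplicativity of $\zeta$, and $q_w = q_{w^{-1}}$, then descends to $\mathcal{H}$. Your extra remarks on extending $\zeta$ to the localization are just the routine bookkeeping the paper leaves implicit.
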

\begin{proof}
In $\mathcal{H}[q_s^{\pm 1}]$, we have $\zeta(T_w^*) = \zeta(q_wT_{w^{-1}}^{-1}) = q_wT_w^{-1} = T_{w^{-1}}^*$.
\end{proof}

\subsection{Parabolic inductions}
In this subsection, we calculate $I_P(\sigma)^*$.
Let $P' = n_{w_Gw_P}\opposite{P}n_{w_Gw_P}^{-1}$.
Then we have $I_P(\sigma) \simeq n_{w_Gw_P}\sigma\otimes_{(\mathcal{H}_P^+,j_P^+)}\mathcal{H}$ by \cite[Proposition~2.21]{arXiv:1612.01312}.
Hence we have
\begin{align*}
\Hom_C(I_P(\sigma),C)
& \simeq \Hom_C(n_{w_Gw_P}\sigma\otimes_{(\mathcal{H}_{P'}^+,j_{P'}^+)}\mathcal{H},C)\\
& \simeq \Hom_{(\mathcal{H}_{P'}^+,j_{P'}^+)}(\mathcal{H},\Hom_C(n_{w_Gw_P}\sigma,C)).
\end{align*}
Therefore $I_P(\sigma)^* \simeq \Hom_{(\mathcal{H}_{P'}^+,j_{P'}^+)}(\mathcal{H},\Hom_C(n_{w_Gw_P}\sigma,C))$ here the action on the right hand side is twisted by $\zeta$.
Let $\varphi\in \Hom_{(\mathcal{H}_{P'}^+,j_{P'}^+)}(\mathcal{H},\Hom_C(n_{w_Gw_P}\sigma,C))$ and set $\varphi^\zeta = \varphi\circ\zeta$.
Let $w\in W_{P'}(1)$ which is $P'$-negative.
Then $w^{-1}$ is $P'$-positive.
Hence for $X\in \mathcal{H}$ and $x\in n_{w_Gw_P}\sigma$, we have $\varphi^\zeta(X T_w)(x) = \varphi(\zeta(XT_{w}))(x) = \varphi(T_{w^{-1}}\zeta(X))(x) = (T^{P'}_{w^{-1}}\varphi(\zeta(X)))(x) = \varphi(\zeta(X))(xT^{P'}_{w^{-1}}) = \varphi^\zeta(X)(x\zeta_P(T^{P'}_w))$.
(Here we regard $\Hom_C(n_{w_Gw_P}\sigma,C)$ as a left $\mathcal{H}_{P'}$-module.)
Therefore 
\[
\varphi^\zeta \in \Hom_{(\mathcal{H}_{P'}^-,j_{P'}^-)}(\mathcal{H},(n_{w_Gw_P}\sigma)^*).
\]
For $X,Y\in \mathcal{H}$, we have $(\varphi Y)^\zeta(X) = (\varphi Y)(\zeta (X)) = \varphi(\zeta(X)\zeta(Y)) = \varphi(\zeta(YX)) = \varphi^\zeta(YX) = (\varphi^\zeta Y)(X)$.
Hence $\varphi\mapsto \varphi^\zeta$ induces
\[
I_P(\sigma)^*\simeq \Hom_{(\mathcal{H}_{P'}^-,j_{P'}^-)}(\mathcal{H},(n_{w_Gw_P}\sigma)^*) = I'_{P'}(n_{w_Gw_P}\sigma^*).
\]
\begin{prop}\label{prop:dual of parabolic induction}
We have $I_P(\sigma)^*\simeq I'_{P'}(n_{w_Gw_P}\sigma^*)$.
\end{prop}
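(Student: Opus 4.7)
The proof essentially writes itself once one lines up the right adjunction; the proposition is already proved in the paragraph immediately preceding the statement, so my plan is simply to formalize that paragraph.

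The starting point is the tensor-product description of $I_P$: by \cite[Proposition~2.21]{arXiv:1612.01312} we have $I_P(\sigma) \simeq n_{w_Gw_P}\sigma \otimes_{(\mathcal{H}_{P'}^+, j_{P'}^+)} \mathcal{H}$, where $P' = n_{w_Gw_P}\opposite{P}n_{w_Gw_P}^{-1}$. Dualizing with $\Hom_C(-,C)$ and applying the standard Hom--tensor adjunction converts the right tensor into a left Hom, giving
\[
I_P(\sigma)^* \simeq \Hom_{(\mathcal{H}_{P'}^+,j_{P'}^+)}(\mathcal{H},\Hom_C(n_{w_Gw_P}\sigma,C)),
\]
where on the right-hand side $\mathcal{H}$ acts via right multiplication on its first argument (which is then twisted by $\zeta$ to produce the right $\mathcal{H}$-module structure on $I_P(\sigma)^*$), and $\Hom_C(n_{w_Gw_P}\sigma,C)$ is regarded as a left $\mathcal{H}_{P'}$-module.

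The main step is then to convert the Hom-space over $(\mathcal{H}_{P'}^+, j_{P'}^+)$ into one over $(\mathcal{H}_{P'}^-, j_{P'}^-)$, which is what is needed to reach $I'_{P'}(n_{w_Gw_P}\sigma^*)$. The natural candidate is the map $\varphi \mapsto \varphi^\zeta := \varphi \circ \zeta$. To check that $\varphi^\zeta$ lands in $\Hom_{(\mathcal{H}_{P'}^-,j_{P'}^-)}(\mathcal{H},(n_{w_Gw_P}\sigma)^*)$, take $w \in W_{P'}(1)$ that is $P'$-negative; then $w^{-1}$ is $P'$-positive, so for $X \in \mathcal{H}$ and $x \in n_{w_Gw_P}\sigma$ one computes
\[
\varphi^\zeta(XT_w)(x) = \varphi(T_{w^{-1}}\zeta(X))(x) = \varphi(\zeta(X))(xT^{P'}_{w^{-1}}) = \varphi^\zeta(X)(x\zeta_P(T^{P'}_w)),
\]
using the $(\mathcal{H}_{P'}^+, j_{P'}^+)$-linearity of $\varphi$ in the middle step and Lemma at the start of Section~\ref{sec:Dual} ($\zeta$ preserves the $T_w$ basis by inverting the index, and analogously for $T_w^*$, $\zeta_P$) to identify the action of $\zeta_P(T^{P'}_w)$.

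The remaining verification is that $\varphi \mapsto \varphi^\zeta$ intertwines the right $\mathcal{H}$-actions, i.e.\ $(\varphi Y)^\zeta = \varphi^\zeta Y$ for $Y \in \mathcal{H}$, which is immediate from $\zeta$ being an anti-involution: $(\varphi Y)^\zeta(X) = \varphi(\zeta(X)\zeta(Y)) = \varphi^\zeta(YX) = (\varphi^\zeta Y)(X)$. The map is its own inverse (up to $\zeta$ being an involution), so it is bijective. Combining the adjunction isomorphism with this rewriting yields
\[
I_P(\sigma)^* \simeq \Hom_{(\mathcal{H}_{P'}^-,j_{P'}^-)}(\mathcal{H}, (n_{w_Gw_P}\sigma)^*) = I'_{P'}(n_{w_Gw_P}\sigma^*).
\]
There is no real obstacle; the only thing to be careful about is keeping track of which side of the parabolic ($P^+$ vs $P^-$) and which version of $j$ ($j$ vs $j^*$) appear at each step, and making sure the involution on $n_{w_Gw_P}\sigma^*$ coming from dualizing the left action of $\mathcal{H}_{P'}$ is compatible with the one induced by $\zeta_{P'}$ via the isomorphism of Hecke algebras defining $n_{w_Gw_P}\sigma$.
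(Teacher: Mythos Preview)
Your proposal is correct and follows exactly the same route as the paper: the tensor description of $I_P(\sigma)$ from \cite[Proposition~2.21]{arXiv:1612.01312}, the Hom--tensor adjunction, the map $\varphi\mapsto\varphi\circ\zeta$ converting $(\mathcal{H}_{P'}^+,j_{P'}^+)$-linearity into $(\mathcal{H}_{P'}^-,j_{P'}^-)$-linearity, and the check of $\mathcal{H}$-equivariance are precisely the steps carried out in the paragraph preceding the proposition. The only comment is that the lemma $\zeta(T_w^*)=T_{w^{-1}}^*$ is not actually needed here---the computation only uses $\zeta(T_w)=T_{w^{-1}}$, which is the definition of $\zeta$.
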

The same calculation shows that 
\begin{prop}\label{prop:dual of the other parabolic induction}
We have $I'_P(\sigma)^*\simeq I_{P'}(n_{w_Gw_P}\sigma^*)$.
\end{prop}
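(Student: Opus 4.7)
The plan is to mimic the proof of Proposition~\ref{prop:dual of parabolic induction} line by line, systematically replacing the Iwahori--Matsumoto bases $\{T_w\}$ by the starred bases $\{T_w^*\}$ (equivalently, replacing $j^{\pm}$ by $j^{\pm *}$ throughout).

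The first step, and the only one that is not completely formal, is to establish a $*$-variant of the tensor-product realization used in the previous proof, namely an isomorphism
\[
I'_P(\sigma) \simeq n_{w_Gw_P}\sigma \otimes_{(\mathcal{H}_{P'}^+, j_{P'}^{+*})} \mathcal{H}.
\]
This is parallel to \cite[Proposition~2.21]{arXiv:1612.01312}; the argument there goes through verbatim once the $T$-basis is replaced by the $T^*$-basis and Proposition~\ref{prop:decomposition of I_P}(2) is used in place of (1).

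Granted this, the standard tensor--Hom adjunction gives
\[
I'_P(\sigma)^* \simeq \Hom_{(\mathcal{H}_{P'}^+, j_{P'}^{+*})}(\mathcal{H}, (n_{w_Gw_P}\sigma)^*),
\]
where $\mathcal{H}$ acts on the right-hand side through the anti-involution $\zeta$. For $\varphi$ in this Hom-space, set $\varphi^\zeta = \varphi \circ \zeta$. Let $w \in W_{P'}(1)$ be $P'$-negative, so that $w^{-1}$ is $P'$-positive; by the lemma just proved, $\zeta(T_w^*) = T_{w^{-1}}^*$, and thus for $X \in \mathcal{H}$ and $x \in n_{w_Gw_P}\sigma$,
\begin{align*}
\varphi^\zeta(XT_w^*)(x)
&= \varphi(T_{w^{-1}}^*\zeta(X))(x) \\
&= (T_{w^{-1}}^{P'*}\,\varphi(\zeta(X)))(x) \\
&= \varphi(\zeta(X))(x\,T_{w^{-1}}^{P'*}) \\
&= \varphi^\zeta(X)(x\,\zeta_P(T_w^{P'*})).
\end{align*}
Hence $\varphi^\zeta \in \Hom_{(\mathcal{H}_{P'}^-, j_{P'}^{-*})}(\mathcal{H}, (n_{w_Gw_P}\sigma)^*) = I_{P'}(n_{w_Gw_P}\sigma^*)$, and $\mathcal{H}$-linearity of $\varphi \mapsto \varphi^\zeta$ is the same one-line check as in the previous proof: $(\varphi Y)^\zeta(X) = \varphi(\zeta(YX)) = (\varphi^\zeta Y)(X)$.

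The main (and only) obstacle is the starred tensor-product realization of $I'_P$; once this is established, the rest is literally the computation of Proposition~\ref{prop:dual of parabolic induction} with $j^\pm$ replaced by $j^{\pm *}$, which is what is meant by ``the same calculation''.
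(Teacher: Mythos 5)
Your proposal is correct and is exactly what the paper means by ``the same calculation'': repeat the proof of Proposition~\ref{prop:dual of parabolic induction} with the starred basis, using $\zeta(T_w^*)=T_{w^{-1}}^*$ and the realization $I'_P(\sigma)\simeq n_{w_Gw_P}\sigma\otimes_{(\mathcal{H}_{P'}^+,j_{P'}^{+*})}\mathcal{H}$ so that the $\zeta$-twist lands in $\Hom_{(\mathcal{H}_{P'}^-,j_{P'}^{-*})}(\mathcal{H},(n_{w_Gw_P}\sigma)^*)=I_{P'}(n_{w_Gw_P}\sigma^*)$. The one ingredient you flag as needing proof, the $*$-variant of the tensor-product realization of $I'_P$, is indeed the right (and available) input --- it is the $I'$-analogue of \cite[Proposition~2.21, Lemma~2.22]{arXiv:1612.01312} that the paper itself invokes later in the proof of Proposition~\ref{prop:dual of Steinberg modules} --- so your argument matches the intended one.
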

\begin{rem}
These propositions are true for any commutative ring $C$.
\end{rem}

\subsection{Steinberg modules}
Let $P$ be a parabolic subgroup, $\sigma$ an $\mathcal{H}_P$-module such that $P(\sigma) = G$ and $P_2$ the parabolic subgroup corresponding to $\Delta\setminus\Delta_P$.
We calculate $(\St_Q\sigma)^{*}$.
\begin{prop}\label{prop:dual of Steinberg modules}
Let $Q$ be a parabolic subgroup containing $P$ and put $Q' = n_{w_Gw_Q}\opposite{Q}n_{w_Gw_Q}^{-1}$.
Then $(\St_Q\sigma)^*\simeq \St_{Q'}\sigma^*$.
\end{prop}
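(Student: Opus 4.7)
The plan is to dualize the cokernel definition of $\St_Q(\sigma)$, convert each $I_R$-dual into an $I'_{R'}$ via Proposition~\ref{prop:dual of parabolic induction}, and then recognize the resulting kernel as $\St_{Q'}(n_{w_Gw_P}\sigma^*)$ by invoking the exact sequence of Lemma~\ref{lem:exact sequence related to I and I'} on the primed side. Applying $\Hom_C(-,C)$ to
\[
\bigoplus_{Q_1\supsetneq Q} I_{Q_1}(e_{Q_1}(\sigma)) \to I_Q(e_Q(\sigma)) \to \St_Q(\sigma) \to 0
\]
exhibits $(\St_Q\sigma)^*$ as the kernel of $I_Q(e_Q(\sigma))^* \to \bigoplus_{Q_1\supsetneq Q} I_{Q_1}(e_{Q_1}(\sigma))^*$.

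By Proposition~\ref{prop:dual of parabolic induction}, each dualized term becomes $I_R(e_R(\sigma))^* \simeq I'_{R'}(n_{w_Gw_R}(e_R(\sigma)^*))$, and the assignment $R\mapsto R' = n_{w_Gw_R}\opposite{R}n_{w_Gw_R}^{-1}$ is an order-preserving bijection from parabolics containing $P$ to those containing $P'$, so $Q_1\supsetneq Q$ if and only if $Q_1'\supsetneq Q'$. The key bookkeeping identity is $n_{w_Gw_R}(e_R(\sigma)^*)\simeq e_{R'}(n_{w_Gw_P}\sigma^*)$ as $\mathcal{H}_{R'}$-modules. By the characterization of the extension, it suffices to check that both sides restrict to $n_{w_Gw_P}\sigma^*$ on $(\mathcal{H}_{P'}^{R'-},j_{P'}^{R'-*})$ and send $T_w^{R'*}$ to $1$ for $w\in W_{\aff,R_2'}(1)$; this is a direct compatibility check between the isomorphism $\mathcal{H}_R\xrightarrow{\sim}\mathcal{H}_{R'}$ defining the twist, the anti-involution $\zeta_R$, and the extension $e_R$.

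With these identifications the sequence reads
\[
0\to (\St_Q\sigma)^*\to I'_{Q'}(e_{Q'}(n_{w_Gw_P}\sigma^*))\to \bigoplus_{R\supsetneq Q'} I'_R(e_R(n_{w_Gw_P}\sigma^*)).
\]
Applying Lemma~\ref{lem:exact sequence related to I and I'} to $Q'$ and $\tau=n_{w_Gw_P}\sigma^*$, together with the kernel computation of $I_{Q'}\to I'_{Q'}$ from the subsection ``The kernel of $I_Q(e_Q(\sigma))\to I'_Q(e_Q(\sigma))$'', identifies the kernel on the right with $I_{Q'}(e_{Q'}(\tau))/\sum_{R\supsetneq Q'} I_R(e_R(\tau)) = \St_{Q'}(\tau)$. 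Thus $(\St_Q\sigma)^*\hookrightarrow \St_{Q'}(n_{w_Gw_P}\sigma^*)$; the reverse inclusion, equivalently the equality of dimensions, follows from Lemma~\ref{lem:embedding of smaller parabolic induction} (counting $\{w\in W_0^Q\mid\Delta_w=\Delta_Q\}$ and noting its cardinality is preserved by the bijection $R\mapsto R'$).

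The main obstacle is the middle step, verifying $n_{w_Gw_R}(e_R(\sigma)^*)\simeq e_{R'}(n_{w_Gw_P}\sigma^*)$: one must track dualization, the twist $n_{w_Gw_R}$, and the extension simultaneously through the anti-involution $\zeta$ and the isomorphism $\mathcal{H}_R\simeq \mathcal{H}_{R'}$. The exactness of the dualized sequence at its rightmost spot is a secondary concern, resolved either by dualizing Lemma~\ref{lem:intersection and union of parabolic inductions} or via the dimension count indicated above.
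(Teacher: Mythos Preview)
Your approach is the mirror image of the paper's: you start from the defining cokernel
\[
\bigoplus_{Q_1\supsetneq Q} I_{Q_1}(e_{Q_1}(\sigma)) \to I_Q(e_Q(\sigma)) \to \St_Q(\sigma) \to 0,
\]
dualize to a kernel, and hope to recognize that kernel via Lemma~\ref{lem:exact sequence related to I and I'}. The paper instead starts from the kernel realization $0\to \St_Q(\sigma)\to I'_Q(e_Q(\sigma))\to \bigoplus_{Q_1} I'_{Q_1}(e_{Q_1}(\sigma))$ supplied by Lemma~\ref{lem:exact sequence related to I and I'}, dualizes to a cokernel, and recognizes that cokernel as the \emph{definition} of $\St_{Q'}(\sigma^*)$. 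Both routes are viable, and both hinge on the same compatibility-of-maps lemma (the one immediately following the proof in the paper), just read in opposite directions.

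The genuine gap in your write-up is precisely that lemma. After you identify the objects $I_R(e_R(\sigma))^*\simeq I'_{R'}(e_{R'}(\tau))$ you write ``with these identifications the sequence reads\ldots'' and then invoke Lemma~\ref{lem:exact sequence related to I and I'}. But Lemma~\ref{lem:exact sequence related to I and I'} concerns a \emph{specific} map $I'_{Q'}(e_{Q'}(\tau))\to I'_{Q_1'}(e_{Q_1'}(\tau))$, the one constructed in \S\ref{subsec:twist:Steinberg modules}--\ref{subsec:exactness} via $\iota$ and the passage $I_Q(\St_P^Q)\to I_{Q_1}(\St_P^{Q_1})$, with the explicit description of Proposition~\ref{prop:explicit formula of I'_Q(e_Q) -> I'_(Q_1)(e_(Q_1))}. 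Your map, by contrast, is the dual of the natural inclusion $I_{Q_1}(e_{Q_1}(\sigma))\hookrightarrow I_Q(e_Q(\sigma))$. You have only matched the objects, not the arrows; without showing these two maps agree (up to a unit), you cannot conclude that their kernels coincide, and Lemma~\ref{lem:exact sequence related to I and I'} does not apply. The paper devotes a separate lemma to exactly this point: it computes, via Proposition~\ref{prop:explicit formula of I'_Q(e_Q) -> I'_(Q_1)(e_(Q_1))} and the explicit tensor-product description of $I'_Q$, that the dualized map is $(-1)^{\ell(w_{Q_1}w_Q)}$ times the natural inclusion. That computation is the heart of the proof and is missing from yours. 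Your closing dimension argument is a symptom of this gap: if the maps truly agreed you would get an equality of kernels on the nose, not merely an inclusion to be repaired by counting.

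Two minor points. First, your bookkeeping identity $n_{w_Gw_R}(e_R(\sigma)^*)\simeq e_{R'}(n_{w_Gw_P}\sigma^*)$ is correct, but the paper packages it as the combination of Lemma~\ref{lem:dual of extension} (so $e_R(\sigma)^*\simeq e_R(\sigma^*)$) and \cite[Lemma~2.27]{arXiv:1612.01312}. Second, you end up with $\St_{Q'}(n_{w_Gw_P}\sigma^*)$ rather than $\St_{Q'}(\sigma^*)$; under the standing hypothesis $P(\sigma)=G$ these are the same (again \cite[Lemma~2.27]{arXiv:1612.01312}, since $w_Gw_P=w_{P_2}$ fixes $\Delta_P$), but you should say so.
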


We start with the case of $Q = G$.
\begin{lem}\label{lem:dual of extension}
We have $e_G(\sigma)^*\simeq e_G(\sigma^*)$.
\end{lem}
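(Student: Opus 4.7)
The plan is to use the uniqueness part of the characterization of the extension (the proposition after Lemma~2.4 of the text). Since $e_G(\sigma)^*$ lives on the same underlying $C$-space as $\sigma^* = \Hom_C(\sigma,C)$, it suffices to verify the two defining conditions of $e_G(\sigma^*)$, namely that $e_G(\sigma)^*(T_w^*) = \sigma^*(T_w^{P*})$ for every $w\in W_P(1)$ and that $e_G(\sigma)^*(T_w^*) = 1$ for every $w\in W_{\aff,P_2}(1)$.

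First I would check that $e_G(\sigma^*)$ is actually defined, i.e.\ that $P(\sigma^*) = G$. The root-theoretic part of the condition $\Delta(\sigma^*)\supset\Delta(\sigma) = \Delta$ is independent of the module, so only the condition $\sigma^*(T_\lambda^P) = 1$ for $\lambda\in W_{\aff,P_\alpha}(1)\cap\Lambda(1)$, $\alpha\in\Delta\setminus\Delta_P$, needs verification. Because such $\lambda$ form a group stable under inversion, the hypothesis $\sigma(T_\lambda^P) = 1$ for all such $\lambda$ implies $\sigma(T_{\lambda^{-1}}^P) = 1$ as well, and then $(f\,T_\lambda^P)(v) = f(v\,\zeta_P(T_\lambda^P)) = f(v\,T_{\lambda^{-1}}^P) = f(v)$, giving $\sigma^*(T_\lambda^P) = 1$.

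The core of the proof is a direct computation using the lemma $\zeta(T_w^*) = T_{w^{-1}}^*$ proved just before subsection~4.1. For $f\in\sigma^*$ and $v\in\sigma$,
\[
(f\,T_w^*)(v) = f(v\,\zeta(T_w^*)) = f(v\,e_G(\sigma)(T_{w^{-1}}^*)).
\]
If $w\in W_P(1)$, this equals $f(v\,\sigma(T_{w^{-1}}^{P*})) = (f\,\sigma^*(T_w^{P*}))(v)$ by definition of the dual $\mathcal{H}_P$-action through $\zeta_P$ and the fact that $\zeta_P(T_w^{P*}) = T_{w^{-1}}^{P*}$ (same lemma applied inside $\mathcal{H}_P$). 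If instead $w\in W_{\aff,P_2}(1)$, then $w^{-1}\in W_{\aff,P_2}(1)$ as well, so $e_G(\sigma)(T_{w^{-1}}^*) = 1$ and the computation gives $(f\,T_w^*)(v) = f(v)$.

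Having verified both conditions, uniqueness in the proposition defining $e_G$ yields $e_G(\sigma)^* \simeq e_G(\sigma^*)$. There is no real obstacle here; the only point to be careful about is bookkeeping the two involutions $\zeta$ and $\zeta_P$ and making sure the identification $e_G(\sigma)^* = \sigma^*$ as $C$-vector spaces is transported to an $\mathcal{H}$-action that restricts, through $j_P^{\pm*}$, to the intrinsic $\mathcal{H}_P$-action on $\sigma^*$. This is automatic from the identity $\zeta(T_w^*) = T_{w^{-1}}^*$, which intertwines the ambient and Levi involutions on the $T^*$-basis.
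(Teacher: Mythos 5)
Your proposal is correct and follows essentially the same route as the paper: compute the dual action on the $T_w^*$-basis using $\zeta(T_w^*)=T_{w^{-1}}^*$, check the two defining conditions on $W_P(1)$ and $W_{\aff,P_2}(1)$, and conclude by the uniqueness in the characterization of the extension. Your additional verification that $P(\sigma^*)=G$ (so that $e_G(\sigma^*)$ is defined) is a small point the paper leaves implicit, and your argument for it is fine.
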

\begin{proof}
Let $f\in e_G(\sigma)^*$ and $x\in e_G(\sigma)$.
For $w\in W_P(1)$, we have 
\begin{align*}
(fe_G(\sigma)^*(T^*_w))(x) & = f(xe_G(\sigma)(\zeta(T^*_w))) = f(xe_G(\sigma)(T^*_{w^{-1}}))\\
& = f(x\sigma(T_{w^{-1}}^{P*})) = f(x\sigma(\zeta_P(T_{w}^{P*}))) = (f\sigma^*(T_w^{P*}))(x).
\end{align*}
Hence $e_G(\sigma)^*(T_w^*) = \sigma^*(T_w^{P*})$.
For $w\in W_{\aff,P_2}(1)$, we have $(fe_G(\sigma)^*(T^*_w))(x) = f(xe_G(\sigma)(\zeta(T^*_w))) = f(xe_G(\sigma)(T^*_{w^{-1}})) = f(x)$.
Hence $e_G(\sigma)^*(T^*_w) = 1$.
Therefore by the characterization of $e_G(\sigma^*)$, we have the lemma.
\end{proof}

\begin{proof}[Proof of Proposition~\ref{prop:dual of Steinberg modules}]
By Lemma~\ref{lem:exact sequence related to I and I'}, we have the following exact sequence
\[
0\to \St_Q(\sigma)\to I'_Q(e_Q(\sigma))\to \bigoplus_{Q_1\supsetneq Q}I'_{Q_1}(e_{Q_1}(\sigma)).
\]
Taking dual, we get an exact sequence
\[
\bigoplus_{Q_1\supsetneq Q}I'_{Q_1}(e_{Q_1}(\sigma))^*\to I'_Q(e_Q(\sigma))^*\to \St_Q(\sigma)^*\to 0.
\]
Put $Q' = n_{w_Gw_Q}\opposite{Q}n_{w_Gw_Q}^{-1}$.
Then by Proposition~\ref{prop:dual of the other parabolic induction}, we have $I'_Q(e_Q(\sigma))^* = I_{Q'}(n_{w_Gw_Q}e_Q(\sigma)^*) = I_{Q'}(e_{Q'}(\sigma)^*) = I_{Q'}(e_{Q'}(\sigma^*))$ by Lemma~\ref{lem:dual of extension} and \cite[Lemma~2.27]{arXiv:1612.01312}.
Put $Q'_1 = n_{w_Gw_{Q_1}}\opposite{Q}_1n_{w_Gw_{Q_1}}^{-1}$.
Then 
\[
\bigoplus_{Q_1\supsetneq Q}I_{Q'_1}(e_{Q'_1}(\sigma^*))\to I_{Q'}(e_{Q'}(\sigma^*))\to \St_Q(\sigma)^*\to 0.
\]
Since $\Delta_{Q'} = -w_G(\Delta_Q)$ and $Q'_1 = -w_G(\Delta_{Q_1})$, we have $Q_1\supsetneq Q$ if and only if $Q'_1\supsetneq Q'$.
Hence 
\[
\bigoplus_{Q_2\supsetneq Q'}I_{Q_2}(e_{Q_2}(\sigma^*))\to I_{Q'}(e_{Q'}(\sigma^*))\to \St_Q(\sigma)^*\to 0.
\]
By the lemma below, we get $\St_Q(\sigma)^* = \St_{Q'}(\sigma^*)$.
\end{proof}
\begin{lem}
Let $Q_1\supset Q$ be parabolic subgroups.
Put $Q' = n_{w_Gw_Q}\opposite{Q}n_{w_Gw_Q}^{-1}$ and $Q'_1 = n_{w_Gw_{Q_1}}\opposite{Q_1}n_{w_Gw_{Q_1}}^{-1}$.
Then the homomorphism induced by $I'_Q(e_Q(\sigma))\to I'_{Q_1}(e_{Q_1}(\sigma))$ with the dual is the inclusion $I_{Q_1'}(e_{Q_1'}(\sigma^*))\hookrightarrow I_{Q'}(e_{Q'}(\sigma^*))$ times $(-1)^{\ell(w_{Q_1}w_Q)}$.
\end{lem}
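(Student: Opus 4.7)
The plan is to verify the claim by evaluating both maps on the basis of $I_{Q'}(e_{Q'}(\sigma^*))$ provided by Proposition~\ref{prop:decomposition of I_P}. By Lemma~\ref{lem:embedding of smaller parabolic induction}, the natural embedding $I_{Q_1'}(e_{Q_1'}(\sigma^*))\hookrightarrow I_{Q'}(e_{Q'}(\sigma^*))$ sends $\Psi$ to $\widetilde{\Psi}$ with $\widetilde{\Psi}(T_{n_v})=\Psi(T_{n_v})$ for $v\in W_0^{Q_1'}$ and $\widetilde{\Psi}(T_{n_v})=0$ for $v\in W_0^{Q'}\setminus W_0^{Q_1'}$. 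Thus it suffices to show that the transferred dual map sends $\Psi$ to an element of $I_{Q'}(e_{Q'}(\sigma^*))$ whose values at $T_{n_v}$ are $(-1)^{\ell(w_{Q_1}w_Q)}\Psi(T_{n_v})$ for $v\in W_0^{Q_1'}$ and zero otherwise.

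First I would make the isomorphism $(I'_R(\tau))^*\simeq I_{R'}(n_{w_Gw_R}\tau^*)$ of Proposition~\ref{prop:dual of the other parabolic induction} explicit enough to read off the pairing $f(\varphi)$ for $f\in(I'_R(\tau))^*$ corresponding to $\Phi\in I_{R'}(n_{w_Gw_R}\tau^*)$, in terms of the values $\Phi(T_{n_v})$ ($v\in W_0^{R'}$) and $\varphi(T^*_{n_u})$ ($u\in W_0^R$). Tracing through $\varphi\mapsto\varphi^\zeta$ and the tensor presentation $I'_R(\tau)\simeq n_{w_Gw_R}\tau\otimes_{(\mathcal{H}_{R'}^-,j_{R'}^-)}\mathcal{H}$, this pairing is concentrated on pairs $(v,u)$ matched by a natural bijection $W_0^{R'}\leftrightarrow W_0^R$ coming from conjugation by $w_G$, the value being the canonical pairing of $\Phi(T_{n_v})\in\sigma^*$ against $\varphi(T^*_{n_u})\in\sigma$ (after invoking the identification $n_{w_Gw_R}e_R(\sigma)^*\simeq e_{R'}(\sigma^*)$ used in the proof of Proposition~\ref{prop:dual of Steinberg modules}).

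Next I would plug the formula $\varphi'(T^*_{n_w})=(-1)^{\ell(w_{Q_1}w_Q)}\varphi(T^*_{n_{ww_{Q_1}w_Q}})$ from Proposition~\ref{prop:explicit formula of I'_Q(e_Q) -> I'_(Q_1)(e_(Q_1))} into this pairing description and read off the induced map $I_{Q_1'}(e_{Q_1'}(\sigma^*))\to I_{Q'}(e_{Q'}(\sigma^*))$. The scalar $(-1)^{\ell(w_{Q_1}w_Q)}$ carries over intact, while the index shift $w\mapsto ww_{Q_1}w_Q$ on the $Q_1$-side should, via the bijection $W_0^Q\leftrightarrow W_0^{Q'}$ and its restriction $W_0^{Q_1}\leftrightarrow W_0^{Q_1'}$, translate to the inclusion $W_0^{Q_1'}\subset W_0^{Q'}$, matching the natural embedding. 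An index $u\in W_0^Q$ not lying in $W_0^{Q_1}w_{Q_1}w_Q$ contributes zero on the $Q_1$-side, which accounts for the required vanishing on $W_0^{Q'}\setminus W_0^{Q_1'}$.

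The main obstacle is precisely this combinatorial matching: showing that the image of $W_0^{Q_1}w_{Q_1}w_Q\subset W_0^Q$ under the $w_G$-conjugation bijection $W_0^Q\to W_0^{Q'}$ coincides exactly with $W_0^{Q_1'}\subset W_0^{Q'}$. This reduces to the Weyl group identities $\Delta_{Q'}=-w_G(\Delta_Q)$, $\Delta_{Q_1'}=-w_G(\Delta_{Q_1})$, and $w_{Q_1'}w_{Q'}=w_G\,w_{Q_1}w_Q\,w_G$, together with a length-preservation check that confirms the overall sign $(-1)^{\ell(w_{Q_1}w_Q)}$. This part is elementary but requires careful bookkeeping; once it is pinned down, the explicit pairing from the second step assembles into the claimed identity.
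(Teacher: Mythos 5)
Your plan is workable and, in substance, it is the same argument as the paper's: both rest on Proposition~\ref{prop:explicit formula of I'_Q(e_Q) -> I'_(Q_1)(e_(Q_1))} together with the explicit isomorphism $I'_Q(e_Q(\sigma))\simeq e_{Q'}(\sigma)\otimes_{(\mathcal{H}_{Q'}^+,j_{Q'}^{+})}\mathcal{H}$, $\varphi\mapsto \sum_{w\in {}^{Q'}W_0}\varphi(T^*_{n_{w^{-1}w_Gw_Q}})\otimes T_{n_w}$, which is exactly the ``explicit pairing'' you propose to extract. The difference is only in packaging: the paper first shows that, in the tensor presentation, the map $I'_Q(e_Q(\sigma))\to I'_{Q_1}(e_{Q_1}(\sigma))$ becomes $x\otimes X\mapsto (-1)^{\ell(w_{Q_1}w_Q)}x\otimes X$ (using $T^*_{n_{w^{-1}w_Gw_{Q_1}}}T^*_{n_{w_{Q_1}w_Q}}=T^*_{n_{w^{-1}w_Gw_Q}}$ and the vanishing of the terms indexed by ${}^{Q'}W_0\setminus {}^{Q_1'}W_0$), and only then dualizes, so it never needs to evaluate the dual map on the basis $T_{n_v}$; you instead do the coordinate computation directly and appeal to Lemma~\ref{lem:embedding of smaller parabolic induction} for the embedding. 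Either organization goes through.

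Two corrections to your bookkeeping, one of which matters. The bijection produced by the duality is \emph{not} conjugation by $w_G$: from the displayed isomorphism, the coordinate of the dual at $T_{n_v}$ with $v\in W_0^{Q'}$ (so $v^{-1}\in {}^{Q'}W_0$) pairs with the coordinate of $\varphi$ at $T^*_{n_{vw_Gw_Q}}$; thus the matching $W_0^{Q'}\to W_0^Q$ is right translation $v\mapsto vw_Gw_Q$ (indeed $vw_Gw_Q(\Delta_Q)=v(-w_G(\Delta_Q))=v(\Delta_{Q'})$). With this bijection your ``main obstacle'' evaporates: the inverse sends $W_0^{Q_1}w_{Q_1}w_Q$ to $W_0^{Q_1}w_{Q_1}w_Qw_Qw_G=W_0^{Q_1}w_{Q_1}w_G=W_0^{Q_1'}$, and the shift $u\mapsto uw_{Q_1}w_Q$ of Proposition~\ref{prop:explicit formula of I'_Q(e_Q) -> I'_(Q_1)(e_(Q_1))} turns $vw_Gw_{Q_1}$ into $vw_Gw_Q$, which is what makes the sign come out untouched. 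With conjugation by $w_G$ the set identity is false: already in type $A_2$ with $\Delta_Q=\emptyset$ and $\Delta_{Q_1}=\{\alpha_1\}$, conjugation sends $s_1\in W_0^{Q_1}w_{Q_1}w_Q$ to $s_2\notin W_0^{Q_1'}$, so the plan as literally written breaks at its declared key step; fix the bijection first. The second, minor, point: the ``canonical pairing of $\Phi(T_{n_v})$ against $\varphi(T^*_{n_u})$'' is canonical only up to factors from $Z_\kappa$, since $\zeta(T_{n_v})=T_{n_v^{-1}}$ and $n_v^{-1}$ differs from $n_{v^{-1}}$ by an element of $Z_\kappa$; these twists are identical on the $Q$- and $Q_1$-sides and cancel in the comparison, but they have to be carried along (or avoided altogether by arguing at the level of $x\otimes X$, as the paper does).
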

\begin{proof}
By Proposition~\ref{prop:explicit formula of I'_Q(e_Q) -> I'_(Q_1)(e_(Q_1))}, the homomorphism $I'_Q(e_Q(\sigma))\to I'_{Q_1}(e_{Q_1}(\sigma))$ is given by $\varphi\mapsto (X\mapsto (-1)^{\ell(w_{Q_1}w_Q)}\varphi(XT_{n_{w_{Q_1}w_Q}}^*))$.
We recall that the isomorphism $I'_Q(e_Q(\sigma))\simeq e_{Q'}(\sigma)\otimes_{(\mathcal{H}_{Q'}^+,j_{Q'}^{+})}\mathcal{H}$ is given by $\varphi\mapsto \sum_{w\in {}^{Q'}W_0}\varphi(T_{n_{w^{-1}w_Gw_Q}}^*)\otimes T_{n_{w}}$ \cite[Lemma~2.22]{arXiv:1612.01312}.
Let $\varphi'\in I'_{Q_1}(e_{Q_1}(\sigma))$ be the image of $\varphi$.
Then the image of $\varphi'$ in $e_{Q'_1}(\sigma)\otimes_{(\mathcal{H}^+_{Q'_1},j_{Q'_1}^{+})}\mathcal{H}$ is 
\begin{align*}
&\sum_{w\in {}^{Q'_1}W_0}\varphi'(T_{n_{w^{-1}w_Gw_{Q_1}}}^*)\otimes T_{n_w}\\
& = 
(-1)^{\ell(w_{Q_1}w_Q)}\sum_{w\in {}^{Q'_1}W_0}\varphi(T_{n_{w^{-1}w_Gw_{Q_1}}}^*T_{n_{w_{Q_1}w_Q}}^*)\otimes T_{n_w}.
\end{align*}
Since $w_{Q_1}w_Q\in W_{0,Q_1}$ and $w^{-1}w_Gw_{Q_1}\in W^{Q_1}_0$ \cite[Lemma~2.22]{arXiv:1612.01312}, we have $\ell(w^{-1}w_Gw_{Q_1}) + \ell(w_{Q_1}w_Q) = \ell(w^{-1}w_Gw_Q)$.
Hence we have $T_{n_{w^{-1}w_Gw_{Q_1}}}^*T_{n_{w_{Q_1}w_Q}}^* = T_{n_{w^{-1}w_Gw_{Q}}}^*$.
Therefore
\[
\sum_{w\in {}^{Q'_1}W_0}\varphi'(T_{n_{w^{-1}w_Gw_{Q_1}}}^*)\otimes T_{n_w} = 
(-1)^{\ell(w_{Q_1}w_Q)}\sum_{w\in {}^{Q'_1}W_0}\varphi(T_{n_{w^{-1}w_Gw_{Q}}}^*)\otimes T_{n_w}.
\]
Let $P_2$ be a parabolic subgroup corresponding to $\Delta\setminus\Delta_P$.
Let $w\in {}^{Q'}W_0$ but $w\notin {}^{Q'_1}W_0$.
Then there exists a simple reflection $s\in W_{0,Q'_1}$ such that $sw < w$.
Since $w\in {}^{Q'}W_0\subset W_{0,P_2}$, we have $s\in S_{0,P_2}$.
Hence for any $x\in e_{Q'_1}(\sigma)$, we have $x\otimes T_{n_w} = x\otimes T_{n_s}T_{n_{sw}} = xe_{Q'_1}(\sigma)(T^{Q'_1}_{n_s})\otimes T_{n_{sw}} = 0$ since $e_{Q'_1}(\sigma)(T^{Q'_1}_{n_s}) = 0$.
Hence
\begin{align*}
&(-1)^{\ell(w_{Q_1}w_Q)}\sum_{w\in {}^{Q'_1}W_0}\varphi(T_{n_{w^{-1}w_Gw_{Q}}}^*)\otimes T_{n_w}\\
&=
(-1)^{\ell(w_{Q_1}w_Q)}\sum_{w\in {}^{Q'}W_0}\varphi(T_{n_{w^{-1}w_Gw_{Q}}}^*)\otimes T_{n_w}.
\end{align*}
Therefore the homomorphism 
\[
e_{Q'}(\sigma)\otimes_{(\mathcal{H}_{Q'}^+,j_{Q'}^{+})}\mathcal{H}\to e_{Q'_1}(\sigma)\otimes_{(\mathcal{H}_{Q_1'}^+,j_{Q_1'}^{+})}\mathcal{H}
\]
is given by $x\otimes X\mapsto (-1)^{\ell(w_{Q_1}w_Q)}x\otimes X$.
(Here we identify $x\in e_{Q'}(\sigma)$ with $x\in e_{Q'_1}(\sigma)$.)

The isomorphism 
\[
(e_{Q'}(\sigma)\otimes_{(\mathcal{H}_{Q'}^+,j_{Q'}^{+*})}\mathcal{H})^*\simeq \Hom_{(\mathcal{H}_{Q'}^-,j_{Q'}^{-*})}(\mathcal{H},e_Q(\sigma)^*) = I_Q(e_Q(\sigma)^*)
\]
is given by $f\mapsto (X\mapsto (x\mapsto f(x\otimes \zeta(X))))$ and the opposite is given by $f'\mapsto ((x\otimes X)\mapsto f'(\zeta(X))(x))$. (Here we identify $e_Q(\sigma)^*$ with $e_Q(\sigma^*)$.)
Hence the maps
\[
I_{Q_1}(e_{Q_1}(\sigma)^*)\simeq (e_{Q'_1}(\sigma)\otimes_{(\mathcal{H}_{Q'_1}^+,j_{Q'_1}^{+})}\mathcal{H})^*
\to
(e_{Q'}(\sigma)\otimes_{(\mathcal{H}_{Q'}^+,j_{Q'}^{+})}\mathcal{H})^* \simeq I_{Q}(e_{Q}(\sigma)^*)
\]
send $f\in I'_{Q_1}(e_{Q_1}(\sigma)^*)$ to
\begin{align*}
(x\otimes X) \mapsto f(\zeta(X))(x)& \in (e_{Q'_1}(\sigma)\otimes_{(\mathcal{H}_{Q'_1}^+,j_{Q'_1}^{+})}\mathcal{H})^*,\\
(x\otimes X) \mapsto (-1)^{\ell(w_{Q_1}w_Q)}f(\zeta(X))(x)& \in (e_{Q'}(\sigma)\otimes_{(\mathcal{H}_{Q'}^+,j_{Q'}^{+})}\mathcal{H})^*,\\
X \mapsto (x\mapsto (-1)^{\ell(w_{Q_1}w_Q)}f(X)(x))& \in I_{Q}(e_{Q}(\sigma)^*).
\end{align*}
Namely, it is equal to the the natural embedding times $(-1)^{\ell(w_{Q_1}w_Q)}$.
\end{proof}

\subsection{Supersingular modules}
Assume that $C$ is a field.
\begin{thm}\label{thm:dual of Supersingular representation}
Let $(\chi,J,V)$ be as in subsection~\ref{subsec:supersingulars}.
Then we have $\pi_{\chi,J,V}^* \simeq \pi_{\chi^{-1},J,V^*}$.
\end{thm}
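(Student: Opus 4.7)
The plan follows the pattern of Proposition~\ref{prop:twist of supersingular} for $\iota$: since $\pi_{\chi, J, V}$ is simple and finite-dimensional, so is $\pi_{\chi, J, V}^*$, and since $\zeta(z_\mathcal{O}) = z_{\mathcal{O}^{-1}}$ (because $\zeta(T_w) = T_{w^{-1}}$ and the lemma $\zeta(T_w^*)=T_{w^{-1}}^*$ proved at the start of this section give $\zeta(E_{o_-}(\lambda))$ proportional to $E_{o_+}(\lambda^{-1})$), with $\ell(\mathcal{O}^{-1}) = \ell(\mathcal{O})$, supersingularity is preserved. Thus $\pi_{\chi, J, V}^* \simeq \pi_{\chi', J', V'}$ for some triple by the classification, and the task reduces to identifying $(\chi', J', V')$.

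To determine $(\chi', J')$, I compute how $\zeta$ transforms the character $\Xi = \Xi_{J, \chi}$ of $\mathcal{H}_\aff$. For $t \in Z_\kappa \cap W_\aff(1)$, $\zeta(T_t) = T_{t^{-1}}$, so $(\Xi \circ \zeta)(T_t) = \chi(t^{-1}) = \chi^{-1}(t)$. For $\widetilde{s} \in S_\aff(1)$, the braid relation $T_{\widetilde{s}^{-1}} = T_{\widetilde{s}} T_{\widetilde{s}^{-2}}$ (since $\widetilde{s}^{-2} \in Z_\kappa$ has length zero) gives $(\Xi \circ \zeta)(T_{\widetilde{s}}) = \Xi(T_{\widetilde{s}}) \chi(\widetilde{s}^{-2})$. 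The essential identity
\[
\chi(\widetilde{s}^{-2}) \chi(c_{\widetilde{s}}) = \chi^{-1}(c_{\widetilde{s}})
\]
follows by combining the lemma $\zeta(T_w^*) = T_{w^{-1}}^*$, which upon comparing $T_{\widetilde{s}}^* = T_{\widetilde{s}} - c_{\widetilde{s}}$ yields $\zeta(c_{\widetilde{s}}) = c_{\widetilde{s}^{-1}}$, with the defining relation $c_{ts} = tc_s$ applied to $t = \widetilde{s}^{-2}$, which yields $c_{\widetilde{s}^{-1}} = \widetilde{s}^{-2} c_{\widetilde{s}}$; applying $\chi$ then gives the identity. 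This shows $S_{\aff, \chi^{-1}} = S_{\aff, \chi}$ and $\Xi \circ \zeta = \Xi_{J, \chi^{-1}}$ as characters of $\mathcal{H}_\aff$, hence $\chi' = \chi^{-1}$ and $J' = J$.

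For $V'$, the image $\Xi \otimes V \hookrightarrow \pi_{\chi, J, V}$ is the $\Xi$-isotypic component of $\pi_{\chi, J, V}$ as an $\mathcal{H}_\aff$-module and carries a natural $C[\Omega(1)_\Xi]$-action realizing $V$. Dually, the $(\Xi \circ \zeta)$-isotypic component of $\pi_{\chi, J, V}^*$, which by the previous step is the $\Xi_{J, \chi^{-1}}$-isotypic component, inherits the contragredient $C[\Omega(1)_\Xi]$-action. Noting that $\Omega(1)_{\Xi_{J, \chi^{-1}}} = \Omega(1)_\Xi$ (the stabilizers coincide because $\omega \cdot \chi = \chi$ is equivalent to $\omega \cdot \chi^{-1} = \chi^{-1}$, and $J$ is the same on both sides) and that $\zeta$ acts by group inversion on $C[\Omega(1)_\Xi]$, this $C[\Omega(1)_\Xi]$-module is precisely $V^* = \Hom_C(V, C)$. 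Hence $V' = V^*$ and $\pi_{\chi, J, V}^* \simeq \pi_{\chi^{-1}, J, V^*}$.

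The main obstacle is the character identity $\chi^{-1}(c_{\widetilde{s}}) = \chi(\widetilde{s}^{-2})\chi(c_{\widetilde{s}})$, which is what distinguishes the $\zeta$-computation from the $\iota$-computation of Proposition~\ref{prop:twist of supersingular}; once established, the identification of the three parameters follows directly as above.
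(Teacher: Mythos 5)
Your central computation agrees with the paper's and is correct: from $\zeta(T^*_{\widetilde{s}})=T^*_{\widetilde{s}^{-1}}$ and $c_{t\widetilde{s}}=tc_{\widetilde{s}}$ you get $\zeta(c_{\widetilde{s}})=c_{\widetilde{s}^{-1}}=\widetilde{s}^{-2}c_{\widetilde{s}}$, hence $\chi^{-1}(c_{\widetilde{s}})=\chi(\widetilde{s}^{-2})\chi(c_{\widetilde{s}})$, which gives both $S_{\aff,\chi^{-1}}=S_{\aff,\chi}$ and $\Xi_{J,\chi}\circ\zeta=\Xi_{J,\chi^{-1}}$; this is exactly the paper's identification of the dual character $\Xi^*$ (your derivation even makes explicit a point the paper leaves implicit), and the identification of the $C[\Omega(1)_\Xi]$-module as $V^*$ also matches.

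Where you diverge is the surrounding logic, and there is a genuine gap there. You first argue that $\pi_{\chi,J,V}^*$ is supersingular and then invoke the classification to write it as $\pi_{\chi',J',V'}$; but the classification of simple supersingular modules in this form is only available for algebraically closed $C$, while the theorem assumes only that $C$ is a field, so this step is not justified in the stated generality. Moreover, your identification of $(\chi',J',V')$ rests on the assertion that $\Xi\otimes V$ is precisely the $\Xi$-isotypic component of $\pi_{\chi,J,V}|_{\mathcal{H}_\aff}$ (and implicitly the analogous statement for $\pi_{\chi',J',V'}$); this is true, but it is exactly the structural input the paper takes from the proof of \cite[Proposition~6.17]{Vigneras-prop-III}, so it must be cited or proved rather than asserted. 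The paper's route avoids both issues: the direct-summand property gives $\Xi^*\otimes V^*=(\Xi\otimes V)^*\subset \pi_{\chi,J,V}^*|_{\mathcal{H}_\Xi}$, hence by adjunction a nonzero homomorphism $\pi_{\chi^{-1},J,V^*}=(\Xi^*\otimes V^*)\otimes_{\mathcal{H}_\Xi}\mathcal{H}\to \pi_{\chi,J,V}^*$, which is an isomorphism because both modules are simple; neither supersingularity of $\pi^*$ nor the classification is needed. Your argument becomes complete, and valid over any field, if you replace the appeal to the classification by this adjunction step, which your computation of the $\Xi_{J,\chi^{-1}}$-isotypic part already contains; the first paragraph of your proposal can then be dropped.
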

\begin{proof}
Let $\Xi$ be a character of $\mathcal{H}_{\aff}$ determined by $(\chi,J)$.
By the proof of \cite[Proposition~6.17]{Vigneras-prop-III}, $\Xi\otimes V\subset \pi_{\chi,J,V}|_{\mathcal{H}_\Xi}$ is a direct summand.
Hence $(\Xi\otimes V)^*\subset (\pi_{\chi,J,V})^*|_{\mathcal{H}_\Xi}$.
Since $\Xi$ and $V$ are finite-dimensional, we have $(\Xi\otimes V)^* = \Xi^*\otimes V^*$.
Therefore we have a non-zero homomorphism $(\Xi^*\otimes V^*)\otimes_{\mathcal{H}_\Xi}\mathcal{H}\to \pi_{\chi,J,V}^*$.
The restriction of $V^*$ to $Z_\kappa\cap W_{\aff,P}(1)$ is the direct sum of $\chi^* = \chi^{-1}$ since $V|_{Z_\kappa\cap W_{\aff,P}(1)}$ is a direct sum of $\chi$.
For $s\in S_{\aff,\chi} = S_{\aff,\chi^{-1}}$, $\Xi^*(T_{\widetilde{s}}) = \Xi(\zeta(T_{\widetilde{s}})) = \Xi(T_{\widetilde{s}^{-1}})$ where $\widetilde{s}$ is a lift of $s$.
This is $0$ or $\chi(c_{\widetilde{s}^{-1}})$ and $0$ if and only if $s\in J$.
Hence the subset of $S_{\aff,\chi^{-1}} = S_{\aff,\chi}$ attached to $\Xi^*$ is $J$.
Therefore $(\Xi^*\otimes V^*)\otimes_{\mathcal{H}_\Xi}\mathcal{H} = \pi_{\chi^{-1},J,V^*}$.
Hence we get a non-zero homomorphism $\pi_{\chi^{-1},J,V^*}\to \pi_{\chi,J,V}^*$.
Since this is a non-zero homomorphism between simple modules, this is an isomorphism.
\end{proof}

\subsection{Simple modules}
Assume that $C$ is a field.
Combining Proposition~\ref{prop:dual of parabolic induction}, \ref{prop:dual of Steinberg modules} and Theorem~\ref{thm:dual of Supersingular representation}, we get the following theorem.
\begin{thm}\label{thm:dual of simples}
Set $P' = n_{w_Gw_P}\opposite{P}n_{w_Gw_P}^{-1}$ and $Q' = n_{w_Gw_Q}\opposite{Q}n_{w_Gw_Q}^{-1}$.
Let $(\chi',J',V')$ be a triple for $\mathcal{H}_{P'}$ defined by the pull-back of the triple $(\chi^{-1},J,V^*)$ by $n_{w_Gw_P}$.
Then we have $I(P;\chi,J,V;Q)^* = I(P';\chi',J',V';Q')$.
\end{thm}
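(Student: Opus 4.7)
The plan is to apply the three duality results—Proposition~\ref{prop:dual of parabolic induction}, Proposition~\ref{prop:dual of Steinberg modules}, and Theorem~\ref{thm:dual of Supersingular representation}—in sequence to the explicit formula $I(P;\chi,J,V;Q) = I_{P(\sigma)}(\St_Q^{P(\sigma)}(\sigma))$ with $\sigma = \pi_{\chi,J,V}$, and then verify that the composite matches the claimed form of $I(P';\chi',J',V';Q')$.

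First I would apply Proposition~\ref{prop:dual of parabolic induction} to the outer parabolic induction $I_{P(\sigma)}$, giving
\[
I(P;\chi,J,V;Q)^* \simeq I'_{P(\sigma)'}\bigl(n_{w_Gw_{P(\sigma)}}\St_Q^{P(\sigma)}(\sigma)^*\bigr),
\]
where $P(\sigma)' = n_{w_Gw_{P(\sigma)}}\opposite{P(\sigma)}n_{w_Gw_{P(\sigma)}}^{-1}$. Since the left-hand side is simple (being the linear dual of a simple finite-dimensional module), Corollary~\ref{cor:I=I' for simples} allows me to replace $I'_{P(\sigma)'}$ by $I_{P(\sigma)'}$ once the inner module has been identified as a generalized Steinberg module.

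Next, I would compute $\St_Q^{P(\sigma)}(\sigma)^*$ by applying a relative version of Proposition~\ref{prop:dual of Steinberg modules}. The statement there assumes $P(\sigma) = G$, but since $\St_Q^{P(\sigma)}$ is built entirely inside $\mathcal{H}_{P(\sigma)}$, the same proof carries over with $G$ replaced by $P(\sigma)$ and $w_G$ replaced by $w_{P(\sigma)}$. Combined with Theorem~\ref{thm:dual of Supersingular representation} giving $\sigma^* \simeq \pi_{\chi^{-1},J,V^*}$, this identifies $\St_Q^{P(\sigma)}(\sigma)^*$ with $\St_{Q^*_{\mathrm{rel}}}^{P(\sigma)}$ of a twist of $\pi_{\chi^{-1},J,V^*}$, where $Q^*_{\mathrm{rel}} = n_{w_{P(\sigma)}w_Q}\opposite{Q}n_{w_{P(\sigma)}w_Q}^{-1}$.

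Third, I would compose the outer twist by $n_{w_Gw_{P(\sigma)}}$ with the inner flip. Using $\ell(w_Gw_Q) = \ell(w_Gw_{P(\sigma)}) + \ell(w_{P(\sigma)}w_Q)$, which gives $n_{w_Gw_Q} = n_{w_Gw_{P(\sigma)}}n_{w_{P(\sigma)}w_Q}$, the composite conjugation sends $Q^*_{\mathrm{rel}}$ to $n_{w_Gw_Q}\opposite{Q}n_{w_Gw_Q}^{-1} = Q'$. Analogously, the restriction of the outer twist to the $\mathcal{H}_P$-module $\pi_{\chi^{-1},J,V^*}$ corresponds to conjugation by $n_{w_Gw_P}$, by the same length additivity applied to the chain $P\subset P(\sigma) \subset G$, and this pull-back is precisely the triple $(\chi',J',V')$ by definition.

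The main obstacle will be organizing all these twists consistently and verifying that $P(\pi_{\chi',J',V'}) = P(\sigma)'$, so that the outer parabolic induction $I_{P(\sigma)'}$ in the computed dual agrees with $I_{P(\pi_{\chi',J',V'})}$ that appears in the target $I(P';\chi',J',V';Q')$. This should follow from the compatibility of the defining data of $\pi_{\chi,J,V}$ under the twist: the set $\Delta(\pi_{\chi',J',V'})$ inside $\Delta_{P'}$ ought to correspond to $-w_G(\Delta(\sigma))$ under the identification of parabolics of $P(\sigma)'$ with parabolics of $P(\sigma)$ via $n_{w_Gw_{P(\sigma)}}$, and a parallel bookkeeping places $Q'$ correctly between $P'$ and $P(\pi_{\chi',J',V'})$.
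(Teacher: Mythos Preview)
Your proposal is correct and follows essentially the same route as the paper: apply the dual-of-induction formula to the outer $I_{P(\sigma)}$, apply the (relative) Steinberg duality inside $P(\sigma)$, use Theorem~\ref{thm:dual of Supersingular representation} for $\sigma^*$, and then track the twists and verify $P(\sigma)'$ is the correct outer Levi. The paper packages the last verification as a separate lemma showing $P(n_{w_Gw_P}\sigma) = n_{w_Gw_{P(\sigma)}}\opposite{P(\sigma)}n_{w_Gw_{P(\sigma)}}^{-1}$, and handles the passage of the twist $n_{w_Gw_{P(\sigma)}}$ through the Steinberg construction by a direct compatibility argument rather than by composing two conjugations.

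One point to be careful about: your claim that the outer twist by $n_{w_Gw_{P(\sigma)}}$ restricts on the inner $\mathcal{H}_P$-module to conjugation by $n_{w_Gw_P}$ does not follow from length additivity alone. The factorization $n_{w_Gw_P} = n_{w_Gw_{P(\sigma)}} n_{w_{P(\sigma)}w_P}$ shows that $n_{w_Gw_{P(\sigma)}}\sigma^* \simeq n_{w_Gw_P}\sigma^*$ only once you know the inner twist $n_{w_{P(\sigma)}w_P}$ acts trivially on $\sigma^*$; the paper invokes \cite[Lemma~2.27]{arXiv:1612.01312} for exactly this. You should cite that result (or reprove it) when you carry out the bookkeeping.
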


We use the following lemma.
\begin{lem}
Let $P$ be a parabolic subgroup and $\sigma$ an $\mathcal{H}_P$-module.
Then we have $P(n_{w_Gw_{P}}\sigma) = n_{w_Gw_{P(\sigma)}}\opposite{P(\sigma)}n_{w_Gw_P(\sigma)}^{-1}$.
\end{lem}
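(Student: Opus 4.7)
The plan is to show that $\Delta(n_{w_Gw_P}\sigma) = -w_G(\Delta(\sigma))$; since the parabolic corresponding to $-w_G(\Delta(\sigma))$ is exactly $n_{w_Gw_{P(\sigma)}}\opposite{P(\sigma)}n_{w_Gw_{P(\sigma)}}^{-1}$, this gives the lemma. Write $n = n_{w_Gw_P}$, $P' = n\opposite{P}n^{-1}$ (so $\Delta_{P'} = -w_G(\Delta_P)$), and $\tau = n_{w_Gw_P}\sigma$. Since $-w_G$ is an involution of $\Delta$ sending $\Delta_P$ bijectively onto $\Delta_{P'}$, I only need to check, for $\alpha\in\Delta\setminus\Delta_P$ and $\beta = -w_G(\alpha)\in\Delta\setminus\Delta_{P'}$, that both conditions defining $\alpha\in\Delta(\sigma)$ are equivalent to the corresponding conditions defining $\beta\in\Delta(\tau)$.

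The orthogonality is straightforward: using $\coroot{(-w_G\alpha)} = -w_G\coroot{\alpha}$ together with the $W_0$-invariance of the pairing between roots and coroots, one immediately gets $\langle\Delta_{P'},\coroot{\beta}\rangle = \langle\Delta_P,\coroot{\alpha}\rangle$. For the triviality condition, the definition of the twist gives $\tau(T^{P'}_\mu) = \sigma(T^P_{n^{-1}\mu n})$, so everything reduces to the conjugation identity
\[
n^{-1}\bigl(W_{\aff,P'_\beta}(1)\cap\Lambda(1)\bigr)n = W_{\aff,P_\alpha}(1)\cap\Lambda(1),
\]
where $P'_\beta$ corresponds to $\Delta_{P'}\cup\{\beta\} = -w_G(\Delta_{P_\alpha})$. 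Once this identity is in hand, $\mu$ runs over the left-hand set iff $\lambda := n^{-1}\mu n$ runs over the right-hand one, and the two triviality conditions coincide. This finishes the identification $\Delta(\tau) = -w_G(\Delta(\sigma))$.

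The main obstacle is proving this conjugation identity, because the element that naturally conjugates the Levi $M_{P'_\beta}$ onto $M_{P_\alpha}$ is $m = n_{w_Gw_{P_\alpha}}$, not $n$ itself. I would handle this by first observing that $P\subset P_\alpha$ forces $\ell(w_Gw_P) = \ell(w_Gw_{P_\alpha}) + \ell(w_{P_\alpha}w_P)$, so the product rule for the chosen representatives $n_w$ yields $n = m\cdot n_{w_{P_\alpha}w_P}$ in $W(1)$. Since the data $(W_{\aff,Q}(1),\Lambda(1))$ defining $\mathcal{H}_Q$ depend only on the Levi of $Q$, conjugation by $m$ carries $W_{\aff,P_\alpha}(1)$ bijectively onto $W_{\aff,P'_\beta}(1)$. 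The remaining factor $n_{w_{P_\alpha}w_P}$ lies in $W_{P_\alpha}(1)$ (any two lifts in $W(1)$ of an element of $W_{0,P_\alpha}$ differ by $Z_\kappa\subset W_{P_\alpha}(1)$), hence normalizes $W_{\aff,P_\alpha}(1)$, which is normal in $W_{P_\alpha}(1)$. Intersecting with $\Lambda(1)$ preserves these identifications because $\Lambda(1)$ is normal in $W(1)$, giving the required equality and completing the proof.
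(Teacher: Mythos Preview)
Your proof is correct and follows the same overall strategy as the paper: both arguments establish $\Delta(n_{w_Gw_P}\sigma) = -w_G(\Delta(\sigma))$ by checking the orthogonality and triviality conditions separately.

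There is one technical difference worth noting. The paper only proves one inclusion explicitly (that $\alpha\in\Delta(n\sigma)\setminus\Delta_{P'}$ implies $-w_G(\alpha)\in\Delta(\sigma)$), tacitly leaving the reverse to symmetry. More substantively, the paper handles the conjugation by first invoking the orthogonality condition: once $\langle -w_G(\alpha),\Delta_P\rangle = 0$ is known, $w_P$ fixes $-w_G(\alpha)$, so $-(w_Gw_P)^{-1}(\alpha) = -w_G(\alpha)$ is again simple and the conjugate of $\Lambda(1)\cap W_{\aff,P'_\beta}(1)$ by $n^{-1}$ is directly identified. Your factorization $n = n_{w_Gw_{P_\alpha}}\cdot n_{w_{P_\alpha}w_P}$, combined with the normality of $W_{\aff,P_\alpha}(1)$ in $W_{P_\alpha}(1)$, establishes the conjugation identity for \emph{every} $\alpha\in\Delta\setminus\Delta_P$, independently of orthogonality. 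This buys you both inclusions at once and decouples the two conditions, at the cost of a slightly longer argument for the identity itself.
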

\begin{rem}
By \cite[Lemma~2.27]{arXiv:1612.01312}, we have $n_{w_Gw_{P(\sigma)}}\sigma = n_{w_Gw_P}\sigma$.
\end{rem}
\begin{proof}
Let $P_\alpha$ be the parabolic subgroup corresponding to $\Delta_P\cup \{\alpha\}$ where $\alpha\in\Delta\setminus\Delta_P$.
Set $n = n_{w_Gw_P}$ and $P' = n_{w_Gw_P}\opposite{P}n_{w_Gw_P}^{-1}$.
Let $\alpha\in\Delta(n\sigma)\setminus\Delta_{P'}$ and we prove that $\alpha\in-w_G(\Delta(\sigma)\setminus\Delta_P)$.
Since $\Delta_{P'} = -w_G(\Delta_P)$, $\langle \alpha,\Delta_{P'}\rangle = 0$ implies $\langle -w_G(\alpha),\Delta_P\rangle = 0$.
If for any $\lambda\in \Lambda(1)\cap W_{\aff,P_\alpha}(1)$ satisfies $(n\sigma)(T_\lambda^{P'}) = 1$, then we have $\sigma(T_{n^{-1}\lambda n}^P) = 1$.
Hence $\sigma(T_\lambda^P) = 1$ for any $\lambda\in n^{-1}(\Lambda(1)\cap W_{\aff,P_\alpha}(1))n = \Lambda(1)\cap W_{\aff,P_{-(w_Gw_P)^{-1}(\alpha)}}(1)$.
Since $\langle -w_G(\alpha),\Delta_P\rangle = 0$, we have $-(w_Gw_P)^{-1}(\alpha) = -w_Pw_G(\alpha) = -w_G(\alpha)$.
Hence $-w_G(\alpha)\in \Delta(\sigma)$.
\end{proof}

\begin{proof}[Proof of Theorem~\ref{thm:dual of simples}]
Set $\sigma = \pi_{\chi,J,V}$ and $P(\sigma)' = n_{w_Gw_{P(\sigma)}}\opposite{P(\sigma)}n_{w_Gw_{P(\sigma)}}^{-1}$.
We have
\begin{align*}
I(P;\chi,J,V;Q)^* & = I_{P(\sigma)}(\St_Q^{P(\sigma)}\sigma)^*\\
& \simeq I_{P(\sigma)'}(n_{w_Gw_{P(\sigma)}}(\St_Q^{P(\sigma)}\sigma)^*)\\
& \simeq I_{P(\sigma)'}(n_{w_Gw_{P(\sigma)}}(\St_{n_{w_{P(\sigma)w_Q}}\opposite{Q}n_{w_{P(\sigma)w_Q}}^{-1}}^{P(\sigma)}\sigma^*)).
\end{align*}
The adjoint action of $n_{w_Gw_{P(\sigma)}}$ induces an isomorphism $\mathcal{H}_{P(\sigma)}\simeq\mathcal{H}_{P(\sigma)'}$.
For a parabolic subgroup $Q_1$ between $P(\sigma)$ and $P$, let $Q_2$ be a parabolic subgroup corresponding to $w_Gw_{P(\sigma)}(\Delta_{Q_1})$.
Then the adjoint action of $n_{w_Gw_{P(\sigma)}}$ induces an isomorphism $\mathcal{H}_{Q_1}\simeq \mathcal{H}_{Q_2}$ and sends $\mathcal{H}_{Q_1}^{P(\sigma)-}$ to $\mathcal{H}_{Q_2}^{P(\sigma)'-}$.
Moreover, it is compatible with homomorphisms $j_{Q_1}^{P(\sigma)-*}$ and $j_{Q_2}^{P(\sigma)'-*}$.
Hence we have $n_{w_Gw_{P(\sigma)}}I_{Q_1}^{P(\sigma)}(e_{Q_1}(\sigma))\simeq I_{Q_2}^{P(\sigma)'}(n_{w_Gw_{P(\sigma)}}e_{Q_1}(\sigma))$.

Since $\Delta_{P(\sigma)}\setminus\Delta_{P}$ is orthogonal to $\Delta_P$, $w_{P(\sigma)}w_P(\Delta_P) = \Delta_P$.
Hence we have $w_Gw_{P(\sigma)}(\Delta_P) = w_Gw_P(\Delta_P) = \Delta_{P'}$.
Therefore, from the above compatibility of negative algebras and homomorphisms, $n_{w_Gw_{P(\sigma)}}e_{Q_1}(\sigma) = e_{Q_2}(n_{w_Gw_{P(\sigma)}}\sigma)$ where $e_{Q_2}$ is the extension from $P'$ to $Q_2$.
Therefore, combining the formula in the above paragraph, we get $n_{w_Gw_{P(\sigma)}}\St_{Q_1}^{P(\sigma)}(\sigma)\simeq \St_{Q_2}^{P(\sigma)'}(n_{w_Gw_{P(\sigma)}}\sigma)$.

Now set $Q_1 = n_{w_{P(\sigma)w_Q}}\opposite{Q}n_{w_{P(\sigma)w_Q}}$.
Then $\Delta_{Q_1} = w_{P(\sigma)}w_Q(\Delta_Q)$.
Hence $\Delta_{Q_2} = w_Gw_Q(\Delta_Q) = \Delta_{Q'}$.
Therefore we have
\[
I(P;\chi,J,V;Q)^*
\simeq
I_{P(\sigma)'}(\St_{Q'}^{P(\sigma)'}n_{w_Gw_{P(\sigma)}}\sigma^*).
\]
By \cite[Lemma~2.27]{arXiv:1612.01312}, $\sigma^* = n_{w_P(\sigma)w_P}\sigma^*$.
Hence we get
\[
I(P,\sigma,Q)^*
\simeq
I_{P(\sigma)'}(\St_{Q'}^{P(\sigma)'}n_{w_Gw_P}\sigma^*).
\]

\end{proof}

\end{document}